\numberwithin{equation}{section}
\newcommand{\N}{\mathbb{N}}
\newcommand{\R}{\mathbb{R}}
\newcommand{\Q}{\mathbb{Q}}
\newcommand{\Z}{\mathbb{Z}}
\newcommand{\F}{\mathcal{F}}
\newcommand{\A}{\mathcal{A}}
\newcommand{\X}{\mathcal{X}}
\newcommand{\Y}{\mathcal{Y}}
\newcommand{\B}{\mathcal{B}}
\newcommand{\D}{\mathcal{D}}
\newcommand{\Hh}{\mathcal{H}}
\newcommand{\li}{\;{\le}_{\rm inc}\;}
\newcommand{\sli}{\;{<}_{\rm inc}\;}
\newcommand{\sgi}{\;{>}_{\rm inc}\;}
\newcommand{\CRpin}{C_{\text{pin}}(\R)}
\newcommand{\Ll}{\mathcal L}
\newcommand{\h}{\mathfrak h}
\newcommand{\wt}{\widetilde}
\newcommand{\Ss}{\mathcal{S}}
\newcommand{\T}{\mathcal{T}}
\newcommand{\deq}{\overset{d}{=}}
\newcommand{\ve}{\varepsilon}
\newcommand{\f}{\frac}
\newcommand{\mbf}{\mathbf}
\newcommand{\Pp}{\mathbb P}
\newcommand{\Ee}{\mathbb E}
\newcommand{\KPZH}{\operatorname{KPZH}}
\newcommand{\kpzb}{h_{Z_\beta}}
\newcommand{\bb}{b_\beta}
\newcommand{\Rup}{\R_{\uparrow}^4}
\newcommand{\dir}{\lambda}
\newcommand{\KH}{F}
\newcommand{\OCY}{Z^{\rm sd}}
\newcommand{\ind}{\mathbf 1}
\newcommand{\erfc}{\operatorname{erfc}}
\newcommand{\Dp}{D}
\newcommand{\Qp}{Q}
\newcommand{\Rp}{R}
\newcommand{\sDp}{\mathbf{D}}
\newcommand{\be}{\begin{equation}}
\newcommand{\ee}{\end{equation}}
\newcommand{\fl}[1]{\lfloor{#1}\rfloor} 
\newcommand{\sig}{{\scaleobj{0.8}{\boxempty}}} 
\newcommand{\sigg}{{\scaleobj{0.9}{\boxempty}}}
\newtheorem{theorem}{Theorem}[section]
\newtheorem{proposition}[theorem]{Proposition}
\newtheorem{corollary}[theorem]{Corollary}
\newtheorem{lemma}[theorem]{Lemma}
\theoremstyle{definition}
\newtheorem{definition}[theorem]{Definition}
\theoremstyle{remark}
\newtheorem*{remark}{Remark}
\definecolor{darkblue}{rgb}{0.0,0.0,0.7}
\definecolor{darkred}{rgb}{0.5,0.0,0.0}
\definecolor{darkgreen}{rgb}{0.0,0.5,0.0}
\definecolor{indigo}{rgb}{0.3,0,0.5}
\newcommand{\pathsp}{\mathbb{X}}
\def\tsp{\hspace{0.5pt}}  
\def\tspa{\hspace{0.7pt}}
\def\tspb{\hspace{0.9pt}}
\def\viiva{\hspace{.7pt}\vert\hspace{.8pt}}
\def\bviiva{\hspace{.8pt}\big\vert\hspace{.8pt}}   
\def\Bviiva{\hspace{.7pt}\Big\vert\hspace{.7pt}}
\newcommand\aabullet{{\tspb\raisebox{2pt}{\scaleobj{0.5}{\bullet}}\tspb}}  
\newcommand\aaabullet{{\tspb\raisebox{1pt}{\scaleobj{0.5}{\bullet}}\tspb}}  
\def\ddd{\displaystyle}
\title{Jointly invariant measures for the Kardar-Parisi-Zhang equation}
\author{Sean Groathouse}
\address{Sean Groathouse, University of Utah, Mathematics Department, 155 South 1400 East, JWB 233
Salt Lake City, UT 84112, USA.}
\email{sean.groathouse@gmail.com}
\author{Firas Rassoul-Agha}
\address{Firas Rassoul-Agha, University of Utah, Mathematics Department, 155 South 1400 East, JWB 233
Salt Lake City, UT 84112, USA.}
\email{firas@math.utah.edu}
\author{Timo Sepp{\"a}l{\"a}inen}
\address{Timo Sepp{\"a}l{\"a}inen, University of Wisconsin-Madison, Mathematics Department, Van Vleck Hall, 480
Lincoln Dr., Madison WI 53706, USA.}
\email{seppalai@math.wisc.edu}
\author{Evan Sorensen}
\address{Evan Sorensen (corresponding author) Columbia University, Mathematics Department,  Room 624, MC 4432,
2990 Broadway,
New York, NY 10027, USA.}
\email{es4203@columbia.edu}
\subjclass[2020]{60K35,60K37}
\keywords{Busemann function, chaos expansion, invariant measure, Kardar-Parisi-Zhang equation, KPZ universality, O'Connell-Yor polymer, stationary horizon, stochastic heat equation, weak-noise limit}
\begin{document}
\maketitle

\begin{abstract}
    We give an explicit description of a family of jointly invariant measures of the KPZ equation singled out by asymptotic slope conditions. These are couplings of Brownian motions with drift, and can be extended to a cadlag process indexed by all real drift parameters. We name this process the KPZ horizon (KPZH). As a corollary, we resolve a recent conjecture by showing the existence of a random, countably infinite dense set of drift values at which the Busemann process of the KPZ equation is discontinuous. This signals instability, and shows the failure of the one force--one solution principle and the existence of at least two extremal semi-infinite polymer measures in the exceptional directions. The low-temperature limit of the KPZH is the stationary horizon (SH), the unique jointly invariant measure of the KPZ fixed point under the same slope conditions. The high-temperature limit of the KPZH is a coupling of Brownian motions that differ by linear shifts, which is jointly invariant under the Edwards-Wilkinson fixed point. 
\end{abstract}
\tableofcontents

\section{Introduction}
\subsection{Invariant measures of the KPZ equation}
For $t > s$, consider the KPZ equation
\be \label{eqn:KPZ}
\begin{aligned}
\partial_t h(t,x) = \f{1}{2} \partial_{xx} h(t,x) + \f{\beta}{2}(\partial_x h(t,x))^2 +  W(t,x),\quad  h(s,x) = h_s(x),
\end{aligned}
\ee
with inverse temperature $\beta>0$, initial condition $h_s$ at time $s$ and 
 space-time white noise $W$ as driving force.
Classically, this equation is ill-posed, but formally, one can solve the KPZ equation via the Cole-Hopf transformation $h(t,x) = \f{1}{\beta}\log Z(t,x)$, where $Z$ solves the stochastic heat equation (SHE) with multiplicative noise:  
\be \label{eqn:SHE}
\partial_t Z(t,x) = \f{1}{2} \partial_{xx}Z(t,x) + \beta Z(t,x)W(t,x),\quad Z(s,x) = e^{\beta h_s(x)}.
\ee
Rigorous solutions to this equation have been discussed in \cite{Bertini-Cancrini-1995,Bertini-Giacomin-1997,Chen-Dalang-2014,Chen-Dalang-2015}. Recently, great progress has been made in understanding solutions of the KPZ equation in the work on Martin Hairer \cite{Hairer-2013,Hairer-2014} on regularity structures. Another perspective through paracontrolled distributions has been studied in \cite{Gubinelli-Perkowski-2017,Perkowski-Rosati-2019}. 

It is well-known that Brownian motion with diffusivity $1$ and arbitrary drift is an invariant measure for \eqref{eqn:KPZ}. 
The notion of invariance requires the caveat that invariance only holds up to a global height shift. That is, if we let $h(t,x \viiva B)$ denote the solution to \eqref{eqn:KPZ} at time $t > 0$ when $h(0,x) = B(x)$ is a Brownian motion, then,
\[
\{h(t,x \viiva B) - h(t,0 \viiva B):x \in \R \} \deq B.
\]
See \cite{Janjigian-Rassoul-Seppalainen-19} and the references therein for a detailed discussion of this height shift.
Using the work of \cite{Alberts-Khanin-Quastel-2014a,Alberts-Khanin-Quastel-2014b,Alb-Janj-Rass-Sepp-22}, one can construct solutions to the KPZ equation with the same driving noise $W$ but started from different initial conditions. The present paper is concerned with \textit{jointly invariant measures}, namely couplings of Brownian motions  $F^1,\ldots,F^k$ with different drifts such that, on $C(\R)^k$,   we have for all $t>0$ the distributional invariance 
\be \label{joint_invar}
\bigl(h(t,\aabullet \viiva F^1) - h(t,0 \viiva F^1),\ldots,h(t,\aabullet \viiva F^k) - h(t,0 \viiva F^k)\bigr)  \deq \bigl(F^1(\aabullet),\ldots,F^k(\aabullet)\bigr).
\ee
The existence, uniqueness, and ergodicity of such jointly invariant measures, up to an asymptotic slope condition, was established in \cite{Janj-Rass-Sepp-22} (see Section 3.4 of that paper for a detailed discussion). We state this condition as follows:
\be \label{eqn:SHE_drift_cond_intro}
\begin{aligned}
    -\infty \le \limsup_{x \to -\infty} \f{F(x)}{|x|} &< \lambda = \lim_{x \to \infty} \f{F(x)}{x} \qquad\qquad\qquad\ \text{ if }\lambda > 0 \\
    \lim_{x \to -\infty} \f{F(x)}{|x|} &= |\lambda| > \limsup_{x \to \infty} \f{F(x)}{x} \ge -\infty \qquad \text{ if }\lambda < 0 \\
    -\infty &\le \limsup_{|x| \to \infty} \f{F(x)}{|x|} \le 0\qquad \qquad\qquad\text{ if }\lambda = 0.
\end{aligned}
\ee
Our first theorem gives an explicit description of these measures.
\begin{theorem} \label{thm:Busedrift}
Let $\lambda_1 < \cdots < \lambda_k$ be real. Let $Y^1,\ldots,Y^k$ be independent two-sided Brownian motions with diffusivity $1$ and drifts $\lambda_1,\ldots,\lambda_k$, respectively. 
Set $F_\beta^1=Y^1$ and then for 
 $j =2, \dotsc, k$ define $F_\beta^j$ through
\begin{align*}
\exp\bigl[\beta F_\beta^j(y)\bigr] 
&=e^{\beta Y^1(y)} \cdot 
\f{
\ddd\int\limits_{-\infty < x_{j - 1} < \cdots <x_{1} < y} \prod_{i = 1}^{j-1} e^{\beta( Y^{i + 1}(x_i) - Y^i(x_i))} d x_{ i}}{\ddd\int\limits_{-\infty < x_{j - 1} < \cdots <x_{1} < 0} \prod_{i = 1}^{j-1} e^{\beta (Y^{i + 1}(x_i) - Y^i(x_i)) } d x_{ i}}\,, \quad y\in\R.
\end{align*}
Then, $(F_\beta^1,\ldots,F_\beta^k)$ is distributed as the unique jointly stationary and ergodic measure for the KPZ equation \eqref{eqn:KPZ} such that, for $1 \le j \le k$, each $F_\beta^j$ satisfies almost surely  the asymptotic slope condition \eqref{eqn:SHE_drift_cond_intro} for $\lambda = \lambda_j$. In particular, $F_\beta^j$ is a two-sided Brownian motion with diffusivity $1$ and drift $\lambda_j$.
\end{theorem}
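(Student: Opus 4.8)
\emph{Proof proposal.} Since \cite{Janj-Rass-Sepp-22} already establishes existence, uniqueness, and ergodicity of the jointly stationary measure satisfying the slope conditions \eqref{eqn:SHE_drift_cond_intro}, it suffices to verify three facts about the explicitly coupled tuple $(F_\beta^1,\dots,F_\beta^k)$: that each $F_\beta^j$ is a two-sided Brownian motion with diffusivity $1$ and drift $\lambda_j$; that each $F_\beta^j$ almost surely satisfies \eqref{eqn:SHE_drift_cond_intro} with $\lambda=\lambda_j$; and that the tuple is jointly invariant, i.e.\ obeys \eqref{joint_invar}. Once the first fact is known the second is immediate: a two-sided Brownian motion $B$ with diffusivity $1$ and drift $\lambda$ has $B(x)/x\to\lambda$ as $x\to+\infty$ and $B(x)/|x|\to-\lambda$ as $x\to-\infty$ by the law of large numbers, and the pair $(\lambda,-\lambda)$ satisfies \eqref{eqn:SHE_drift_cond_intro} in each of the cases $\lambda>0$, $\lambda<0$, $\lambda=0$. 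So the real content is the first and third facts. A guiding observation is that the nested-integral formula defining $F_\beta^j$ is the partition function of a stationary $(j{-}1)$-layer O'Connell--Yor polymer with boundary datum $Y^1$ and bulk data $Y^{i+1}-Y^i$, obtained from $(Y^1,\dots,Y^j)$ by iterating the O'Connell--Yor queueing map one layer at a time; this suggests proving the theorem by induction on $k$, with the inductive step reducing to a two-component statement.

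For the Brownian marginals, write $\beta F_\beta^j(y)=\beta Y^1(y)+\log N_j(y)-\log N_j(0)$ with $N_j$ the numerator integral; since each increment $Y^{i+1}-Y^i$ has positive drift $\lambda_{i+1}-\lambda_i$ and hence tends to $-\infty$ at $-\infty$, the improper integral $N_j$ converges almost surely. Peeling off the outermost integration variable exhibits $F_\beta^j$ as the two-input map
\[
(f,g)\;\longmapsto\; f+\tfrac1\beta\log\frac{\int_{-\infty}^{y}e^{\beta(g-f)}}{\int_{-\infty}^{0}e^{\beta(g-f)}}
\]
applied to $Y^1$ and to the last coordinate of the $(j{-}1)$-layer system built from $Y^2,\dots,Y^j$. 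The Brownian analogue of Burke's theorem of O'Connell and Yor, together with its multiline extension, states precisely that this map, fed independent Brownian motions with ordered drifts, returns Brownian motions with the drifts permuted while preserving the independence needed to iterate. Applying it $j-1$ times yields that $F_\beta^j$ is a two-sided Brownian motion with diffusivity $1$ and drift $\lambda_j$, and moreover that $\Phi\colon(Y^1,\dots,Y^k)\mapsto(F_\beta^1,\dots,F_\beta^k)$ is law-preserving in the appropriate joint sense; this is the point at which the exactly-solvable structure enters.

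It remains to prove the joint invariance \eqref{joint_invar}: the law of $\Phi(Y^1,\dots,Y^k)$ is a fixed point of the $k$-fold coupling of the KPZ evolution \eqref{eqn:KPZ} driven by one common white noise $W$ and then recentered at $x=0$. For $k=1$ this is the classical invariance of Brownian motion with drift recalled in the introduction. I would argue the general case by induction on $k$ via the layer recursion: the goal is to show that the coupled evolution $\mathcal T_t$ intertwines the queueing map, so that evolving all $k$ components and re-reading the nested-integral structure produces the same law as applying $\Phi$ to $k$ independently evolved and recentered Brownian motions. Concretely this reduces to a \emph{two-component Burke-type invariance for the KPZ equation} --- equivalently, via Cole--Hopf, for the stochastic heat equation \eqref{eqn:SHE} --- namely that the coupled evolution carries the stationary pair formed by $Y^1$ and the last coordinate of a lower $(k{-}1)$-layer system to a pair with the same joint law. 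I expect this two-component identity, together with the bookkeeping of independence needed to pass from it to all $k$, to be the main obstacle, since it must promote the single-component invariance and the purely algebraic queueing identities to a genuine distributional statement about the continuum SHE dynamics. A natural route is to discretize --- realize $(F_\beta^1,\dots,F_\beta^k)$ as a limit of Busemann-type functions of the $N$-line O'Connell--Yor polymer, for which the corresponding joint invariance under the polymer dynamics follows from the polymer's Burke property --- and then pass to the KPZ limit, matching the result with the Busemann and invariant structure of the SHE furnished by \cite{Janj-Rass-Sepp-22}; controlling that limit and the matching is itself a substantial part of the work.
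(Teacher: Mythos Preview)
Your proposal is correct and follows essentially the same route as the paper: establish the Brownian marginals via the iterated O'Connell--Yor Burke theorem, prove joint invariance first at the semi-discrete O'Connell--Yor level through a Ferrari--Martin style intertwining of the queueing map with the polymer dynamics, pass to the SHE by a weak-noise scaling limit, and then invoke the uniqueness result of \cite{Janj-Rass-Sepp-22} to identify the measure with the Busemann process. One small refinement: rather than realizing $(F_\beta^1,\dots,F_\beta^k)$ as a limit of OCY Busemann functions, the paper exploits an exact scaling invariance of the law of $(F_\beta^1,\dots,F_\beta^k)$ under $(\beta,\lambda)\mapsto(\gamma\beta,\gamma\lambda+\alpha)$, so that the KPZH itself is the invariant initial datum for OCY at every scale and the limit transfers invariance directly.
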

In Section \ref{sec:KPZH_cons_sec}, we extend the measures of Theorem \ref{thm:Busedrift} to a process $\{\KH_\beta^\lambda\}_{\lambda \in \R}$, which we name the \textit{KPZ horizon} with inverse temperature $\beta$ ($\KPZH_\beta$ for short, or sometimes simply $\KPZH$). The path space of this process is the Skorokhod space $D(\R,C(\R))$ of functions $\R \to C(\R)$ that are right-continuous with left limits. $C(\R)$ is endowed with its Polish topology of uniform convergence on compact sets. The term KPZ horizon is introduced  in analogy to its zero-temperature counterpart, the stationary horizon (SH), introduced by Busani in \cite{Busani-2021} and studied by Busani and the third and fourth authors in \cite{Seppalainen-Sorensen-21b,Busa-Sepp-Sore-22a,Busa-Sepp-Sore-22b,Busa-Sepp-Sore-23}. 
The KPZ fixed point is the large-time scaling limit of the KPZ equation under the $1:2:3$ scaling \cite{KPZ_equation_convergence,heat_and_landscape,Wu-23}. This is discussed more in Section \ref{sec:SH-KPZH} below.  
The SH gives the unique jointly invariant measure of the KPZ fixed point under the same asymptotic slope conditions \eqref{eqn:SHE_drift_cond_intro}.  This picture is completed by the convergence, as  $\beta \nearrow \infty$, of the projections of $\KPZH_\beta$ on $C(\R,\R^k)$   to those of SH (Theorem \ref{thm:SHlimit} below). 

Theorem \ref{thm:Busedrift} gives rise to the following description of the difference  of two marginals of KPZH. 
\begin{theorem} \label{thm:lambda-F}
Let $\beta > 0$ and  $\{\KH_\beta^\lambda\}_{\lambda \in \R}$ be the $\KPZH_\beta$. For $\lambda_1 < \lambda_2$ with $\lambda=\lambda_2 - \lambda_1$, 
\[
\{\KH_\beta^{\lambda_2}(y) - \KH_\beta^{\lambda_1}(y): y \in \R \} \deq \Biggl\{\beta^{-1} \log\Biggl(\f{\int_{-\infty}^y \exp\bigl(\sqrt 2\tsp \beta B(x) + \lambda \beta x\bigr)\,dx}{\int_{-\infty}^0 \exp\bigl(\sqrt 2\tsp \beta B(x) + \lambda \beta x\bigr)\,dx}\Biggr): y \in \R\Biggr\}.
\]
In particular, 
\[
\{\KH_\beta^{\lambda_2}(y) - \KH_\beta^{\lambda_1}(y): y \ge 0 \} \deq  \{{\beta}^{-1} \log\bigl(1 + X_{\lambda,\beta} Y_{\lambda,\beta}(y)\bigr):y \ge 0 \}  
\]
where $X_{\lambda,\beta}\sim{\rm Gamma}(\lambda \beta^{-1}, \beta^{-2})$, independent of the process $\{Y_{\lambda,\beta}(y):y \ge 0\}$. The law of this latter process is given  by 
\[
\{ Y_{\lambda,\beta}(y): y \ge 0 \} \;\deq \; \Bigl\{\int_0^y \exp\bigl(\sqrt 2\tsp \beta B(x) + \lambda \beta x\bigr)\,dx :y \ge 0 \Bigr\} ,
\]
where $B$ is a standard Brownian motion.
\end{theorem}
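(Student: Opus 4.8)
The plan is to reduce everything to the two-marginal case of Theorem~\ref{thm:Busedrift} and then invoke the Dufresne identity for exponential functionals of Brownian motion. First I would use that $\KPZH_\beta$ extends the coupling of Theorem~\ref{thm:Busedrift}, so that for fixed $\lambda_1<\lambda_2$ the pair $(\KH_\beta^{\lambda_1},\KH_\beta^{\lambda_2})$ has the law of $(F_\beta^1,F_\beta^2)$ from that theorem with $k=2$ and drifts $\lambda_1<\lambda_2$. Subtracting the two formulas, the common factor $F_\beta^1=Y^1$ cancels and one gets, as processes in $y\in\R$,
\[
\exp\bigl[\beta\bigl(\KH_\beta^{\lambda_2}(y)-\KH_\beta^{\lambda_1}(y)\bigr)\bigr]\ \deq\ \frac{\int_{-\infty}^y e^{\beta (Y^2(x)-Y^1(x))}\,dx}{\int_{-\infty}^0 e^{\beta (Y^2(x)-Y^1(x))}\,dx}.
\]
The key observation is that $Y^2-Y^1$, the difference of two independent diffusivity-$1$ two-sided Brownian motions with drifts $\lambda_2$ and $\lambda_1$, is itself a two-sided Brownian motion, vanishing at $x=0$, of diffusivity $2$ with drift $\lambda=\lambda_2-\lambda_1$; that is, $Y^2(x)-Y^1(x)\deq \sqrt2\,B(x)+\lambda x$ for a standard two-sided $B$. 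Plugging this in yields the first displayed identity of the theorem. One routine check is needed here: since $\lambda>0$, the exponent $\sqrt2\,\beta B(x)+\lambda\beta x\to-\infty$ as $x\to-\infty$ (e.g.\ by the law of the iterated logarithm), so $\int_{-\infty}^0 e^{\sqrt2\beta B(x)+\lambda\beta x}\,dx<\infty$ almost surely and the ratio is well defined.

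Next I would restrict to $y\ge0$ and split $\int_{-\infty}^y=\int_{-\infty}^0+\int_0^y$, which rewrites the right-hand side above as $1+X_{\lambda,\beta}\,Y_{\lambda,\beta}(y)$ with
\[
X_{\lambda,\beta}:=\Bigl(\int_{-\infty}^0 e^{\sqrt2\beta B(x)+\lambda\beta x}\,dx\Bigr)^{-1},\qquad Y_{\lambda,\beta}(y):=\int_0^y e^{\sqrt2\beta B(x)+\lambda\beta x}\,dx.
\]
With this choice the description of the law of $\{Y_{\lambda,\beta}(y):y\ge0\}$ is immediate from the definition, and the independence of $X_{\lambda,\beta}$ from $\{Y_{\lambda,\beta}(y):y\ge0\}$ is immediate as well, since the former is measurable with respect to $B|_{(-\infty,0]}$ and the latter with respect to $B|_{[0,\infty)}$, two independent parts of the two-sided Brownian motion.

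It then remains to identify the distribution of $X_{\lambda,\beta}$. The plan is: reverse time via $x\mapsto-x$ to turn $\int_{-\infty}^0 e^{\sqrt2\beta B(x)+\lambda\beta x}\,dx$ into $\int_0^\infty e^{\sqrt2\beta \widetilde B(x)-\lambda\beta x}\,dx$ for a standard one-sided Brownian motion $\widetilde B$; apply Brownian scaling with $a=2\beta^{-2}$ to write this as $\tfrac{2}{\beta^2}\int_0^\infty e^{2(\widehat B(u)-(\lambda/\beta)u)}\,du$; and invoke the Dufresne identity $\int_0^\infty e^{2(\widehat B(u)-\nu u)}\,du\deq(2\gamma_\nu)^{-1}$ with $\gamma_\nu\sim\mathrm{Gamma}(\nu,1)$, valid here since $\nu=\lambda/\beta>0$. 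This gives $\int_{-\infty}^0 e^{\sqrt2\beta B(x)+\lambda\beta x}\,dx\deq(\beta^2\gamma_{\lambda/\beta})^{-1}$, hence $X_{\lambda,\beta}\deq\beta^2\gamma_{\lambda/\beta}\sim\mathrm{Gamma}(\lambda\beta^{-1},\beta^{-2})$, matching the parameterization in the statement.

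Overall this is mostly bookkeeping on top of Theorem~\ref{thm:Busedrift}; the only external input is the Dufresne identity, and the only place that needs (routine) care is keeping track of the scaling constants so that the Gamma shape and rate come out exactly as $\lambda\beta^{-1}$ and $\beta^{-2}$ rather than off by a power of $\beta$ or a factor of $2$.
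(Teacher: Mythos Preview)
Your proposal is correct and follows essentially the same route as the paper: use the explicit two-component formula for $(\KH_\beta^{\lambda_1},\KH_\beta^{\lambda_2})$, recognize $Y^2-Y^1$ as $\sqrt2\,B+\lambda\,\aabullet$, split the integral at $0$ to separate independent pieces, and identify the reciprocal of the negative-half integral via Dufresne. The only cosmetic difference is that the paper cites Proposition~\ref{prop:KPZH_cons}\ref{itm:KPZH_dist} (via \eqref{FY500}) and Lemma~\ref{lem:pQalt} (via \eqref{Did}) for the formula rather than Theorem~\ref{thm:Busedrift}, and packages the Dufresne computation as Lemma~\ref{lem:int_Gamma}; since Theorem~\ref{thm:Busedrift}'s invariance and uniqueness claims are proved later in the paper, citing the construction directly avoids any appearance of forward reference.
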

\noindent 
The analogous description of the process $\{\KH_\beta^{\lambda_2}(y) - \KH_\beta^{\lambda_1}(y): y \le 0 \}$ can be obtained through the symmetry in Theorem \ref{thm:dist_invar}\ref{itm:reflinv}. Qualitatively, a  key feature of the description of these increments is that the extension to the  full KPZH process inherently produces  discontinuities: 
\begin{theorem} \label{thm:jumps_every_edge}
Let $\KH_\beta = \{\KH_\beta^\lambda\}_{\lambda \in \R}$ be the $\KPZH_\beta$ and $\Pp_\beta$ its distribution on the space $D(\R,C(\R))$.
Then, $\Pp_\beta$-almost surely there exists a random countably infinite dense subset $\Lambda_{\beta}$ of $\R$ such that  whenever $x \neq y$, $\alpha \mapsto \KH_\beta^{\alpha}(y) - \KH_\beta^\alpha(x)$ is discontinuous at $\alpha = \lambda$ if and only if $\lambda \in \Lambda_{\beta}$.
\end{theorem}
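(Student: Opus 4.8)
The plan is to take $\Lambda_\beta\subseteq\R$ to be the (at most countable) set of $\lambda$ at which the càdlàg path $\lambda\mapsto\KH_\beta^\lambda$ is discontinuous in $C(\R)$, i.e.\ at which $\KH_\beta^\lambda\neq\KH_\beta^{\lambda-}$, and to establish three things: that $\Lambda_\beta$ is at most countable (immediate from the definition of $D(\R,C(\R))$), that each jump of the full process is registered by \emph{every} one-dimensional projection $\alpha\mapsto\KH_\beta^\alpha(y)-\KH_\beta^\alpha(x)$, and that $\Lambda_\beta$ is almost surely dense. The easy half of the claimed equivalence is recorded first: if $\lambda\notin\Lambda_\beta$ then $\KH_\beta^{\lambda-}=\KH_\beta^\lambda$, so by right-continuity $\lambda$ is a continuity point of $\alpha\mapsto\KH_\beta^\alpha$ in the topology of uniform convergence on compacts; since $g\mapsto g(y)-g(x)$ is a continuous functional on $C(\R)$, the projection is continuous at $\lambda$ for every $x\neq y$.

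For the converse direction, observe that by right-continuity the jump of $\alpha\mapsto\KH_\beta^\alpha(y)-\KH_\beta^\alpha(x)$ at $\lambda$ equals $J^\lambda(y)-J^\lambda(x)$, where $J^\lambda:=\KH_\beta^\lambda-\KH_\beta^{\lambda-}\in C(\R)$ is the jump function of the full process; being a uniform-on-compacts limit of the spatially nondecreasing functions $\KH_\beta^\lambda-\KH_\beta^\alpha$ as $\alpha\uparrow\lambda$ (spatial monotonicity in $\lambda$ of the $\KPZH_\beta$ is the Busemann-type monotonicity built into the construction, cf.\ Theorem~\ref{thm:dist_invar}), $J^\lambda$ is nondecreasing. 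What must be shown is that $J^\lambda$ is \emph{strictly} increasing on all of $\R$ whenever $J^\lambda\not\equiv0$, for then $J^\lambda(y)\neq J^\lambda(x)$ for every $x\neq y$. I would obtain this from the explicit construction of the $\KPZH_\beta$ in Section~\ref{sec:KPZH_cons_sec}: each discontinuity point and its jump are produced from the underlying independent driving Brownian motions, and I expect $J^\lambda$ to come out in the same ``ratio of exponential integrals'' shape as the increments in Theorem~\ref{thm:lambda-F}, namely $J^\lambda(y)=\beta^{-1}\log\bigl(\int_{-\infty}^{y}g_\lambda(z)\,dz\big/\int_{-\infty}^{0}g_\lambda(z)\,dz\bigr)$ for an almost surely strictly positive, locally integrable random function $g_\lambda$; any such function is strictly increasing on $\R$. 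The part $y\le 0$ can, if needed, be handled by feeding the reflection symmetry of Theorem~\ref{thm:dist_invar}\ref{itm:reflinv} into this argument. This identification of $J^\lambda$ is, I expect, the main obstacle.

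For density, note that $\{\Lambda_\beta\text{ is not dense}\}$ is contained in the countable union $\bigcup\{\Lambda_\beta\cap(\alpha_1,\alpha_2)=\varnothing\}$ over rational pairs $\alpha_1<\alpha_2$, so it suffices to show $\Pp\bigl(\Lambda_\beta\cap(\alpha_1,\alpha_2)=\varnothing\bigr)=0$ for each such pair. Fix one edge $x<y$; by the previous step this event is the event that $V^{x,y}(\alpha):=\KH_\beta^\alpha(y)-\KH_\beta^\alpha(x)$ is continuous on $(\alpha_1,\alpha_2)$. Now $V^{x,y}$ is nondecreasing in $\alpha$, and by Theorem~\ref{thm:lambda-F} its increment over a fixed interval is distributed as $\beta^{-1}\log\bigl(\int_{-\infty}^{y}e^{\sqrt2\tsp\beta B(z)+(\alpha_2-\alpha_1)\beta z}\,dz\big/\int_{-\infty}^{x}e^{\sqrt2\tsp\beta B(z)+(\alpha_2-\alpha_1)\beta z}\,dz\bigr)$, which is strictly positive almost surely; running this over rational pairs, $V^{x,y}$ is almost surely strictly increasing. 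The plan is then to show that $V^{x,y}$ has no continuous component, i.e.\ is a pure-jump process. Theorem~\ref{thm:lambda-F} makes this plausible: as the interval shrinks the relevant Gamma shape parameter $(\alpha_2-\alpha_1)\beta^{-1}$ tends to $0$, the increment tends to $0$ in probability, and the probability of a macroscopic increase over a short interval is of the order of the interval length — the signature of an infinite-activity pure-jump process. Rigorously, I would check that $\Ee[V^{x,y}(\alpha_2)-V^{x,y}(\alpha_1)]$ equals the expected sum of the jumps of $V^{x,y}$ on $(\alpha_1,\alpha_2)$, computed from the jump intensity implicit in Theorem~\ref{thm:lambda-F} together with the Markov-in-$\lambda$ structure of the $\KPZH_\beta$, so that the nonnegative continuous part of $V^{x,y}$ has zero mean and hence vanishes. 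Granting this, continuity of $V^{x,y}$ on $(\alpha_1,\alpha_2)$ would force it to be constant there, contradicting strict monotonicity; hence $\Lambda_\beta$ is almost surely dense, and being dense it is automatically countably infinite.

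Aside from the identification of the jump function $J^\lambda$ in the second step, the one genuinely technical point is the exclusion of a continuous component of the edge process $V^{x,y}$ in the density step; the remaining ingredients — countability, the easy direction of the equivalence, and the reduction of density to a single rational interval — are routine.
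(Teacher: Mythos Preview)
Your outline diverges from the paper's route in both of the steps you flag as hard, and in each case the paper avoids exactly the obstacle you run into by passing through the Busemann process identification (Corollary~\ref{cor:SHEBuse=KPZH}) rather than working directly with the $\KPZH_\beta$ construction.

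For the ``every jump is seen by every edge'' direction, your plan is to identify the jump function $J^\lambda$ explicitly as a ratio of exponential integrals. This does not come out of the construction: the formulas of Theorem~\ref{thm:lambda-F} and Lemma~\ref{lem:pQalt} describe $\KH_\beta^{\lambda_2}-\KH_\beta^{\lambda_1}$ for \emph{fixed} $\lambda_1<\lambda_2$, built from finitely many independent Brownian motions attached to those specific drifts; the full process in Proposition~\ref{prop:KPZH_cons} is obtained by Kolmogorov extension over rational drifts and then monotone limits, and there is no ``driving function $g_\lambda$'' attached to a random jump location $\lambda$. The paper instead imports this strict-monotonicity-of-jumps statement from \cite{Janj-Rass-Sepp-22} (recorded here as Theorem~\ref{thm:SHEbuse}\ref{itm:SHEBusemont}), where it is proved using the evolution equation \ref{itm:SHEBuseevolve}: the integral representation $e^{\bb^{\lambda\sig}(t,0,t,y)}=\int e^{\bb^{\lambda\sig}(r,0,r,z)}Z_\beta(t,y\viiva r,z)\,dz$ with a strictly positive kernel propagates a nontrivial jump at time $r$ to a strictly increasing jump at time $t$. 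That mechanism is unavailable until you know $\KPZH_\beta$ \emph{is} the Busemann process.

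For density, your proposed route (show $V^{x,y}$ is pure jump by matching $\Ee[V^{x,y}(\alpha_2)-V^{x,y}(\alpha_1)]$ with an expected jump sum from a ``Markov-in-$\lambda$'' structure) rests on a Markov property in $\lambda$ that is neither stated nor expected for $\KPZH_\beta$; the increments are not independent (cf.\ the discussion after Theorem~\ref{thm:jumps_every_edge} contrasting with the compound-Poisson case in exponential LPP). The paper sidesteps this entirely: the dichotomy Theorem~\ref{thm:SHEbuse}\ref{itm:SHEBusedichot} from \cite{Janj-Rass-Sepp-22} says $\Lambda_{\bb}$ is either a.s.\ empty or a.s.\ countable and dense, so the whole density question reduces to exhibiting a single discontinuity. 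That is Corollary~\ref{cor:SHEjumps}, proved via the criterion of Lemma~\ref{lem:jumpscond}: for a nondecreasing, increment-stationary, a.s.\ continuous process one must have $n\,\Pp(X(1/n)-X(0)>\ve)\to 0$, and the explicit Gamma law in Theorem~\ref{thm:lambda-F} gives $\liminf_{\lambda\searrow 0}\lambda^{-1}\Pp(\KH_1^\lambda(y)-\KH_1^0(y)>\ve)>0$, violating this. Your intuition that ``the probability of a macroscopic increase over a short interval is of order the interval length'' is exactly the right computation, but its role is to \emph{contradict} continuity via Lemma~\ref{lem:jumpscond}, not to feed into a jump-intensity calculation.
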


\subsection{Discontinuities of the Busemann process in the continuum directed random polymer}
The work of \cite{Alberts-Khanin-Quastel-2014a,Alberts-Khanin-Quastel-2014b,Alb-Janj-Rass-Sepp-22} constructs a strictly positive, continuous  four-parameter random field $\{Z_\beta(t,y\viiva s,x):x,y \in \R, s  < t \}$ on an appropriate probability space of the white noise so that, for each $s \in \R$ and suitable  initial data $h_s$, 
\[
(t,y) \mapsto \int_\R e^{\beta h_s(x)}Z_\beta(t,y\viiva s,x)\,dx
\]
solves the SHE \eqref{eqn:SHE} at times $t\in(s,\infty)$ and agrees with the notion of solution from \cite{Bertini-Cancrini-1995,Bertini-Giacomin-1997,Chen-Dalang-2014,Chen-Dalang-2015}.  This four-parameter field defines random probability measures $Q_\beta^{(s,x) \to (t,y)}$ on paths $g:[s,t] \to \R$ from $(x,s)$ to $(y,t)$ whose time-$r$ distribution is given by  
\[
Q_\beta^{(s,x) \to (t,y)}(g(r) \in dz) = \f{Z_\beta(t,y\viiva r,z)Z_\beta(r,z\viiva s,x)}{Z_\beta(t,y\viiva s,x)}\, dz 
\qquad\text{for } s<r<t.
\]

In this sense, we say that $Z_\beta$ is the partition function for the \textit{continuum directed random polymer} (CDRP) first introduced in \cite{Alberts-Khanin-Quastel-2014a}. The measures 
$Q_\beta^{(s,x) \to (t,y)}$ extend in a Gibbsian sense to measures $Q_\beta^{(t,y)}$ on 
 semi-infinite backward  paths $g:(-\infty,t] \to \R$ rooted at $(t,y)$. 
 The Gibbs property is that, conditional on the path passing through $(s,x)$ at time $s \in(-\infty,t)$, the portion of the path between $(t,y)$ and $(s,x)$ is distributed as $Q_\beta^{(s,x) \to (t,y)}$. See \cite[Section 9]{Janj-Rass-Sepp-22} for a more precise definition and detailed discussion. The infinite-path measure is said to be \textit{strongly $\lambda$-directed}  if 
\[
Q_\beta^{(t,y)}\biggl(\lim_{r \to -\infty} \f{g(r)}{|r|} = \lambda\biggr) = 1.
\]


To study this collection of infinite-path measures, Janjigian and the second and third authors \cite{Janj-Rass-Sepp-22} constructed \textit{Busemann functions} for the SHE. For a fixed $\lambda \in \R$, these satisfy the almost sure locally uniform limits \cite[Theorem 3.16]{Janj-Rass-Sepp-22}
\be \label{eqn:buselim}
\bb^\lambda(s,x,t,y) = \lim_{r \to -\infty} \log \f{Z_\beta(s,x\viiva r,z_r)}{Z_\beta(t,y\viiva r,z_r)},
\ee
simultaneously for all paths    $\{z_r: r<s\wedge t\}$  that satisfy $\lim_{r \to -\infty} \f{z_r}{|r|} = \lambda$. Furthermore, the article \cite{Janj-Rass-Sepp-22} constructs the  Busemann process
\be\label{Bpr10}
\{\bb^{\lambda \sig}(s,x,t,y): (s, x,t,y)\in\R^4, \,\lambda \in \R, \,\sigg \in \{-,+\}\}
\ee
on a single event of full probability.    The sign parameter $\sigg \in \{-,+\}$ is a necessary ingredient of the construction.   In general, $\lambda \mapsto \bb^{\lambda-}(s,x,t,y)$ is left-continuous, while $\lambda \mapsto \bb^{\lambda+}(s,x,t,y)$ is right-continuous. A fixed value $\lambda\in\R$ is almost surely not a discontinuity of this process, that is, 
\be\label{fixed_not}
\Pp(\bb^{\lambda-}=\bb^{\lambda+})=1 \quad \forall \lambda\in\R.
\ee
(See Theorem \ref{thm:SHEbuse}\ref{itm:SHEBuseP0} below.)  But the existence of random  discontinuities across the uncountably many values $\lambda$  was left open  \cite[Open Problem 2]{Janj-Rass-Sepp-22}. Define the  set
of exceptional  directions at which jumps occur as  
\be \label{Lambda}
\Lambda_{\bb} := \{\lambda \in \R: \bb^{\lambda -}(s,x,t,y) \neq \bb^{\lambda +}(s,x,t,y) \text{ for some }(s,x,t,y) \in \R^4 \}.
\ee

The set $\Lambda_{\bb}$ is exactly the set of directions $\lambda$ at which  the semi-infinite Gibbs measure supported on $\lambda$-directed paths is not unique \cite[Theorems 3.35, 3.38]{Janj-Rass-Sepp-22}. The Busemann process is an \textit{eternal solution} to the KPZ equation, meaning that started from any initial time, the Busemann process evolves forward in time via the KPZ equation. When the Busemann process is discontinuous at $\lambda$,   the \textit{one force--one solution} principle fails because there are two eternal solutions to the equation satisfying the same asymptotic slope conditions. This is manifested in the dynamic programming principle proved in \cite{Janj-Rass-Sepp-22} and recorded in the present paper as Theorem \ref{thm:SHEbuse}\ref{itm:SHEBuseevolve}.  Theorem 3.5 of \cite{Janj-Rass-Sepp-22}, recorded as Theorem \ref{thm:SHEbuse}\ref{itm:SHEBusedichot} in the present paper, established the following dichotomy: either $\Pp(\Lambda_{\bb} = \varnothing) = 1$ or $\Pp(\Lambda_{\bb}\text{ is countable and dense in }\R) = 1$.  Our next theorem resolves this question. 

\begin{theorem} \label{thm:SHEjumps}
Let $\beta > 0$. Then, $\Pp(\Lambda_{\bb}\text{ is countable and dense in }\R) = 1$.
\end{theorem}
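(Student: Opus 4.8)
The plan is to deduce Theorem~\ref{thm:SHEjumps} from Theorem~\ref{thm:jumps_every_edge} by way of the dichotomy in Theorem~\ref{thm:SHEbuse}\ref{itm:SHEBusedichot}. Since that dichotomy asserts that either $\Pp(\Lambda_{\bb}=\varnothing)=1$ or $\Pp(\Lambda_{\bb}\text{ is countable and dense in }\R)=1$, it suffices to prove $\Pp(\Lambda_{\bb}\neq\varnothing)>0$, and in fact the argument will give probability one. The first ingredient is that, restricted to finitely many directions, the Busemann process \eqref{Bpr10} is distributed according to the unique jointly invariant measure made explicit in Theorem~\ref{thm:Busedrift}. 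For $\lambda\in\R$ and $\sigg\in\{-,+\}$ write $\psi^{\lambda\sigg}(y):=\bb^{\lambda\sigg}(0,y,0,0)$, a continuous function of $y$ that vanishes at $y=0$. By \eqref{fixed_not} (Theorem~\ref{thm:SHEbuse}\ref{itm:SHEBuseP0}) the slice at a fixed direction does not depend on $\sigg$, and by \cite{Janj-Rass-Sepp-22} the tuple $(\psi^{\lambda_1},\dots,\psi^{\lambda_k})$ is jointly invariant for \eqref{eqn:KPZ} and, by the uniqueness in \cite{Janj-Rass-Sepp-22}, equals in law the unique jointly stationary ergodic measure with slopes $\lambda_1<\cdots<\lambda_k$ in the sense of \eqref{eqn:SHE_drift_cond_intro}. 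Hence by Theorem~\ref{thm:Busedrift} and the construction of $\KPZH_\beta$ in Section~\ref{sec:KPZH_cons_sec}, $(\psi^{\lambda_1},\dots,\psi^{\lambda_k})\deq(\KH_\beta^{\lambda_1},\dots,\KH_\beta^{\lambda_k})$ on $C(\R)^k$ for all $\lambda_1<\cdots<\lambda_k$. It is also part of the structure of the Busemann process \cite{Janj-Rass-Sepp-22} that $\lambda\mapsto\bb^{\lambda+}(s,x,t,y)$ is cadlag, with left limit $\bb^{\lambda-}(s,x,t,y)$ at each $\lambda$; the right- and left-continuity of the two versions is already recorded in the excerpt.

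Next I would transport a discontinuity from $\KPZH_\beta$ to the Busemann process. Fix the spatial points $0$ and $1$ and put
\[
g(\lambda):=\KH_\beta^\lambda(1)-\KH_\beta^\lambda(0),\qquad \wt g(\lambda):=\psi^{\lambda+}(1)-\psi^{\lambda+}(0)=\bb^{\lambda+}(0,1,0,0),
\]
real-valued cadlag functions of $\lambda\in\R$ that, by the previous paragraph, have the same finite-dimensional distributions. By Theorem~\ref{thm:jumps_every_edge} with $x=0$ and $y=1$, the set of discontinuities of $g$ is $\Pp_\beta$-almost surely equal to $\Lambda_\beta$, which is dense in $\R$; hence the restriction $g\vert_{[0,1]}$ is $\Pp_\beta$-almost surely not continuous. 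Now for any real function $h$ that is right-continuous on $\R$, failure of $h\vert_{[0,1]}$ to be continuous (equivalently, to be uniformly continuous on the compact interval $[0,1]$) is the event
\[
\mathcal D(h)\;:=\;\bigcup_{\ve\in\Q_{>0}}\ \bigcap_{m\in\N}\ \bigcup_{\substack{p,\,q\,\in\,\Q\cap[0,1]\\ 0<q-p<1/m}}\bigl\{\,\lvert h(q)-h(p)\rvert>\ve\,\bigr\},
\]
where right-continuity of $h$ and the memberships $0,1\in\Q$ let one replace arbitrary sample points by rational ones. The right-hand side belongs to the cylinder $\sigma$-algebra, so $\Pp(\mathcal D(h))$ depends on $h$ only through its finite-dimensional distributions over $\Q\cap[0,1]$. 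Applying this to $g$ and to $\wt g$, which are both right-continuous and share those distributions, gives $\Pp(\mathcal D(\wt g))=\Pp_\beta(\mathcal D(g))=1$.

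Finally, on the event $\mathcal D(\wt g)$ the cadlag, right-continuous function $\wt g$ is discontinuous at some $\lambda_0\in(0,1]$, i.e.\ $\wt g(\lambda_0-)\neq\wt g(\lambda_0)$, which is to say $\bb^{\lambda_0-}(0,1,0,0)\neq\bb^{\lambda_0+}(0,1,0,0)$, so $\lambda_0\in\Lambda_{\bb}$ by \eqref{Lambda}. Thus $\mathcal D(\wt g)\subseteq\{\Lambda_{\bb}\neq\varnothing\}$, whence $\Pp(\Lambda_{\bb}\neq\varnothing)=1>0$, and the dichotomy of Theorem~\ref{thm:SHEbuse}\ref{itm:SHEBusedichot} upgrades this to $\Pp(\Lambda_{\bb}\text{ is countable and dense in }\R)=1$.

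The substance of the proof is carried by the two facts I treat as given: Theorem~\ref{thm:jumps_every_edge} (and, behind it, the explicit increment laws of Theorem~\ref{thm:lambda-F}), which is where all the probabilistic work lies, and the identification in the first paragraph of the Busemann horizon with the explicit measures of Theorem~\ref{thm:Busedrift}, which rests on the uniqueness of jointly invariant measures from \cite{Janj-Rass-Sepp-22} and on \eqref{fixed_not} to eliminate the sign parameter $\sigg$ at each fixed direction. Granting those, the only genuinely new point is the elementary observation that ``the restriction to a compact $\lambda$-interval is discontinuous'' is an event determined by the finite-dimensional distributions of a cadlag process; this is what lets the discontinuity be moved across the distributional identity and then, via \eqref{Lambda}, recognized as $\Lambda_{\bb}\neq\varnothing$. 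I expect the main (if minor) obstacle to be the bookkeeping around this transfer --- verifying the $\mathcal D$-formula for merely right-continuous functions, checking the cadlag-in-$\lambda$ regularity of the Busemann slice with $\bb^{\lambda-}$ as its left limit, and matching the index conventions of $\bb^{\lambda\sigg}(s,x,t,y)$ to the normalization of $\KPZH_\beta$ so that the relevant horizon slice is indeed the KPZH marginal.
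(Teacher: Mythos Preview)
Your approach is correct and coincides with the paper's: identify the horizontal Busemann slice with $\KPZH_\beta$ (this is Corollary~\ref{cor:SHEBuse=KPZH} in the paper), transfer the discontinuity from Theorem~\ref{thm:jumps_every_edge}, and invoke the dichotomy of Theorem~\ref{thm:SHEbuse}\ref{itm:SHEBusedichot}. The paper's proof is one line because the identification is stated as a full $D(\R,C(\R))$ equality in law, which makes your cylinder-set argument for $\mathcal D(h)$ unnecessary; also note that the correct normalization is $\bb^{(\beta\lambda)+}(0,0,0,\aabullet)\deq\beta\,\KH_\beta^\lambda$, so your $\psi^\lambda$ and $\KH_\beta^\lambda$ differ by a harmless rescaling and reparametrization in~$\lambda$.
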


Theorem \ref{thm:SHEjumps} is equivalent to Theorem \ref{thm:jumps_every_edge}. 
In particular, we deduce from Theorem \ref{thm:Busedrift} that the $\KPZH_\beta$ is equal in law to the marginal of the Busemann process \eqref{Bpr10} obtained by fixing $s=t$ and $x=0$  (see Corollary \ref{cor:SHEBuse=KPZH}). The presence of the discontinuity set for any interval of space is derived from the earlier work \cite{Janj-Rass-Sepp-22}.

The proof of the existence of discontinuities of the $\KPZH_\beta$ comes in Corollary \ref{cor:SHEjumps}. 
Our proof exploits the explicit description of the distribution of $\KH_\beta^\lambda(y) - \KH_\beta^0(y)$ given in Theorem \ref{thm:lambda-F}. It would be interesting to see if there is a proof of the condition above that uses softer properties of Busemann functions and can be generalized to other models. However, as a counterexample, consider the deterministic approximation of the Green's function for the KPZ equation with $\beta = 1$ (see \cite[Section 1.5, Theorem 3.8]{Janj-Rass-Sepp-22})
\[
\wt {\mathcal H}(t,y \viiva s,x) = -\f{t-s}{24} - \f{(y-x)^2}{2(t-s)}.
\]
In this setting, the Busemann function is equal to
\[
\wt b^\lambda(s,x,t,y) = \lim_{r \to -\infty} \wt {\mathcal H}(t,y \viiva r,-r\lambda) -  \wt {\mathcal H}(s,x\viiva r,-r\lambda) =  \f{(12\lambda^2 - 1)(t-s)}{24} + (y-x)\lambda,
\]
which is  continuous in the parameter $\lambda$. Thus, any more general condition to prove the existence of discontinuities would need deeper information about the noise present in the model and cannot rely only on curvature or strict convexity of the shape function.

\subsection{High and low temperature limits of the KPZ horizon} 
\label{sec:SH-KPZH}
The KPZ equation interpolates between two so-called \textit{universality classes}. This phenomenon was first mathematically observed from explicit formulas calculated in \cite{Amir-Corwin-Quastel-2011} and is explicitly noted in \cite[Theorem 1.1]{Corwin-survey}. Setting $\beta = 1$ for simplicity (noting that the general equation can be obtained from this one by scaling, see \cite[Equation (3)]{Corwin-survey}), we let $Z(T,X) = Z_1(T,X \viiva 0,0)$, and set 
\[
F_T(s) = P\Bigl(\log Z(T,X) + \f{X^2}{2T} + \f{T}{24} \le s\Bigr).
\]
Theorem 1.1 and Corollary 1.2 of \cite{Corwin-survey} state that the probability above does not depend on $X$ and gives these long and short time limits:  
\be\label{acq}
\lim_{T \to \infty} F_T(2^{-1/3} T^{1/3} s) = F_{\text{GUE}}(s)\quad \text{and}\quad \lim_{T \searrow 0} F_T(2^{-1/2} \pi^{1/4} T^{1/4}(s-\log \sqrt{2\pi T})) = \Phi(s),
\ee
where $F_{\text{GUE}}$ is the Tracy-Widom GUE distribution, and $\Phi$ is the standard Gaussian distribution. The Tracy-Widom distribution is central to the KPZ universality class, while the Gaussian distribution is central to the Edwards-Wilkinson class \cite{Edwards-Wilkinson,Corwin-survey}. On the KPZ side of things, much recent work has been devoted to stronger convergence on the level of the process \cite{KPZ_equation_convergence,heat_and_landscape,Wu-23,Das-Zhu-22a,Das-Zhu-22b}. See Section \ref{sec:KPZintro} for a more detailed discussion of the relevant literature.

The scaling relations for $Z_\beta$ proved in \cite{Alberts-Khanin-Quastel-2014b,Alb-Janj-Rass-Sepp-22} (recorded in the present paper as Theorem \ref{thm:Zinvar}) imply that $Z(
T,0) = Z_1(T,0 \viiva 0,0) \deq \f{1}{\sqrt T} Z_{T^{1/4}}(1,0 \viiva 0,0)$. Hence, large times $T$ correspond to high inverse temperatures $\beta$, while short times $T$ correspond to small values of $\beta$. In this same spirit, the results of this section show that the $\KPZH_\beta$ interpolates between the jointly invariant measures in the KPZ and Edwards-Wilkinson universality classes, seen in the limits as $\beta \nearrow \infty$ and $\beta \searrow 0$, respectively.

Figure \ref{fig:KH_sim} shows a simulation (using R \cite{R}) of the $\KPZH_\beta$ for three different values of $\beta$, namely $0.1,1$, and $20$. In each case, we use the drift values $\lambda = -5,-2.5,0,2.5,5$. For small $\beta$,   the trajectories tend to look like affine shifts of one another. For large $\beta$, the trajectories appear to stick very closely together in a neighborhood of the origin.  In fact, before the limit the paths do not actually touch outside the origin,   but at $\beta=\infty$, each pair of paths coincide  in a nondegenerate interval around the origin. 
\begin{figure}
    \centering
    \includegraphics[height = 3in]{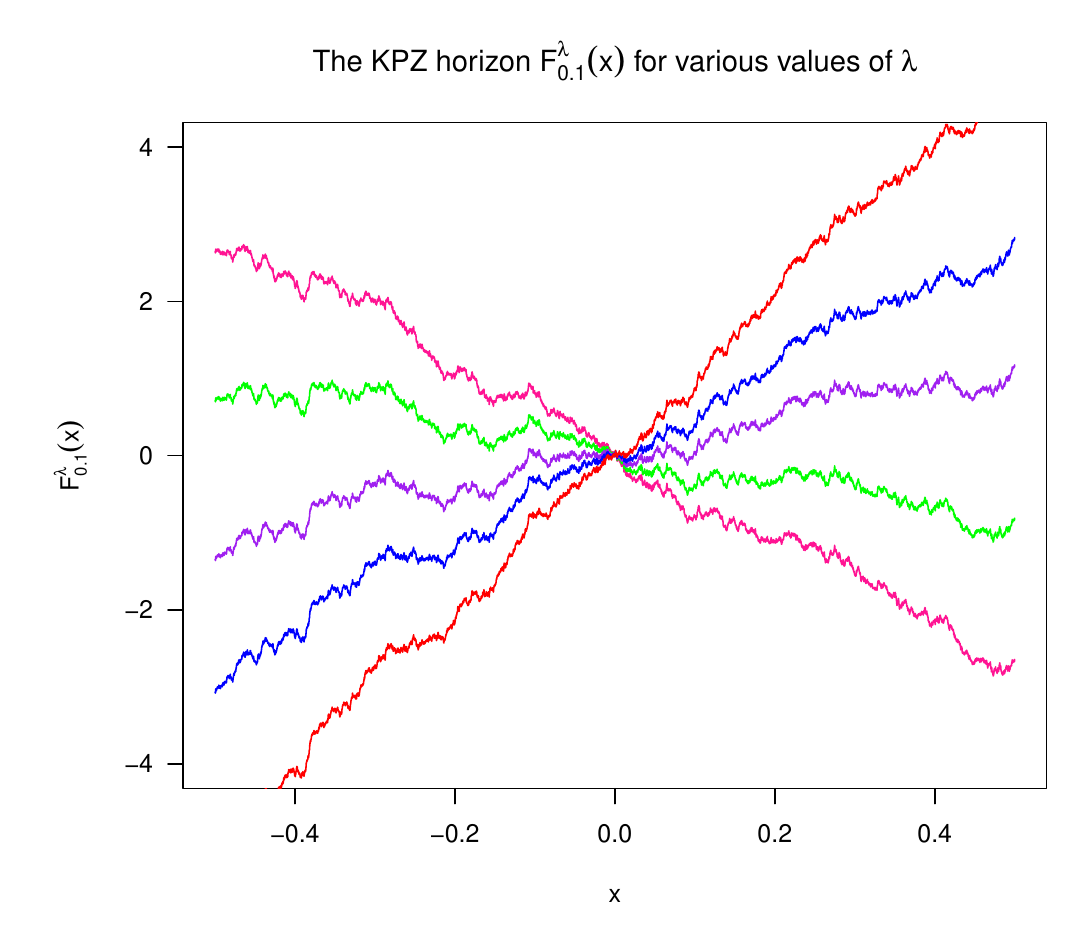}
    \includegraphics[height = 3in]{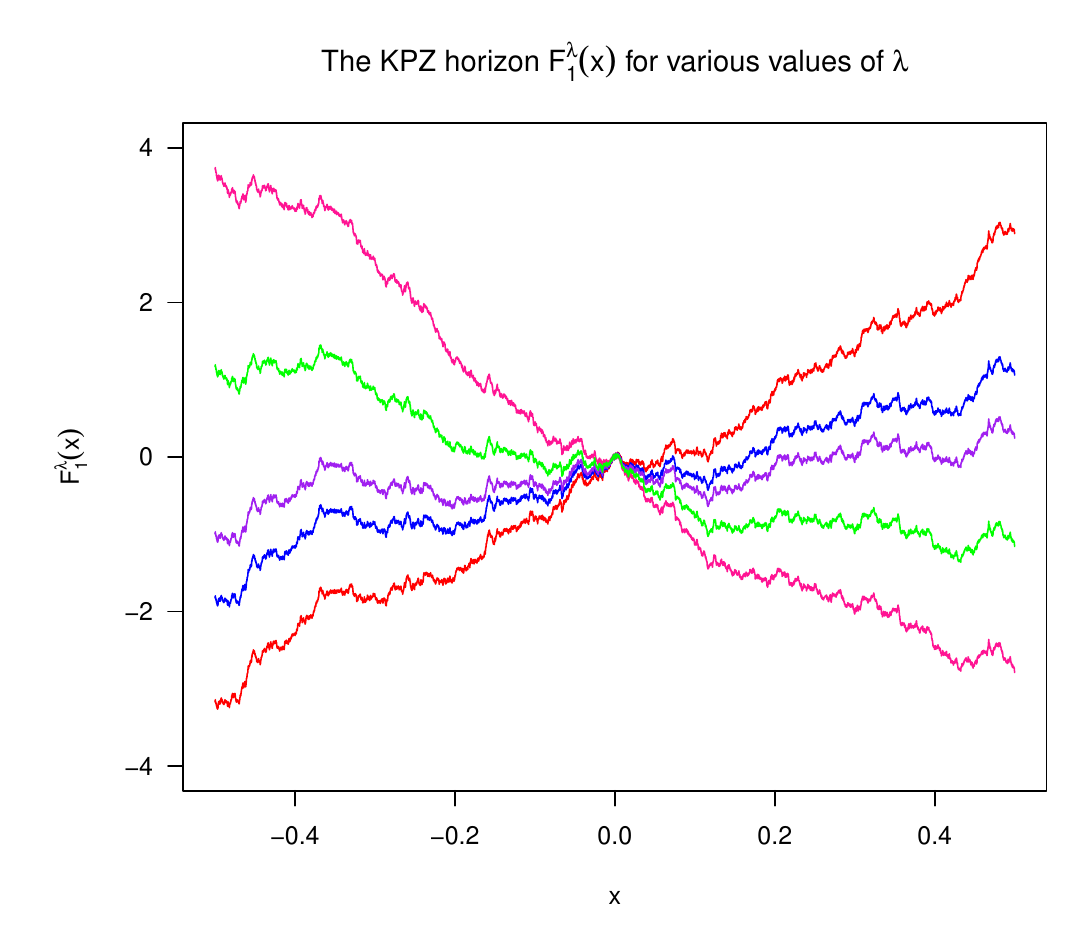}
    \includegraphics[height = 3in]{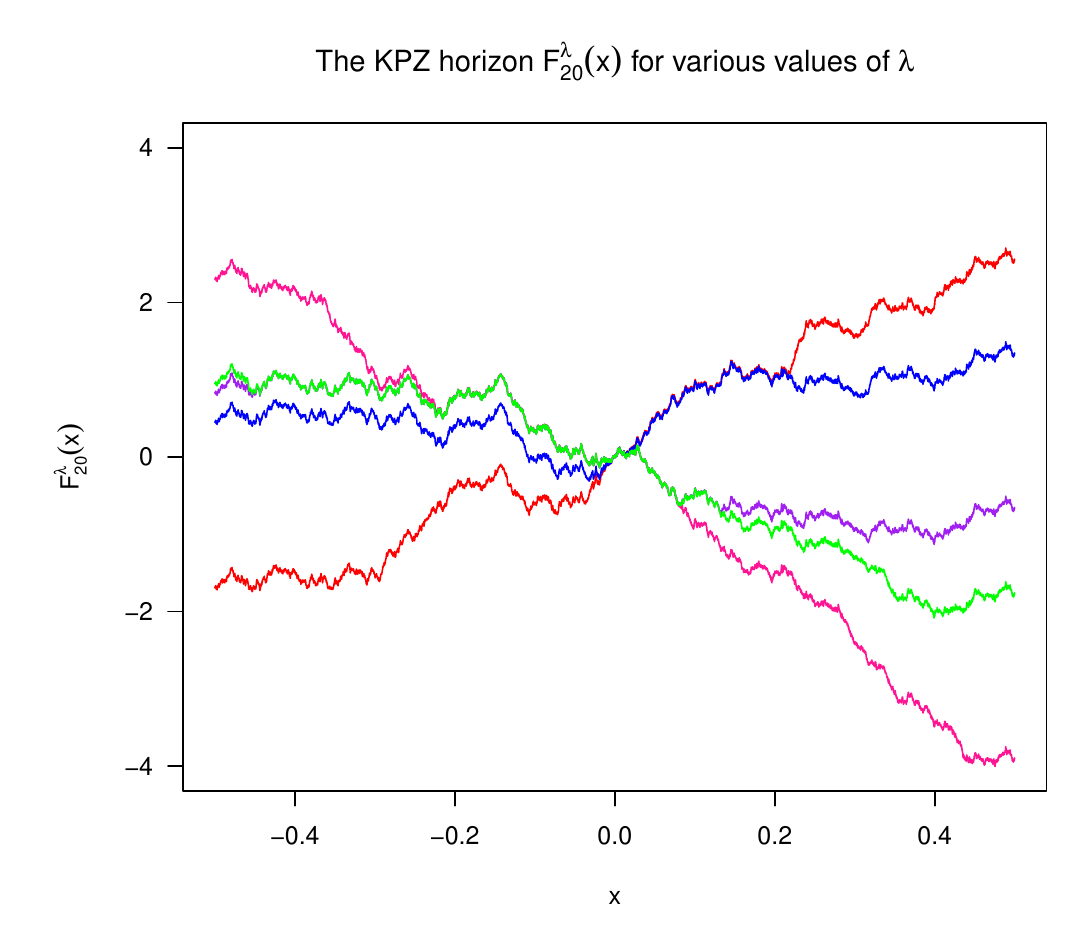}
    \caption{\small KPZH$_\beta$ for three inverse temperature values $\beta= 0.1$, $1$, and $20$ from top to bottom,  and in each frame for the drift  values   $\lambda = -5$ (pink), $\lambda = -2.5$ (green), $\lambda = 0$ (purple), $\lambda = 2.5$ (blue), and $\lambda = 5$ (red).}
    \label{fig:KH_sim}
\end{figure}

As already touched upon above, the next Theorem \ref{thm:SHlimit}  establishes the low-temperature/long-time limit of   $\KPZH_\beta$ as its zero-temperature counterpart, the \textit{stationary horizon}. 
The stationary horizon (SH) is a stochastic process $G = \{G^\lambda\}_{\lambda \in \R}$ with path space $D(\R,C(\R))$. Marginally, each $C(\R)$-valued component  $G^\lambda$ is a Brownian motion with diffusivity $\sqrt 2$ and drift $2\lambda$.  See Appendix \ref{sec:SH} for a precise definition of the SH.  
As $\beta \searrow 0$ we see a simple  limit from the EW class, as we remark below in Section \ref{sec:EW}, and one 
that is consistent with the short-time limit in \eqref{acq}.


The mode of convergence proved is weak convergence of  projections on the spaces $C(\R,\R^k)$ for $k \ge 1$. We expect that convergence on the full path space $D(\R,C(\R))$ also holds, as is proved for exponential LPP in \cite{Busani-2021} and for the TASEP speed process in \cite{Busa-Sepp-Sore-22b}. However, the topology of convergence on the space $D(\R,C(\R))$ needs to be adjusted because the set of discontinuities for the prelimiting object is not isolated in a compact window of space, as is the case in \cite{Busani-2021,Busa-Sepp-Sore-22b}. We leave the investigation of tightness on $D(\R,C(\R))$ to future work. The convergence of  parts \ref{tempscal} and \ref{KPZscal} below are equivalent by the scaling relations of Theorem \ref{thm:dist_invar}\ref{itm:KHscale} followed by the change of variable $\gamma \mapsto \gamma \beta$.  

\begin{theorem} \label{thm:SHlimit}
Let $ \{G^\lambda\}_{\lambda \in \R}$ be the {\rm SH} \rm{(}defined in Appendix \ref{sec:SH}\rm{)} and   $\{\KH_\beta^\lambda\}_{\lambda \in \R}$  the $\KPZH_\beta$.  Fix  two real  parameters $\beta > 0$ and $\alpha \in \R$.  For any finite increasing vector $\lambda_1 < \cdots < \lambda_k$, $\{G^{\lambda_i}\}_{1 \le i \le k}$ is the limit in distribution on $C(\R,\R^k)$, as $\gamma \to \infty$, of the following two processes: 
\begin{enumerate} [label={\rm(\roman*)}, ref={\rm(\roman*)}]   \itemsep=3pt 
    \item \label{tempscal} $\{\KH_\gamma^{\lambda_i}(2\,\aabullet)\}_{1 \le i \le k}$. 
    \item \label{KPZscal}$\Bigl\{\gamma^{-1} F_\beta^{\alpha +  \gamma^{-1}\lambda_i}(2\gamma^2 \, \aabullet) - 2\gamma \alpha  \aabullet\Bigr\}_{1 \le i \le k}$. 
\end{enumerate}
Furthermore, let $B$ be a standard two-sided Brownian motion {\rm(}diffusivity $1$ and zero drift{\rm)}. Then as $\gamma \searrow 0$, the processes in parts \ref{tempscal} and \ref{KPZscal} above converge in distribution, on $C(\R,\R^k)$, to 
$
\{B(2 \aabullet) + 2\lambda_i \aabullet\}_{1 \le i \le k}.
$
\end{theorem}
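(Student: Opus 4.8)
The plan is to prove the two convergence statements by reducing everything to the explicit increment description in Theorem \ref{thm:lambda-F} together with the characterization of the stationary horizon by its finite-dimensional marginals. The first observation is that, by Theorem \ref{thm:dist_invar}\ref{itm:KHscale} (the scaling relation) followed by the change of variables $\gamma \mapsto \gamma\beta$, parts \ref{tempscal} and \ref{KPZscal} are literally the same statement, so it suffices to treat \ref{tempscal}. For that, I would first argue that it is enough to prove joint convergence of the \emph{increment process} $\{\KH_\gamma^{\lambda_i}(2\aabullet) - \KH_\gamma^{\lambda_1}(2\aabullet)\}_{2 \le i \le k}$ together with the single marginal $\KH_\gamma^{\lambda_1}(2\aabullet)$; since each $F_\gamma^{\lambda_1}$ is, by Theorem \ref{thm:Busedrift}, exactly a Brownian motion with diffusivity $1$ and drift $\lambda_1$, after the $2\aabullet$ time change this marginal is already \emph{exactly} (not just in the limit) a diffusivity-$\sqrt2$, drift-$2\lambda_1$ Brownian motion, matching $G^{\lambda_1}$. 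So the real content is the limit of the consecutive differences.

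The heart of the matter is a single-pair computation: for $\lambda_1 < \lambda_2$ with $\lambda = \lambda_2 - \lambda_1$, Theorem \ref{thm:lambda-F} gives
\[
\KH_\gamma^{\lambda_2}(y) - \KH_\gamma^{\lambda_1}(y) \deq \gamma^{-1}\log\!\left(\frac{\int_{-\infty}^y \exp(\sqrt2\,\gamma B(x) + \lambda\gamma x)\,dx}{\int_{-\infty}^0 \exp(\sqrt2\,\gamma B(x) + \lambda\gamma x)\,dx}\right).
\]
After the time change $y \mapsto 2y$ I would rescale inside: with the substitution $x = 2\gamma^{-2} u$ the exponent becomes $2\sqrt2\,\gamma^{-1} B(2\gamma^{-2}u) + 2\lambda\gamma^{-1}u$, and using Brownian scaling $B(2\gamma^{-2}u) \deq \gamma^{-1}\sqrt2\, \wt B(u)$ this is $4\gamma^{-2}\wt B(u) + 2\lambda\gamma^{-1}u$. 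As $\gamma\to\infty$ a Laplace/large-deviations heuristic suggests the integrals localize and after taking $\gamma^{-1}\log$ the limit should be $\sup_{u \le y}\big(2\sqrt2\,\wt B(u) + 2\lambda u\big) - \sup_{u\le 0}\big(2\sqrt2\,\wt B(u) + 2\lambda u\big)$ — which is precisely the known increment law of the stationary horizon (the queueing map / Pitman-type transform applied to a pair of drifted Brownian motions, as recorded in Appendix \ref{sec:SH}). The clean way to make this rigorous is the Laplace principle: $\gamma^{-1}\log\int e^{\gamma f_\gamma(u)}\,du \to \sup f$ when $f_\gamma \to f$ locally uniformly with adequate decay at $-\infty$; here $f_\gamma(u) = 4\gamma^{-1}\wt B(u) + 2\lambda u$ has an explicit $\gamma$-dependence of order $\gamma^{-1}$ inside, so I would instead divide through by the right power and track the ratio of integrals over $(-\infty,y]$ and $(-\infty,0]$ directly, controlling the lower tail via the negative drift $2\lambda u$ (valid since $\lambda>0$) and a uniform-in-$\gamma$ modulus-of-continuity bound for $\wt B$ on compacts. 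For the $\gamma\searrow 0$ limit the same formula trivializes: $\gamma^{-1}\log$ of a ratio whose integrand $\to 1$ has a first-order Taylor expansion, giving $\gamma^{-1}\log(1 + O(\gamma)) \to$ a Gaussian increment; concretely one expands $e^{\sqrt2\gamma B + \lambda\gamma x} = 1 + \gamma(\sqrt2 B + \lambda x) + O(\gamma^2)$ and finds the increment converges to $\big(\sqrt2 B(y)+\lambda y\big) - \big(\sqrt2 B(0)+\lambda\cdot 0\big)$ normalized appropriately — and after the $2\aabullet$ time change this matches $B(2\aabullet) + 2\lambda_i\aabullet$ minus its value at $\lambda_1$, so the full vector reassembles to the claimed limit.

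The main obstacle is the joint (multi-index) version and the locally-uniform-in-$y$ upgrade. Theorem \ref{thm:lambda-F} as stated is a single-pair, fixed-$y$-marginal statement, so I would need either a multi-parameter analogue of that explicit description (which should follow from the same Theorem \ref{thm:Busedrift} nested-integral formula, peeling off one $Y^i$ at a time) or, more robustly, I would build the joint limit inductively: condition on $F_\gamma^{\lambda_1}, \dots, F_\gamma^{\lambda_{j-1}}$ and show $F_\gamma^{\lambda_j} - F_\gamma^{\lambda_{j-1}}$ converges jointly with the earlier differences to the corresponding sequential-queueing output defining the SH, using the Markov-like structure of the formula in Theorem \ref{thm:Busedrift}. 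Tightness in $C(\R,\R^k)$ and the promotion from fixed-$y$ convergence to process-level convergence is then a standard argument: a uniform (in $\gamma$ near the relevant regime) Kolmogorov-type moment bound on increments of $\KH_\gamma^{\lambda_i}(2\aabullet) - \KH_\gamma^{\lambda_1}(2\aabullet)$, which follows from the explicit integral representation and Brownian moment estimates, combined with convergence of finite-dimensional distributions. I expect the lower-tail control at $x \to -\infty$ uniformly in $\gamma$ (needed so the denominators do not degenerate and the Laplace localization is legitimate) to be the fiddliest technical point, handled by the drift $2\lambda u$ with $\lambda = \lambda_i - \lambda_1 > 0$ dominating the Brownian fluctuations.
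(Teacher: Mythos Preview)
Your reduction of \ref{KPZscal} to \ref{tempscal} via the scaling relation is correct and matches the paper.

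For $\gamma\to\infty$, however, you are making the argument harder than it needs to be, and your rescaling contains errors. The paper works directly with the full $k$-tuple representation from Theorem~\ref{thm:Busedrift}/Lemma~\ref{lem:pQalt}: with $Y^1,\dots,Y^k$ fixed independent Brownian motions with drifts $\lambda_1<\dots<\lambda_k$, the formula for $\eta_\gamma^n(y)$ is $Y^1(y)$ plus $\gamma^{-1}\log$ of a nested integral of $e^{\gamma f(\mathbf x)}$ with $f$ \emph{not depending on $\gamma$}. Thus the $L^\gamma\to L^\infty$ convergence (Laplace principle) applies with no rescaling at all, and the limit is exactly the sup-formula of Lemma~\ref{lem:SHqueuerep} defining the SH. This handles the joint $k$-tuple case in one stroke; the ``obstacle'' you identify for the multi-index version simply does not arise. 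Your substitution $x=2\gamma^{-2}u$ is both unnecessary and miscomputed (the exponent $\sqrt2\,\gamma B(x)+\lambda\gamma x$ becomes $\sqrt2\,\gamma B(2\gamma^{-2}u)+2\lambda\gamma^{-1}u$, and after Brownian scaling the first term is $2\wt B(u)$, not $4\gamma^{-2}\wt B(u)$). Tightness is also simpler than you propose: each coordinate $\KH_\gamma^{\lambda_i}$ is a Brownian motion with drift $\lambda_i$ for every $\gamma$, so the marginals on $C(\R)$ do not depend on $\gamma$ at all.

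For $\gamma\searrow 0$, your Taylor expansion of the integrand is not directly usable: $\int_{-\infty}^y 1\,dx$ diverges, so expanding $e^{\sqrt2\gamma B+\lambda\gamma x}\approx 1+\gamma(\cdots)$ inside the integral is ill-defined. Your intermediate claim that the increment converges to $\sqrt2 B(y)+\lambda y$ is wrong; the correct limit of $\KH_\gamma^{\lambda_2}(y)-\KH_\gamma^{\lambda_1}(y)$ is the \emph{deterministic} quantity $(\lambda_2-\lambda_1)y$. The paper handles this via the second part of Theorem~\ref{thm:lambda-F}: for $y\ge 0$ the increment equals $\gamma^{-1}\log(1+X_{\lambda,\gamma}Y_{\lambda,\gamma}(y))$ with $X_{\lambda,\gamma}\sim\text{Gamma}(\lambda\gamma^{-1},\gamma^{-2})$ independent of $Y_{\lambda,\gamma}(y)=\int_0^y e^{\sqrt2\gamma B+\lambda\gamma x}dx$. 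Then $Y_{\lambda,\gamma}(y)\to y$ by dominated convergence, $\gamma^{-1}X_{\lambda,\gamma}\to\lambda$ in probability (mean $\lambda$, variance $\lambda\gamma$), so $\gamma^{-1}\log(1+X_{\lambda,\gamma}Y_{\lambda,\gamma}(y))\to\lambda y$. A union bound over finitely many pairs $(i,j)$ and points $y$, plus Slutsky with the $\gamma$-independent law of $\KH_\gamma^{\lambda_1}$, finishes the finite-dimensional convergence; tightness again follows from the $\gamma$-independent marginals.
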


For large $\gamma > 0$, the scaling in Item \ref{KPZscal} above fixes a temperature $\beta$ and considers a direction perturbed from the drift $\alpha$. This is the scaling of the  initial data in the convergence of the KPZ equation to the KPZ fixed point, as in \cite{KPZ_equation_convergence,heat_and_landscape,Das-Zhu-22a,Das-Zhu-22b,Wu-23}.

Setting $\gamma = 2^{-1/3}T^{1/3}$, the sequence in part   \ref{KPZscal} becomes
\[
\Bigl\{2^{1/3} T^{-1/3}  \KH_\beta^{\alpha +  2^{1/3}T^{-1/3}\lambda_i}(2^{1/3}T^{2/3} \aabullet) - 2^{2/3}T^{1/3}\alpha  \aabullet\Bigr\}_{1 \le i \le k},
\]
which, as $T \to \infty$, demonstrates the $1:2:3$ scaling in convergence to the KPZ fixed point. There are only two scaling parameters now because we are scaling initial data, so there is no time parameter. However, we can in fact strengthen our result to a process-level convergence using the recent results of Wu \cite{Wu-23}. Let $\Ll = \{\Ll(x,s;y,t):x,y \in \R, s < t\}$ be the directed landscape (DL). For upper semicontinuous initial data $\h:\R \to \R \cup \{-\infty\}$ satisfying, for some $a,b > 0$, $\h(x) \le a + b|x|$ for all $x \in \R$ and $\h(x) > -\infty$ for some $x$, define the KPZ fixed point by 
\be \label{eqn:KPZfixed}
h_\Ll(t,y|s,\h) = \sup_{x \in \R}\{\h(x) + \Ll(x,s;y,t)\}.
\ee

Let $\kpzb(t,y\viiva s,f)$ be the solution of the KPZ equation \eqref{eqn:KPZ} started at time $s$ with initial data $f$:
\be \label{eqn:KPZeqn}
\kpzb(t,y\viiva s,f) = \f{1}{\beta}\log \int_\R e^{\beta f(x)} Z_\beta(t,y\viiva s,x)\,dx. 
\ee

\begin{corollary} \label{cor:KPZeqtofp}
    Let $\beta > 0$, $\alpha,s \in \R$, and $\lambda_1 < \cdots < \lambda_k$. Then, as processes in $C(\R_{> s} \times \R,\R^k)$ equipped with the uniform-on-compacts  topology, 
    \begin{align*}
    &\Biggl\{2^{1/3} T^{-1/3} \Biggl[\beta h_{Z_{\beta}}\Bigl( \f{T t}{\beta^4} ,\f{2^{1/3} T^{2/3} y}{\beta^2} \Bviiva \f{Ts}{\beta^4}, F_\beta^{\alpha + 2^{1/3} T^{-1/3}\lambda_i}(\aabullet) - \alpha \aabullet\Bigr) + \f{T(t - s)}{24}\Biggr]: \\
    &\qquad\qquad  (t,y) \in \R_{>s} \times \R \Biggr \}_{1 \le i \le k}  
    \quad \overset{T \to \infty}{\Longrightarrow}  \quad
    \{h_{\Ll}(t,y \viiva s,G^{\lambda_i}):(t,y) \in \R_{>s} \times \R\}_{1 \le i \le k}.
    \end{align*}
    Furthermore, let $\{\mathcal B^{\lambda \sig}: \lambda \in \R, \sigg \in \{-,+\}\}$ be the Busemann process for the DL discussed in Appendix \ref{sec:SH}. Then, for any $\beta > 0$ and $\lambda_1 < \cdots < \lambda_k$, as processes in $C(\R^4,\R^k)$ equipped with the uniform-on-compacts topology,
    \begin{align*}
    &\Biggl\{2^{1/3}T^{-1/3}\Bigl[b_{\beta}^{\beta 2^{1/3} T^{-1/3} \lambda_i}\Bigl(\f{Ts}{\beta^4},\f{2^{1/3} T^{2/3} x}{\beta^2}, \f{Tt}{\beta^4}, \f{2^{1/3} T^{2/3} y}{\beta^2}   \Bigr) + \f{T(t - s)}{24}\Bigr]: (x,s;y,t) \in \R^4 
   \Biggr\}_{1 \le i \le k}  \\
   &\qquad\qquad\qquad\qquad \overset{T \to \infty}{\Longrightarrow} \{\mathcal B^{\lambda_i}(y,-t;x,-s):(x,s;y,t) \in \R^4 \}_{1 \le i \le k}.
    \end{align*}
\end{corollary}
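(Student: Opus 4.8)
The plan is to deduce the first display from Theorem~\ref{thm:SHlimit}\ref{KPZscal} and Wu's invariance principle \cite{Wu-23} for the convergence of the KPZ equation to the directed landscape, and then deduce the second display from the first via the eternal-solution structure of the Busemann process.

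For the first display, I would begin with bookkeeping: substituting $\gamma=2^{-1/3}T^{1/3}$ and using the scaling relations for $Z_\beta$ recorded in Theorem~\ref{thm:Zinvar}, the left-hand side is, for each fixed $s$, exactly the $1:2:3$-rescaled (with the standard $\f{T(t-s)}{24}$ and $\f23\log\sqrt2\,T$ centering) solution of a $\beta=1$ KPZ equation run forward from time $s$ with initial data $\gamma^{-1}F_\beta^{\alpha+\gamma^{-1}\lambda_i}(2\gamma^2\aabullet)-2\gamma\alpha\aabullet$. By Theorem~\ref{thm:SHlimit}\ref{KPZscal} this vector of initial data converges in distribution on $C(\R,\R^k)$ to $\{G^{\lambda_i}\}_{1\le i\le k}$. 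The initial data is a function of the white noise strictly before time $s$ (as in \eqref{eqn:buselim} and Corollary~\ref{cor:SHEBuse=KPZH}, or simply by construction in Theorem~\ref{thm:Busedrift}), hence independent of the noise on $(s,\infty)$ that drives the equation forward; since the SH $G$ and the directed landscape $\Ll$ are likewise independent, the pair (rescaled initial data, rescaled forward field) converges jointly in distribution to $(\{G^{\lambda_i}\},\Ll)$. Now I would invoke \cite{Wu-23}: the rescaled KPZ four-parameter field converges to $\Ll$, and, composed with the jointly-converging admissible initial data, the rescaled KPZ solutions converge locally uniformly to $h_\Ll(\cdot\,,\cdot\viiva s,G^{\lambda_i})$ — the composition map being continuous on pairs (field, linearly-bounded initial data) because the supremum in \eqref{eqn:KPZfixed} localizes, which is the case here since the SH marginals have at most linear growth. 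The continuous mapping theorem (after a Skorokhod coupling, if desired) then gives the first display with the stated $C(\R_{>s}\times\R,\R^k)$ topology.

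For the second display I would reduce to the first using the eternal-solution/dynamic-programming property of Theorem~\ref{thm:SHEbuse}\ref{itm:SHEBuseevolve}. For $s<t$, $\bb^\lambda(s,x,t,y)$ is recovered from its time-$s$ spatial increment $x'\mapsto\bb^\lambda(s,0,s,x')$ — which by Corollary~\ref{cor:SHEBuse=KPZH} is (a shift of) $F_\beta^\lambda$ — by running the KPZ equation forward on $(s,t)$; the DL Busemann process $\mathcal B^\lambda$ (Appendix~\ref{sec:SH}) admits the identical description in terms of the SH and the KPZ fixed point. Rewriting both processes through these formulas and tracking the scalings, the $s<t$ case of the second display becomes the first display combined with Theorem~\ref{thm:SHlimit}. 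The diagonal $s=t$ is directly Theorem~\ref{thm:SHlimit}, since in the relevant normalization $\bb^\lambda(s,x,s,y)=F_\beta^\lambda(x)-F_\beta^\lambda(y)$ and $\mathcal B^\lambda(y,-s;x,-s)=G^\lambda(y)-G^\lambda(x)$. The range $s>t$ follows from $s<t$ by the additive antisymmetry $\bb^\lambda(s,x,t,y)=-\bb^\lambda(t,y,s,x)$ of \eqref{eqn:buselim} and its DL analogue. Patching the three ranges and combining the $s<t$ locally uniform convergence with continuity of the Busemann cocycle in its spacetime arguments yields convergence on $C(\R^4,\R^k)$.

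The main obstacle is the process-level, uniform-on-compacts upgrade rather than any single identity. Wu's theorem must be applied to \emph{random} initial data that converges only in the $C(\R,\R^k)$ topology and is independent of the forward field, and one needs joint locally uniform convergence of the limiting KPZ-fixed-point solutions over all of $(t,y)$ — and, for the Busemann statement, over all four space-time parameters — which requires uniform control that the (rescaled) KPZ-horizon initial data is Brownian-with-drift-like, so that \eqref{eqn:KPZfixed} has a well-behaved, localized argmax and no contribution comes from spatial infinity; it also requires checking that the eternal-solution reduction of the second display to the first survives the rescaling limit simultaneously in all four parameters.
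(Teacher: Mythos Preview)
Your overall architecture matches the paper's: deduce the first display from Wu's landscape limit plus convergence of the rescaled initial data, then reduce the second display to the first via the eternal-solution structure. The second reduction is essentially the paper's argument, though the paper avoids your $s<t$ / $s=t$ / $s>t$ case split (and the awkward gluing across the diagonal it would require) by fixing an auxiliary base time $r$ below both $s$ and $t$ and writing each Busemann process as a \emph{difference} of two forward solutions from time $r$; the first display then delivers convergence on $\R\times\R_{>r}\times\R\times\R_{>r}$ in one stroke, and $r$ is arbitrary.

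The substantive gap is in the first display. Your appeal to a ``continuous mapping theorem'' on the pair (initial data, field) does not apply as stated: the prelimit solution is a log-integral (a soft-max at inverse temperature $\asymp T^{1/3}$), not the supremum functional \eqref{eqn:KPZfixed} evaluated at the converging pair, so the map itself is changing with $T$. Passing from soft-max to hard-max requires ruling out mass escaping to $|x|=\infty$, which you flag as an obstacle but do not resolve. The paper's device is to observe that, after the rescaling, each marginal initial datum $F_i^T$ has law \emph{independent of $T$} (two-sided Brownian, diffusivity $\sqrt 2$, drift $2\lambda_i$), so the marginal convergence $h_i^T\Rightarrow h_\Ll(\,\cdot\,\viiva s,G^{\lambda_i})$ is already supplied by \cite{KPZ_equation_convergence,heat_and_landscape}. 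This gives tightness of the full vector $(\{F_i^T\},\Hh^T,\{h_i^T\})$. For any subsequential limit $(\{G^{\lambda_i}\},\Ll,\{g_i\})$, a Skorokhod coupling and truncation of the log-integral to $|x|\le M$ yield the one-sided bound $h_\Ll(t,y\viiva s,G^{\lambda_i})\le g_i(t,y)$ pointwise; combined with the marginal distributional equality $g_i\deq h_\Ll(\,\cdot\,\viiva s,G^{\lambda_i})$ this forces a.s.\ equality, which extends to all $(t,y)$ by continuity. The fixed-marginal-law observation is the missing ingredient that makes the argument go through without a quantitative localization estimate.
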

The temporal reflection in the process $\{\mathcal B^{\lambda_i}(y,-t;x,-s):(x,s;y,t) \in \R^4 \}_{1 \le i \le k}$ is a manifestation of the fact that in \cite{Janj-Rass-Sepp-22}, the infinite paths travel south, while the infinite geodesics in \cite{Rahman-Virag-21} and \cite{Busa-Sepp-Sore-22a} travel north.

\subsubsection{Jointly invariant measures for the Edwards-Wilkinson fixed point} 
\label{sec:EW}
In contrast with the $\gamma \to \infty$ limit to the SH and in light of the $\gamma \searrow 0$ limit in Theorem \ref{thm:SHlimit}, it is natural to ask whether $\{B(\aabullet) + \lambda_i \aabullet\}_{1 \le i \le k}$ is a jointly invariant measure for the Edwards-Wilkinson fixed point \cite{Edwards-Wilkinson,Corwin-survey}.  The Edwards-Wilkinson fixed point is governed by the $1$-dimensional additive stochastic heat equation $\partial_t u = \frac12 u_{xx} + W$. It is well-known that this equation, started from initial data $f$ at time $0$, is solved as 
\be \label{SHEadd}
u(t,x \viiva f) = \int_\R \rho(t,x - y) f(y)\,dy + \int_0^t \int_{\R}  \rho(t - s,x - y)  W(ds\,dy).
\ee
It is also well-known that the increments of two-sided Brownian motion $B$ are invariant in time for $u$. That is, $u(t,\aabullet;B) - u(t,0;B) \deq B$. From \eqref{SHEadd}
it follows that, for any appropriate function $f:\R \to \R$ and $\lambda \in \R$,
\[
u(t,x \viiva f(\aabullet) + \lambda \aabullet) = u(t,x \viiva f) + \lambda x.
\]
Hence, in the sense of \eqref{joint_invar}, $\{B(\aabullet) + \lambda_1 \aabullet,\ldots, B(\aabullet) + \lambda_k \aabullet\}$ is a jointly invariant measure for the SHE with additive noise, where the common noise $W$ drives the equation from the different initial conditions. Indeed, this can be expected from Theorems \ref{thm:Busedrift} and \ref{thm:SHlimit}, as the $\beta \searrow 0$ limit of \eqref{eqn:KPZ} is precisely the additive SHE.

\subsection{Methods and related literature}

\subsubsection{The O'Connell-Yor polymer and intertwining}
The  proof of Theorem \ref{thm:Busedrift} comes from first showing that the $\KPZH_\beta$ describes jointly invariant measures for the semi-discrete  O'Connell-Yor (OCY) polymer  introduced in \cite{brownian_queues}. We show that the $\KPZH_\beta$ satisfies certain distributional invariances under scaling to initial data for the SHE. Then, we show that the $\KPZH_\beta$ is jointly invariant for the SHE and use a uniqueness result from \cite{Janj-Rass-Sepp-22} to conclude the proof. 

The proof of invariance of the KPZH for the O'Connell-Yor polymer comes from an intertwining argument that was originally developed for TASEP and the Hammersley process in \cite{Ferrari-Martin-2007,Ferrari-Martin-2005,Ferrari-Martin-2009}. The main idea is to find the invariant measure for a different Markov chain with a more tractable invariant distribution, then prove that this simpler process intertwines with the process of interest via mappings from queuing theory.  Since then, \cite{CGM_Joint_Buse} adapted this method to discrete last-passage percolation, \cite{Seppalainen-Sorensen-21b} extended this to the semi-discrete model of Brownian last-passage percolation, and \cite{Bates-Fan-Seppalainen} extended this to the positive temperature inverse-gamma polymer. The present paper is the first to extend this method to a positive temperature semi-discrete model. While one can take limits of the positive temperature model to get the zero temperature model, the opposite direction is not a straightforward task. The key inputs needed to complete the intertwining argument in this setting are found in Section \ref{sec:queue}. Other details of the argument that bear close resemblance to previous work are relegated to Appendix \ref{appx:queue}. 

The convergence step to the SHE requires a substantial amount of nontrivial work. Convergence of the OCY polymer (with the initial point fixed) to narrow wedge solutions of the SHE was established in the sense of finite-dimensional distributions by Nica \cite{Nica-2021}. Also relevant is the work of Jara and Moreno-Flores \cite{Jara-MorenoFlores-20}, which showed convergence of the discrete gradients from the O'Connell-Yor polymer to the stochastic Burgers equation. In Section \ref{sec:OCYSHE} of the present paper, we prove in full detail, using different methods than those in \cite{Nica-2021}, the convergence of the four-parameter field of the OCY polymer to the Green's function of the SHE (in the sense of finite-dimensional distributions) and prove convergence of solutions from appropriate initial data.

Similar items to  Lemmas \ref{lem:Yrep}, \ref{lem:moment_bd}, \ref{lem:L1conv_bd}, and Theorem \ref{thm:OCYtoSHE} in Section \ref{sec:SHE} appeared in an unfinished manuscript
of Moreno Flores, Quastel, and Remenik. As no proofs for the precise results we need appear in the literature, we provide them in Section \ref{sec:SHE}. We develop several new ideas to complete the technical details of these results. This work in Section \ref{sec:SHE} may have independent interest.

\subsubsection{Discontinuities of the Busemann process and one force--one solution principle}
At this point it is reasonable to expect that discontinuities appear universally  in the Busemann processes of 1+1 dimensional KPZ models on noncompact spaces. As evidence for this, a dense set of discontinuities has been established in discrete, semi-discrete and fully continuous settings, in both positive and zero temperature, and in the putative universal limit (DL). An overview of this recent development appears later in this section.  

  The \textit{one force--one solution principle} (1F1S) states that, for a given realization of the driving noise and a given value of the conserved quantity in a  stochastically forced conservation law, there is a unique eternal solution that is measurable with respect to the history of the noise.  That the  failure of 1F1S is associated with the discontinuities  of the Busemann process has now been observed in both discrete and continuous settings \cite{Bates-Fan-Seppalainen, janj-rass-19-1f1s, Janjigian-Rassoul-Seppalainen-19, Janj-Rass-Sepp-22}.

 Busemann functions and the one force--one solution principle have been studied in the past for the Burgers equation with discrete random forcing, both in compact and noncompact settings \cite{Sinai-1991,GIKP-2005,Iturriaga-Khanin-2003,Bakhtin-2007,Kifer-1997,Dirr-Souganidis-2005,Kifer-1997,Bakhtin-2013,Bakhtin-Cator-Konstantin-2014,Bakhtin-16chapter,Bakhtin-16,Bakhtin-Khanin-18,Bakhtin-Li-18,Bakhtin-Li-19,Hoang-Khanin-2003,Drivats-2022,Dunlap-Graham-Ryzhik-21}. Specifically, in the works of Bakhtin and coauthors \cite{Bakhtin-16,Bakhtin-Cator-Konstantin-2014,Bakhtin-16chapter,Bakhtin-2013,Bakhtin-Li-18,Bakhtin-Li-19}, one sees analogous results for Busemann functions and semi-infinite geodesics, which are the zero-temperature counterpart of semi-infinite polymer measures. The failure of 1F1S did not arise in this earlier work because the focus was on    a fixed, nonrandom value of the conserved quantity.  As mentioned above in  \eqref{fixed_not}, there are no fixed discontinuities.  This is a general fact about polymer models with differentiable  limit shapes. Jointly invariant measures for the Burgers equation with smooth forcing have recently been studied in \cite{Dunlap-Graham-Ryzhik-21,Dunlap-Ryzhik-2020}. 

We give a  brief summary of the history. 
The first observation of random discontinuities of the Busemann process was completed by Fan and the third author \cite{CGM_Joint_Buse} for the exactly solvable exponential corner growth model. Across a single horizontal edge, they showed that the Busemann process, indexed by the direction, can be described by a compound Poisson process. Across all edges, the union of the discontinuities is countably infinite and dense. This result was used in \cite{Janjigian-Rassoul-Seppalainen-19} to characterize the set of directions with non-unique semi-infinite geodesics as the same as the set of discontinuities of the Busemann process. 

Similar studies were carried out for Brownian last-passage percolation (BLPP) by the third and fourth authors \cite{Seppalainen-Sorensen-21b} and for the directed landscape (DL) \cite{Busa-Sepp-Sore-22b} by Busani and the third and fourth authors. Here, the characterization of exceptional directions of semi-infinite geodesics is exactly analogous to that of \cite{Janjigian-Rassoul-Seppalainen-19}, but additional non-uniqueness of initial segments of geodesics appears due to the semi-discrete setting in these models. As a result, new methods of proof were developed to achieve these results. The studies \cite{Seppalainen-Sorensen-21b,Busa-Sepp-Sore-22a} used a description of the Busemann process for Brownian LPP developed in \cite{Seppalainen-Sorensen-21b} and the remarkable fact that the Busemann process along a horizontal line for BLPP agrees with that of the DL. Unlike in the exponential corner growth model, the drift-indexed Busemann process of BLPP along a single horizontal interval is not a compound Poisson process, nor does it have independent increments. Thus, obtaining a full  description of this process remains an open problem. However, the characterization  of the process in terms of coupled Brownian motions permits some distributional calculations,  enough to show that the drift-indexed  Busemann process along an interval  is a step function. 

The recent work \cite{Bates-Fan-Seppalainen} studies the Busemann process for the inverse-gamma polymer and discovers a similar explicit description as in \cite{CGM_Joint_Buse}. However,  the jumps of the Busemann process across a single horizontal interval are now dense, unlike in the zero temperature case where they are isolated. Likewise, the paper \cite{Janj-Rass-Sepp-22} showed that, for the KPZ equation, if the set \eqref{Lambda} is nonempty, the jumps are present along each horizontal interval and are therefore dense. In the present work, we obtain a description of the Busemann process for the SHE in terms of coupled Brownian motions with drift. But, just as in the zero-temperature cases of BLPP and the DL, the drift-indexed process along a horizontal interval does not have an explicit description that we know. However, we can compute the distribution of an increment of this process.  Then in Corollary \ref{cor:SHEjumps}, we apply a novel condition developed in Lemma \ref{lem:jumpscond} to show the existence of jumps. Our work demonstrates that the corresponding phenomenon in  \cite{Bates-Fan-Seppalainen} is \textit{not} simply a manifestation of discrete lattice effects. 

After the first version of this paper was posted, Dunlap and Gu \cite{Dunlap-Gu-23} studied jointly invariant measures for the KPZ/Burgers equation on the torus. Without obtaining an explicit description of these measures, they showed that in this compact setting, there are no random discontinuities in the drift parameter.

\subsubsection{Stationary horizon and KPZ universality} \label{sec:KPZintro} SH was first constructed by Busani \cite{Busani-2021} as the scaling limit of the Busemann process of the exponential corner growth model.  \cite{Busani-2021} conjectured SH to be the universal scaling limit of Busemann processes of models in the KPZ universality class. Shortly afterwards, SH was independently discovered in the context of Brownian last-passage percolation by the third and fourth authors \cite{Seppalainen-Sorensen-21b}. 
A brief introduction to the SH is given in Appendix \ref{sec:SH}. 

In \cite{Busa-Sepp-Sore-22a,Busa-Sepp-Sore-22b,Busa-Sepp-Sore-23}, the third and fourth authors, together with Busani, studied the role of SH in the KPZ class and established further evidence of its universality:   

(i)  Given appropriate conditions on the asymptotic slope of the initial data, the SH is the unique multi-type stationary distribution of the KPZ fixed point \eqref{eqn:KPZfixed} that  evolves in the  environment given by the directed landscape.

(ii) As a consequence, the SH gives the distribution of the fixed-time-level Busemann process of the directed landscape.  In this representation, the parameter $\lambda$ corresponds to the space-time slope of semi-infinite geodesics.  

(iii) The suitably scaled TASEP speed process introduced by \cite{Amir_Angel_Valko11} converges to the SH. In the limit, $\lambda$ represents  the scaled and centered values of the speed process. This suggests that  SH is a general scaling limit of   multitype invariant distributions,  beyond the  Busemann functions of stochastic growth models. 

(iv) A framework is given in the work \cite{Busa-Sepp-Sore-23} to show convergence to the SH under conditions that are expected to hold in great generality. These conditions are (a) convergence of the point-to-point LPP process to the DL, (b) marginal convergence of individual Busemann functions to Brownian motion with drift, and (c) tightness of exit points on the scale $N^{2/3}$ under stationary boundary conditions. As an application of the general framework,  it is shown that the Busemann process of six solvable LPP models converge to the SH in the sense of finite-dimensional distributions. 

The high-level analogy  between $\KPZH_\beta$ and SH is that they both describe  unique jointly invariant distributions,  $\KPZH_\beta$ for the KPZ equation and SH for the KPZ fixed point. Additionally, $\KPZH_\beta$ and SH share certain  properties. Both are couplings of Brownian motions with drift  whose increments are ordered. Both are translation-invariant and have a reflection symmetry (Theorem \ref{thm:dist_invar}\ref{itm:shinv} and \ref{itm:reflinv}). However, the two processes are not the same in law. One way to see this is Theorem \ref{thm:jumps_every_edge}). While the full SH process $\lambda \mapsto G^\lambda\in C(\R)$ has a dense set of discontinuities $\lambda\in\R$,   for  given $x < y$  the points of discontinuity of the restricted process  $\lambda \mapsto G^\lambda(y)-G^\lambda(x)$ are isolated. In contrast, for \textit{any} $x < y$, the process $\lambda \mapsto \KH_\beta^\lambda(y) - \KH_\beta^\lambda(x)$ contains the full countable dense set of discontinuities of the process $\lambda \mapsto \KH_\beta^\lambda \in C(\R)$. 

There has been much recent work on the   convergence of the KPZ equation to the KPZ fixed point. This was first accomplished in two independent works of Quastel and Sarkar \cite{KPZ_equation_convergence} and  Vir\'ag \cite{heat_and_landscape}. Recently, Wu \cite{Wu-23} proved that the Green's function of the KPZ equation converges to the directed landscape. Combined with the previous work of Das and Zhu \cite{Das-Zhu-22a,Das-Zhu-22b}, who showed localization of polymer path measures in the CDRP, this establishes that the annealed polymer measures of the CDRP converge in distribution to the geodesics of the DL (See \cite[Theorem 1.9]{Das-Zhu-22b}).

\subsection{Organization of the paper} 
Section \ref{sec:KPZHbig} constructs the KPZ horizon. The  mappings that define the projections of this process onto $C(\R,\R^k)$ are developed in Section \ref{sec:queue}. In Section \ref{sec:KPZH_cons_sec}, we construct the KPZH as a process of Brownian motions indexed by the drift $\lambda \in \R$. The remaining subsections of Section \ref{sec:KPZHbig} establish properties of this process, including the proof of Theorem \ref{thm:lambda-F} in Section \ref{sec: x_Desc}. In Section \ref{sec:disc}, we show the existence of discontinuities in the $\lambda$ parameter.  The main technical Section \ref{sec:SHE} begins with background on the stochastic heat equation from \cite{Alberts-Khanin-Quastel-2014a,Alb-Janj-Rass-Sepp-22,Janj-Rass-Sepp-22}. Then we prove the  weak-noise limit   of the O'Connell-Yor polymer to the stochastic heat equation.  The paper culminates in the proofs of the main theorems in Section \ref{sec:proofs}, except for Theorem \ref{thm:lambda-F}, which is proved earlier.  The invariance of KPZH under the KPZ equation is established through the limit from OCY to SHE. A uniqueness theorem for invariant distributions completes the characterization of the Busemann process as KPZH.  Appendix \ref{appx:queue} contains additional technical proofs for the queuing mappings. Appendix \ref{sec:SH} contains the necessary background information for the stationary horizon.

\subsection{Notation and conventions}
\begin{itemize}
    \item $\CRpin$ denotes the space of continuous functions $f:\R\to \R$ such that  $f(0) = 0$. 
    \item Increments of a single-variable function $F$ are denoted by $F(x,y) = F(y) - F(x)$.  Increment ordering between functions $f,g:\R \to \R$:    $f \li g$ if $f(x,y) \le g(x,y)$ for all $x < y$, and $f \sli g$ if $f(x,y)< g(x,y)$ for all $x < y$. 
    \item For random variables $X$ and $Y$ and probability measures $\mu$, $X\deq Y$ and $X\sim Y$ both mean that $X$ and $Y$ are equal in distribution, and  $X\sim\mu$ means that $X$ has probability distribution $\mu$.   
    \item Random variable $X$ has the gamma distribution with shape parameter $\alpha>0$  and rate $\beta>0$, abbreviated $X\sim$ Gamma$(\alpha,\beta)$, if $X$ has density function $f(x)=\Gamma(\alpha)^{-1} \beta^\alpha x^{\alpha-1}e^{-\beta x}$ on $\R_+$. 
    \item A two-sided standard Brownian motion is a continuous random process $\{B(x): x \in \R\}$ such that $B(0) = 0$ almost surely and   $\{B(x):x \ge 0\}$ and $\{B(-x):x \ge 0\}$ are two independent standard Brownian motions on $[0,\infty)$.
    \item\label{def:2BMcmu} If $B$ is a two-sided standard Brownian motion, then 
    $\{c B(x) + \mu x: x \in \R\}$ is a two-sided Brownian motion with diffusivity $c>0$ and drift $\mu\in\R$. 
    \item The complementary error function $\erfc$ is defined as 
$
\erfc(x) = \f{2}{\sqrt \pi} \int_{x}^\infty e^{-u^2}\,du.
$
\item The heat kernel is $\rho(t,x) = \f{1}{\sqrt{2\pi t}} e^{-\f{x^2}{2t}} \ind_{t > 0}$ for $(t,x)\in\R^2$.  
\item Ranges of indices in vectors and sequences are abbreviated as in $x_{m:n}=(x_m, x_{m+1}, \dotsc, x_n)$. 

\item The domain of pairs of space-time points with strictly ordered times is $\Rup=\{(s,x,t,y) \in\R^4: s<t\}$. 

\item In a $C(\R)$-valued stochastic process $\lambda\mapsto Y^\lambda(\aabullet)$, the bullet marks the missing real variable: $Y^\lambda(\aabullet)=(x\mapsto Y^\lambda(x))\in C(\R)$. 

\item Coordinatewise order on $\R^2$: $(x,y)\le(a,b)$ means that $x\le a$ and $y\le b$. 

\end{itemize}

\subsection{Acknowledgements and Funding} E.\ Sorensen wishes to thank Tom Alberts, Ofer Busani, Ivan Corwin, Sayan Das, Yu Gu, Chris Janjigian, Mihai Nica, and Xuan Wu for helpful pointers to the literature and insightful discussions. S.\ Groathouse and F.\ Rassoul-Agha were partially supported by National Science Foundation grants DMS-1811090 and
DMS-2054630. F.\ Rassoul-Agha was partially supported by MPS-Simons Fellowship grant 823136.
 T.\ Sepp\"al\"ainen  was partially supported by National Science Foundation grants DMS-1854619 and DMS-2152362, by Simons Foundation grant 1019133, and by the Wisconsin Alumni Research Foundation. E.\ Sorensen was partially supported by the Fernholz foundation. This work was partly performed while E.\ Sorensen was a Ph.D.\ student at the University of Wisconsin--Madison, where he was partially supported by T.\ Sepp\"al\"ainen under National Science Foundation grants DMS-1854619 and DMS-2152362.

\section{Construction and properties of the KPZ horizon} \label{sec:KPZHbig}

\subsection{Mappings defining finite-dimensional distributions} \label{sec:queue}
Let $\CRpin$ denote the space of continuous functions $f:\R \to \R$ satisfying $f(0) = 0$.
For $Y,B \in \CRpin$ satisfying
\be\label{Q400}
\limsup_{x \rightarrow -\infty} \f{Y(x) - B(x)}{x} > 0,
\ee
and for $\beta > 0$, define the following transformations: 
\be \label{QPDpRp}
\begin{aligned}
    \Qp_\beta(B,Y)(y) &= {\beta}^{-1}\log \int_{-\infty}^y \exp\left(\beta( B(x,y) - Y(x,y))\right)\,dx \\
    \Dp_\beta(B,Y)(y) &=  Y(y) + \Qp_\beta(B,Y)(y) -  \Qp_\beta(B,Y)(0), \\[3pt] 
    \Rp_\beta(B,Y)(y) &= B(y) + \Qp_\beta(B,Y)(0) - \Qp_\beta(B,Y)(y).
    \end{aligned}
\ee
Iterate the mapping $\Dp_\beta$ as follows:
\begin{align}
    \Dp^{(1)}_\beta(Y) &= Y, \quad\text{and}\quad
    \Dp^{(n)}_\beta(Y^1,Y^{2},\ldots,Y^n) = \Dp_\beta(Y^1, \Dp_\beta^{(n - 1)}(Y^2,\ldots,Y^{n}))\quad \text{ for } n \geq 2. \label{hat D iterated}
\end{align}
Given a Borel subset $A \subseteq \R$, we define three state spaces of $n$-tuples of functions. 
\be \label{Andef}
\A_n^A:= \Big\{\mathbf Y = (Y^1,\ldots, Y^n) \in \CRpin^n: 
\text{ for }1 \le i \le n,\;\;\lim_{x \rightarrow -\infty} \frac{Y^i(x)}{x} \text{ exists and lies in }A  \Big\}.
\ee
Note that if the components of $\mathbf Z \in \CRpin^n$ are Brownian motions with drifts in $A$, then $\mathbf Z \in \A_n^A$ almost surely. 
Next, set 
\be\label{Yndef}\begin{aligned} 
\ \;  \Y_n^A := \Bigg\{\mathbf Y = (Y^1,\ldots, Y^n) \in \CRpin^n: \text{ for } 1 \le i \le n, \lim_{x\rightarrow -\infty} \frac{Y^i(x)}{x} \text{ exists and lies in }A,  \\
\qquad\qquad\text{and for } 2 \leq i \leq n , \lim_{x \rightarrow -\infty} \frac{Y^i(x)}{x} > \lim_{x \rightarrow -\infty} \frac{Y^{i - 1}(x)}{x}    \Bigg\}
\end{aligned}\ee
and
\begin{align} \label{Xndef}
\X_n^A := \Bigg\{\boldsymbol\eta = (\eta^1,\ldots,\eta^n) \in \Y_n^A:  \eta^i \sgi \eta^{i - 1}  \text{ for } 2 \leq i \leq n 
\Bigg\}.
\end{align}
The most common choices for $A$ will be $\R_{>0}$ (to be used for the state space of invariant measures in the O'Connell-Yor polymer) and $\R$ (to be used as the state space of invariant measures in the KPZ equation). 
Section 7 of \cite{Seppalainen-Sorensen-21b} shows that these state spaces are Borel measurable subsets of the space $C(\R,\R^n)$.

Next, define a transformation  $\sDp_\beta^{(n)}$ on $n$-tuples of functions as follows. Let $A\subseteq \R$. For $\mathbf Y  = (Y^1,\ldots,Y^n)\in \Y_n^A$, the image $\boldsymbol \eta = (\eta^1,\ldots, \eta^n)=\sDp_\beta^{(n)}(\mbf Y) \in \X_n^A$ is defined by 
\begin{equation} \label{definition of script hat D}
\eta^i = \Dp^{(i)}_\beta(Y^1,\ldots,Y^i) \quad \text{ for } 1 \le i \le n.
\end{equation}
Lemma \ref{sDppres} below proves that $\sDp_\beta^{(n)}: \Y_n^A \to \X_n^A$.

For a finite increasing real vector $\bar \lambda = (\lambda_1 < \lambda_2 < \cdots < \lambda_n)$, define the measure $\nu^{\bar \lambda}$ on $\Y_n^\R$ as follows: $(Y^1,\ldots,Y^n) \sim \nu^{\bar \lambda}$ if $Y^1,\ldots,Y^n$ are mutually independent and $Y^i$ is a Brownian motion with drift $\lambda_i$. Define the measure $\mu_\beta^{\bar \lambda}$ on $\X_n^\R$ as
\be \label{mubeta}
\mu^{\bar \lambda}_\beta = \nu^{\bar \lambda} \circ (\sDp^{(n)}_\beta)^{-1}.
\ee
This is the key definition of the section.  
In each application of \eqref{definition of script hat D} the drifts satisfy $\lambda_1<\dotsm<\lambda_i$ and so the mappings are well-defined. 

We prove a series of lemmas about these measures. These measures and their properties have analogues in zero temperature (see \cite{Busani-2021,Seppalainen-Sorensen-21b,Busa-Sepp-Sore-22a}), but their extensions to positive temperature require a different perspective and the proofs are different.  The first result below derives a formula for $\Dp^{(n)}_\beta(Y^1,\ldots,Y^n)$. Once the first properties of the mappings and measures are established, some proofs go through just as they do for zero temperature in \cite{Seppalainen-Sorensen-21b}. For such results, we provide the full details in Appendix \ref{appx:queue}. 
\begin{lemma} \label{lem:pQalt}
Let $Y^1,\ldots,Y^n \in \CRpin$ be such that all the following integrals are finite. Then, for $n \ge 2$ and $\beta > 0$,
\be \label{Did}
\begin{aligned}
\exp\bigl(\beta \Dp_\beta^{(n)}(Y^1,\ldots,Y^n)(y)\bigr) 
&=e^{\beta Y^1(y)} \cdot 
\f{
\ddd\int\limits_{-\infty < x_{n - 1} < \cdots <x_{1} < y} \prod_{i = 1}^{n-1} e^{\beta (Y^{i + 1}(x_i) - Y^i(x_i))} d x_{ i}}{\ddd\int\limits_{-\infty < x_{n - 1} < \cdots <x_{1} < 0} \prod_{i = 1}^{n-1} e^{\beta( Y^{i + 1}(x_i) - Y^i(x_i)) } d x_{ i}}.
\end{aligned}
\ee
Furthermore,
\be \label{Rid}
\exp\bigl(\beta R_\beta(Y^1,Y^2)(y)\bigr) = \f{e^{\beta Y^2(y)}\int_{-\infty}^0 \exp\bigl[\beta(Y^2(x) - Y^1(x)\bigr]\,dx }{\int_{-\infty}^y \exp\bigl[\beta(Y^2(x) - Y^1(x)\bigr]\,dx}.
\ee
\end{lemma}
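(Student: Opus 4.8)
The plan is to prove both identities by induction on $n$, reducing everything to the base case $n=2$, which amounts to unwinding the definitions in \eqref{QPDpRp}. First I would establish the two-variable case. For $n=2$, by \eqref{definition of script hat D} and \eqref{hat D iterated} we have $\Dp_\beta^{(2)}(Y^1,Y^2) = \Dp_\beta(Y^1,Y^2)$, so by the middle line of \eqref{QPDpRp},
\[
\Dp_\beta(Y^1,Y^2)(y) = Y^2(y) + \Qp_\beta(Y^1,Y^2)(y) - \Qp_\beta(Y^1,Y^2)(0),
\]
where $\Qp_\beta(Y^1,Y^2)(y) = \beta^{-1}\log\int_{-\infty}^y \exp(\beta(Y^1(x,y) - Y^2(x,y)))\,dx$. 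Writing $Y^i(x,y) = Y^i(y) - Y^i(x)$ and exponentiating, the $Y^1(y)$ and $Y^2(y)$ contributions inside the integral factor out of the integral as $e^{\beta(Y^1(y) - Y^2(y))}$, and after the subtraction of the $y=0$ term one is left with exactly \eqref{Did} for $n=2$ (note $Y^1(0)=Y^2(0)=0$ since $Y^i\in\CRpin$). The same bookkeeping applied to the third line of \eqref{QPDpRp} gives \eqref{Rid}: indeed $R_\beta(Y^1,Y^2)(y) = Y^1(y) + \Qp_\beta(Y^1,Y^2)(0) - \Qp_\beta(Y^1,Y^2)(y)$, and exponentiating and substituting the integral formula for $\Qp_\beta$ produces the stated ratio, with the roles of $Y^1,Y^2$ appearing as $Y^2 - Y^1$ in the exponent because $B(x,y) - Y(x,y)$ with $B = Y^1$, $Y = Y^2$ evaluates with a sign that flips under the $\Qp_\beta(0) - \Qp_\beta(y)$ combination. (One must be a little careful to match the paper's convention that the first argument of $R_\beta$ plays the role of $B$; I would double-check the sign by evaluating at $y=0$, where both sides equal $1$.)

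Next I would run the induction for \eqref{Did}. Assume the formula holds for $n-1$, i.e.\ $\exp[\beta\Dp_\beta^{(n-1)}(Y^2,\ldots,Y^n)(y)]$ equals $e^{\beta Y^2(y)}$ times the ratio of $(n-2)$-fold integrals over $-\infty < x_{n-1} < \cdots < x_2 < y$ (resp.\ $<0$) of $\prod_{i=2}^{n-1} e^{\beta(Y^{i+1}(x_i) - Y^i(x_i))}$. By \eqref{hat D iterated}, $\Dp_\beta^{(n)}(Y^1,\ldots,Y^n) = \Dp_\beta(Y^1, \Dp_\beta^{(n-1)}(Y^2,\ldots,Y^n))$. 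Apply the $n=2$ identity \eqref{Did} with the pair $(Y^1, \eta)$ where $\eta := \Dp_\beta^{(n-1)}(Y^2,\ldots,Y^n)$: this yields
\[
\exp[\beta\Dp_\beta^{(n)}(Y^1,\ldots,Y^n)(y)] = e^{\beta Y^1(y)}\cdot\frac{\int_{-\infty}^y e^{\beta(\eta(x_1) - Y^1(x_1))}\,dx_1}{\int_{-\infty}^0 e^{\beta(\eta(x_1) - Y^1(x_1))}\,dx_1}.
\]
Now substitute the induction hypothesis for $e^{\beta\eta(x_1)}$. A crucial cancellation occurs: $e^{\beta\eta(x_1)}$ carries a factor $e^{\beta Y^2(x_1)}$ in the numerator and a normalizing denominator (the integral over $x_{n-1}<\cdots<x_2<0$) that does \emph{not} depend on $x_1$, so it pulls out of both the $x_1$-integral in the numerator and the one in the denominator and cancels. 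What remains in the numerator is $\int_{-\infty}^y dx_1\, e^{\beta(Y^2(x_1) - Y^1(x_1))}\int_{-\infty<x_{n-1}<\cdots<x_2<x_1} \prod_{i=2}^{n-1} e^{\beta(Y^{i+1}(x_i)-Y^i(x_i))}\,dx_{2:n-1}$, which is exactly the $(n-1)$-fold integral over $-\infty < x_{n-1} < \cdots < x_1 < y$ of $\prod_{i=1}^{n-1} e^{\beta(Y^{i+1}(x_i)-Y^i(x_i))}$; the $i=1$ factor is supplied by the $e^{\beta(Y^2(x_1)-Y^1(x_1))}$ just produced, and the nesting $x_2 < x_1$ is exactly the domain constraint. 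The denominator is the same integral with $y$ replaced by $0$. This is precisely \eqref{Did} for $n$, completing the induction. The identity \eqref{Rid} needs no induction — it is just the $n=2$ computation recorded separately.

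The main obstacle, such as it is, is purely bookkeeping: keeping track of which integration variable ranges over which interval, making sure the normalizing denominator from the inductive hypothesis is genuinely independent of the new outer variable $x_1$ (so that it cancels rather than getting tangled into the $x_1$-integral), and matching the index shift so that $\prod_{i=2}^{n-1}$ becomes $\prod_{i=1}^{n-1}$ after the new factor is absorbed. There is also a mild analytic point — the hypothesis "all the following integrals are finite" must be invoked to justify exchanging the order of integration (Tonelli, since all integrands are positive) when reorganizing the iterated integral into the simplex form; this is where the condition \eqref{Q400}-type finiteness is silently used, and I would state it is exactly the standing assumption of the lemma so no extra work is needed. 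No convergence or measurability subtleties arise beyond that.
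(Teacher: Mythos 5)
Your proposal is correct and follows essentially the same route as the paper: establish the $n=2$ case directly from \eqref{QPDpRp}, then induct via $\Dp_\beta^{(n)} = \Dp_\beta(Y^1,\Dp_\beta^{(n-1)}(Y^2,\ldots,Y^n))$, substitute the inductive hypothesis inside, and exploit the fact that the inner normalization (the integral over $-\infty<x_{n-1}<\cdots<x_2<0$) is independent of $x_1$ and cancels between numerator and denominator. Your side remarks on the $R_\beta$ sign convention and the use of Tonelli for the iterated-integral reorganization are accurate and compatible with the paper's (more terse) presentation.
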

\begin{proof}
We prove this by induction on $n$. We start with the base case $n = 2$. From \eqref{QPDpRp},
\be\label{Did2}\begin{aligned}
\exp\bigl(\beta \Dp_\beta(Y^1,Y^2)(y)\bigr) &= \f{e^{\beta Y^2(y)}\int_{-\infty}^y \exp\bigl(\beta(Y^1(x,y) - Y^2(x,y)) \bigr)\,dx }{\int_{-\infty}^0 \exp\bigl(\beta(Y^1(x,0) - Y^2(x,0)) \bigr)\,dx} \\[0.5em]
&= \f{e^{\beta Y^1(y)}\int_{-\infty}^y \exp\bigl(\beta(Y^2(x) - Y^1(x)) \bigr)\,dx }{\int_{-\infty}^0 \exp\bigl(\beta(Y^2(x) - Y^1(x)) \bigr)\,dx}.
\end{aligned}\ee
The proof of \eqref{Rid} is analogous.
Now, assume that \eqref{Did} holds for $n \ge 2$. Then,
\begin{align*}
&\quad \exp\bigl(\beta \Dp_\beta^{(n)}(Y^1,\ldots,Y^n)(y)\bigr) \\[0.5em]
&= \exp\bigl(\beta \Dp_\beta(Y^1,\Dp_\beta^{(n - 1)}(Y^2,\ldots,Y^n))(y)\bigr) \\[0.5em]
&=\f{e^{\beta Y^1(y)}\int_{-\infty}^y \exp\bigl(\beta(\Dp_\beta^{(n - 1)}(Y^2,\ldots,Y^n)(x_{1}) - Y^1(x_1)\bigr)dx_1 }{\int_{-\infty}^0 \exp\bigl(\beta(\Dp_\beta^{(n - 1)}(Y^2,\ldots,Y^n)(x_1) - Y^1(x_1)\bigr) dx_1} \\[0.5em]
&= \f{e^{\beta Y^1(y)}\int_{-\infty}^y\bigl( e^{\beta Y^2(x_1)} \int_{-\infty < x_{n - 1} < \cdots < x_2 < x_1} \prod_{i = 2}^{n-1}\exp\bigl[\beta (Y^{i+1}(x_i) - Y^i(x_i))\bigr]\,dx_i\bigr) e^{- \beta Y^1(x_1)} dx_1 }{\int_{-\infty}^0\bigl( e^{\beta Y^2(x_1)} \int_{-\infty < x_{n - 1} < \cdots < x_2 < x_1} \prod_{i = 2}^{n-1}\exp\bigl[\beta (Y^{i+1}(x_i) - Y^i(x_i))\bigr]\,dx_i\bigr) e^{- \beta Y^1(x_1)} dx_1 } \\[0.5em]
&= \f{e^{\beta Y^1(y)} \int_{-\infty < x_{n - 1} < \cdots <x_{1} < y} \prod_{i = 1}^{n-1} \exp\bigl[\beta(Y^{i + 1}(x_i) - Y^i(x_i)) \bigr] dx_i }{\int_{-\infty < x_{n - 1} < \cdots <x_{1} < 0} \prod_{i = 1}^{n-1} \exp\bigl[\beta(Y^{i + 1}(x_i) - Y^i(x_i)) \bigr] dx_i}.
\end{align*}
The first equality used the definition of $\Dp^{(n)}$, the second the $n = 2$ case, and in the third the induction assumption. In the third equality,  an integral over the set $\{-\infty < x_{n - 1} < \cdots < x_2 < 0\}$ was cancelled from the numerator and the denominator. 
\end{proof} 

\begin{lemma} \label{lem:OCYlim}
Assume that $(B,Y) \in \Y_2^\R$ with
\[
\lim_{x \to -\infty} \f{B(x)}{x} = a < b = \lim_{x \to -\infty} \f{Y(x)}{x}.
\]
Then,
\[
\lim_{x \to -\infty} \f{\Rp_\beta(B,Y)(x)}{x} = a,\qquad\text{and}\qquad \lim_{x \to -\infty} \f{\Dp_\beta(B,Y)(x)}{x} = b.
\]
\end{lemma}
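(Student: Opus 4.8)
The plan is to reduce both limits to a single Laplace-type asymptotic at $-\infty$. Write $g = Y - B \in \CRpin$; the hypothesis says $g(x)/x \to b - a =: c > 0$ as $x \to -\infty$, so $g(x) \to -\infty$ and, for each $\ve \in (0,c)$, there is $M_\ve < 0$ with
\[
(c+\ve)\,x \;\le\; g(x) \;\le\; (c-\ve)\,x \qquad \text{for all } x \le M_\ve
\]
(the inequalities run opposite to the usual orientation because $x<0$). In particular $e^{\beta g(x)} \le e^{\beta(c-\ve)x}$ on $(-\infty,M_\ve]$ with $\beta(c-\ve)>0$, so every integral $\int_{-\infty}^y e^{\beta g(x)}\,dx$ converges; this is exactly condition \eqref{Q400}, so $\Qp_\beta(B,Y)$, $\Dp_\beta(B,Y)$, $\Rp_\beta(B,Y)$ are well defined.

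Next I would record from \eqref{QPDpRp}, using $B(0)=Y(0)=0$ (equivalently, from Lemma \ref{lem:pQalt} with $Y^1=B$, $Y^2=Y$), the identity
\[
\beta\,\Dp_\beta(B,Y)(y) \;=\; \beta B(y) \;+\; \log\!\int_{-\infty}^y e^{\beta g(x)}\,dx \;-\; \log\!\int_{-\infty}^0 e^{\beta g(x)}\,dx ,
\]
whose last term is a finite constant. Dividing by $\beta y$ and letting $y\to-\infty$: $B(y)/y \to a$ and the constant term vanishes, so it suffices to prove
\[
\lim_{y\to-\infty} \frac{1}{\beta y}\,\log\!\int_{-\infty}^y e^{\beta g(x)}\,dx \;=\; c \;=\; b-a .
\]
This is the one genuine computation. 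For a lower bound on this quotient, bound the integral above by $\int_{-\infty}^y e^{\beta(c-\ve)x}\,dx = \tfrac{1}{\beta(c-\ve)}e^{\beta(c-\ve)y}$ for $y\le M_\ve$; taking $\log$ and dividing by the negative number $\beta y$ gives $\liminf \ge c-\ve$. For an upper bound, restrict the integral to $[y-1,y]$ and use $g(x)\ge(c+\ve)x\ge(c+\ve)(y-1)$ there to get $\int_{-\infty}^y e^{\beta g(x)}\,dx \ge e^{\beta(c+\ve)(y-1)}$, hence $\limsup \le c+\ve$. Letting $\ve\downarrow0$ yields the limit, and therefore $\Dp_\beta(B,Y)(y)/y \to b$.

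The second assertion then comes for free: summing the two lines of \eqref{QPDpRp} that define $\Dp_\beta$ and $\Rp_\beta$ gives the pointwise conservation identity $\Dp_\beta(B,Y)(y) + \Rp_\beta(B,Y)(y) = Y(y) + B(y)$, so
\[
\frac{\Rp_\beta(B,Y)(y)}{y} \;=\; \frac{Y(y)}{y} + \frac{B(y)}{y} - \frac{\Dp_\beta(B,Y)(y)}{y} \;\xrightarrow[\,y\to-\infty\,]{}\; b + a - b \;=\; a .
\]
The only nontrivial step is the Laplace estimate in the previous paragraph, and within it the main place to be careful is the sign-chasing: every inequality inherited from $g(x)/x\to c$ reverses because $x$ and $y$ are negative, and the division by $\beta y<0$ flips $\limsup$ and $\liminf$.
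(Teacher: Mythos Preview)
Your proof is correct and follows essentially the same route as the paper: both reduce the statement to the Laplace asymptotic $\lim_{y\to-\infty}\frac{1}{y}\log\int_{-\infty}^y e^{\beta(Y-B)}=\beta(b-a)$ via an $\varepsilon$-sandwich on $Y-B$. The only cosmetic differences are that you bound $g=Y-B$ directly (rather than $B-ax$ and $Y-bx$ separately, which costs the paper a factor of $2\varepsilon$), you obtain the lower integral bound by restricting to $[y-1,y]$ instead of integrating the full exponential, and you read off the $\Rp_\beta$ limit from the conservation identity $\Dp_\beta+\Rp_\beta=Y+B$ rather than appealing again to Lemma~\ref{lem:pQalt}.
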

\begin{proof}
By Lemma \ref{lem:pQalt}, it suffices to show that 
\[
\lim_{y \to -\infty} \f{1}{y} \log \int_{-\infty}^y e^{\beta(Y(x) - B(x))}\,dx = \beta(b - a).
\]
Fix $\ve > 0$, and let $y < 0$ be such that $x\ve < B(x) - ax < -x\ve$  and $x\ve < Y(x) - bx < -x\ve$ for all $x < y$. Then, for such $y$,
\[
\f{e^{\beta(b - a+2\ve)y}}{\beta(b -a + 2\ve)} = \int_{-\infty}^y e^{\beta(b - a + 2\ve )x}\,dx \le \int_{-\infty}^y e^{\beta(Y(x) - B(x))}\,dx \le \int_{-\infty}^y e^{\beta(b - a - 2\ve )x}\,dx \le \f{e^{\beta(b - a - 2\ve)y}}{\beta(b -a - 2\ve)}.
\]
Taking the log of all sides and dividing by $y$ yields
\[
\beta(b - a - 2\ve) \le \liminf_{y \to -\infty} \f{1}{y} \log \int_{-\infty}^y e^{\beta(Y(x) - B(x))}\,dx  \le \limsup_{y \to -\infty} \f{1}{y} \log \int_{-\infty}^y e^{\beta(Y(x) - B(x))}\,dx \le \beta(b - a + 2\ve).
\]
Sending $\ve \searrow 0$ completes the proof.
\end{proof}

\begin{lemma} \label{lem:OCYmont}
Let $(B,Y),(B,Y') \in \Y_2^\R$ be such that $Y \li Y'$. Then, $B \sli \Dp_\beta(B,Y) \li \Dp_\beta(B,Y')$. If $Y \sli Y'$, then $\Dp_\beta(B,Y) \sli \Dp_\beta(B,Y')$ as well.
\end{lemma}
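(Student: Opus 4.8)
The plan is to reduce everything to one explicit increment identity plus an elementary monotonicity fact about weighted averages. First I would record that, since $(B,Y),(B,Y')\in\Y_2^\R$, the slope of $B$ at $-\infty$ is strictly below that of $Y$ and of $Y'$, so condition \eqref{Q400} holds and (by the estimate in the proof of Lemma \ref{lem:OCYlim}) all the integrals below converge and are strictly positive; this bookkeeping is routine but should be stated. Then, applying Lemma \ref{lem:pQalt} in the form \eqref{Did2}, I get for every $x<y$
\[
\Dp_\beta(B,Y)(y)-\Dp_\beta(B,Y)(x)
= B(y)-B(x) + \beta^{-1}\log\!\Biggl(1 + \frac{\int_x^y e^{\beta(Y(z)-B(z))}\,dz}{\int_{-\infty}^x e^{\beta(Y(z)-B(z))}\,dz}\Biggr),
\]
and the same identity with $Y'$ in place of $Y$.

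Since the ratio inside the logarithm is strictly positive, the last term is strictly positive, which immediately gives $B\sli\Dp_\beta(B,Y)$. For the comparison of $\Dp_\beta(B,Y)$ with $\Dp_\beta(B,Y')$ I would subtract the two increment identities; because $\log(1+\,\cdot\,)$ is increasing, it is enough to show that for all $x<y$,
\[
\frac{\int_x^y e^{\beta(Y(z)-B(z))}\,dz}{\int_{-\infty}^x e^{\beta(Y(z)-B(z))}\,dz}
\;\le\;
\frac{\int_x^y e^{\beta(Y'(z)-B(z))}\,dz}{\int_{-\infty}^x e^{\beta(Y'(z)-B(z))}\,dz}.
\]
Here is the main step. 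Set $g(z)=e^{\beta(Y(z)-B(z))}$ and $\phi(z)=e^{\beta(Y'(z)-Y(z))}$, so that $e^{\beta(Y'(z)-B(z))}=g(z)\phi(z)$; the hypothesis $Y\li Y'$ says $Y'-Y$ is nondecreasing, hence $\phi$ is nondecreasing. Then $\bigl(\int_x^y g\phi\bigr)\big/\bigl(\int_x^y g\bigr)$ is an average of the values $\phi(z)\ge\phi(x)$ for $z\in[x,y]$, so it is $\ge\phi(x)$, while $\bigl(\int_{-\infty}^x g\phi\bigr)\big/\bigl(\int_{-\infty}^x g\bigr)\le\phi(x)$. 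Cross-multiplying these two inequalities (legitimate since all four integrals are finite and positive) gives exactly the displayed inequality, hence $\Dp_\beta(B,Y)\li\Dp_\beta(B,Y')$.

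For the strict assertion, if $Y\sli Y'$ then $Y'-Y$ is strictly increasing, so $\phi(z)>\phi(x)$ for every $z\in(x,y]$, a set of full Lebesgue measure in $[x,y]$; then $\bigl(\int_x^y g\phi\bigr)\big/\bigl(\int_x^y g\bigr)>\phi(x)$ strictly, the displayed inequality is strict, and $\Dp_\beta(B,Y)\sli\Dp_\beta(B,Y')$ follows. I do not anticipate a real obstacle: the substance is the monotone-average comparison, which is a one-line Chebyshev-type estimate, and the only delicate point is keeping track of integrability and strict positivity, which is immediate from $\Y_2^\R$-membership.
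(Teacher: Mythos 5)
Your proof is correct and uses essentially the same approach as the paper. Both start from the identical increment identity supplied by Lemma~\ref{lem:pQalt}, and your factorization $e^{\beta(Y'-B)} = g\phi$ with $\phi = e^{\beta(Y'-Y)}$ nondecreasing is, once unpacked, precisely the paper's observation that after normalizing at the base point $x$ the numerator integrand (over $z>x$) increases while the denominator integrand (over $z<x$) decreases under $Y \mapsto Y'$; the paper simply absorbs the factor $\phi(x)$ into the renormalization $Y(z)\mapsto Y(x,z)$ rather than phrasing the comparison as a weighted-average (Chebyshev) inequality, but the two bookkeepings are interchangeable.
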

\begin{proof}
Let $x < y$. We use Lemma \ref{lem:pQalt} to write
\begin{align*}
&\quad \, \exp[\beta \Dp_\beta(B,Y)(x,y)] = e^{\beta B(x,y)}  \f{\int_{-\infty}^y e^{\beta(Y(z) - B(z))}\,dz}{\int_{-\infty}^x e^{\beta(Y(z) - B(z))}\,dz}  \\
&=  e^{\beta B(x,y)}\Biggl(1 + \f{\int_x^y e^{\beta(Y(z) - B(z))}\,dz }{\int_{-\infty}^x e^{\beta(Y(z) - B(z))}\,dz}\Biggr) =e^{\beta B(x,y)}\Biggl(1 + \f{\int_x^y e^{\beta(Y(x,z) - B(x,z))}\,dz }{\int_{-\infty}^x e^{\beta(Y(x,z) - B(x,z))}\,dz}\Biggr).
\end{align*}
All statements of the lemma now follow from the last equality. 
\end{proof}

\begin{lemma}\label{sDppres}
For $\beta > 0$ and $A \subseteq \R$, $\sDp^{(n)}_\beta:\Y_n^A \to \X_n^A$. 
\end{lemma}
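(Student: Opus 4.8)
The plan is to induct on $n$, and it is convenient to strengthen the statement slightly so that the induction carries the drifts along. Precisely, I would prove: if $\mathbf Y=(Y^1,\dots,Y^n)\in\Y_n^A$ has component drifts $\lambda_1<\dots<\lambda_n$ (which exist and lie in $A$ by the definition \eqref{Yndef} of $\Y_n^A$), then $\boldsymbol\eta=\sDp_\beta^{(n)}(\mathbf Y)$ is well-defined (every integral appearing in \eqref{Did} is finite and positive), belongs to $\X_n^A$, and its $i$-th component $\eta^i$ has drift $\lambda_i$. The base case $n=1$ is immediate because $\sDp_\beta^{(1)}$ is the identity and $\X_1^A=\Y_1^A$.

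For the inductive step I would apply the induction hypothesis to $(Y^2,\dots,Y^n)\in\Y_{n-1}^A$, whose drifts are $\lambda_2<\dots<\lambda_n$, obtaining $(\zeta^1,\dots,\zeta^{n-1})=\sDp_\beta^{(n-1)}(Y^2,\dots,Y^n)\in\X_{n-1}^A$ with $\zeta^j=\Dp_\beta^{(j)}(Y^2,\dots,Y^{j+1})$ of drift $\lambda_{j+1}$. By the definition \eqref{definition of script hat D} of the iterated map together with the recursion \eqref{hat D iterated}, the components of $\boldsymbol\eta$ are $\eta^1=Y^1$ and $\eta^i=\Dp_\beta(Y^1,\zeta^{i-1})$ for $2\le i\le n$. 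Since $Y^1$ has drift $\lambda_1$ and $\zeta^{i-1}$ has drift $\lambda_i>\lambda_1$, each pair $(Y^1,\zeta^{i-1})$ lies in $\Y_2^\R$; in particular \eqref{Q400} holds, so $\Qp_\beta(Y^1,\zeta^{i-1})$ and hence $\Dp_\beta(Y^1,\zeta^{i-1})$ are well-defined, and by \eqref{QPDpRp} the latter is continuous with value $0$ at $0$, so $\eta^i\in\CRpin$; the finiteness of all integrals in \eqref{Did} for $\boldsymbol\eta$ then follows inductively from this together with Lemma \ref{lem:pQalt}.

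It then remains to check the two order conditions defining $\X_n^A$. For the drifts I would invoke Lemma \ref{lem:OCYlim} with $(B,Y)=(Y^1,\zeta^{i-1})$: it gives that $\eta^i=\Dp_\beta(Y^1,\zeta^{i-1})$ has the same drift as $\zeta^{i-1}$, namely $\lambda_i$; combined with $\eta^1=Y^1$ of drift $\lambda_1$, the drifts of $\boldsymbol\eta$ are $\lambda_1<\dots<\lambda_n$, all in $A$, hence $\boldsymbol\eta\in\Y_n^A$. For the strict increment ordering $\eta^i\sgi\eta^{i-1}$, I would use Lemma \ref{lem:OCYmont}: when $i=2$ it gives $\eta^1=Y^1\sli\Dp_\beta(Y^1,Y^2)=\eta^2$ directly; when $3\le i\le n$, the induction hypothesis supplies $\zeta^{i-2}\sli\zeta^{i-1}$, and Lemma \ref{lem:OCYmont} applied with $B=Y^1$, $Y=\zeta^{i-2}$, $Y'=\zeta^{i-1}$ (both pairs in $\Y_2^\R$ since $\lambda_1<\lambda_{i-1}<\lambda_i$) yields $\eta^{i-1}=\Dp_\beta(Y^1,\zeta^{i-2})\sli\Dp_\beta(Y^1,\zeta^{i-1})=\eta^i$. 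This gives $\boldsymbol\eta\in\X_n^A$ and closes the induction.

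I expect no serious obstacle here: once Lemmas \ref{lem:pQalt}, \ref{lem:OCYlim} and \ref{lem:OCYmont} are available the argument is essentially bookkeeping. The one point requiring care is verifying at each stage that the pair fed into $\Dp_\beta$ really lies in $\Y_2^\R$ — equivalently that \eqref{Q400} holds, so the queueing maps are defined and the integrals converge — and this is precisely the reason for tracking the drifts inside the strengthened induction hypothesis.
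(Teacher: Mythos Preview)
Your proof is correct and takes essentially the same approach as the paper: both use the recursion $\eta^i = \Dp_\beta(Y^1, \Dp_\beta^{(i-1)}(Y^2,\dots,Y^i))$, invoke Lemma~\ref{lem:OCYlim} for the drifts, and Lemma~\ref{lem:OCYmont} for the strict increment ordering. The only difference is organizational---you induct on $n$ with a strengthened hypothesis, while the paper inducts on the component index $i$---and your version is arguably cleaner in tracking the strict inequalities needed for membership in $\X_n^A$.
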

\begin{proof}
Let $\mbf Y = (Y^1,\ldots,Y^n)$ and $\boldsymbol \eta = \sDp^{(n)}_\beta(\mbf Y)$. From  Lemma \ref{lem:OCYlim} and induction, it follows that for $1 \le i \le n$, 
\[
\lim_{x \to -\infty} \f{Y^i(x)}{x} = \lim_{x \to -\infty}\f{\eta^i(x)}{x}.
\]
It remains to show that $\eta_{i - 1} \sli \eta_i$, which we prove by induction. By Lemma \ref{lem:OCYmont},
$\eta^2 = \Dp_\beta(Y^1,Y^2) \sgi Y^1 = \eta^1$.
Now, we assume that 
\[
\eta^{i} = \Dp_\beta^{(i)}(Y^{1},\ldots,Y^i) \sgi \Dp_\beta^{(i - 1)}(Y^{1},\ldots,Y^{i-1}) = \eta^{i - 1}.
\]
We apply this assumption, replacing $Y^1,\ldots,Y^i$ with  $Y^2,\ldots,Y^{i + 1}$ along with Lemma \ref{lem:OCYmont} to get
\begin{align*}
\eta^{i + 1} &= \Dp_\beta^{(i + 1)}(Y^{1},\ldots,Y^{i+1}) = \Dp_\beta(Y^1,\Dp_\beta^{(i)}(Y^{2},\ldots,Y^{i+1})) \\
&\sgi \Dp_\beta(Y^1,\Dp_\beta^{(i - 1)}(Y^{2},\ldots,Y^{i})) = \Dp_\beta^{(i)}(Y^1,\ldots,Y^i) = \eta^i.  \qedhere
\end{align*}
\end{proof}

\begin{lemma} \label{lem:mubetaweak}
Let $\beta > 0$ and $\lambda_1 < \cdots < \lambda_k$.  Let $\lambda_i^N,\beta^N$ be sequences such that  $\lambda_i^N \to \lambda_i$ and $\beta^N \to \beta$ as $N\to\infty$. Set $\bar \lambda^N = (\lambda_1^N,\ldots,\lambda_k^N)$ and $\bar \lambda = (\lambda_1,\ldots,\lambda_k)$. Then, $\mu_{\beta_N}^{\bar \lambda^N} \to \mu_\beta^{\bar \lambda}$ weakly as measures on $C(\R,\R^k)$.
\end{lemma}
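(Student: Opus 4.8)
The plan is to realize all the source measures $\nu^{\bar\lambda^N}$ and $\nu^{\bar\lambda}$ on one probability space, to show that their push-forwards under $\sDp^{(k)}_{\beta_N}$ converge \emph{almost surely} in $C(\R,\R^k)$, and then to conclude using the definition \eqref{mubeta} together with the fact that almost sure convergence implies convergence in distribution. Throughout we may assume $N$ is large enough that $\lambda_1^N < \cdots < \lambda_k^N$, which holds eventually because $\lambda_1 < \cdots < \lambda_k$, and that $\beta_N$ is bounded away from $0$.

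First I would fix independent two-sided standard Brownian motions $W^1,\dots,W^k$ and set $Y^{i}_N(x) = W^i(x) + \lambda_i^N x$ and $Y^{i}(x) = W^i(x) + \lambda_i x$. Then $\mbf Y_N := (Y^1_N,\dots,Y^k_N) \sim \nu^{\bar\lambda^N}$ and $\mbf Y := (Y^1,\dots,Y^k) \sim \nu^{\bar\lambda}$, and since $\lambda_i^N \to \lambda_i$ the convergence $\mbf Y_N \to \mbf Y$ holds almost surely, uniformly on compact sets. Because $\sDp^{(k)}_{\beta}$ is built componentwise from the maps $\Dp^{(i)}_\beta$ via \eqref{definition of script hat D}, and the image lies in $C(\R,\R^k)$ by Lemma \ref{sDppres}, it suffices to show that $\Dp^{(i)}_{\beta_N}(Y^1_N,\dots,Y^i_N) \to \Dp^{(i)}_\beta(Y^1,\dots,Y^i)$ uniformly on compacts, almost surely, for each $1 \le i \le k$. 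For this I would work directly with the explicit representation \eqref{Did}, which expresses $\exp[\beta \Dp^{(i)}_\beta(Y^1,\dots,Y^i)(y)]$ as $e^{\beta Y^1(y)}$ times a ratio of iterated integrals of $\prod_{j} e^{\beta(Y^{j+1}(x_j) - Y^j(x_j))}$ over ordered domains.

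The key quantitative input — and the step I expect to be the main obstacle — is a uniform-in-$N$ exponential decay bound for these integrands as the deepest variable $x_{i-1} \to -\infty$, which is exactly where the strict ordering of the drifts is used. Fixing $\delta_0 > 0$ with $\lambda^N_{j+1} - \lambda^N_j \ge 2\delta_0$ for all $j$ and all large $N$, and using the strong law of large numbers for Brownian motion ($x^{-1}W^j(x) \to 0$ as $x \to -\infty$ a.s.) to produce an a.s.\ finite random $M \ge 1$ with $|W^{j+1}(x) - W^j(x)| \le \tfrac12\delta_0|x|$ for all $x \le -M$ and all $j$, one obtains $Y^{j+1}_N(x) - Y^j_N(x) \le -\delta_0|x|$ for all $x \le -M$, all $j$, and all large $N$, while on any fixed compact interval these increments are bounded uniformly in large $N$ by a random constant. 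Now split the numerator integral in \eqref{Did} at $x_{i-1} = -M$: on $\{x_{i-1} \ge -M\}$ all integration variables lie in a fixed compact set, the integrand is jointly continuous in the restrictions of $(Y^1,\dots,Y^i)$ and in $\beta$, and uniform convergence $\mbf Y_N \to \mbf Y$, $\beta_N \to \beta$ forces convergence; on $\{x_{i-1} < -M\}$ the bound above supplies an $N$-independent integrable dominating function — a product of $e^{-c|x_j|}$ with $c = \tfrac12\delta_0\beta > 0$ over the variables below $-M$ times a bounded factor for those in the compact window — so dominated convergence applies. Hence the numerator converges, uniformly on compact $y$-intervals (pointwise convergence plus monotonicity in $y$ and continuity of the limit suffice); taking $y = 0$ gives convergence of the strictly positive denominator. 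Since $e^{\beta_N Y^1_N(y)} \to e^{\beta Y^1(y)}$ uniformly on compacts, \eqref{Did} yields $\exp[\beta_N \Dp^{(i)}_{\beta_N}(\mbf Y_N)(y)] \to \exp[\beta \Dp^{(i)}_\beta(\mbf Y)(y)]$ uniformly on compacts; the limit is continuous and bounded below there by a positive constant, so taking logarithms and dividing by $\beta_N \to \beta > 0$ gives the desired uniform-on-compacts convergence.

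Finally, this establishes $\sDp^{(k)}_{\beta_N}(\mbf Y_N) \to \sDp^{(k)}_\beta(\mbf Y)$ almost surely in $C(\R,\R^k)$; since these random elements have laws $\mu^{\bar\lambda^N}_{\beta_N}$ and $\mu^{\bar\lambda}_\beta$ respectively by \eqref{mubeta}, the claimed weak convergence follows. Apart from the uniform tail estimate, every step is a routine application of dominated convergence and standard manipulations of uniform limits; the estimate itself is available simultaneously for the whole sequence precisely because the gaps $\lambda^N_{j+1} - \lambda^N_j$ are bounded below, giving a common exponential decay rate.
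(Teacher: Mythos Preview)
Your argument is correct and uses essentially the same approach as the paper: the same additive coupling $Y^i_N(x)=W^i(x)+\lambda_i^N x$, the explicit formula \eqref{Did}, and dominated convergence on the iterated integrals driven by the strict drift gaps. The only difference is organizational: the paper stops at pointwise (finite-dimensional) convergence of the integrals and then invokes tightness via the fact that each $\eta_N^i$ is a Brownian motion with drift $\lambda_i^N$, whereas you push the dominated-convergence estimate a bit further to get almost sure uniform-on-compacts convergence and thereby avoid a separate tightness step.
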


\begin{proof}
Realize the distributions in terms of $(\eta_N^1,\ldots,\eta_N^k)\sim\mu_{\beta_N}^{\bar \lambda^N}$ and $(\eta^1,\ldots,\eta^k)\sim\mu_{\beta}^{\bar \lambda}$, where 
$(\eta_N^1,\ldots,\eta_N^k) =  \sDp^{(k)}_{\beta^N}(Z_N^1,\ldots,Z_N^k)$ and 
$(\eta^1,\ldots,\eta^k) = \sDp^{(k)}_\beta(Y^1,\ldots,Y^k)$, 
and the Brownian motions 
$(Z_N^1,\ldots,Z_N^k) \sim \nu^{\bar \lambda^N}$
and 
$(Y^1,\ldots,Y^k) \sim \nu^{\bar \lambda}$  
are coupled so that 
$Z_N^i(x) = Y^i(x) + (\lambda_i^N - \lambda_i)x$.  By \eqref{Did}, 
\be \label{DZN} \nonumber
\begin{aligned}
\eta_N^j(y) &= Z^1_N(y) + \f{1}{\beta_N} \log \int_{-\infty < x_{j - 1} < \cdots < x_1 < y} \prod_{i = 1}^{j-1}\exp\Bigl\{\beta_N(Z_N^{i + 1}(x_i) - Z_N^i(x_i))\Bigr\} dx_i \\
&\qquad\qquad\qquad-\f{1}{\beta_N} \log \int_{-\infty < x_{j - 1} < \cdots < x_1 < 0} \prod_{i = 1}^{j-1}\exp\Bigl\{\beta_N(Z_N^{i + 1}(x_i) - Z_N^i(x_i))\Bigr\} dx_i.
\end{aligned}
\ee
Dominated convergence applied to the integrals gives 
 $(\eta_N^1,\ldots,\eta_N^k) \Rightarrow (\eta^1,\ldots,\eta^k)$ in the sense of finite-dimensional distributions. Each $\eta_N^i$ is a Brownian motion with drift $\lambda^N_i$, so each marginal is tight in $C(\R)$. Hence, the process $(\eta_N^1,\ldots,\eta_N^k)$ is tight in $C(\R,\R^k)$. 
\end{proof}

For $\gamma > 0$ and $\alpha \in \R$  define the mapping $T_{\gamma,\alpha}:C(\R) \to C(\R)$ as \[ T_{\gamma,\alpha} f(x) = \gamma^{-1} f(\gamma^2x) + \alpha x. \]
Extend it to a mapping  $T^n_{\gamma,\alpha}:C(\R,\R^n) \to C(\R,\R^n)$ of $n$-tuples componentwise: 
\[
T^n_{\gamma,\alpha} (f_1,\ldots,f_n) = (T_{\gamma,\alpha} f_1,\ldots, T_{\gamma,\alpha} f_n). 
\]
For $\alpha = 0$ use the shorthand notation $T_\gamma = T_{\gamma,0}$ and $T^n_\gamma = T^n_{\gamma,0}$.

\begin{lemma} \label{lem:pQscaling}
For $\beta, \gamma > 0$, $\alpha \in \R$, and $Y^1,\ldots,Y^n$ such that the following are all finite, we have
\be \label{diff}
T_{\gamma,\alpha} \Dp^{(n)}_\beta(Y^1,\ldots,Y^n) = \Dp^{(n)}_{\beta \gamma}(T_{\gamma,\alpha}^n( Y^1,\ldots, Y^n)),
\ee
and 
\be \label{Rscaling}
T_{\gamma,\alpha} R_\beta(Y^1,Y^2) = R_{\beta \gamma}(T_{\gamma,\alpha}^2( Y^1, Y^2)).
\ee
Consequently, for $\bar \lambda = (\lambda_1 < \cdots < \lambda_n)$,
\be \label{measmap}
\mu^{\bar \lambda}_\beta \circ (T^{n}_{\gamma,\alpha})^{-1} = \mu^{\gamma \bar \lambda + (\alpha,\ldots,\alpha)}_{\beta \gamma}.
\ee
In particular,
\be \label{temp1}
\mu^{\bar \lambda}_\beta = \mu^{\beta^{-1} \bar \lambda}_1 \circ (T^{n}_{\beta})^{-1}
\ee
\end{lemma}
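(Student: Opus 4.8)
\textbf{Proof proposal for Lemma \ref{lem:pQscaling}.}

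The plan is to establish \eqref{diff} first by induction on $n$, then obtain \eqref{Rscaling} by a parallel direct computation, and finally deduce the two consequences \eqref{measmap} and \eqref{temp1} as formal corollaries via change of variables. For the base case $n=1$ of \eqref{diff}, note $\Dp_\beta^{(1)}(Y^1)=Y^1$ and the identity reduces to $T_{\gamma,\alpha}Y^1 = T_{\gamma,\alpha}Y^1$, which is trivial. For the inductive step, I would work directly with the integral representation \eqref{Did} from Lemma \ref{lem:pQalt} rather than unwinding the recursive definition \eqref{hat D iterated}. Writing out $\exp[\beta\gamma\, \Dp_{\beta\gamma}^{(n)}(T^n_{\gamma,\alpha}(Y^{1:n}))(y)]$, each factor $e^{\beta\gamma(T_{\gamma,\alpha}Y^{i+1}(x_i) - T_{\gamma,\alpha}Y^i(x_i))}$ equals $e^{\beta(Y^{i+1}(\gamma^2 x_i) - Y^i(\gamma^2 x_i))}$ because the linear terms $\alpha x_i$ cancel in the difference and the prefactor $\gamma^{-1}$ combines with the $\gamma$ from $\beta\gamma$. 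Then substituting $u_i = \gamma^2 x_i$ in each of the $(n-1)$-fold integrals (both numerator and denominator, with the Jacobian factors $\gamma^{2(n-1)}$ cancelling and the integration region $-\infty<x_{n-1}<\cdots<x_1<y$ mapping to $-\infty<u_{n-1}<\cdots<u_1<\gamma^2 y$) converts the whole ratio into the corresponding ratio for $\Dp_\beta^{(n)}(Y^{1:n})$ evaluated at $\gamma^2 y$. The leading factor $e^{\beta\gamma\, T_{\gamma,\alpha}Y^1(y)} = e^{\beta Y^1(\gamma^2 y) + \beta\gamma\alpha y}$ supplies exactly $\beta\gamma\alpha y$ beyond $\beta\gamma\cdot\gamma^{-1}\Dp_\beta^{(n)}(Y^{1:n})(\gamma^2 y)$, which is precisely $\beta\gamma$ times $T_{\gamma,\alpha}\Dp_\beta^{(n)}(Y^{1:n})(y)$. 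Taking logarithms and dividing by $\beta\gamma$ gives \eqref{diff}. The identity \eqref{Rscaling} follows the same way using \eqref{Rid} instead of \eqref{Did}.

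For \eqref{measmap}: if $(Y^1,\ldots,Y^n)\sim\nu^{\bar\lambda}$, then $T^n_{\gamma,\alpha}(Y^1,\ldots,Y^n)$ has independent components and $T_{\gamma,\alpha}Y^i(x) = \gamma^{-1}Y^i(\gamma^2 x) + \alpha x$ is a Brownian motion with diffusivity $1$ (since $\gamma^{-1}$ times a diffusivity-$\gamma^2$ scaling is diffusivity $1$) and drift $\gamma\lambda_i + \alpha$; hence $T^n_{\gamma,\alpha}(Y^1,\ldots,Y^n)\sim \nu^{\gamma\bar\lambda + (\alpha,\ldots,\alpha)}$, noting the new drift vector is still strictly increasing. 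Combining this with \eqref{diff} applied componentwise — which is exactly the statement $T^n_{\gamma,\alpha}\circ \sDp^{(n)}_\beta = \sDp^{(n)}_{\beta\gamma}\circ T^n_{\gamma,\alpha}$ — and the definition \eqref{mubeta}, we get $\mu^{\bar\lambda}_\beta \circ (T^n_{\gamma,\alpha})^{-1} = \nu^{\bar\lambda}\circ(\sDp^{(n)}_\beta)^{-1}\circ (T^n_{\gamma,\alpha})^{-1} = \nu^{\bar\lambda}\circ(T^n_{\gamma,\alpha}\circ\sDp^{(n)}_\beta)^{-1} = \nu^{\bar\lambda}\circ(\sDp^{(n)}_{\beta\gamma}\circ T^n_{\gamma,\alpha})^{-1} = \nu^{\gamma\bar\lambda+(\alpha,\ldots,\alpha)}\circ(\sDp^{(n)}_{\beta\gamma})^{-1} = \mu^{\gamma\bar\lambda+(\alpha,\ldots,\alpha)}_{\beta\gamma}$. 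Finally, \eqref{temp1} is the special case $\gamma = \beta$, $\alpha = 0$ of \eqref{measmap} with $\bar\lambda$ replaced by $\beta^{-1}\bar\lambda$: it reads $\mu^{\beta^{-1}\bar\lambda}_1 \circ (T^n_\beta)^{-1} = \mu^{\bar\lambda}_\beta$, which rearranges to the claim.

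The only mild subtlety — not really an obstacle — is bookkeeping of the constants in the substitution: making sure the $\gamma^{-1}$ in $T_{\gamma,\alpha}$, the $\gamma^2$ in the time-scaling, the temperature change $\beta\mapsto\beta\gamma$, and the Jacobians all cancel correctly, and that the linear-in-$x$ pieces $\alpha x_i$ genuinely drop out of every difference $Y^{i+1}-Y^i$ inside the integrand while only the outermost $Y^1$ retains its $\alpha y$ term. The finiteness hypothesis on the integrals is preserved under the substitution since it is just a reparametrization, so no new integrability conditions arise. Everything else is routine.
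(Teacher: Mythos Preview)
Your proof is correct and follows essentially the same route as the paper: both plug directly into the integral formula \eqref{Did} from Lemma~\ref{lem:pQalt}, observe that the $\alpha x_i$ terms cancel in the differences $Y^{i+1}-Y^i$, make the substitution $x_i\mapsto \gamma^2 x_i$ with Jacobians cancelling between numerator and denominator, and deduce \eqref{measmap} from the Brownian scaling $T^n_{\gamma,\alpha}(Y^{1:n})\sim\nu^{\gamma\bar\lambda+(\alpha,\ldots,\alpha)}$. One cosmetic note: you frame the argument as ``induction on $n$,'' but your actual computation (like the paper's) handles all $n\ge2$ in one stroke via \eqref{Did}, so no inductive hypothesis is ever invoked.
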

\begin{remark}
Equation \eqref{temp1} allows us to perform computations for $\beta = 1$ and extend to general $\beta$. 
\end{remark}

\begin{proof}
\eqref{measmap} follows from \eqref{diff} because, if $(Y^1,\ldots,Y^n) \sim \nu^{\bar \lambda}$, then $T^{n}_{\gamma,\alpha}(Y^1,\ldots,Y^n) \sim \nu^{\gamma \bar \lambda + (\alpha,\ldots,\alpha)}$. We turn our attention to proving \eqref{diff}.  To do so, we use Lemma \ref{lem:pQalt}. For $y \in \R$,
\begin{align*}
&\quad T^{n}_{\gamma,\alpha}\Dp_\beta^{(n)}(Y^1,\ldots,Y^n)(y) \\
&= \gamma^{-1} Y^1(\gamma^2 y) + \alpha y  + \f{1}{\beta \gamma } \log \int_{-\infty < x_{n - 1} < \cdots < x_1 < \gamma^2 y} \prod_{i = 1}^{n-1}\exp\bigl(\beta(Y^{i + 1}(x_i) - Y^i(x_i))\bigr)dx_{i} \\
&\qquad\qquad\qquad\qquad- \f{1}{\beta\gamma}\log \int_{-\infty < x_{n - 1} < \cdots < x_1 < 0} \prod_{i = 1}^{n-1}\exp\bigl(\beta(Y^{i + 1}(x_i) - Y^i(x_i))\bigr)dx_{i} \\
&= T_{\gamma,\alpha} Y^1(y) + \f{1}{\beta \gamma } \log \int_{-\infty < w_{n - 1} < \cdots < w_1 < y} \prod_{i=1}^{n-1} \exp\bigl(\beta \gamma (T_{\gamma,\alpha} Y^{i + 1}(w_i) - T_{\gamma,\alpha} Y^i(w_i))\bigr)dw_{i} \\
&\qquad\qquad\qquad\qquad-  \f{1}{\beta \gamma } \log \int_{-\infty < w_{n - 1} < \cdots < w_1 < 0} \prod_{i=1}^{n-1} \exp\bigl(\beta \gamma (T_{\gamma,\alpha} Y^{i + 1}(w_i) - T_{\gamma,\alpha} Y^i(w_i))\bigr)dw_{i} \\
&= \Dp^{(n)}_{\beta \gamma}(T^{n}_{\gamma,\alpha}(Y^1,\ldots,Y^n)),
\end{align*}
where in the second equality, we made the change of variables $x_i = \gamma^2w_i$, with the Jacobian term cancelling in the difference of the logs of the two integrals. The proof of \eqref{Rscaling} is analogous. 
\end{proof}

\begin{lemma} \label{lem:intertwine}
Let $\beta > 0$ and let $Y^2,Y^1,B^1:\R \to \R$ be so that the following are well-defined. Set $B^2 = R_\beta(Y^1,B^1)$. Then, 
\be \label{fullid}
\Dp_\beta(\Dp_\beta(B^1,Y^1),\Dp_\beta(B^2,Y^2)) = \Dp_\beta^{(3)}(B^1,Y^1,Y^2).
\ee
\end{lemma}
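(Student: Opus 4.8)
The plan is to verify the identity by a direct computation, reducing both sides---via the explicit formulas of Lemma~\ref{lem:pQalt}---to ratios of one and the same double integral over an ordered simplex.

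First I would fix shorthand: set $a(x)=e^{\beta(Y^1(x)-B^1(x))}$, $b(x)=e^{\beta(Y^2(x)-Y^1(x))}$, and $I(y)=\int_{-\infty}^y a(x)\,dx$, so that $I'=a$ and $I$ is increasing. By \eqref{Did2}, $\exp[\beta\Dp_\beta(B^1,Y^1)(y)]=e^{\beta B^1(y)}\,I(y)/I(0)$, and by \eqref{Rid}, $\exp[\beta B^2(y)]=e^{\beta Y^1(y)}\,I(0)/I(y)$. The right-hand side is then read off at once from the $n=3$ instance of \eqref{Did}:
\[
\exp\bigl[\beta\Dp_\beta^{(3)}(B^1,Y^1,Y^2)(y)\bigr]=e^{\beta B^1(y)}\,\frac{N(y)}{N(0)},\qquad N(y):=\iint_{-\infty<x_2<x_1<y}a(x_1)b(x_2)\,dx_1\,dx_2 .
\]
Integrating out $x_1$ first, I would also record the equivalent form $N(y)=I(y)J(y)-Q(y)$, with $J(y)=\int_{-\infty}^y b$ and $Q(y)=\int_{-\infty}^y b(x)I(x)\,dx$; these quantities will resurface on the left side.

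For the left-hand side I would work from the inside out. The integrand of $\Dp_\beta(B^2,Y^2)$ is $e^{\beta(Y^2(x)-B^2(x))}=b(x)I(x)/I(0)$, which gives $\exp[\beta\Dp_\beta(B^2,Y^2)(x)]=e^{\beta Y^1(x)}\,(I(0)/I(x))\cdot(Q(x)/Q(0))$. Dividing by $\exp[\beta\Dp_\beta(B^1,Y^1)(x)]=e^{\beta B^1(x)}I(x)/I(0)$ shows the integrand of the outer $\Dp_\beta$ equals $\tfrac{I(0)^2}{Q(0)}\cdot\tfrac{a(x)Q(x)}{I(x)^2}$. The step I expect to be the crux is then the integration by parts, using $a/I^2=-(1/I)'$:
\[
\int_{-\infty}^y \frac{a(x)Q(x)}{I(x)^2}\,dx=-\frac{Q(y)}{I(y)}+\int_{-\infty}^y\frac{Q'(x)}{I(x)}\,dx=-\frac{Q(y)}{I(y)}+J(y)=\frac{N(y)}{I(y)},
\]
where the boundary contribution at $-\infty$ vanishes because $0\le Q(x)\le I(x)J(x)$ and $J(x)\to0$. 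Feeding this into $\exp[\beta\Dp_\beta(\Dp_\beta(B^1,Y^1),\Dp_\beta(B^2,Y^2))(y)]=e^{\beta\Dp_\beta(B^1,Y^1)(y)}$ times the ratio of $\int_{-\infty}^y$ to $\int_{-\infty}^0$ of that outer integrand, every factor of $I$ and of $Q(0)$ cancels and the left side collapses to $e^{\beta B^1(y)}N(y)/N(0)$---exactly the right side.

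The main obstacle is not any single hard estimate but the bookkeeping through three nested applications of the queuing maps together with spotting the telescoping in the integration-by-parts step; verifying that the boundary term at $-\infty$ is legitimately zero is where the standing hypothesis that all the displayed integrals are finite (cf.\ \eqref{Q400}) enters, and everything else is routine algebra with the formulas of Lemma~\ref{lem:pQalt}.
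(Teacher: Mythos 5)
Your proof is correct and follows essentially the same route as the paper's: both reduce the identity, via the explicit formulas of Lemma~\ref{lem:pQalt}, to the single integration-by-parts identity $I(y)\int_{-\infty}^y aQ/I^2 = N(y)$ (the paper's display \eqref{intfin} is exactly this). Your choice to carry $\beta$ through directly and to package the bookkeeping in the shorthands $a,b,I,J,Q,N$ makes the cancellations more transparent than the paper's verbatim presentation (which also invokes the scaling Lemma~\ref{lem:pQscaling} to first reduce to $\beta=1$), but the key step and the handling of the boundary term at $-\infty$ are identical.
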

\begin{proof}
Written out fully, the statement reads
\[
\Dp_\beta(\Dp_\beta(B^1,Y^1),\Dp_\beta(\Rp_\beta(B^1,Y^1),Y^2)) = \Dp_\beta^{(3)}(B^1,Y^1,Y^2).
\]
By applying Lemma \ref{lem:pQscaling} to each of the operations $\Dp$ and $\Rp$, this is equivalent to
\[
T_\beta \Dp_1(\Dp_1(T_{\beta^{-1}}B^1,T_{\beta^{-1}}Y^1),\Dp_1(\Rp_1(T_{\beta^{-1}} B^1,T_{\beta^{-1}}Y^1),T_{\beta^{-1}} Y^2))= T_\beta \Dp_1^{(3)}(T_{\beta^{-1}}B^1, T_{\beta^{-1}}Y^1,T_{\beta^{-1}}Y^2).
\]
Hence, it suffices to prove the $\beta = 1$ case. For this, we drop the subscript in the mappings $D,R$, and $\Dp^{(3)}$. We make repeated use of Lemma \ref{lem:pQalt}. By \eqref{Did}, 
\be \label{RHSint}
\begin{aligned}
\exp\bigl(D^{(3)}(B^1,Y^1,Y^2)(y)\bigr) &= \f{e^{B_1(y)}\int_{-\infty < w < x < y} \exp\bigl(Y^1(x) - B^1(x) + Y^2(w) - Y^1(w) \bigr)\,dx\,dw }{\int_{-\infty < w < x < 0} \exp\bigl(Y^1(x) - B^1(x) + Y^2(w) - Y^1(w) \bigr)\,dx\,dw}.
\end{aligned}
\ee
We turn to the left-hand side of \eqref{fullid} for $\beta = 1$.  We repeatedly use the $n = 2$ case of Lemma \ref{lem:pQalt} as follows:
\be \label{inteqp1}
\begin{aligned}
    \exp\bigl[D(D(B^1,Y^1),D(B^2,Y^2))(y)\bigr] 
    &= \f{e^{D(B^1,Y^1)(y)}\int_{-\infty}^y e^{D(B^2,Y^2)(x) - D(B^1,Y^1)(x)}\,dx }{\int_{-\infty}^0 e^{D(B^2,Y^2)(x) - D(B^1,Y^1)(x)}\,dx} \\[0.5em]
    &= \f{e^{B^1(y)} \int_{-\infty}^y e^{Y^1(x) - B^1(x)}\,dx  }{\int_{-\infty}^0 e^{Y^1(x) - B^1(x)}\,dx } \cdot \f{I_y}{I_0},
    \end{aligned}
\ee
where 
\[
\begin{aligned}
    I_y &= \int_{-\infty}^y e^{B^2(x) - B^1(x)} \f{\int_{-\infty}^x e^{Y^2(w) - B^2(w)}\,dw \int_{-\infty}^0 e^{Y^1(w) - B^1(w)}\,dw }{\int_{-\infty}^0 e^{Y^2(w) - B^2(w)}\,dw \int_{-\infty}^x e^{Y^1(w) - B^1(w)}\,dw }\,dx \\[0.5em]
    &= \int_{-\infty}^y e^{Y^1(x) - B^1(x)} \f{\bigl(\int_{-\infty}^0 e^{Y^1(w) - B^1(w)}\,dw\bigr)^2}{\bigl(\int_{-\infty}^x e^{Y^1(w) - B^1(w)}\,dw\bigr)^2} \f{\int_{-\infty}^x e^{Y^2(w) - B^2(w)}\,dw}{\int_{-\infty}^0 e^{Y^2(w) - B^2(w)}\,dw  } \,dx \\[0.5em]
    &= \int_{-\infty}^y e^{Y^1(x) - B^1(x)} \f{\bigl(\int_{-\infty}^0 e^{Y^1(w) - B^1(w)}\,dw\bigr)^2}{\bigl(\int_{-\infty}^x e^{Y^1(w) - B^1(w)}\,dw\bigr)^2} \f{\int_{-\infty}^x e^{Y^2(w) - Y^1(w)} \int_{-\infty}^w e^{Y^1(z) - B^1(z)}\,dz \,dw }{\int_{-\infty}^0 e^{Y^2(w) - Y^1(w)} \int_{-\infty}^w e^{Y^1(z) - B^1(z)}\,dz \,dw } \,dx.
    \end{aligned}
\]
Therefore, $I_y/I_0 = I_y'/I_0'$, where 
\be \label{Iyp}
I_y' = \int_{-\infty}^y e^{Y^1(x) - B^1(x)} \f{\int_{-\infty}^x\int_{-\infty}^w \exp\bigl(Y^2(w) - Y^1(w) + Y^1(z) - B^1(z)\bigr)\,dz \,dw}{\bigl(\int_{-\infty}^x e^{Y^1(w) - B^1(w)}\,dw\bigr)^2}\,dx.
\ee
Comparing \eqref{RHSint}, \eqref{inteqp1}, and \eqref{Iyp}, to prove \eqref{fullid}, it suffices to show that for each $y \in \R$, 
\be \label{intfin}
\begin{aligned}
 &\quad \int_{-\infty}^y e^{Y^1(x) - B^1(x)}\,dx \int_{-\infty}^y e^{Y^1(x) - B^1(x)} \f{\int_{-\infty}^x\int_{-\infty}^w e^{Y^2(w) - Y^1(w) + Y^1(z) - B^1(z)}\,dz \,dw}{\bigl(\int_{-\infty}^x e^{Y^1(w) - B^1(w)}\,dw\bigr)^2}\,dx \\[0.5em]
 &\qquad 
 = \int_{-\infty < w < x < y} e^{Y^1(x) - B^1(x) + Y^2(w) - Y^1(w)}\,dx\,dw. 
 \end{aligned}
\ee
On the left-hand side of \eqref{intfin}, integrate by parts in the second $dx$ integral over $(-\infty,y]$  with
\[
dv = e^{Y^2(x) - B^1(x)}\Bigl(\int_{-\infty}^x e^{Y^1(w) - B^1(w)}\,dw\Bigr)^{-2},\quad u = \int_{-\infty}^x\int_{-\infty}^w e^{Y^2(w) - Y^1(w) + Y^1(z) - B^1(z)}\,dz \,dw. 
\]
Then the left-hand side  of \eqref{intfin} equals
\[
\begin{aligned}
 &\int_{-\infty}^y e^{Y^1(x) - B^1(x)}\,dx \cdot\Biggl[-\Bigl(\int_{-\infty}^x e^{Y^1(w) - B^1(w)}\,dw \Bigr)^{-1} \int_{-\infty}^x\int_{-\infty}^w e^{Y^2(w) - Y^1(w) + Y^1(z) - B^1(z)}\,dz \,dw \Bigg |_{x = -\infty}^{y}  \\[0.5em] 
&\qquad\qquad\qquad\qquad\qquad\qquad
+ \int_{-\infty}^y \Bigl(\int_{-\infty}^x e^{Y^1(x) - B^1(x)}\,dx \Bigr)^{-1} \int_{-\infty}^x e^{Y^2(x) - Y^1(x) + Y^1(z) - B^1(z)}\,dz \,dx \Biggr] \\[0.5em]
&= \int_{-\infty}^y e^{Y^1(x) - B^1(x)}\,dx \int_{-\infty}^y e^{Y^2(x) - Y^1(x)}\,dx - \int_{-\infty}^y \int_{-\infty}^w e^{Y^2(w) - Y^1(w) + Y^1(z) - B^1(z)}\,dz\,dw \\[0.5em]
&= \int_{(x,w) \in (-\infty,y)^2} e^{Y^1(x) - B^1(x) + Y^2(w) - Y^1(w)}\,dx\,dw \\[0.5em]
&\qquad\qquad\qquad\qquad- \int_{-\infty < x < w <y} e^{Y^1(x) - B^1(x) + Y^2(w) - Y^1(w)}\,dx\,dw.
\end{aligned}
\]
One readily sees that the last right-hand side above equals the right-hand side of \eqref{intfin}. 
\end{proof}

\subsection{Consistency and invariance} \label{sec:consistent_invar}
With the needed inputs from Section \ref{sec:queue}, the following results follow analogously as for zero temperature in \cite{Seppalainen-Sorensen-21b,Sorensen-thesis}. The key technical input for both of these results is Lemma \ref{lem:intertwine} above, whose proof is much different than in zero temperature (see \cite[Lemma 7.6]{Seppalainen-Sorensen-21b}). The full proofs of the following two results are found in Appendix \ref{appx:queue}.

\begin{lemma} \label{lem:OCY_consis}
Let $\bar \lambda = (\lambda_1 < \lambda_2 < \cdots < \lambda_n)\in\R^n$. If $(\eta^1,\ldots,\eta^n)\sim \mu_\beta^{\bar \lambda}$, then for any subsequence $\lambda_{i_1} < \cdots < \lambda_{i_k}$, $(\eta^{i_1},\ldots,\eta^{i_k}) \sim \mu_\beta^{(\lambda_{i_1},\ldots,\lambda_{i_k})}$. 
\end{lemma}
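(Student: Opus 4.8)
The plan is to reduce the general subsequence statement to the single case of deleting one index, and then to iterate. So it suffices to show: if $(\eta^1,\dots,\eta^n)\sim\mu_\beta^{\bar\lambda}$ with $\bar\lambda=(\lambda_1<\dots<\lambda_n)$, then deleting any one coordinate $\eta^m$ leaves a tuple distributed as $\mu_\beta^{\bar\lambda\setminus\lambda_m}$. By the definition \eqref{mubeta}--\eqref{definition of script hat D}, each $\eta^i=\Dp_\beta^{(i)}(Y^1,\dots,Y^i)$ is built from the first $i$ of the independent drifted Brownian motions $Y^1,\dots,Y^n$, so dropping the last coordinate $\eta^n$ is immediate: $(\eta^1,\dots,\eta^{n-1})=\sDp_\beta^{(n-1)}(Y^1,\dots,Y^{n-1})$ by construction, and $(Y^1,\dots,Y^{n-1})\sim\nu^{(\lambda_1,\dots,\lambda_{n-1})}$, giving $(\eta^1,\dots,\eta^{n-1})\sim\mu_\beta^{(\lambda_1,\dots,\lambda_{n-1})}$. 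The real content is deleting an \emph{interior} (or first) coordinate, and this is exactly what the intertwining identity of Lemma \ref{lem:intertwine} is designed to handle.

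The key step: I want to show that for $2\le m\le n$, the tuple obtained by deleting $\eta^{m-1}$ has the same law as the one built directly from the drift vector with $\lambda_{m-1}$ removed. First observe that the coordinates $\eta^i$ with $i<m-1$ are unaffected and the coordinates $\eta^i$ with $i\ge m$ all have the form $\Dp_\beta^{(i)}(Y^1,\dots,Y^i)$; the point is to re-express these so that $Y^{m-1}$ no longer appears and the remaining driving functions are again independent Brownian motions with the correct drifts. This is precisely where Lemma \ref{lem:intertwine} enters: with the substitution $B^1\rightsquigarrow \Dp_\beta^{(m-2)}(Y^1,\dots,Y^{m-2})$ (or $B^1=Y^1$ when $m=2$, after which this should be packaged as a statement about the composed maps), $Y^1\rightsquigarrow Y^{m-1}$, $Y^2\rightsquigarrow Y^m$, and $B^2=\Rp_\beta(Y^{m-1},B^1)$, the identity \eqref{fullid} rewrites $\Dp_\beta^{(m)}(\dots,Y^{m-1},Y^m)$ as $\Dp_\beta$ applied to $\eta^{m-1}$ and to $\Dp_\beta(B^2,Y^m)$. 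Iterating this up the chain lets one "commute $Y^{m-1}$ past" each successive queuing operation. The distributional input that makes this work is Burke-type output independence for the queuing map: one needs that $B^2=\Rp_\beta(Y^{m-1},\Dp_\beta^{(m-2)}(Y^1,\dots,Y^{m-2}))$, together with $Y^m,\dots,Y^n$ and the already-processed coordinates, reproduces the joint law of a fresh family of independent Brownian motions with drifts $\lambda_1,\dots,\lambda_{m-2},\lambda_m,\dots,\lambda_n$ feeding the map $\sDp_\beta^{(n-1)}$. This Burke property is the standard companion to the intertwining (it is the analogue of \cite[Lemma 7.6 and the consistency argument]{Seppalainen-Sorensen-21b}), and since the excerpt tells us the full proof is deferred to Appendix \ref{appx:queue}, here I would just cite the intertwining lemma plus the queuing-output distributional identity and carry out the induction.

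To assemble: induct on $m$ (the index being deleted), with the case $m=n$ handled by the trivial observation above. For the inductive step, use Lemma \ref{lem:intertwine} repeatedly to push $Y^{m-1}$ through all the maps producing $\eta^m,\eta^{m+1},\dots,\eta^n$, reducing the expression for $(\eta^1,\dots,\eta^{m-2},\eta^m,\dots,\eta^n)$ to $\sDp_\beta^{(n-1)}$ applied to $(Y^1,\dots,Y^{m-2},\widetilde Y^m,Y^{m+1},\dots,Y^n)$ where $\widetilde Y^m:=\Dp_\beta(B^2,Y^m)$ and $B^2=\Rp_\beta(Y^{m-1},\Dp_\beta^{(m-2)}(Y^1,\dots,Y^{m-2}))$; then invoke the Burke-type identity to see that this input tuple has law $\nu^{(\lambda_1,\dots,\lambda_{m-2},\lambda_m,\dots,\lambda_n)}$ (the drift of $\widetilde Y^m$ is $\lambda_m$ by Lemma \ref{lem:OCYlim}), hence the output has law $\mu_\beta^{(\lambda_1,\dots,\lambda_{m-2},\lambda_m,\dots,\lambda_n)}$. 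Finally, an arbitrary subsequence $\lambda_{i_1}<\dots<\lambda_{i_k}$ is reached by deleting one non-chosen index at a time and applying the single-deletion result at each stage.

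\textbf{Main obstacle.} The genuine difficulty is the bookkeeping in the iterated application of Lemma \ref{lem:intertwine} to commute $Y^{m-1}$ through the entire tower of queuing maps, together with verifying the Burke-type output independence in positive temperature — i.e.\ that the reshuffled driving functions are again \emph{independent} Brownian motions with the stated drifts. The identity \eqref{fullid} handles two adjacent stations at a time, so one must be careful that the recursion closes and that no cross-dependence is introduced; this is the part the authors relegate to Appendix \ref{appx:queue}, and it is where the "different perspective" from the zero-temperature proof is needed. The finiteness of all the integrals (guaranteed by the strict drift ordering via Lemma \ref{lem:OCYlim} and condition \eqref{Q400}) should be checked along the way but is routine given the earlier lemmas.
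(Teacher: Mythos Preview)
Your high-level strategy matches the paper's: reduce to deleting a single coordinate, handle $m=n$ trivially, and use the intertwining Lemma~\ref{lem:intertwine} together with a Burke-type independence for the remaining cases. However, the concrete algebraic reduction you propose does not work, for two related reasons.

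First, the identity you are aiming for --- that $(\eta^1,\dots,\eta^{m-2},\eta^m,\dots,\eta^n)=\sDp_\beta^{(n-1)}(Y^1,\dots,Y^{m-2},\widetilde Y^m,Y^{m+1},\dots,Y^n)$ with only the single slot $\widetilde Y^m$ modified --- is false in general. Already for $\eta^{m+1}=D_\beta^{(m+1)}(Y^1,\dots,Y^{m+1})$ you cannot absorb the effect of removing $Y^{m-1}$ into a change at position $m$ alone; every coordinate $\eta^j$ with $j\ge m$ requires its own modified input. The paper's proof makes this explicit: using the nested form $D_\beta^{(j)}(Y^1,\dots,Y^j)=D_\beta\bigl(Y^1,\dots D_\beta\bigl(Y^{i-1},D_\beta^{(j-i+1)}(Y^i,\dots,Y^j)\bigr)\dots\bigr)$ (here $i=m-1$), one applies the full intertwining \eqref{intertwining} to the \emph{inner} block $D_\beta^{(j-i+1)}(Y^i,\dots,Y^j)$, producing $D_\beta^{(j-i)}(\hat Y^{i+1},\dots,\hat Y^j)$ with $\hat Y^\ell=D_\beta(\wt Y^{\ell-1},Y^\ell)$ and $\wt Y^i=Y^i$, $\wt Y^\ell=R_\beta(\wt Y^{\ell-1},Y^\ell)$. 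The upshot is $(\eta^1,\dots,\eta^{i-1},\eta^{i+1},\dots,\eta^n)=\sDp_\beta^{(n-1)}(Y^1,\dots,Y^{i-1},\hat Y^{i+1},\dots,\hat Y^n)$, with \emph{all} of $\hat Y^{i+1},\dots,\hat Y^n$ modified.

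Second, your choice $B^1=\eta^{m-2}=D_\beta^{(m-2)}(Y^1,\dots,Y^{m-2})$ makes $B^2$ and hence $\widetilde Y^m$ depend on $Y^1,\dots,Y^{m-2}$, so the input tuple you feed into $\sDp_\beta^{(n-1)}$ cannot have law $\nu^{(\lambda_1,\dots,\lambda_{m-2},\lambda_m,\dots,\lambda_n)}$: independence from the first $m-2$ coordinates is lost, and no Burke-type statement recovers it. The paper avoids this by applying the intertwining \emph{inside} the tower, so that the modified inputs $\hat Y^{i+1},\dots,\hat Y^n$ are functions of $Y^i,\dots,Y^n$ only; independence from $Y^1,\dots,Y^{i-1}$ is then automatic, and Theorem~\ref{multiline invariant distribution} gives that $\hat Y^{i+1},\dots,\hat Y^n$ are themselves independent Brownian motions with the correct drifts.
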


\begin{theorem} \label{thm:OCYinvar}
For an increasing vector $\bar \lambda=(\lambda_1, \dotsc, \lambda_n)$ of strictly positive drifts $0 < \lambda_1 < \cdots <\lambda_n$, the measure $\mu_\beta^{\bar \lambda}$ is an invariant measure for the Markov chain on $\X_n^{\R_{>0}}$ whose time $m-1$ to time $m$ transition is defined as follows.  Let $(\eta_{m - 1}^1,\ldots,\eta_{m -1}^n)$ be the state at time $m - 1$, and let $B_m$ be a standard two-sided Brownian motion, independent of the Markov chain in the past. Then the state at time $m$ is 
\be \label{eqn:OCYMC}
(\eta_m^1,\ldots,\eta_m^n) = (D_\beta(B_m,\eta_{m-1}^1),\ldots,D_\beta(B_m,\eta_{m-1}^n)). 
\ee
\end{theorem}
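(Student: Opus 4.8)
The plan is to establish invariance via the standard intertwining strategy of Ferrari--Martin, realized here through the mappings of Section~\ref{sec:queue}. The heart of the argument is to show that the Markov chain \eqref{eqn:OCYMC} on $\X_n^{\R_{>0}}$ is intertwined with a simpler Markov chain on $\Y_n^{\R_{>0}}$ via the deterministic map $\sDp_\beta^{(n)}$, and that the simpler chain has $\nu^{\bar\lambda}$ as an invariant measure in an obvious way; then $\mu_\beta^{\bar\lambda}=\nu^{\bar\lambda}\circ(\sDp_\beta^{(n)})^{-1}$ is invariant for \eqref{eqn:OCYMC} by a transfer-of-invariance lemma.

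First I would identify the ``upstairs'' chain. On $\Y_n^{\R_{>0}}$, given the state $(Y_{m-1}^1,\dotsc,Y_{m-1}^n)$ and an independent standard two-sided Brownian motion $B_m$, define the new state by the sequential queueing update
\begin{align*}
Y_m^1 &= \Dp_\beta(B_m,Y_{m-1}^1),\\
B_m^{(2)} &= \Rp_\beta(B_m,Y_{m-1}^1),\\
Y_m^2 &= \Dp_\beta(B_m^{(2)},Y_{m-1}^2),\\
B_m^{(3)} &= \Rp_\beta(B_m^{(2)},Y_{m-1}^2),
\end{align*}
and so on, so that $B_m^{(i+1)}=\Rp_\beta(B_m^{(i)},Y_{m-1}^i)$ and $Y_m^i=\Dp_\beta(B_m^{(i)},Y_{m-1}^i)$ with $B_m^{(1)}=B_m$. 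The key distributional input is a ``Burke-type'' property: if the inputs $(B_m^{(i)},Y_{m-1}^i)$ are independent Brownian motions with drifts $0$ and $\lambda_i>0$ respectively (so that \eqref{Q400} holds a.s.\ and the integrals in \eqref{QPDpRp} converge), then the outputs $(Y_m^i,B_m^{(i+1)})$ are again independent Brownian motions with the \emph{same} drifts $\lambda_i$ and $0$. Granting this, one sees inductively that if $(Y_{m-1}^1,\dotsc,Y_{m-1}^n)\sim\nu^{\bar\lambda}$ and $B_m$ is an independent standard Brownian motion, then $(Y_m^1,\dotsc,Y_m^n)\sim\nu^{\bar\lambda}$ and moreover $B_m^{(n+1)}$ is an independent standard Brownian motion; hence $\nu^{\bar\lambda}$ is invariant for the upstairs chain.

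Second, I would prove the intertwining identity: if $\boldsymbol\eta_{m-1}=\sDp_\beta^{(n)}(Y_{m-1}^1,\dotsc,Y_{m-1}^n)$, then applying the downstairs update \eqref{eqn:OCYMC} with the same $B_m$ to $\boldsymbol\eta_{m-1}$ yields exactly $\sDp_\beta^{(n)}(Y_m^1,\dotsc,Y_m^n)$ with $Y_m^i$ as above. Unwinding \eqref{definition of script hat D}, the $i$-th component of the claim is $\Dp_\beta(B_m,\Dp_\beta^{(i)}(Y_{m-1}^1,\dotsc,Y_{m-1}^i))=\Dp_\beta^{(i)}(Y_m^1,\dotsc,Y_m^i)$. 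For $i=1$ this is a tautology; for $i=2$ it is precisely Lemma~\ref{lem:intertwine} applied with $B^1=B_m$, $Y^1=Y_{m-1}^1$, $Y^2=Y_{m-1}^2$ (so $B^2=B_m^{(2)}$). For general $i$ one proceeds by induction on $i$ using Lemma~\ref{lem:intertwine} to peel off one $\Dp_\beta$ at a time, together with the associativity built into the iterated definition \eqref{hat D iterated}; Lemma~\ref{sDppres} and Lemma~\ref{lem:OCYlim} guarantee we stay inside the relevant state spaces and that all integrals converge throughout, using the strict drift ordering $0<\lambda_1<\dotsm<\lambda_n$.

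Finally, combining the two pieces: for a test function $\phi$ on $\X_n^{\R_{>0}}$ and $\boldsymbol\eta_{m-1}\sim\mu_\beta^{\bar\lambda}$, realize $\boldsymbol\eta_{m-1}=\sDp_\beta^{(n)}(\mathbf Y_{m-1})$ with $\mathbf Y_{m-1}\sim\nu^{\bar\lambda}$; the intertwining identity rewrites $\Ee[\phi(\boldsymbol\eta_m)]=\Ee[\phi(\sDp_\beta^{(n)}(\mathbf Y_m))]$, and the upstairs invariance gives $\mathbf Y_m\sim\nu^{\bar\lambda}$, hence $\boldsymbol\eta_m\sim\mu_\beta^{\bar\lambda}$. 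I expect the main obstacle to be the Burke-type property for the pair map $(B,Y)\mapsto(\Dp_\beta(B,Y),\Rp_\beta(B,Y))$ — i.e.\ that it preserves the product of Brownian laws with the drifts swapped in the stated way; this is the positive-temperature semi-discrete analogue of the queueing output theorem and is where the genuinely new input beyond the zero-temperature treatment of \cite{Seppalainen-Sorensen-21b} is needed. Once that is in hand (it is presumably supplied among the queueing inputs of Section~\ref{sec:queue} or Appendix~\ref{appx:queue}), the rest is bookkeeping organized around Lemmas~\ref{lem:pQalt}, \ref{lem:OCYlim}, \ref{sDppres}, and \ref{lem:intertwine}.
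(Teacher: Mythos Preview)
Your proposal is correct and follows essentially the same route as the paper: define the multiline (``upstairs'') chain on $\Y_n^{\R_{>0}}$, show $\nu^{\bar\lambda}$ is invariant for it via the Burke-type property, prove the deterministic intertwining $\T_\beta^B\circ\sDp_\beta^{(n)}=\sDp_\beta^{(n)}\circ\Ss_\beta^B$ by induction from Lemma~\ref{lem:intertwine}, and conclude. The Burke input you flag as the main obstacle is not new here---it is exactly Theorem~\ref{thm:ocy}, due to O'Connell and Yor \cite{brownian_queues}, and the paper simply quotes it; the paper packages your inductive intertwining step as Lemma~\ref{lem:extint} (whose case $k=n-1$ is the identity \eqref{intertwining}), but the underlying argument is the same induction on Lemma~\ref{lem:intertwine} that you outline.
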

\begin{remark}
    The strictly positive drifts ensure that condition \eqref{Q400} is satisfied, and the transformations above are well-defined almost surely.
\end{remark}

\subsection{Construction of the KPZ horizon} \label{sec:KPZH_cons_sec}
The Skorokhod space $D(\R,C(\R))$ consists of functions $\R \to C(\R)$ that are right-continuous with left limits. $C(\R)$ is endowed with the topology of uniform convergence on compact sets. A generic element of $D(\R,C(\R))$ is denoted by  $F = \{F^\lambda\}_{\lambda \in \R}$, where $F^\lambda \in C(\R)$ for each $\lambda$. The standard $\sigma$-algebra $\B_D$ on $D(\R,C(\R))$ is generated by the projections $\pi^{\lambda_1,\ldots,\lambda_k}:D(\R,C(\R))\to C(\R,\R^k)$ defined by  $\pi^{\lambda_1,\ldots,\lambda_k}(F) = (F^{\lambda_1},\ldots,F^{\lambda_k})$ (See, for example, \cite[Sections 12-13]{billing} and \cite[Page 101]{Schwartz-1973}.) Recall the measures $\mu_\beta^{\bar \lambda}$ defined in \eqref{mubeta}.

\begin{proposition} \label{prop:KPZH_cons}  
     On the space $(D(\R,C(\R)),\B_D)$, there exists a family of probability measures $\Pp_\beta$ indexed by the inverse temperature $\beta > 0$, satisfying the following properties. Let $\KH_\beta = \{\KH_\beta^\lambda\}_{\lambda \in \R}$ denote the random element of $D(\R,C(\R))$ under the measure $\Pp_\beta$.     
     \begin{enumerate} [label={\rm(\roman*)}, ref={\rm(\roman*)}]   \itemsep=3pt
     \item \label{itm:KPZHBM} For $\beta > 0$ and $\lambda \in \R$, $\KH_\beta^\lambda$ is a two-sided Brownian motion with diffusivity $1$ and drift $\lambda$. In particular, $\Pp_\beta$-almost surely,  $\KH_\beta^\lambda(0) = 0$ for each $\lambda \in \R$. 
     \item \label{itm:KPZH_dist} For $\beta > 0$ and an increasing  vector $\bar \lambda = (\lambda_1 < \cdots < \lambda_k)\in\R^k$ of drifts,  the $C(\R,\R^k)$-valued  $k$-tuple $(\KH_\beta^{\lambda_1},\ldots,\KH_\beta^{\lambda_k})$ has distribution $\mu_\beta^{\bar \lambda}$. Equivalently, in terms of projections, 
     $\Pp_\beta \circ (\pi^{\bar \lambda })^{-1} = \mu_\beta^{\bar \lambda }$.  
     In terms of the mapping $\sDp^{(k)}_\beta$  and independent Brownian motions  $Y^1,\ldots,Y^k$ with drifts $\lambda_1<\dotsm<\lambda_k$, 
     \be\label{FY500}  
(\KH_\beta^{\lambda_1},\ldots,\KH_\beta^{\lambda_k})
     \;\deq\;  
     (Y^1,\Dp^{(2)}_\beta(Y^1,Y^2),\ldots, \Dp_\beta^{(k)}(Y^1,\ldots,Y^k)).  
     \ee
       The measure $\Pp_\beta$ is the unique probability measure on $D(\R,C(\R))$ satisfying this finite-dimensional marginal condition. 
     \item \label{itm:KPZH_mont} For $\beta > 0$, $\Pp_\beta$-almost surely, for all $\lambda_1 < \lambda_2$, $\KH_\beta^{\lambda_1} \sli \KH_\beta^{\lambda_2}$.
    \end{enumerate}
\end{proposition}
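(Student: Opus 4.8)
The plan is to construct $\Pp_\beta$ via Kolmogorov's extension theorem applied to the consistent family $\{\mu_\beta^{\bar\lambda}\}$, then to upgrade the resulting process on $C(\R)^{\R}$ to one living on the Skorokhod space $D(\R,C(\R))$, and finally to verify the three listed properties. The consistency needed for the extension theorem is exactly Lemma \ref{lem:OCY_consis} (together with the obvious invariance of $\mu_\beta^{\bar\lambda}$ under permutations of the index set, which reduces to the increasing case). This produces a probability measure on the product $\sigma$-algebra of $C(\R)^{\R}$ whose finite-dimensional marginals are the $\mu_\beta^{\bar\lambda}$. Uniqueness among measures with these marginals is then immediate from the fact that $\B_D$ is generated by the finite-dimensional projections $\pi^{\lambda_1,\ldots,\lambda_k}$, as recalled just before the proposition.

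The first genuine step is regularity in $\lambda$: I would show the process has a modification with paths in $D(\R,C(\R))$. For this I would fix a countable dense set $\Q$, consider the restriction $\{\KH_\beta^q\}_{q\in\Q}$, and show that for every compact interval of drifts and every compact spatial window the family of increments is ordered (an a.s.\ statement coming from Lemma \ref{lem:OCYmont} applied through the mapping $\sDp^{(k)}_\beta$, exactly as in part \ref{itm:KPZH_mont}) and tight enough to admit left and right limits at every $\lambda\in\R$. Monotonicity of the increments $\lambda\mapsto \KH_\beta^\lambda(x,y)$ for fixed $x<y$ means this is essentially the statement that a monotone $\R$-valued function has one-sided limits everywhere; upgrading from monotone increments on $\Q$ to a bona fide $D(\R,C(\R))$ path requires controlling the modulus of continuity in $x$ uniformly over $\lambda$ in a compact set, which follows because each $\KH_\beta^\lambda$ is a Brownian motion with drift $\lambda$ and the drifts are bounded on compacts. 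This is the step I expect to be the main obstacle: the bookkeeping to pass from the countable skeleton to a cadlag-in-$\lambda$, continuous-in-$x$ object, and to check that the left limits in the $C(\R)$ topology exist (not merely pointwise), is the technical heart; it mirrors the zero-temperature construction of the stationary horizon in \cite{Busani-2021,Seppalainen-Sorensen-21b}, and I would cite and adapt that argument rather than redo it from scratch.

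With the cadlag modification in hand, properties \ref{itm:KPZHBM}, \ref{itm:KPZH_dist}, \ref{itm:KPZH_mont} are read off directly. Property \ref{itm:KPZHBM}: by Theorem \ref{thm:Busedrift} (or directly from Lemma \ref{sDppres} and Lemma \ref{lem:OCYlim}, since $\sDp^{(k)}_\beta$ preserves the marginal law of each coordinate as a Brownian motion with the prescribed drift — indeed $\Dp^{(i)}_\beta(Y^1,\dots,Y^i)$ has the same law as a drift-$\lambda_i$ Brownian motion, a fact that is part of the intertwining package), each $\KH_\beta^\lambda$ is a two-sided Brownian motion with diffusivity $1$ and drift $\lambda$; the $C(\R)$-normalization built into $\CRpin$ gives $\KH_\beta^\lambda(0)=0$ simultaneously in $\lambda$ on a full-probability event. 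Property \ref{itm:KPZH_dist}: this is the definition of the finite-dimensional marginals plus \eqref{definition of script hat D}, giving \eqref{FY500}; uniqueness is the generated-$\sigma$-algebra argument above. Property \ref{itm:KPZH_mont}: for fixed $\lambda_1<\lambda_2$ the a.s.\ inequality $\KH_\beta^{\lambda_1}\sli\KH_\beta^{\lambda_2}$ follows from Lemma \ref{lem:OCYmont} via the representation $\eta^i=\Dp^{(i)}_\beta(Y^1,\dots,Y^i)$ and the monotonicity chain in the proof of Lemma \ref{sDppres}; to get the statement simultaneously for \emph{all} $\lambda_1<\lambda_2$ I would intersect the countably many events indexed by rational pairs and then use right-continuity of $\lambda\mapsto\KH_\beta^\lambda$ together with the already-established existence of left limits to extend the strict increment ordering to all real pairs (strictness survives the limit because $\KH_\beta^{\lambda_1}\sli\KH_\beta^{q}\li\KH_\beta^{\lambda_2}$ for any rational $q\in(\lambda_1,\lambda_2)$).
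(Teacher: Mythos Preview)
Your proposal is correct and follows essentially the same route as the paper: Kolmogorov extension over a countable dense set of drifts, then upgrade to a cadlag $D(\R,C(\R))$ modification via the increment monotonicity, and finally read off \ref{itm:KPZHBM}--\ref{itm:KPZH_mont} (including the rational-sandwich argument for simultaneous strict ordering). One small clarification: the uniform-on-compacts convergence in the cadlag step does not require a separate uniform modulus-of-continuity estimate in $x$; it falls out directly from the increment ordering via the sandwich $0\le \wt F^\alpha(a,b)-F^\lambda(a,b)\le \wt F^\alpha(A,B)-F^\lambda(A,B)$ for $A<a<b<B$, which is exactly how the paper (and the zero-temperature SH construction you plan to cite) handles it.
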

\begin{remark}
With a nod to the stationary horizon (SH) discussed above in Section \ref{sec:SH-KPZH}, we call the process $\{\KH_\beta^\lambda\}_{\lambda \in \R}$ the
\textit{KPZ horizon at inverse temperature $\beta$}, abbreviated $\KPZH_\beta$.  
\end{remark}

\begin{proof}
The construction follows a similar procedure as the construction of the SH in \cite{Sorensen-thesis} (we note here that the SH was originally constructed as a limit of the Busemann process in exponential LPP in \cite{Busani-2021}). We start by recalling Lemma \ref{lem:OCY_consis}, which states that the measures $\mu^{\bar \lambda}_\beta$ are consistent. Thus, for $\bar \lambda = (\lambda_1 < \cdots < \lambda_k)$, if $(\eta^1,\ldots,\eta^k) \sim \mu^{\bar \lambda}$, each $\eta^i$ has distribution $\mu_\beta^{\lambda_i}$, which is the law of a two-sided Brownian motion with diffusion coefficient $1$ and drift $\lambda_i$. By Kolmogorov's extension theorem, there exists a unique measure $\mu^\Q_\beta$ on $C(\R)^\Q = \prod_{\Q}C(\R)$ under which, for $\{\wt \KH^\alpha\}_{\alpha \in \Q} \in C(\R)^\Q$ 
and any choice of $\bar \alpha = (\alpha_1,\dotsc,\alpha_k) \in \Q^k$ with $\alpha_1 < \cdots < \alpha_k$, $(\wt \KH^{\alpha_1},\ldots,\wt \KH^{\alpha_k}) \sim \mu^{\bar \alpha}_\beta$. In particular,  under $\mu_\beta^\Q$ each $\wt \KH^\alpha$ is a Brownian motion with drift $\alpha$. 

Because the measures $\mu_\beta^{\bar \lambda}$ are supported on the sets $\X_k^\R$ of \eqref{Xndef}, we have that
\be \label{mubetamont}
\mu^\Q_\beta\bigl(\wt \KH^{\alpha_1} \sli \wt \KH^{\alpha_2} \;\; \forall \alpha_1 < \alpha_2 \in \Q\bigr) = 1.
\ee
Hence, there is a full probability event for $\mu^\Q_\beta$, on which, for each $\lambda\in \R$ and $x \in \R$, the limits
\be \label{eqn:KPZHdef}
\KH^{\lambda}(x) := \lim_{\Q \ni \alpha \searrow \lambda}\wt \KH^\alpha(x) \qquad\text{and}\qquad  \KH^{\lambda -} \lim_{\Q \ni \alpha \nearrow \lambda} \wt \KH^\alpha(x)
\ee
exist. By construction,
\be \label{eqwtKHmont}
\mu^\Q_\beta(\KH^\lambda \li \wt \KH^\alpha \;\; \forall \lambda \in \R, \alpha \in \Q \text{ with } \alpha > \lambda) = 1.
\ee 
Then, on the event of \eqref{eqwtKHmont}, for $A < a < b < B$,  
\[
\KH^\lambda(A,a) + \KH^\lambda(b,B) \le  \wt \KH^\alpha(A,a) + \wt \KH^\alpha(b,B),
\]
or equivalently,
\[
0 \le \wt \KH^\alpha(a,b) -  \KH^\lambda(a,b) \le  \wt \KH^\alpha(A,B) - \KH^\lambda(A,B), 
\]
implying that the convergence is uniform on compact sets. The same holds for limits from the left. By monotonicity, $\mu^\Q_\beta( \KH^{\lambda -} \li \wt \KH^\lambda \li  \KH^\lambda \;\;\forall \lambda \in \Q) = 1$. Additionally, uniform convergence ensures that, for each $\lambda \in \R$,  $ \KH^{\lambda-}$ and $\KH^{\lambda}$ are both Brownian motions with drift $\lambda$. Hence, for each $\lambda \in \Q$, 
\[
\mu^\Q_\beta(\KH^{\lambda-}= \wt \KH^\lambda = \KH^\lambda )
= 1.
\]

In summary, we have defined a stochastic process $\{ \KH^\lambda\}_{\lambda \in \R}$ whose projection to the rationals agrees with $\{\wt \KH^\lambda\}_{\lambda \in \Q}$ under the measure $\mu_\beta^\Q$. Let $\lambda_1 < \lambda_2 < \lambda_3$ be real, and choose rational values $\alpha_1 < \lambda_1 < \alpha_2 < \lambda_2 <  \alpha_3 < \lambda_3 < \alpha_4$. Then,
\be \label{eqn:KHbig_mont}
\KH^{\alpha_1} \li  \KH^{\lambda_1} \li  \KH^{\alpha_2} \li  \KH^{\lambda_2} \li  \KH^{\alpha_3} \li  \KH^{\lambda_3} \li  \KH^{\alpha_4}.
\ee
This implies that, $\mu_\beta^\Q$-almost surely, simultaneously for every $\lambda \in \R$, the following limits exist uniformly on compact sets, and they agree with the limits along rational directions.
\[
\KH^\lambda = \lim_{\alpha \searrow \lambda} \KH^\alpha \qquad\text{and}\qquad \KH^{\lambda -} = \lim_{ \alpha \nearrow \lambda} \KH^\alpha.
\]

Therefore, the process $\{\KH^\lambda\}_{\lambda \in \R}$ lies in the space $D(\R,C(\R))$. Let $\Pp_\beta$ be the pushforward of the measure $\mu_\beta^\Q$ to $D(\R,C(\R))$ under the map defined by \eqref{eqn:KPZHdef}. Without reference to the measure, we use $\{\KH_\beta^\lambda\}_{\lambda \in \R}$ to denote the process. 

We check that $\{\KH^\lambda\}_{\lambda \in \R}$ satisfies the claims of the theorem. Item \ref{itm:KPZHBM} follows from the uniform convergence along rational directions. Item \ref{itm:KPZH_dist} follows because for rational directions the finite-dimensional distributions were defined to be $\mu_\beta^{\bar \lambda}$. The limits in \eqref{eqn:KPZHdef} and the weak convergence  of Lemma \ref{lem:mubetaweak} extend this property  to all real directions. Since the $\sigma$-algebra on $D(\R,C(\R))$ is generated by the projections, uniqueness of this process on $D(\R,C(\R))$ follows.  To verify Item \ref{itm:KPZH_mont} for real  $\lambda_1 < \lambda_2$, pick   rational  $\alpha_1, \alpha_2$ such that  $\lambda_1 < \alpha_1 < \alpha_2 < \lambda_2$.  Then  \eqref{eqn:KHbig_mont} and  \eqref{mubetamont} give $
\KH_\beta^{\lambda_1} \li \KH_\beta^{\alpha_1} \sli \KH_\beta^{\alpha_2} \li \KH_\beta^{\lambda_2}. 
$
\end{proof}

\subsection{Distributional invariances of the $\KPZH$}

We prove the following distributional invariances of $\KPZH_\beta$.  Item \ref{itm:reflinv} below is not needed elsewhere in this paper, but it is included here for future use.

\begin{theorem} \label{thm:dist_invar}
For $\beta > 0$, let $\KH_\beta$ be the $\KPZH_\beta$. 
\begin{enumerate} [label={\rm(\roman*)}, ref={\rm(\roman*)}]   \itemsep=3pt 

\item \label{itm:shinv} Translation invariance: for each $x\in\R$, $\{\KH_\beta^\lambda(x,x + \aaabullet)\}_{\lambda \in \R} \deq \KH_\beta$.

\item\label{itm:KHscale} Scaling invariance: for each $\beta >0$, $\gamma > 0$, and $\alpha \in \R$, $
\{\gamma^{-1} F^\lambda_\beta(\gamma^2 \aabullet) + \alpha \aabullet \}_{\lambda \in \R} \deq \{F^{\gamma \lambda + \alpha}_{\gamma \beta}\}_{\lambda \in \R}.
$
\item \label{itm:inc_stat} Stationarity of increments: for $\lambda_1 < \lambda_2 < \cdots < \lambda_n$ and $\lambda^\star\in \R$,
\[
(\KH_\beta^{\lambda_2 } - \KH_\beta^{\lambda_1},\ldots,\KH_\beta^{\lambda_{n}} - \KH_\beta^{\lambda_{n - 1}}) \deq (\KH_\beta^{\lambda_2 + \lambda^\star } - \KH_\beta^{\lambda_1 + \lambda^\star},\ldots,\KH_\beta^{\lambda_{n} + \lambda^\star} - \KH_\beta^{\lambda_{n - 1} + \lambda^\star}).
\]
\item \label{itm:reflinv} Reflection invariance: $\{\KH_\beta^{(-\lambda)-}(-\tspb\aaabullet)\}_{\lambda \in \R} \deq \KH_\beta$, where $F_\beta^{(-\lambda)-} = \lim_{\alpha \nearrow \lambda} F_\beta^{(-\alpha)}$.
\end{enumerate}
\end{theorem}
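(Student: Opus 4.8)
The plan is to establish each of the four distributional invariances by pushing the corresponding property down to the level of the finite-dimensional distributions $\mu_\beta^{\bar\lambda}$, since by Proposition \ref{prop:KPZH_cons}\ref{itm:KPZH_dist} the law $\Pp_\beta$ on $D(\R,C(\R))$ is uniquely determined by those marginals. Concretely, to prove any of \ref{itm:shinv}--\ref{itm:reflinv} it suffices to fix an arbitrary finite increasing vector $\bar\lambda=(\lambda_1<\dotsm<\lambda_n)$ (or, for the reflection statement, an arbitrary vector of distinct reals $-\lambda_n<\dotsm<-\lambda_1$) and verify the identity in law of the relevant $n$-tuple in $C(\R,\R^n)$; the full process statement then follows because the transformations involved (translation of the spatial variable, the affine rescaling $T_{\gamma,\alpha}$, subtraction of consecutive components, and reflection $x\mapsto -x$ together with $\lambda\mapsto-\lambda$) are measurable on $D(\R,C(\R))$ and commute with the projections.

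For \ref{itm:shinv}, I would use the representation \eqref{FY500}: $(\KH_\beta^{\lambda_1},\dotsc,\KH_\beta^{\lambda_n})\deq\sDp_\beta^{(n)}(Y^1,\dotsc,Y^n)$ with $Y^i$ independent Brownian motions with drifts $\lambda_i$. Translating the spatial argument by a fixed $x$ and recentering at $x$ sends each $Y^i$ to $Y^i(x,x+\aabullet)$, which is again a vector of independent drift-$\lambda_i$ Brownian motions (Brownian motion has stationary increments), so it suffices to check that the map $\sDp_\beta^{(n)}$ (equivalently $\Dp_\beta^{(j)}$) commutes with this spatial shift-and-recenter operation. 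This is immediate from the explicit formula \eqref{Did} in Lemma \ref{lem:pQalt}: shifting all the $Y^i$ and the base point $0\mapsto x$ simultaneously shifts the variables of integration $x_i\mapsto x_i+x$ in both numerator and denominator, leaving the ratio, hence $\Dp_\beta^{(j)}$ after recentering, unchanged. Part \ref{itm:KHscale} is even more direct: it is exactly the measure identity \eqref{measmap}, $\mu_\beta^{\bar\lambda}\circ(T^n_{\gamma,\alpha})^{-1}=\mu_{\beta\gamma}^{\gamma\bar\lambda+(\alpha,\dotsc,\alpha)}$, from Lemma \ref{lem:pQscaling}, combined with the uniqueness of $\Pp_\beta$ from the marginals. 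Part \ref{itm:inc_stat} follows from \ref{itm:shinv} plus translation invariance in the drift parameter of the \emph{increment} process: shifting all drifts by $\lambda^\star$ replaces each $Y^i$ by a drift-$(\lambda_i+\lambda^\star)$ Brownian motion, i.e.\ $Y^i+\lambda^\star(\aabullet)$; one checks via \eqref{Did} that adding a common linear term $\lambda^\star x$ to every $Y^i$ adds the \emph{same} linear term $\lambda^\star y$ to every $\Dp_\beta^{(j)}$ (the differences $Y^{i+1}-Y^i$ in the integrands are unchanged, and the factor $e^{\beta Y^1(y)}$ acquires $e^{\beta\lambda^\star y}$), so consecutive differences $\KH_\beta^{\lambda_{j+1}}-\KH_\beta^{\lambda_j}$ are unchanged in law.

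The reflection invariance \ref{itm:reflinv} is the one I expect to be the real work. Here I would again reduce to finite-dimensional distributions: I must show $(\KH_\beta^{(-\lambda_1)-}(-\aabullet),\dotsc,\KH_\beta^{(-\lambda_n)-}(-\aabullet))\deq(\KH_\beta^{\lambda_1},\dotsc,\KH_\beta^{\lambda_n})$ for $\lambda_1<\dotsm<\lambda_n$; note that by \eqref{fixed_not}-type reasoning (the fixed-direction continuity encoded in Proposition \ref{prop:KPZH_cons}) the left-limits $\KH_\beta^{(-\lambda_i)-}$ can be replaced by $\KH_\beta^{-\lambda_i}$ for each fixed $-\lambda_i$ without changing the law, so the left-limit decoration is only needed to make the full $D(\R,C(\R))$ statement literally true (a right-continuous-with-left-limits process reflected in $\lambda$ becomes left-continuous-with-right-limits, and one takes left limits to land back in the Skorokhod space). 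The core identity is then a statement about $\mu_\beta^{\bar\lambda}$: the vector $(\eta^1,\dotsc,\eta^n)=\sDp_\beta^{(n)}(Y^1,\dotsc,Y^n)$, read backwards in the drift index and reflected in space, should be equal in law to $\sDp_\beta^{(n)}$ applied to a fresh family of independent Brownian motions with the reflected drifts $-\lambda_n<\dotsm<-\lambda_1$. I would prove this by manipulating \eqref{Did}: under $x\mapsto -x$, the nested integral over $-\infty<x_{n-1}<\dotsm<x_1<y$ becomes an integral over $-\infty<\widetilde x_1<\dotsm<\widetilde x_{n-1}<-y$ after $\widetilde x_i=-x_{n-i}$, which reverses the role of the ordering and of the increments $Y^{i+1}-Y^i$; combined with the reflection invariance of Brownian motion ($\{-B(-x)\}\deq\{B(x)\}$) this should realize the right-hand vector. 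The delicate point is bookkeeping: matching the factor $e^{\beta Y^1(y)}$ on the left with the appropriate factor after reflection requires tracking how the ``outermost'' function in the iterated $\Dp_\beta$ construction turns into the ``innermost'' one under reversal — this is precisely the symmetry $\Rp_\beta\leftrightarrow\Dp_\beta$ embodied in the intertwining Lemma \ref{lem:intertwine}, and I would expect to need an induction on $n$ using \eqref{Rid} together with \eqref{fullid} to push the reflection through the iteration cleanly. If a direct computation proves too cumbersome, an alternative is to invoke the corresponding reflection symmetry of the O'Connell-Yor polymer's invariant measures established via the queuing mappings (Theorem \ref{thm:OCYinvar} and the material in Appendix \ref{appx:queue}) and transfer it, but I would attempt the explicit route first.
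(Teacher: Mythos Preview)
Your treatment of \ref{itm:shinv}, \ref{itm:KHscale}, and \ref{itm:inc_stat} matches the paper's proof essentially verbatim: the paper argues \ref{itm:shinv} from \eqref{FY500} and the formula \eqref{Did}, cites Lemma \ref{lem:pQscaling} for \ref{itm:KHscale}, and deduces \ref{itm:inc_stat} from \ref{itm:KHscale} with $\gamma=1$ (your argument that a common affine shift of the $Y^i$ passes through $\Dp_\beta^{(j)}$ as a common linear term is exactly the content of that specialization).

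For \ref{itm:reflinv} the paper does something completely different from what you propose. It does \emph{not} attempt any direct manipulation of \eqref{Did}, nor an induction via Lemma \ref{lem:intertwine}, nor a queuing-based OCY reflection symmetry. Instead it defers the proof to the identification of the KPZH with the KPZ-equation Busemann process (Corollary \ref{cor:SHEBuse=KPZH}, proved later in the paper) and then invokes the reflection invariance of that Busemann process, Theorem \ref{thm:SHEbuse}\ref{itm:SHEBusereflect}, which is imported from \cite{Janj-Rass-Sepp-22}. The paper is explicit that \ref{itm:reflinv} is not used elsewhere, so this forward reference creates no circularity (in contrast to \ref{itm:shinv}, which \emph{is} needed in the proof of Corollary \ref{cor:SHEBuse=KPZH} and therefore must be proved directly).

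Your direct route for \ref{itm:reflinv} is a reasonable program but, as you yourself flag, the bookkeeping is the issue: after the change of variables the integrals in \eqref{Did} run over $[y,\infty)^{n-1}$ rather than $(-\infty,y]^{n-1}$, and turning this back into a $\sDp_\beta^{(n)}$ expression with the drift indices reversed is not a one-line computation; Lemma \ref{lem:intertwine} by itself does not obviously supply the needed ``innermost $\leftrightarrow$ outermost'' swap. Your fallback---an already-established OCY reflection symmetry in Appendix \ref{appx:queue}---does not actually exist in the paper, so that route is not available either. If you want a self-contained proof independent of the Busemann identification you would have to supply a genuine new argument; the paper's choice avoids this by leaning on the prior work \cite{Janj-Rass-Sepp-22}, at the cost of invoking the full SHE machinery of Section \ref{sec:SHE}.
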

\begin{proof}
 For $\lambda_1 < \cdots < \lambda_k$, by definition of the $\KPZH_\beta$ (Proposition \ref{prop:KPZH_cons}\ref{itm:KPZH_dist}),
\be \label{KHY} 
(\KH_\beta^{\lambda_1},\ldots,\KH_\beta^{\lambda_k}) \deq 
(Y^1,D_\beta(Y^1,Y^2),\ldots,D^{(k)}_\beta(Y^1,\ldots,Y^k)),
\ee
where $Y^1,\ldots,Y^n$ are independent Brownian motions with drifts $\lambda_1,\ldots,\lambda_n$.
 It follows from Lemma \ref{lem:pQalt} that $\Dp_\beta^{(k)}(Y^1,\ldots,Y^k)(x,x + y) = \Dp_\beta^{(k)}(Y^1(x,x+\aabullet),\ldots,Y^k(x,x + \aabullet))(y)$, from which Item \ref{itm:shinv} follows. Item \ref{itm:KHscale} follows from Lemma \ref{lem:pQscaling}. 

  Now we note that Item \ref{itm:inc_stat} follows from Item \ref{itm:KHscale}: Setting $\gamma = 1$, we obtain 
  \[
(\KH_{\beta}^{\lambda_1},\ldots,\KH_\beta^{\lambda_n}) \deq (\KH_\beta^{\lambda_1 + \lambda^\star}(\aabullet) - \lambda^\star \aabullet,\ldots,\KH_\beta^{\lambda_n + \lambda^\star}(\aabullet) - \lambda^\star \aabullet),
  \]
  so 
  \[
(\KH_\beta^{\lambda_2 } - \KH_\beta^{\lambda_1},\ldots,\KH_\beta^{\lambda_{n}} - \KH_\beta^{\lambda_{n - 1}}) \deq (\KH_\beta^{\lambda_2 + \lambda^\star } - \KH_\beta^{\lambda_1 + \lambda^\star},\ldots,\KH_\beta^{\lambda_{n} + \lambda^\star} - \KH_\beta^{\lambda_{n - 1} + \lambda^\star}).
\]

Item \ref{itm:reflinv} follows from  Theorem \ref{thm:SHEbuse}\ref{itm:SHEBusereflect} and Corollary \ref{cor:SHEBuse=KPZH}, the first of which is proved in \cite{Janj-Rass-Sepp-22} and the second of which we prove later in this paper. One may notice that Item \ref{itm:shinv} also follows from Theorems \ref{thm:SHEbuse}\ref{itm:SHEbuseshift} and Corollary \ref{cor:SHEBuse=KPZH}, but we have proved this item here  to avoid circular logic because it is used  to prove Corollary \ref{cor:SHEBuse=KPZH}.
\end{proof}


 \subsection{Difference of two functions (Proof of Theorem \ref{thm:lambda-F})} 
\begin{proof}[Proof of Theorem \ref{thm:lambda-F}] \label{sec: x_Desc}

Let $Y^1,Y^2$ be two independent Brownian motions  with drifts $\lambda_1<\lambda_2$. 
By \eqref{FY500} and  \eqref{Did}, as processes indexed by  $y \in \R$, 
 \be \label{inc_dist}
 e^{\beta(\KH_\beta^{\lambda_2}(y) - \KH_\beta^{\lambda_1}(y))}
 \deq \f{\int_{-\infty}^y e^{\beta(Y^2(x) - Y^1(x))}\,dx   }{\int_{-\infty}^0 e^{\beta(Y^2(x) - Y^1(x))}\,dx} = 1 + \f{\int_{0}^y e^{\beta(Y^2(x) - Y^1(x))}\,dx   }{\int_{-\infty}^0 e^{\beta(Y^2(x) - Y^1(x))}\,dx}.
 \ee
 The process $\beta(Y^2(\aabullet) - Y^1(\aabullet))$ has the distribution of $\sqrt 2 \beta B(\aabullet) + \beta \lambda \,\aabullet$, where $B$ is a standard two-sided Brownian motion. When only considering the process for $y \ge 0$, by the independence of Brownian increments, the numerator (as a process in $y \ge 0$) and the denominator of the last ratio  in \eqref{inc_dist} are independent.   The distribution of the denominator is computed in Lemma \ref{lem:int_Gamma} using results from \cite{Dufresne-1990}.
 \end{proof}
 
 \subsection{Discontinuities of $\KPZH$ in the drift parameter} \label{sec:disc}
 
 \begin{lemma} \label{lem:jumpscond}
On a probability space $(\Omega,\F,\Pp)$, let $\lambda \mapsto X(\lambda)$ be an increment-stationary, nondecreasing, almost surely continuous process with $\Ee[X(1)-X(0)] < \infty$. Then, for every $\ve > 0$,
\be \label{eqn:cont_in_prob}
\lim_{\Z \ni n \to \infty} n\Pp(X(n^{-1}) - X(0) > \ve) = 0.
\ee
\end{lemma}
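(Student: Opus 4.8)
The plan is to rewrite $n\Pp(X(n^{-1})-X(0)>\ve)$ as the expectation of an integer-valued counting variable and then pass to the limit by dominated convergence, with the almost sure continuity of $X$ providing the pointwise decay.

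First, fix $\ve>0$. For each $n\ge 1$ I would partition $[0,1]$ into the $n$ subintervals $[k/n,(k+1)/n]$, $k=0,\dots,n-1$, and set
\[
N_n=\#\Bigl\{\,k\in\{0,\dots,n-1\}\,:\,X\bigl(\tfrac{k+1}{n}\bigr)-X\bigl(\tfrac{k}{n}\bigr)>\ve\,\Bigr\}.
\]
Increment-stationarity gives that each increment $X(\tfrac{k+1}{n})-X(\tfrac{k}{n})$ has the same law as $X(n^{-1})-X(0)$, so $\Ee[N_n]=n\Pp(X(n^{-1})-X(0)>\ve)$, and it suffices to prove $\Ee[N_n]\to 0$. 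The second step is an $n$-uniform integrable bound: since $X$ is nondecreasing, each partition increment is nonnegative, so $\ve N_n\le\sum_{k=0}^{n-1}\bigl(X(\tfrac{k+1}{n})-X(\tfrac{k}{n})\bigr)=X(1)-X(0)$ by telescoping, whence $N_n\le\ve^{-1}\bigl(X(1)-X(0)\bigr)$, an integrable random variable (by hypothesis) not depending on $n$. The third step uses continuity: on the full-probability event where $X$ is continuous, $X$ is uniformly continuous on $[0,1]$, so there is a random $\delta>0$ with $|X(s)-X(t)|<\ve$ whenever $|s-t|<\delta$; once $n^{-1}<\delta$ every partition increment is $<\ve$, i.e.\ $N_n=0$. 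Hence $N_n\to 0$ almost surely, and dominated convergence (with the bound from the second step) gives $\Ee[N_n]\to 0$, which is \eqref{eqn:cont_in_prob}.

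I do not expect a genuine obstacle here; the one point requiring care is the logical ordering. The naive route — combining $\Ee[X(n^{-1})-X(0)]=n^{-1}\Ee[X(1)-X(0)]$ (itself a consequence of increment-stationarity and telescoping) with Markov's inequality — only yields $n\Pp(X(n^{-1})-X(0)>\ve)\le\ve^{-1}\Ee[X(1)-X(0)]$, i.e.\ boundedness rather than decay. The improvement to $o(1)$ comes precisely from replacing the first-moment estimate of a single increment by the counting variable $N_n$, for which uniform continuity forces $N_n\to 0$ while the telescoping bound supplies the domination needed to interchange limit and expectation.
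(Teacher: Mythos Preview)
Your proof is correct and essentially identical to the paper's: your counting variable $N_n$ is exactly the paper's $J_n^\ve$, and the three steps---increment-stationarity to rewrite $n\Pp(\cdots)$ as $\Ee[N_n]$, the telescoping bound $\ve N_n\le X(1)-X(0)$ for domination, and uniform continuity on $[0,1]$ to force $N_n\to 0$ a.s.---match the paper line for line.
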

\begin{proof}
Partition $[0,1]$ into disjoint intervals of length $n^{-1}$, and let  
\[
J_n^\ve = \sum_{i = 1}^n \ind \{X(i/n) - X((i - 1)/n) > \ve\}. 
\]
By increment-stationarity,  $\Ee[J_n^\ve] = n \Pp(X(n^{-1}) - X(0) > \ve).$
By pathwise uniform continuity of  $X$ on $[0,1]$, $J_n^\ve = 0$ for large enough $n$.  The bound 
$\ve J_n^\ve \le  X(1)-X(0)$
and dominated convergence  complete the proof. 
\end{proof}
\begin{remark}
The condition \eqref{eqn:cont_in_prob} appears in Chapter 12 of \cite{Breiman-book} as a defining condition of Brownian motion. The process we will apply this to is of a different nature, as it is nondecreasing and does not have independent increments.
\end{remark}

\begin{corollary} \label{cor:SHEjumps}
For $y \in \R$ and $\beta > 0$, the process $\lambda \mapsto \KH_\beta^\lambda(y)$ is not almost surely continuous. 
\end{corollary}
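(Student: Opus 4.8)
The plan is to argue by contradiction, using Lemma \ref{lem:jumpscond} as a necessary condition for continuity of $\lambda\mapsto\KH_\beta^\lambda(y)$ and then violating that condition with the explicit law supplied by Theorem \ref{thm:lambda-F}. I will carry out the details for $y>0$. The case $y<0$ reduces to this via the reflection invariance Theorem \ref{thm:dist_invar}\ref{itm:reflinv}: that identity exhibits $\lambda\mapsto\KH_\beta^{\lambda}(-y)$ (after reindexing $\lambda\mapsto-\lambda$ and passing to the left-continuous version, neither of which affects whether a c\`adl\`ag path is continuous) as a copy in law of $\lambda\mapsto\KH_\beta^{\lambda}(y)$ with $-y>0$; and $y=0$ is excluded because there $\KH_\beta^{\lambda}(0)\equiv 0$. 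So fix $y>0$ and set $X(\lambda):=\KH_\beta^{\lambda}(y)-\KH_\beta^{0}(y)$. Since $\KH_\beta^{0}(y)$ does not depend on $\lambda$, the process $\lambda\mapsto\KH_\beta^{\lambda}(y)$ is almost surely continuous if and only if $X$ is.

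Next I would verify the hypotheses of Lemma \ref{lem:jumpscond} for $X$. It is nondecreasing because Proposition \ref{prop:KPZH_cons}\ref{itm:KPZH_mont} gives $\KH_\beta^{\lambda_1}\sli\KH_\beta^{\lambda_2}$ for $\lambda_1<\lambda_2$, which applied to the increment over $[0,y]$ with $y>0$ yields $\KH_\beta^{\lambda_1}(y)<\KH_\beta^{\lambda_2}(y)$. It has stationary increments by Theorem \ref{thm:dist_invar}\ref{itm:inc_stat}. Finally $X(0)=0$, and since each $\KH_\beta^{\lambda}$ is a Brownian motion with drift $\lambda$ (Proposition \ref{prop:KPZH_cons}\ref{itm:KPZHBM}), $\Ee[X(1)-X(0)]=\Ee[\KH_\beta^{1}(y)]-\Ee[\KH_\beta^{0}(y)]=y<\infty$. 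Hence, \emph{if} $X$ were almost surely continuous, Lemma \ref{lem:jumpscond} would force
\[
\lim_{n\to\infty} n\,\Pp\bigl(X(1/n)>\ve\bigr)=0 \qquad\text{for every }\ve>0 .
\]
The contradiction will come from showing the left side is strictly positive for any fixed $\ve$.

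To get the lower bound, apply Theorem \ref{thm:lambda-F} with $\lambda_1=0$ and $\lambda_2=1/n$ (so the parameter there is $\lambda=1/n>0$): for the fixed $y>0$,
\[
X(1/n)\;\deq\;\beta^{-1}\log\bigl(1+X_{1/n,\beta}\,Y_{1/n,\beta}(y)\bigr),
\]
where $X_{1/n,\beta}\sim\mathrm{Gamma}\bigl((n\beta)^{-1},\beta^{-2}\bigr)$ is independent of $Y_{1/n,\beta}(y)\deq\int_0^{y}e^{\sqrt2\,\beta B(x)+\beta x/n}\,dx$. Pathwise $Y_{1/n,\beta}(y)\ge\int_0^{y}e^{\sqrt2\,\beta B(x)}\,dx=:Y_\infty$ since the omitted exponent is nonnegative on $[0,y]$, and $Y_\infty>0$ almost surely, so one can fix $\delta\in(0,c]$ with $p:=\Pp(Y_\infty>\delta)>0$, where $c:=e^{\beta\ve}-1>0$. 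Using independence of $X_{1/n,\beta}$ and $Y_{1/n,\beta}(y)$,
\[
\Pp\bigl(X(1/n)>\ve\bigr)=\Pp\bigl(X_{1/n,\beta}\,Y_{1/n,\beta}(y)>c\bigr)\ge\Pp\bigl(X_{1/n,\beta}>c/\delta\bigr)\,\Pp\bigl(Y_{1/n,\beta}(y)>\delta\bigr)\ge p\,\Pp\bigl(X_{1/n,\beta}>c/\delta\bigr).
\]
It then remains to check $\liminf_{n}n\,\Pp\bigl(X_{1/n,\beta}>c/\delta\bigr)>0$. This is the one genuinely computational point: $\Pp(\mathrm{Gamma}(a,b)>t)=\Gamma(a)^{-1}b^{a}\int_t^\infty x^{a-1}e^{-bx}\,dx$, and $\Gamma(a)^{-1}=a/\Gamma(a+1)$ with $\Gamma(a+1)\le1$ on $(0,1]$, while $b^{a}$ and $\int_t^\infty x^{a-1}e^{-bx}\,dx$ stay bounded below by positive constants as $a\downarrow0$ (for $t=c/\delta\ge1$ one has $x^{a-1}\ge x^{-1}$ on $[t,\infty)$); hence $\Pp(\mathrm{Gamma}(a,b)>t)\ge c_1 a$ for all small $a>0$, with $c_1=c_1(\beta,t)>0$. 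Taking $a=(n\beta)^{-1}$ gives $n\,\Pp(X_{1/n,\beta}>c/\delta)\ge c_1/\beta$ for all large $n$, so $\liminf_n n\,\Pp(X(1/n)>\ve)\ge p\,c_1/\beta>0$, contradicting the displayed consequence of Lemma \ref{lem:jumpscond}. Therefore $X$, and hence $\lambda\mapsto\KH_\beta^{\lambda}(y)$, is not almost surely continuous.

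The main obstacle is essentially bookkeeping rather than depth: the analytic substance is already contained in Theorem \ref{thm:lambda-F} and Lemma \ref{lem:jumpscond}, and the only new ingredient needed to fuse them is the elementary fact that a Gamma law with shape parameter $a\to0^{+}$ assigns mass of exact order $a$ to any fixed half-line $(t,\infty)$; one must also be mindful to keep $\lambda>0$ throughout so the Gamma shape in Theorem \ref{thm:lambda-F} is positive, and to handle $y<0$ by reflection as indicated above.
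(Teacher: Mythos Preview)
Your proof is correct and follows essentially the same route as the paper's: verify the hypotheses of Lemma \ref{lem:jumpscond}, then violate its conclusion via the explicit Gamma--times--exponential-integral law from Theorem \ref{thm:lambda-F}, using that a $\mathrm{Gamma}(a,\cdot)$ tail at a fixed threshold has order $a$ as $a\to 0$. The only cosmetic differences are that the paper reduces the $y<0$ case via shift invariance (Theorem \ref{thm:dist_invar}\ref{itm:shinv}) rather than reflection, first normalizes to $\beta=1$, and integrates over the law of $Y$ instead of fixing a threshold $\delta$.
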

\begin{proof}
We may take $y > 0$ because for $y < 0$, Theorem \ref{thm:dist_invar}\ref{itm:shinv} and $\KH_\beta^\lambda(0) = 0$ (Proposition \ref{prop:KPZH_cons}\ref{itm:KPZHBM}) implies 
\[
\{\KH_\beta^\lambda(y)\}_{\lambda \in \R} =  \{-\KH_\beta^\lambda(y,0)\}_{\lambda \in \R} \deq  \{-\KH_\beta^\lambda(0,-y)\}_{\lambda \in \R} 
= \{-\KH_\beta^\lambda(-y)\}_{\lambda \in \R}. 
\]
By the scaling relations of Theorem \ref{thm:dist_invar}\ref{itm:KHscale}, it suffices to take $\beta = 1$. We apply Lemma \ref{lem:jumpscond} to the process $\lambda \mapsto \KH_1^\lambda(y)$, which has stationary increments by Theorem \ref{thm:dist_invar}\ref{itm:inc_stat} and is strictly increasing by Proposition \ref{prop:KPZH_cons}\ref{itm:KPZH_mont}. Since $\KH_1^\lambda$ is a Brownian motion with drift $\lambda$ (Proposition \ref{prop:KPZH_cons}\ref{itm:KPZHBM}), we have $\Ee[\KH_1^1(y))] = y < \infty$. Lemma \ref{lem:jumpscond} reduces the problem to showing that for some $\ve > 0$,
\[
\liminf_{\lambda \searrow 0} \lambda^{-1} \Pp(\KH_1^\lambda(y) - \KH_1^0(y) > \ve) > 0.
\]
In fact, we show that this is true for all $\ve > 0$. 
For each $\lambda > 0$, let $X_\lambda,Y_\lambda(y)$ be the independent random variables of Theorem \ref{thm:lambda-F} with $\beta = 1$ so that  $\KH_1^\lambda(y) - \KH_1^0(y) \deq\log(1 + X_\lambda Y_\lambda(y))$. Observe that for a standard Brownian motion $B$,  
\[
Y_\lambda(y) \deq \int_0^y \exp\bigl(\sqrt 2  B(x) + \lambda x\bigr)\,dx > \int_0^y \exp\bigl(\sqrt 2 B(x)\bigr)\,dx =: Y,
\]
where $Y$ is taken as a new random variable  independent of $X_\lambda$. 
 By formula 1.8.4 on page 612 of \cite{BM_handbook}, $Y$ has a   density function $f_Y$  that  is 
strictly 
 positive on $(0,\infty)$.  For $\ve > 0$, let $\ve' = e^\ve - 1 > 0$. Since $X_\lambda\sim$ Gamma$(\lambda,1)$, 
\[
\begin{aligned}
&\quad \Pp(\KH_1^\lambda(y) - \KH_1^0(y) > \ve) = \Pp(X_\lambda Y_\lambda(y) > \ve') 
    \ge \Pp(X_\lambda Y > \ve') = \Pp(X_\lambda > \ve'/Y)  \\
    &= \int_0^\infty \int_{\ve'/w} f_Y(w) \f{1}{\Gamma(\lambda)} x^{\lambda -1} e^{-x}\,dx\,dw \ge \f{1}{\Gamma(\lambda)} \int_0^\infty \int_{1 \vee ({\ve'}/{w})} f_Y(w) x^{-1} e^{-x}\,dx\,dw = \f{C_\ve}{\Gamma(\lambda)},
    \end{aligned}
\]
where $C_\ve$ is a positive constant. Thus, 
$
\liminf_{\lambda \searrow 0} \lambda^{-1} \Pp(F_1^\lambda(y) > \ve) \ge C_\ve > 0
$ because $\lim_{\lambda \searrow 0} \lambda \Gamma(\lambda) = 1$.
\end{proof}

\section{Stochastic heat equation} \label{sec:SHE}

This section collects the necessary background on the SHE and KPZ equation. Section \ref{sec:SHEGreen} mainly summarizes results from \cite{Alb-Janj-Rass-Sepp-22,Janj-Rass-Sepp-22}. Section \ref{sec:OCYSHE} deals with convergence of the OCY polymer to the SHE.

\subsection{Green's function and Busemann process of the SHE} \label{sec:SHEGreen}

We briefly describe the construction of the four-parameter field $Z_\beta(\aabullet,\aabullet\viiva \aabullet,\aabullet)$ from \cite{Alberts-Khanin-Quastel-2014a,Alberts-Khanin-Quastel-2014b,Alb-Janj-Rass-Sepp-22}. We primarily adopt the notation of \cite{Alb-Janj-Rass-Sepp-22} and \cite{Janj-Rass-Sepp-22}. In the following, we give a brief overview of white noise. Some standard references are \cite{Janson,Nua06}.

On an appropriate probability space $(\Omega,\F,\Pp)$, a space-time white noise $W$ is a mean-zero Gaussian process whose index set is $L^2(\R^2)$, with Lebesgue measure.  It satisfies the almost sure linearity $W(af + bg) = aW(f) + bW(g)$ as well as the $L^2$ isometry property:
\[
\Ee[W(f)W(g)] = \int_{\R^2} f(t,x)g(t,x)\,dt\,dx.
\]
One immediate consequence is that, whenever $A$ and $B$ are disjoint, or more generally, their intersection has Lebesgue measure $0$, $W(\ind_A)$ and $W(\ind_B)$ are independent. As a point of notation, we often write
\[
W(f) = \int_{\R^2} f(t,x) W(dt\,dx) = \int_{\R^2} f(t,x)W(t,x)\,dt\,dx,
\]
where the second equality is formal because $W$ is a random distribution and not defined pointwise.  

We can also define multiple white noise integrals, as in \cite[Section 1.1.2]{Nua06}, denoted
\[
I_k(f) = \int_{\R^k}\int_{\R^k} f(t_1,\ldots,t_k,x_1,\ldots,x_k) \prod_{i = 1}^k W(dt_i,dx_i).
\]
These satisfy $E[I_k(f) I_j(g)] = 0$ for $k \neq j$ (orthogonality), and 
\begin{equation} \label{eq:Ikfbd}
E[I_k(f)^2]\le \|f\|_{L^2(\R^k \times \R^k)}^2.
\end{equation}
 Equality holds if we replace $f$ with the symmetrization of $f$, but in general, we have an inequality (see \cite[Section 1.1.2]{Nua06}).  

We define $Z_\beta$ as the following chaos expansion, where  convergence holds in $L^2(\Pp)$.
\be \label{SHEchaos}
Z_\beta(t,y\viiva s,x) =  \sum_{k = 0}^\infty \beta^k \int_{\R^k}\int_{\R^k} \prod_{i = 0}^k \rho(t_{i + 1} - t_i,x_{i +1} - x_i) \prod_{i = 1}^k W(dt_i,dx_i).
\ee
Here, $\rho(t,x) = \f{1}{\sqrt{2\pi t}} e^{-\f{x^2}{2t}} \ind(t > 0)$ is the  heat kernel, 
and the conventions in the integrals are $t_0 = s,x_0 = x,t_{k +1} = t$, and $x_{k + 1} = y$. 
For $f:\R \to \R_{>0}$ with sufficient decay at $\pm \infty$, and $t > s$, define
\be \label{ZZf}
Z_\beta(t,y\viiva s, f) = \int_\R Z_\beta(t,y\viiva s,x) f(x)\,dx.
\ee
When the value of $s$ is unspecified in \eqref{ZZf}, we take $s = 0$. Theorem 2.2 and Lemma A.5 of \cite{Alb-Janj-Rass-Sepp-22} prove that, in the rigorous sense of solutions in \cite{Bertini-Cancrini-1995,Bertini-Giacomin-1997,Chen-Dalang-2014,Chen-Dalang-2015}, the process \eqref{ZZf} solves
the SHE defined in \eqref{eqn:SHE}, for strictly positive functions $f = e^{h_s}$ satisfying
\[
\int_\R e^{-\alpha x^2}f(x)\,dx < \infty
\]
for all $\alpha > 0$. In fact, solutions can be defined for a class of measures which are not necessarily absolutely continuous with respect to Lebesgue measure, but in all applications of this paper, $f(x) = e^{B(x) + \lambda x}$, where  $B$ is a Brownian motion, and $\lambda \in \R$, so the necessary conditions are satisfied. We refer the reader to \cite[Appendix A]{Alb-Janj-Rass-Sepp-22} and the references therein for a more technical discussion on the solution of the SHE from measure-valued initial data.

\begin{theorem}   \cite[Proposition 2.3]{Alb-Janj-Rass-Sepp-22},\cite[Equation (18)]{Alberts-Khanin-Quastel-2014a} \label{thm:Zinvar} 
    Let $\beta > 0$. Then
  the following distributional equalities hold between random elements of $C(\Rup, \R)$.
    \begin{enumerate} [label={\rm(\roman*)}, ref={\rm(\roman*)}]   \itemsep=3pt
    \item \rm{(}Shift invariance\rm{)} \label{itm:Zshift} For given $u,z \in \R$, $Z_\beta(t,y\viiva s,x) \deq Z_\beta(t + u,y + z\viiva s + u,x + z).$
    \item \rm{(}Reflection invariance\rm{)} \label{itm:Zrefl}
    $
    Z_\beta(t,y \viiva s,x) \deq Z_\beta(t,-y \viiva,s,-x).
    $
    \item \rm{(}Rescaling\rm{)} \label{itm:Zscale} For given $\lambda > 0$,
    $
    Z_\beta(t,y\viiva s,x) \deq \lambda Z_{\beta/\sqrt \lambda}(\lambda^2 t,\lambda y \viiva \lambda^2 s,\lambda x).
    $
    \end{enumerate}
    Furthermore,
    \begin{enumerate} [resume,label={\rm(\roman*)}, ref={\rm(\roman*)}]   \itemsep=3pt
    \item \label{itm:SHEmoments}There exists a constant $C = C_\beta$ so that for all $t > s$ and $x,y \in \R$,
    \[
    \Ee[Z_\beta^2(t,y \viiva s,x)] \le C \rho^2(t - s,y - x).
    \]
    \end{enumerate}
\end{theorem}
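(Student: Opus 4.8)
The plan is to work directly from the chaos expansion \eqref{SHEchaos}, which presents $Z_\beta$ as an $L^2(\Pp)$-convergent series of multiple Wiener integrals against the space-time white noise $W$, and to read every assertion off the standard symmetries of white noise together with the matching symmetries of the heat kernel $\rho$. Parts \ref{itm:Zshift}--\ref{itm:Zscale} will come from measure-preserving changes of the integration variables, and \ref{itm:SHEmoments} from chaos orthogonality plus an explicit computation of each chaos-level second moment.

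For the distributional identities, I would change the integration variables $(t_i,x_i)$ in \eqref{SHEchaos} simultaneously at every chaos level $k$. For \ref{itm:Zshift}, substitute $t_i\mapsto t_i+u$, $x_i\mapsto x_i+z$: the kernel factors $\rho(t_{i+1}-t_i,x_{i+1}-x_i)$ are unchanged while $W$ is stationary under space-time translations, so the $k$-th term of $Z_\beta(t+u,y+z\viiva s+u,x+z)$ has the law of the $k$-th term of $Z_\beta(t,y\viiva s,x)$. For \ref{itm:Zrefl}, substitute $x_i\mapsto-x_i$ and use $\rho(t,x)=\rho(t,-x)$ and invariance of $W$ under spatial reflection. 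For \ref{itm:Zscale}, substitute $t_i=\lambda^2\tau_i$, $x_i=\lambda\xi_i$: the parabolic scaling $\rho(\lambda^2u,\lambda v)=\lambda^{-1}\rho(u,v)$ contributes $\lambda^{-(k+1)}$ from the $k+1$ kernels, and the white-noise scaling $W(\lambda^2\,d\tau,\lambda\,d\xi)\deq\lambda^{3/2}\wt W(d\tau,d\xi)$, with $\wt W$ a standard white noise, contributes $\lambda^{3k/2}$ from the $k$ noise factors; combined with the prefactor $\beta^k$ this gives total weight $\beta^k\lambda^{-(k+1)+3k/2}$, which equals the $k$-free value $\lambda^{-1}$ exactly when $\beta$ is replaced by $\beta/\sqrt\lambda$. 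Summing over $k$ against the common noise $\wt W$ yields $Z_\beta(t,y\viiva s,x)\deq\lambda\tsp Z_{\beta/\sqrt\lambda}(\lambda^2t,\lambda y\viiva\lambda^2s,\lambda x)$. In each case the substitution and the law of the transformed noise are independent of $(s,x,t,y)$, so the identities hold at the level of the whole random field, as equalities in distribution in $C(\Rup,\R)$.

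For \ref{itm:SHEmoments}, distinct chaos levels are orthogonal in $L^2(\Pp)$, and the $k$-th multiple integral over the time-ordered region has second moment equal to the squared $L^2$ norm of its integrand, so $\Ee[Z_\beta^2(t,y\viiva s,x)]=\sum_{k\ge0}\beta^{2k}\int\prod_{i=0}^k\rho^2(t_{i+1}-t_i,x_{i+1}-x_i)$, the integral over $\{s<t_1<\dots<t_k<t\}\times\R^k$. Writing $\rho^2(u,v)=(2\sqrt{\pi u})^{-1}\rho(u/2,v)$ turns the $x$-integration into a Chapman--Kolmogorov convolution, collapsing it to $\rho((t-s)/2,y-x)\prod_{i=0}^k(2\sqrt{\pi(t_{i+1}-t_i)})^{-1}$, and the remaining integral over the time simplex is a Dirichlet (beta) integral equal to $(t-s)^{(k-1)/2}\pi^{(k+1)/2}/\Gamma((k+1)/2)$. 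Collecting constants, and using $\rho((t-s)/2,y-x)=2\sqrt{\pi(t-s)}\,\rho^2(t-s,y-x)$, yields the closed form $\Ee[Z_\beta^2(t,y\viiva s,x)]=\sqrt\pi\,\rho^2(t-s,y-x)\,\Psi(\beta^2\sqrt{t-s}/2)$ with $\Psi(z)=\sum_{k\ge0}z^k/\Gamma((k+1)/2)$. Since $\Psi$ is entire, bounding it over the relevant range of $t-s$ gives \ref{itm:SHEmoments}, and finiteness of $\Psi$ also re-establishes the $L^2(\Pp)$-convergence in \eqref{SHEchaos}.

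I do not expect a genuine obstacle: the whole argument is standard white-noise calculus, which is why the theorem is simply cited from \cite{Alberts-Khanin-Quastel-2014a,Alb-Janj-Rass-Sepp-22}. The steps that most want care are (a) the exponent bookkeeping in \ref{itm:Zscale}, where the $k$-dependence has to cancel and thereby pins down the temperature change $\beta\mapsto\beta/\sqrt\lambda$; (b) phrasing \ref{itm:Zshift}--\ref{itm:Zscale} as equalities of the entire field in $C(\Rup,\R)$ rather than at a fixed quadruple, legitimate because one transformation of $W$ acts on all chaos terms at once; and (c) the Dirichlet-integral evaluation and the series identity for $\Psi$ underlying \ref{itm:SHEmoments}.
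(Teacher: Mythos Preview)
The paper does not give its own proof of this theorem; it is simply cited from \cite{Alberts-Khanin-Quastel-2014a,Alb-Janj-Rass-Sepp-22}. Your chaos-expansion approach is exactly the standard route those references take, and your treatment of \ref{itm:Zshift}--\ref{itm:Zscale} is correct, including the exponent bookkeeping that forces the temperature change $\beta\mapsto\beta/\sqrt\lambda$, and the observation that a single noise transformation acts on all chaos levels simultaneously so the identities hold as fields.

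There is, however, a genuine issue in your handling of \ref{itm:SHEmoments}, though it stems from the way the statement is phrased rather than from your computation. Your closed form $\Ee[Z_\beta^2(t,y\viiva s,x)]=\sqrt\pi\,\rho^2(t-s,y-x)\,\Psi(\beta^2\sqrt{t-s}/2)$ is correct, but $\Psi(z)=\sum_{k\ge0}z^k/\Gamma((k+1)/2)$ grows like $e^{z^2}$ as $z\to\infty$, so $\Ee[Z_\beta^2]/\rho^2$ is \emph{not} bounded uniformly over all $t>s$. Your hedge ``bounding it over the relevant range of $t-s$'' papers over this: taken literally, the inequality in \ref{itm:SHEmoments} with a constant depending only on $\beta$ is false. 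The cited sources work on bounded time intervals (in \cite{Alberts-Khanin-Quastel-2014a} the constant depends on the time horizon), so the intended statement is $C=C_{\beta,T}$ for $t-s\le T$; you should say so explicitly rather than leave the quantifier as written.
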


We are particularly interested in the $\lambda = \beta^2$ case of Theorem \ref{thm:Zinvar}\ref{itm:Zscale}, in which the distributional equality becomes
\be \label{Zeq}
Z_\beta(t,y\viiva s,x) \deq \beta^2 Z_1(\beta^4 t,\beta^2 y \viiva \beta^4 s,\beta^2 x).
\ee
Jointly with the Busemann functions, by appeal to the  Busemann limits \eqref{eqn:buselim},  we have  this  distributional equality:
\be \label{Zbeq}
\begin{aligned}
& \bigl\{\bb^{\lambda \sig}(s,x,t,y), Z_\beta(t',y' \viiva s',x'): (s,x,t,y)\in\R^4 , \, (s',x',t',y')\in\Rup, \, \lambda \in \R, \, \sigg \in \{-,+\}\bigr\} \\
&\qquad\qquad\qquad
\deq \bigl\{b_1^{\lambda \beta^{-2} \sig}(\beta^4 s,\beta^2 x, \beta^4 t,\beta^2 y), \beta^2 Z_1(\beta^4t',\beta^2y' \viiva \beta^4s',\beta^2x'):  \\
&\qquad \qquad\qquad\qquad\qquad\qquad
(s,x,t,y)\in\R^4 , \, (s',x',t',y')\in\Rup, \, \lambda \in \R, \, \sigg \in \{-,+\}  \bigr\}.
\end{aligned}
\ee
In the following theorems, we use \eqref{Zbeq} to transfer the statements for $\beta = 1$ from \cite{Janj-Rass-Sepp-22} to general $\beta > 0$. We introduce the following class of functions, named $\mathbb F_\lambda$. Let $f :\R \to (0,\infty)$ be a Borel function that is locally bounded. Then, for $\lambda \in \R$, we say that $f \in \mathbb F_\lambda$ if 
\be \label{eqn:SHE_drift_cond}
\begin{aligned}
    -\infty \le \limsup_{x \to -\infty} \f{\log f(x)}{|x|} &< \lambda = \lim_{x \to \infty} \f{\log f(x)}{x} \qquad\qquad\qquad\ \text{ if }\lambda > 0 \\
    \lim_{x \to -\infty} \f{\log f(x)}{|x|} &= |\lambda| > \limsup_{x \to \infty} \f{\log f(x)}{x} \ge -\infty \qquad \text{ if }\lambda < 0 \\
    -\infty &\le \limsup_{|x| \to \infty} \f{\log f(x)}{|x|} \le 0\qquad \qquad\qquad\text{ if }\lambda = 0.
\end{aligned}
\ee

\begin{theorem}\cite[Theorems 3.1, 3.3, 3.5, 3.23, and Corollary 3.4]{Janj-Rass-Sepp-22} \label{thm:SHEbuse}
Let $\beta > 0$. Then, there exists a stochastic process $\{\bb^{\lambda \sig}(s,x,t,y): s,x,t,y,\lambda \in \R,\sigg \in \{-,+\}\}$ defined on the probability space $(\Omega,\F,\Pp)$ and satisfying the following properties. For this process, we define
\[
\Lambda_{\bb} = \{\lambda \in \R: \bb^{\lambda -}(s,x,t,y) \neq \bb^{\lambda +}(s,x,t,y) \text{ for some }(s,x,t,y) \in \R^4\}.
\]
When $\lambda \notin \Lambda_{\bb}$, we write $\bb^\lambda = \bb^{\lambda - } = \bb^{\lambda +}$.
\begin{enumerate} [label={\rm(\roman*)}, ref={\rm(\roman*)}]   \itemsep=3pt
\item \label{itm:BuseBM} For each $t,\lambda \in \R$, under $\Pp$, the process $y \mapsto \bb^{\lambda}(t,0,t,y)$ is a two-sided Brownian motion with diffusivity $\beta$ and drift $\lambda$. 
\item \label{itm:SHEbuseshift} \rm{(}Shift\rm{)} For $r,z \in \R$, as processes in $s,x,t,y,\lambda \in \R$, $\sigg \in \{-,+\}$,
\[
\bb^{\lambda \sig}(s,x,t,y) \deq \bb^{\lambda \sig}(s + r,x + z,t + r,y + z). 
\]
\item \label{itm:SHEBusereflect} \rm{(}Reflection\rm{)} As processes in $s,x,t,y,\lambda \in \R$, $\sigg \in \{-,+\}$,
\[
\bb^{\lambda \sig}(s,x,t,y) \deq \bb^{(-\lambda)(-\sig)}(s,-x,t,-y).
\]
\item \label{itm:SHEBuseP0} For each $\lambda \in \R$, $\Pp(\lambda \in \Lambda_{\bb}) = 0$.
\item \label{itm:SHEBusedichot} Either $\Pp(\Lambda_{\bb} = \varnothing)=1$ or $\Pp(\Lambda_{\bb} \text{ is countable and dense in } \R) = 1$.
\end{enumerate}
Furthermore, there exists an event of full probability on which the following hold:
\begin{enumerate}[resume, label={\rm(\roman*)}, ref={\rm(\roman*)}]   \itemsep=3pt
\item \label{itm:SHEBusecont} For each $\lambda \in \R$ and $\sigg \in \{-,+\}$, $\bb^{\lambda \sig} \in C(\R^4,\R)$. 
\item \label{itm:SHEBusemont} For all $x < y,t$, and $\alpha < \lambda$, 
\[
\bb^{\alpha-}(t,x,t,y) \le \bb^{\alpha +}(t,x,t,y) < \bb^{\lambda -}(t,x,t,y) \le \bb^{\lambda+}(t,x,t,y). 
\]
More specifically, whenever $\lambda \in \Lambda_{\bb}$, 
$
\bb^{\lambda -}(t,x,t,y) < \bb^{\lambda+}(t,x,t,y)
$, and consequently, for each $a \neq 0$,
    \[
    \Lambda_{\bb} =  \{\lambda \in \R: \bb^{\lambda-}(0,0,0,a) \neq \bb^{\lambda+}(0,0,0,a)\}.
    \]
\item \label{itm:SHEbuseadd} For all $r,x,s,y,t,z,\lambda$ and all $\sigg \in \{-,+\}$,
\[
\bb^{\lambda \sig}(r,x,s,y) + \bb^{\lambda \sig}(s,y,t,z) = \bb^{\lambda \sig}(r,x,t,z).
\]
\item \label{itm:SHEBusemontlim} For all $s,x,t,y,\lambda$ and all $\sigg \in \{-,+\}$, 
\[
\bb^{\lambda -}(s,x,t,y) = \lim_{\alpha \nearrow \lambda} \bb^{\alpha \sig}(s,x,t,y),\quad\text{and}\quad\bb^{\lambda+}(s,x,t,y) = \lim_{\alpha \searrow \lambda} \bb^{\alpha \sig}(s,x,t,y).
\]
\item \label{itm:SHEBuseevolve} For all $t > r$, all $s,x,y,\lambda$, and all $\sigg \in \{-,+\}$,
\[
e^{\bb^{\lambda \sig}(s,x,t,y)} = \int_\R e^{\bb^{\lambda \sig}(s,x,r,z)}Z_\beta(t,y\viiva r,z)\,dz.
\]
\item \label{itm:conv} For all $\lambda \notin \Lambda_{\bb}$ and $f \in \mathbb F_\lambda$, the following limit holds uniformly on compact sets of $(s,x,t,y) \in \R^4$:
\[
\lim_{r \to -\infty} \f{\int_\R f(z) Z_\beta(t,y\viiva r,z) \,dz}{\int_\R f(z) Z_\beta(s,x|r,z) \,dz} = e^{\bb^\lambda(s,x,t,y)}.
\]
\end{enumerate}
\end{theorem}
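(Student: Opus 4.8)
The plan is to reduce every item to the case $\beta = 1$, which is precisely \cite[Theorems 3.1, 3.3, 3.5, 3.23, and Corollary 3.4]{Janj-Rass-Sepp-22}, and then transport it to general $\beta>0$ through the distributional identity \eqref{Zbeq}. So the first step is to establish \eqref{Zbeq}. Theorem \ref{thm:Zinvar}\ref{itm:Zscale} with $\lambda=\beta^2$ gives the field identity \eqref{Zeq}; realizing it as a coupling $Z_\beta(t,y\viiva s,x)=\beta^2 Z_1(\beta^4 t,\beta^2 y\viiva\beta^4 s,\beta^2 x)$ and feeding it into the Busemann limit \eqref{eqn:buselim}, I reparametrize the limit by $r'=\beta^4 r$, $z'_{r'}=\beta^2 z_r$. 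Then $z_r/|r|\to\lambda$ becomes $z'_{r'}/|r'|\to\beta^{-2}\lambda$, the prefactors $\beta^2$ cancel in the ratio inside the logarithm, and one reads off $\bb^{\lambda}(s,x,t,y)=b_1^{\lambda\beta^{-2}}(\beta^4 s,\beta^2 x,\beta^4 t,\beta^2 y)$, jointly with the whole field $Z_\beta$; the signed versions $\sigg\in\{-,+\}$ come along because they are the one-sided monotone limits in the direction of the same expression. Equivalently, one may simply \emph{define} the inverse-temperature-$\beta$ process \eqref{Bpr10} by the right-hand side of \eqref{Zbeq}, so the identity holds by construction, and observe it is consistent with \eqref{eqn:buselim}.

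With \eqref{Zbeq} in hand, each conclusion is obtained from its $\beta=1$ version by tracking the substitution $(s,x,t,y,\lambda)\mapsto(\beta^4 s,\beta^2 x,\beta^4 t,\beta^2 y,\lambda\beta^{-2})$. Part \ref{itm:BuseBM} is the short computation that $y\mapsto b_1^{\lambda\beta^{-2}}(\beta^4 t,0,\beta^4 t,\beta^2 y)$ equals $B(\beta^2 y)+\lambda y$ for a standard Brownian motion $B$, hence is a two-sided Brownian motion with diffusivity $\beta$ and drift $\lambda$. Parts \ref{itm:SHEbuseshift} and \ref{itm:SHEBusereflect} hold because the scaling commutes with the space-time shift and with the reflection of space and of the direction. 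For \ref{itm:SHEBuseP0} and \ref{itm:SHEBusedichot}, the substitution shows $\Lambda_{\bb}=\beta^2\Lambda_{b_1}$; multiplication by $\beta^2>0$ preserves ``empty'' and ``countable and dense in $\R$'', and sends a fixed $\lambda$ to the fixed direction $\lambda\beta^{-2}$, which lies in $\Lambda_{b_1}$ with probability zero. All the almost sure structural statements \ref{itm:SHEBusecont}--\ref{itm:conv} concern events measurable with respect to the pair $(\{\bb^{\lambda\sig}\},Z_\beta)$ --- continuity, the additive cocycle identity, monotonicity, one-sided limits, and the two identities involving the field, all of which can be checked on countable dense sets of arguments and then extended --- so the full-probability event for $\beta=1$ pulls back under the scaling to a full-probability event for $\beta$; for \ref{itm:SHEBuseevolve} and \ref{itm:conv} one additionally performs the change of variables $z'=\beta^2 z$ in the integrals, the Jacobian $\beta^{-2}$ exactly cancelling the prefactor in $Z_\beta=\beta^2 Z_1$, and checks (from \eqref{eqn:SHE_drift_cond}) that $f\in\mathbb F_\lambda$ if and only if $f(\beta^{-2}\aabullet)\in\mathbb F_{\lambda\beta^{-2}}$.

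The one point that requires genuine care --- and the only place I anticipate an obstacle --- is justifying that \eqref{Zbeq} is a \emph{joint} equality in distribution of the entire Busemann process \eqref{Bpr10} together with the field $Z_\beta$, and not merely a marginal statement. This rests on the fact, from \cite{Janj-Rass-Sepp-22}, that the whole family \eqref{Bpr10} is built on a single full-probability event as an explicit almost sure functional of the white noise through limits of the type \eqref{eqn:buselim}; since that functional intertwines with the scaling of $Z$, the law of $(\{\bb^{\lambda\sig}\},Z_\beta)$ pushed forward by the scaling map is the law of $(\{b_1^{\lambda\beta^{-2}\sig}\},\beta^2 Z_1)$. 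Granting this, the rest is the bookkeeping above and no further difficulty arises.
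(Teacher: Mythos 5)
Your proposal is correct and follows exactly the route the paper takes: the paper states that it uses the joint scaling identity \eqref{Zbeq} to transfer the $\beta=1$ statements of \cite{Janj-Rass-Sepp-22} to general $\beta>0$, and your item-by-item tracking of the substitution $(s,x,t,y,\lambda)\mapsto(\beta^4 s,\beta^2 x,\beta^4 t,\beta^2 y,\lambda\beta^{-2})$ — including the diffusivity/drift computation in \ref{itm:BuseBM}, the observation $\Lambda_{\bb}=\beta^2\Lambda_{b_1}$, the Jacobian cancellation $dz'=\beta^2 dz$ against the $\beta^2$ prefactor in \ref{itm:SHEBuseevolve}--\ref{itm:conv}, and the membership equivalence for $\mathbb F_\lambda$ — is the bookkeeping the paper leaves implicit. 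The point you correctly flag (that \eqref{Zbeq} must hold jointly, not merely marginal by marginal) is handled exactly as you say, because in \cite{Janj-Rass-Sepp-22} the entire Busemann family is constructed on one full-probability event as an almost sure measurable functional of the field $Z$, so the scaling that identifies $Z_\beta$ with $\beta^2 Z_1$ carries the functional along.
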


For later use, we 
  derive the following uniqueness result  from Theorem \ref{thm:SHEbuse}.
\begin{theorem} \label{thm:SHE_Buse_unique}
    Let $(f^1,\ldots,f^k)$ be a coupling of initial data with $f_i \in \mathbb F_{\lambda_i}$ almost surely for $i \in \{1,\ldots,k\}$. If, for all $t > 0$,
\[
\biggl\{\f{Z_\beta(t,\aabullet \viiva e^{f^i})}{Z_\beta(t,0 \viiva e^{f^i})} \biggr\}_{1 \le i \le k} \deq \{\exp(f^i)\}_{1 \le i \le k},
\]
then 
\[
\{\exp(f^i)\}_{1 \le i \le k} \deq \{\exp(\bb^{\lambda_i}(0,0,0,\aabullet\tspb))\}_{1 \le i \le k}.
\]
\end{theorem}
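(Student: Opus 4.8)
The plan is to show that \emph{any} jointly invariant coupling satisfying the prescribed asymptotic slope conditions is attracted, in distribution, to the Busemann process at a fixed time. The key point is that the limit in Theorem~\ref{thm:SHEbuse}\ref{itm:conv} is the \emph{same} object $\bb^{\lambda_i}(0,0,0,\aabullet)$ no matter which admissible initial data is used to produce it.

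First I would build an eternal, time-stationary extension of the given coupling. Taking $t=1$ in the hypothesis shows that the law $\mu_0$ of $(f^1,\dots,f^k)$ is invariant for the one-step Markov kernel of the normalized SHE evolution, i.e.\ the map sending a profile $\psi=(\psi^i)_i$ to $\bigl(y\mapsto\log\tfrac{Z_\beta(1,y\viiva e^{\psi^i})}{Z_\beta(1,0\viiva e^{\psi^i})}\bigr)_{1\le i\le k}$; here one reads the hypothesis with $(f^1,\dots,f^k)$ independent of the restriction of $W$ to $[0,\infty)\times\R$, the natural convention that makes the statement a genuine stationarity assertion. Combining Kolmogorov's extension theorem with the shift invariance of the white noise (Theorem~\ref{thm:Zinvar}\ref{itm:Zshift}) and the Chapman--Kolmogorov identity $Z_\beta(t,y\viiva s,x)=\int_\R Z_\beta(t,y\viiva r,z)Z_\beta(r,z\viiva s,x)\,dz$ for $s<r<t$ (implicit in the definition of the polymer measures $Q_\beta^{(s,x)\to(t,y)}$), I obtain on one probability space a white noise $W$ on $\R^2$ and a family $\{(g_n^1,\dots,g_n^k)\}_{n\in\Z}$ with $(g_n^1,\dots,g_n^k)\deq(f^1,\dots,f^k)$ for every $n$, with $(g_n^1,\dots,g_n^k)$ independent of the restriction of $W$ to $[n,\infty)\times\R$, and with
\[
e^{g_0^i(y)}=\f{\int_\R e^{g_m^i(z)}\,Z_\beta(0,y\viiva m,z)\,dz}{\int_\R e^{g_m^i(z)}\,Z_\beta(0,0\viiva m,z)\,dz},\qquad y\in\R,\ \ 1\le i\le k,\ \ m<0.
\]
(Note $g_n^i(0)=0$, since $f^i(0)=0$ is forced by the hypothesis.) The independence of $(g_m^i)_i$ from the noise driving the evolution on $[m,0]$ is what lets me treat the far-past profile as deterministic.

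Next, fix a bounded continuous $F\colon C(\R,\R^k)\to\R$. Since $\lambda_1<\dots<\lambda_k$ are fixed, Theorem~\ref{thm:SHEbuse}\ref{itm:SHEBuseP0} and a union bound give that a.s.\ no $\lambda_i$ lies in $\Lambda_{\bb}$, while $f^i\in\mathbb{F}_{\lambda_i}$ a.s.; hence $\mu_0$-almost every deterministic profile $\varphi=(\varphi^1,\dots,\varphi^k)$ satisfies $\varphi^i\in\mathbb{F}_{\lambda_i}$ for all $i$. For such $\varphi$, Theorem~\ref{thm:SHEbuse}\ref{itm:conv} with $s=t=0$, $x=0$, and $f=e^{\varphi^i}$ gives, $W$-a.s.\ and uniformly on compact sets in $y$,
\[
\lim_{m\to-\infty}\f{\int_\R e^{\varphi^i(z)}Z_\beta(0,y\viiva m,z)\,dz}{\int_\R e^{\varphi^i(z)}Z_\beta(0,0\viiva m,z)\,dz}=e^{\bb^{\lambda_i}(0,0,0,y)},
\]
a limit not depending on $\varphi$; consequently $F$ applied to the corresponding $k$-tuple of log-functions tends $W$-a.s.\ to $F\bigl((\bb^{\lambda_i}(0,0,0,\aabullet))_i\bigr)$. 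Conditioning on $(g_m^i)_i=\varphi$, invoking the displayed identity above for $e^{g_0^i}$, and averaging against $\mu_0$ expresses $\Ee\bigl[F((g_0^i)_i)\bigr]$, for every $m<0$, as the $\mu_0$-average over $\varphi$ of a quantity bounded by $\|F\|_\infty$ that converges, as $m\to-\infty$, to $\Ee\bigl[F((\bb^{\lambda_i}(0,0,0,\aabullet))_i)\bigr]$ for $\mu_0$-a.e.\ $\varphi$. Since $\Ee\bigl[F((g_0^i)_i)\bigr]$ does not depend on $m$, dominated convergence forces $\Ee\bigl[F((g_0^i)_i)\bigr]=\Ee\bigl[F((\bb^{\lambda_i}(0,0,0,\aabullet))_i)\bigr]$. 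As $F$ is arbitrary and $(g_0^i)_i\deq(f^i)_i$, this is exactly the claimed equality $\{\exp(f^i)\}_{1\le i\le k}\deq\{\exp(\bb^{\lambda_i}(0,0,0,\aabullet))\}_{1\le i\le k}$.

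I expect the construction of this eternal extension to be the main obstacle---specifically, arranging that the far-past profile $(g_m^i)_i$ is independent of the noise on $[m,0]$, which is exactly what legitimizes applying Theorem~\ref{thm:SHEbuse}\ref{itm:conv}, stated only for a fixed deterministic $f\in\mathbb{F}_\lambda$, to the randomly produced far-past data. Once this is in place the rest is soft: the attractor $\bb^{\lambda_i}(0,0,0,\aabullet)$ is independent of whatever data is sent in from $-\infty$, so the $m$-independent number $\Ee[F((g_0^i)_i)]$ must agree with its $m\to-\infty$ limit.
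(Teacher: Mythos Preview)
Your argument is correct and shares the paper's core idea: the invariant coupling is identified with the Busemann process by showing both equal the $r\to-\infty$ attractor from Theorem~\ref{thm:SHEbuse}\ref{itm:conv}, applied conditionally on initial data that is independent of the noise.

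The difference is that you build a full two-sided eternal extension $\{(g_n^i)\}_{n\in\Z}$ via Kolmogorov extension, while the paper avoids this entirely. By shift invariance of $Z_\beta$ (Theorem~\ref{thm:Zinvar}\ref{itm:Zshift}) and independence of $(f^i)$ from the noise, one has directly, for each $t>0$,
\[
\biggl\{\f{Z_\beta(t,\aabullet\viiva e^{f^i})}{Z_\beta(t,0\viiva e^{f^i})}\biggr\}_{1\le i\le k} \;\deq\; \biggl\{\f{\int_\R e^{f^i(z)}Z_\beta(0,\aabullet\viiva -t,z)\,dz}{\int_\R e^{f^i(z)}Z_\beta(0,0\viiva -t,z)\,dz}\biggr\}_{1\le i\le k},
\]
with $(f^i)$ still independent of $Z_\beta$ on the right. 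The left side has the fixed law of $(e^{f^i})_i$ by hypothesis; the right side converges in distribution as $t\to\infty$, via your conditioning-and-DCT step, to $(e^{\bb^{\lambda_i}(0,0,0,\aabullet)})_i$. So only a single distributional identity per $t$ is needed, not a coupled family over all $n\in\Z$. What you flagged as ``the main obstacle''---arranging independence of the far-past profile from the intervening noise---never arises: it is built into the product coupling $(f^i)\perp W$ from the start, transported by the shift.
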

\begin{remark}
A stronger uniqueness property is true. The joint Busemann process is the unique stationary and ergodic jointly invariant distribution for the KPZ equation under more general conditions on the asymptotic slopes at $\pm\infty$. We refer the reader to Section 3.4 of \cite{Janj-Rass-Sepp-22} for a more detailed discussion. 
\end{remark}
\begin{proof}
The $r = s$ case of Theorem \ref{thm:SHEbuse}\ref{itm:SHEBuseevolve} along with the additivity of Theorem \ref{thm:SHEbuse}\ref{itm:SHEbuseadd} implies that 
\be \label{eqn:Buse_invar}
e^{\bb^{\lambda \sig}(t,0,t,y)} = \f{\int_\R e^{\bb^{\lambda \sig}(s,0,s,z)} Z_\beta(t,y \viiva s,z)\,dz }{\int_\R e^{\bb^{\lambda \sig}(s,0,s,z)} Z_\beta(t,0 \viiva s,z)\,dz },
\ee
and Theorem \ref{thm:SHEbuse}\ref{itm:SHEbuseshift} implies that for all $s < t$, 
\be \label{SHEbusehoriz}
\exp(\bb^{\lambda \sig}(t,0,t,\aabullet)) \deq \exp(\bb^{\lambda \sig}(s,0,s,\aabullet)).
\ee
The $s = t$ case of Theorem \ref{thm:SHEbuse}\ref{itm:conv} states that for $\lambda \notin \Lambda_{\bb}$ and $f \in \mathbb F_\lambda$, 
\[
\lim_{r \to -\infty} \f{\int_\R f(z) Z_\beta(t,y\viiva r,z) \,dz}{\int_\R f(z) Z_\beta(t,0|r,z) \,dz} = e^{b_\beta^\lambda(t,0,t,y)},
\]
uniformly on compact subsets of $y \in \R$. Then, using \eqref{eqn:Buse_invar}, Theorem \ref{thm:SHEbuse}\ref{itm:SHEBuseP0}, and the shift invariance of Theorem \ref{thm:Zinvar}\ref{itm:Zshift}, for any (deterministic or random) $k$-tuple of functions $(f^1,\ldots,f^k)$, so that, with probability one, each $f^i \in \mathbb F_{\lambda_i}$, as $t \to \infty$, we have the following distributional convergence on $C(\R^k,\R)$: 
\[
\biggl\{\f{Z_\beta(t,\aabullet \viiva e^{f^i})}{Z_\beta(t,0 \viiva e^{f^i})} \biggr\}_{1 \le i \le k} \Longrightarrow \{\exp(\bb^{\lambda_i}(0,0,0,\aabullet))\}_{1 \le i \le k}
\]
In particular, if for all $t > 0$,
\[
\biggl\{\f{Z_\beta(t,\aabullet \viiva e^{f^i})}{Z_\beta(t,0 \viiva e^{f^i})} \biggr\}_{1 \le i \le k} \deq \{\exp(f^i)\}_{1 \le i \le k},
\]
then $\{\exp(f^i)\}_{1 \le i \le k} \deq \{\exp(\bb^{\lambda_i}(0,0,0,\aabullet))\}_{1 \le i \le k}$.
\end{proof}

\subsection{Convergence of the O'Connell-Yor polymer to SHE} \label{sec:OCYSHE}
In his section, we show convergence of the O'Connell-Yor polymer to the Green's function of the SHE (Theorem \ref{thm:OCYtoSHE}) and prove a convergence result for the model started from initial data (Theorem \ref{lem:L1conv_bd}). The O'Connell-Yor polymer (alternatively, the Brownian polymer), first introduced in \cite{brownian_queues}, is defined as follows. On a probability space $(\Omega, \F,P)$, let $\mathbf B = (B_r)_{r \in \Z}$ be a sequence of independent, two-sided standard Brownian motions. For $(m,x) \le (n,y) \in \Z \times \R$,  define the path space
\[
\pathsp_{(m,x),(n,y)} := \{(x_{m - 1},x_m,\ldots,x_n) \in \R^{n - m + 2}: x = x_{m - 1} \le x_m \le \cdots \le x_n = y   \}.
\] For $(m,x),(n,y) \in \Z \times \R$ with $m < n$ and $x \le y$, the point-to-point partition function is defined as
\begin{equation} \label{OCYpart}
\OCY_\beta(n,y\viiva m,x)(\mathbf B) = \int_{\pathsp_{(m,x),(n,y)} }\exp \bigg\{\beta \sum_{r = m}^n B_r(x_{r - 1},x_r)\bigg\} d  x_{m:n-1}.
\end{equation}
Throughout the paper, $\beta>0$ is a positive inverse-temperature parameter.  
The superscript in $\OCY_\beta$ stands for \textit{semi-discrete}. 
 For $m = n$,  define
\[
\OCY_\beta(m,y\viiva m,x)(\mathbf B) = e^{\beta B_m(x,y)}.
\]
The argument $\mathbf B$ will often be omitted from the notation. From the definition, the Chapman-Kolmogorov equation holds: namely that, for $m < r \le n$ and $x < y$,
\be \label{Zcomp}
\OCY_\beta(n,y\viiva m,x) = \int_{x}^y \OCY_\beta(n,y\viiva r,w) \OCY_\beta(r - 1,w\viiva m,x)\,dw.
\ee

We also define a partition function with a boundary at level $m= -1$. For a random or deterministic  initial function $f:\R \to \R$ with $f(x) \to 0$ sufficiently fast as $x \to -\infty$, define, for $n \ge 0$ and $y \in \R$,
\be \label{Zf}
\OCY_\beta(n,y\viiva f) = \int_{-\infty}^y f(x) \tspb \OCY_\beta(n,y\viiva 0,x)\,dx.
\ee
For $n = -1$, define $\OCY_\beta(-1,y\viiva f) = f(y)$.

 Abbreviate the Poisson distribution as
\be\label{poi} 
q(n,y) = e^{-y} \f{y^n}{n!} \ind((n,y)\in \Z_{\ge 0} \times \R_{\ge 0}).  
\ee
For integers $n \ge m$, real numbers $y \ge x$, and $\gamma \in \R_{>0}$, set 
\be\label{Ydef}   Y_\gamma(n,y \viiva m,x) = e^{-(y - x) - \f{\gamma^2}{2} (y - x)}\OCY_\gamma(n,y \viiva m,x).\ee 
Next, define
\begin{align*}
 \delta_k(n \viiva m) &= \{m = n_0 \le n_1 \le \cdots \le n_k \le n_{k + 1} = n:  n_i \in \Z\}, \quad\text{and} \\
\Delta_k(y \viiva x) &= \{x = y_0 < y_1 < \cdots < y_k < y_{k + 1} = y: y_i \in \R\}.
\end{align*}

\begin{lemma} \label{lem:q2prod}
There exists a constant $C > 0$ so that, for all integers $k > 0$, $n \ge m$, and all real numbers $y > x$, whenever $\mbf n \in \delta_k(n \viiva m)$ and $\mbf y \in \Delta_k(y \viiva x)$,
\be \label{eq:prodqbd}
\begin{aligned}
\prod_{i = 0}^k q^2(n_{i+1} - n_i,y_{i+1} - y_i) &\le C^k g(\mbf n,\mbf y)  \\
&:= C^k \f{  e^{-2(y-x)} 2^{2(n-m)}}{\pi^{(k+1)/2}} \prod_{i = 0}^k \f{(y_{i+1} - y_i)^{2(n_{i+1} - n_i)}}{[2(n_{i+1} - n_i)]! \sqrt{(n_{i+1} - n_i) \vee 1}}.
\end{aligned}
\ee
Furthermore,  for each $\mbf n \in \delta_k(n \viiva m)$,
\be \label{eq:Dir_int}
\int_{\Delta_k(y \viiva x)} g(\mbf n,\mbf y) \prod_{i = 1}^k dy_i = \f{2^{2(n-m)} [(n-m)!]^2(y-x)^{k} q^2(n-m,y-x)}{\pi^{(k+1)/2}[2(n-m) + k]!} \prod_{i = 0}^k \f{1}{\sqrt{(n_{i+1} - n_i) \vee 1}}. 
\ee
\end{lemma}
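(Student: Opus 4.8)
The plan is to prove the two displays separately; each reduces to rewriting $q^2$ and then invoking one standard estimate.

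\textbf{The upper bound \eqref{eq:prodqbd}.} Write $a_i=n_{i+1}-n_i\ge 0$, so $\sum_{i=0}^k a_i=n-m$, and use $q^2(a,b)=e^{-2b}b^{2a}/(a!)^2$ together with $1/(a!)^2=\binom{2a}{a}/(2a)!$ and the telescoping identity $\sum_{i=0}^k(y_{i+1}-y_i)=y-x$. These give the exact algebraic identity
\[
\prod_{i=0}^k q^2(n_{i+1}-n_i,\,y_{i+1}-y_i)
=\frac{\pi^{(k+1)/2}}{2^{2(n-m)}}\,g(\mbf n,\mbf y)\prod_{i=0}^k\binom{2a_i}{a_i}\sqrt{a_i\vee 1}.
\]
Hence it suffices to show $\prod_{i=0}^k\binom{2a_i}{a_i}\sqrt{a_i\vee 1}\le 2^{2(n-m)}$, for then the right side above is at most $\pi^{(k+1)/2}g(\mbf n,\mbf y)\le\pi^k g(\mbf n,\mbf y)$ (using $k\ge 1$), which is \eqref{eq:prodqbd} with $C=\pi$. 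For the factorwise bound I will use the standard central-binomial estimate $\binom{2a}{a}\le 4^a/\sqrt{\pi a}$ for $a\ge 1$ (which follows from the duplication formula for $\Gamma$ and the log-convexity bound $\Gamma(a+\tfrac12)\le\Gamma(a+1)/\sqrt a$); this yields $\binom{2a}{a}\sqrt{a\vee1}\le 4^a$ for every integer $a\ge 0$, the case $a=0$ being the trivial equality $1=4^0$. Multiplying over $i$ and using $\sum a_i=n-m$ finishes this part.

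\textbf{The identity \eqref{eq:Dir_int}.} Pull out of the integrand all factors not depending on $\mbf y$, namely $e^{-2(y-x)}\,2^{2(n-m)}\,\pi^{-(k+1)/2}\prod_{i=0}^k[(2a_i)!\sqrt{a_i\vee1}]^{-1}$, leaving the integral $\int_{\Delta_k(y\viiva x)}\prod_{i=0}^k(y_{i+1}-y_i)^{2a_i}\,dy_1\cdots dy_k$. The unimodular change of variables $u_i=y_i-y_{i-1}$ (for $i=1,\dots,k$, with $y_0=x$) turns this into the Dirichlet integral
\[
\int_{\{u_1,\dots,u_k>0,\ \sum u_i<y-x\}}\ \prod_{i=1}^k u_i^{2a_{i-1}}\,\Bigl((y-x)-\textstyle\sum_{i=1}^k u_i\Bigr)^{2a_k}\,du_1\cdots du_k
=(y-x)^{2(n-m)+k}\,\frac{\prod_{j=0}^k (2a_j)!}{\bigl(2(n-m)+k\bigr)!},
\]
by the classical Dirichlet formula, since $\sum_{j=0}^k(2a_j+1)=2(n-m)+k+1$. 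The factor $\prod_j(2a_j)!$ cancels against the one pulled out, and substituting $e^{-2(y-x)}(y-x)^{2(n-m)}=[(n-m)!]^2\,q^2(n-m,y-x)$ produces exactly the right-hand side of \eqref{eq:Dir_int}.

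Neither step is deep. The only mildly delicate bookkeeping is the treatment of the $\sqrt{a_i\vee 1}$ factors: they are inert under the $\mbf y$-integration and simply persist on both sides of \eqref{eq:Dir_int}, while in \eqref{eq:prodqbd} they are exactly what the $\sqrt a$ saved by the sharp central-binomial bound absorbs, which is also what keeps the power of $\pi$ in the denominator of $g$. I expect the Dirichlet-integral evaluation to be the most computational piece, but it is entirely routine, so there is no genuine obstacle here.
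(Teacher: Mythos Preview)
Your proof is correct and follows essentially the same approach as the paper: both reduce \eqref{eq:prodqbd} to the per-factor bound $1/(a!)^2 \le C\cdot 4^a/\bigl((2a)!\sqrt{\pi(a\vee 1)}\bigr)$ (the paper via Stirling, you via the central-binomial estimate $\binom{2a}{a}\le 4^a/\sqrt{\pi a}$), and both compute \eqref{eq:Dir_int} as a Dirichlet integral after the change of variables $u_i=y_i-y_{i-1}$. Your version is simply more explicit, yielding the concrete constant $C=\pi$.
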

\begin{proof}
From the definition, it follows that 
\be \label{231}
\prod_{i = 0}^k q^2(n_{i+1} - n_i,y_{i+1} - y_i) = e^{-2(y-x)} \prod_{i = 0}^k \f{(y_{i+1} - y_i)^{2(n_{i+1} - n_i)}}{[(n_{i+1} - n_i)!]^2},
\ee
and Stirling's approximation implies that for large $n$,
\[
[n!]^2 \sim \f{[2n]! \sqrt{\pi n}}{2^{2n}}
\]
In particular, there exists a constant $C$ so that $\f{1}{[n!]^2} \le C \f{2^{2n}}{[2n]!\sqrt{\pi n}}$ for $n \ge 1$. Inserting this bound into \eqref{231} proves \eqref{eq:prodqbd}. 
The integral \eqref{eq:Dir_int} is the computation of a Dirichlet integral after the change of variable $w_i = \f{y_i- y_{i-1}}{y-x}$.
\end{proof}

\begin{lemma} \label{lem:Yrep}
For $\ve \ge 0$, for all integers $n \ge m$ and real numbers $y \ge x + \ve$, the field $Y_\gamma(n,y \viiva m,x)$ satisfies the following It\^o integral equation:
\be \label{Yrep}
 Y_\gamma(n,y \viiva m,x) = \sum_{m \le k \le n} q(n-k,y-(x + \ve)) Y_\gamma(k,x+\ve \viiva m,x) + \gamma \int_{x+\ve}^y \sum_{m \le k \le n} q(n - k,y - w) \tspb Y_\gamma(k,w \viiva m,x)\,d B_k(w),
\ee
where $\{B_r\}_{r \in \Z}$ are the i.i.d.\ Brownian motions that define $\OCY_\gamma$.
\end{lemma}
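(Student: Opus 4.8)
The plan is to recognize \eqref{Yrep} as the Duhamel (variation-of-constants) representation of a triangular linear system of \emph{spatial} stochastic differential equations satisfied by the fields $y \mapsto Y_\gamma(n,y\viiva m,x)$, $m \le n$, whose deterministic part is generated by the Poisson kernel $q$. Throughout I would first center the Brownian motions at $x$ — replacing $B_r(\aabullet)$ by $B_r(\aabullet) - B_r(x)$, which does not change $\OCY_\gamma(\aabullet,\aabullet\viiva m,x)$ and makes each $B_n$ an $(\mathcal F_y)_{y\ge x}$-Brownian motion for the spatial filtration $\mathcal F_y = \sigma\{B_r(u)-B_r(x): r\in\Z,\ x\le u\le y\}$.

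\emph{Step 1: a spatial evolution equation.} Writing $Z_n(y) = \OCY_\gamma(n,y\viiva m,x)$, the Chapman--Kolmogorov identity \eqref{Zcomp} with $r = n$ gives, for $n > m$ and $y > x$,
\[
Z_n(y) = \int_x^y Z_{n-1}(w)\, e^{\gamma B_n(w,y)}\,dw = e^{\gamma B_n(y)}\int_x^y Z_{n-1}(w)\, e^{-\gamma B_n(w)}\,dw,
\]
where $Z_{n-1}$ is adapted and independent of $B_n$, and the integral on the right is a $C^1$ (hence finite-variation, zero-bracket) process in $y$. Itô's product rule applied to this product then yields
\[
dZ_n(y) = Z_{n-1}(y)\,dy + \tfrac{\gamma^2}{2}\,Z_n(y)\,dy + \gamma\, Z_n(y)\,dB_n(y), \qquad y \ge x,
\]
with $Z_m(x) = 1$, $Z_n(x) = 0$ for $n > m$; the case $n = m$ (where $Z_m(y) = e^{\gamma B_m(x,y)}$) is handled directly by Itô's formula and is the same equation with the convention $Z_n\equiv 0$ for $n < m$. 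Multiplying by the deterministic factor $e^{-(1+\gamma^2/2)(y-x)}$ and using the ordinary product rule converts this into the clean system, with $Y_n(y) := Y_\gamma(n,y\viiva m,x)$,
\[
dY_n(y) = \bigl(Y_{n-1}(y) - Y_n(y)\bigr)\,dy + \gamma\, Y_n(y)\,dB_n(y), \qquad y \ge x,
\]
with $Y_m(x) = 1$, $Y_n(x) = 0$ for $n > m$, and $Y_n\equiv 0$ for $n < m$. The $L^2$-integrability needed for the stochastic integrals to be genuine martingales follows from a direct Gaussian computation of the second moment of $\OCY_\gamma(n,y\viiva m,x)$, or from the estimates packaged in Lemma \ref{lem:q2prod}.

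\emph{Step 2: Duhamel with the Poisson semigroup.} Since $\partial_t q(j,t) = q(j-1,t) - q(j,t)$ (with the convention $q(-1,\aabullet)\equiv 0$), the family $\{q(n-k,\aabullet)\}_{k\le n}$ solves the forward equations of the drift operator above. I would therefore let $\hat Y_n(y)$ denote the right-hand side of \eqref{Yrep} with base point $a = x+\ve$, compute $d\hat Y_n(y)$, and check it solves the same system with the same data at $y = a$. The non-stochastic part of $\hat Y_n$ contributes $\hat Y_{n-1} - \hat Y_n$ by the formula for $\partial_t q$. For the stochastic part $F_n(y) = \gamma\int_a^y\sum_k q(n-k,y-w)\, Y_k(w)\,dB_k(w)$ — in which $y$ appears both as the upper limit and inside the kernel — I would insert $q(n-k,y-w) = q(n-k,0) + \int_w^y\bigl(q(n-k-1,u-w)-q(n-k,u-w)\bigr)\,du$ and use a stochastic Fubini theorem to exchange $\int dB$ with $\int du$, obtaining $F_n(y) = \gamma\int_a^y Y_n(w)\,dB_n(w) + \int_a^y\bigl(F_{n-1}(u) - F_n(u)\bigr)\,du$, hence $dF_n(y) = \gamma\, Y_n(y)\,dB_n(y) + \bigl(F_{n-1}(y) - F_n(y)\bigr)\,dy$. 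Combining, $d\hat Y_n(y) = \bigl(\hat Y_{n-1}(y) - \hat Y_n(y)\bigr)\,dy + \gamma\, Y_n(y)\,dB_n(y)$ and $\hat Y_n(a) = Y_n(a)$. Thus $E_n := \hat Y_n - Y_n$ solves the deterministic triangular linear ODE $dE_n = \bigl(E_{n-1}-E_n\bigr)\,dy$ with $E_n(a) = 0$ and $E_n\equiv 0$ for $n < m$; solving it upward in $n$ forces $E_n\equiv 0$, which is exactly \eqref{Yrep}. (An interchangeable route is to build the Wiener-chaos series for $Y_\gamma$ by Picard iteration of the fixed-point equation \eqref{Yrep} and to verify its $L^2$-convergence via Lemma \ref{lem:q2prod}.)

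\emph{Main obstacle.} The genuinely delicate point is Step 2's differentiation in $y$ of the stochastic integral whose outer variable sits simultaneously in the upper limit and in the Poisson kernel: making the stochastic-Fubini interchange rigorous requires the second-moment bounds of Lemma \ref{lem:q2prod} to see that the relevant $L^2(\Pp\otimes\mathrm{Leb})$ norms are finite. The Itô computations of Step 1 are, by contrast, routine once the spatial filtration is fixed and the independence of $Z_{n-1}$ from $B_n$ is recorded.
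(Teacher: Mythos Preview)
Your proposal is correct and follows essentially the same approach as the paper: derive the spatial SDE $dY_n = (Y_{n-1}-Y_n)\,dy + \gamma Y_n\,dB_n$ via Chapman--Kolmogorov and It\^o's formula, differentiate the right-hand side of \eqref{Yrep} to see it satisfies the same SDE with the same initial data, and conclude by an inductive uniqueness argument on $n$. The only cosmetic differences are that the paper organizes the argument as an explicit induction on $n$ (using the hypothesis to reduce to the scalar ODE $dX=-X\,dy$) and handles the $y$-differentiation of the stochastic integral via the stochastic Leibniz rule rather than your FTC-plus-Fubini expansion.
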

\begin{remark}
We observe that in the $\ve = 0$ case, since $Y_\gamma(k,x \viiva m,x) = \ind(k = m)$, we obtain
\[
Y_\gamma(n,y \viiva m,x) = q(n-m,y-x) + \gamma \int_x^y \sum_{m \le k \le n} q(n - k,y - w) \tspb Y_\gamma(k,w \viiva m,x)\,d B_k(w).
\]
This further implies that 
\be \label{Ygamexp}
\Ee[Y_\gamma(n,y \viiva m,x)] = q(n-m,y-x).
\ee
\end{remark}
\begin{proof}
 With $Y_\gamma(n,y \viiva m,x)$ defined, let $\wt Y_\gamma(n,y \viiva m,x)$ denote the RHS of \eqref{Yrep}. We prove that $\wt Y_\gamma(n,y \viiva m,x) = Y_\gamma(n,y \viiva m,x)$ by induction on $n \ge m$. First, note that 
\begin{align*}
\wt Y_\gamma(m,y \viiva m,x) &= q(0,y - x - \ve)Y_\gamma(m,x + \ve \viiva m,x) + \gamma \int_{x+\ve}^y q(0,y - w) Y_\gamma(m,w \viiva m,x)\,dB_m(w) \\
&= e^{-(y - x) - \f{\gamma^2 }{2}\ve + \gamma B_m(x,x+\ve)}  + \gamma \int_{x+\ve}^y e^{-(y - x) - \f{\gamma^2}{2}(w - x) + \gamma B_m(x,w) }\, dB_m(w) \\
&= e^{-(y - x) + \f{\gamma^2}{2}x - \gamma B_m(x)} \Bigl(e^{-\f{\gamma^2}{2}(x+\ve) + \gamma B_m(x+\ve)} + \gamma \int_{x+\ve}^y e^{- \f{\gamma^2}{2}w + \gamma B_m(w)}\,dB_m(w)\Bigr),
\end{align*}
so the equality $Y_\gamma(m,y \viiva m,x) = \wt Y_\gamma(m,y \viiva m,x)$ reduces to 
\[
e^{-\f{\gamma^2}{2} y + \gamma B_m(y)} = e^{-\f{\gamma^2}{2} (x+\ve) + \gamma B_m(x+\ve)} +\gamma \int_{x+\ve}^y e^{- \f{\gamma^2}{2}w + \gamma B_m(w)}\,dB_m(w),
\]
which follows from It\^o's formula. Now, assume that for some $n > m$, $\wt Y_\gamma(n - 1,w \viiva m,x) = Y_\gamma(n - 1 ,w \viiva m,x)$ for all $w \ge x + \ve$.

From the definition \eqref{Ydef}, the Chapman-Kolmogorov equation \eqref{Zcomp}, and definition \eqref{Ydef} again,  
\begin{align*}
Y_\gamma(n,y \viiva m,x) &= e^{-(y - x) - \f{\gamma^2}{2} (y - x)}\int_x^y \OCY_\gamma(n - 1,w \viiva m,x) e^{\gamma B_n(w,y)}  \,dw \\[1em]
&= e^{-(y - x) - \f{\gamma^2}{2} (y - x) + \gamma B_n(y)} \int_x^y  \OCY_\gamma(n - 1,w\viiva m,x) e^{-\gamma B_n(w)}\,dw \\[1em]
&=   e^{\gamma B_n(y)} \cdot e^{-(1+\f{\gamma^2}{2})y } \int_x^y  
e^{(1+\f{\gamma^2}{2})w } \tspb Y_\gamma(n - 1,w\viiva m,x) e^{-\gamma B_n(w)}\,dw.
\end{align*}
Let $d$ denote differentiation in the real variable $y$, with $x$ fixed. An application of  It\^o's formula to the last line above gives 
\begin{align}
dY_\gamma(n,y \viiva m,x) &= [Y_\gamma(n - 1,y \viiva m,x)-Y_\gamma(n,y \viiva m,x) ]\,dy + \gamma Y_\gamma(n,y \viiva m,x)\,d B_n(y).  \label{Ydif}
\end{align}
Additionally, a simple computation shows
\be \label{qdif}
d q(n - k,y - x) = [ q(n - 1 - k,y - x)- q(n - k,y - x)  ]\,dy \qquad \text{for } n\in\Z_{\ge k} \text{ and } y\ge x, 
\ee
where we set $q(-1,w) = 0$ by convention.  Differentiate the right-hand side of \eqref{Yrep} and apply 
  \eqref{qdif} and $q(n,0) = \ind(n = 0)$  to obtain 
\begin{align}
d \wt Y_\gamma(n,y \viiva m,x) &= \sum_{m \le k \le n} dq(n - k, y - (x + \ve))Y_\gamma(k,x+\ve \viiva m,x)  + \gamma \sum_{m \le k \le n} q(n - k,0) Y_\gamma(k,y \viiva m,x)\,dB_k(y) \nonumber \\
&\qquad\qquad\qquad\qquad\qquad+ \gamma \int_{x+\ve}^y \sum_{m \le k \le n} dq(n - k,y - w)Y_\gamma(k,w \viiva m,x)\,dB_k(w) \\
&= [\wt Y_\gamma(n - 1,y \viiva m,x) - \wt Y_\gamma(n,y \viiva m,x)]\,dy + \gamma Y_\gamma(n,y \viiva m,x)\,dB_n(y).\label{Ytildeeq}
\end{align}
In the first equality above, we have used the stochastic Leibniz rule (see for example, \cite[Equation (6.2.25)]{Oskendal}). Then, comparing \eqref{Ydif} and \eqref{Ytildeeq}, the induction hypothesis implies that the process 
\[ X(y) := Y_\gamma(n,y \viiva m,x) - \wt Y_\gamma(n,y \viiva m,x)\] 
satisfies $dX = -Xdy$. Since $q(n,0) = \ind(n = 0)$, we observe further that $\wt Y_\gamma(n,x+\ve \viiva m,x) = Y_\gamma(n,x+\ve \viiva m,x)$ for $n \ge m$, so $X$ has the initial condition $X(x + \ve) = 0$. Thus, $X(y) = 0$ for all $y \ge x+\ve$, and $Y_\gamma(n,y \viiva m,x) = \wt Y_\gamma(n,y \viiva m,x)$, as desired. 
\end{proof}

We now use Lemma \ref{lem:Yrep} to write $Y_\gamma$ as an infinite series of iterated stochastic integrals.
\begin{lemma}
    Let $q$ and $Y_\gamma$ be defined as in \eqref{poi} and \eqref{Ydef}. For every $n \ge m$ and $y \ge x$, $Y_\gamma(n,y \viiva m,x)$ can be written as the following $L^2(\Pp)$-convergent infinite sum 
    \be \label{OCYchaos}
    \begin{aligned}
 Y_\gamma(n,y \viiva m,x) &= \sum_{k = 0}^\infty \gamma^k I_k(n,y \viiva m,x),\\
 I_k(n,y \viiva m,x) &=  \sum_{\delta_k(n \viiva m)} \int_{\Delta_k(y \viiva x)}  \prod_{i = 0}^{k} q(n_{i + 1} - n_{i},y_{i + 1} - y_{i}) \prod_{i = 1}^k\, dB_{n_i}(y_i),
 \end{aligned}
 \ee
 where, in the $k = 0$ case, we use this notation to mean $I_0(n,y \viiva m,x) = q(n - m, y - x)$. Furthermore, 
\be \label{Ygamma2}
\Ee[Y_\gamma(n,y \viiva m,x)^2] = \sum_{k = 0}^\infty \gamma^{2k}\Ee[I_k(n,y \viiva m,x)^2],
\ee
and there exists a universal constant $C > 0$ so that for all integers $n \ge m$ and $k \ge 0$, and real numbers $y \ge x$ and $\gamma > 0$,
\be \label{IgammaL2bd}
\Ee[I_k(n,y \viiva m,x)^2] \le  \f{C^k (y-x)^k q^2(n-m, y-x)  }{(n-m)^{k/2} \Gamma((k+1)/2)}.
\ee
\end{lemma}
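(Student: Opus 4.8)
The plan is to obtain \eqref{OCYchaos} by iterating the It\^o integral equation of Lemma~\ref{lem:Yrep} in its $\ve=0$ form,
\[
Y_\gamma(n,y\viiva m,x)=q(n-m,y-x)+\gamma\int_x^y\sum_{m\le k\le n}q(n-k,y-w)\,Y_\gamma(k,w\viiva m,x)\,dB_k(w),
\]
and then controlling the resulting remainder. Substituting this identity into itself $N$ times gives, almost surely,
\[
Y_\gamma(n,y\viiva m,x)=\sum_{k=0}^{N}\gamma^k I_k(n,y\viiva m,x)+\gamma^{N+1}R_{N+1}(n,y\viiva m,x),
\]
where a short induction identifies the $k$-th coefficient with the iterated stochastic integral $I_k$ of \eqref{OCYchaos} (each substitution strips off one Poisson kernel $q$ and pushes $Y_\gamma$ one level deeper), and $R_{N+1}$ has exactly the form of $I_{N+1}$ except that its innermost factor $q(n-n_{N+1},y-y_{N+1})$ is replaced by $Y_\gamma(n_{N+1},y_{N+1}\viiva m,x)$. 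Thus the lemma reduces to three points: (a) the second-moment bound \eqref{IgammaL2bd}; (b) $L^2(\Pp)$-convergence of the partial sums $S_N=\sum_{k\le N}\gamma^k I_k$; and (c) showing $\gamma^{N+1}R_{N+1}\to 0$ in $L^2(\Pp)$.

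For (a), I would evaluate $\Ee[I_k(n,y\viiva m,x)^2]$ by the It\^o isometry applied from the innermost integration variable outward. Since each integral runs over the ordered simplex $\Delta_k(y\viiva x)$, the integrands are adapted, and $d\langle B_r,B_{r'}\rangle=\ind(r=r')\,dt$, every surviving term must have matching index vectors; only the diagonal remains, giving
\[
\Ee\bigl[I_k(n,y\viiva m,x)^2\bigr]=\sum_{\mbf n\in\delta_k(n\viiva m)}\int_{\Delta_k(y\viiva x)}\prod_{i=0}^{k}q^2(n_{i+1}-n_i,y_{i+1}-y_i)\prod_{i=1}^{k}\,dy_i,
\]
and the same cascade shows $\Ee[I_kI_{k'}]=0$ for $k\neq k'$ (an iterated integral of positive order has mean zero). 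Now insert the pointwise bound $\prod_{i=0}^{k}q^2\le C^k g(\mbf n,\mbf y)$ from \eqref{eq:prodqbd}, integrate $g$ over $\Delta_k(y\viiva x)$ using the Dirichlet-integral formula \eqref{eq:Dir_int}, and sum over $\mbf n\in\delta_k(n\viiva m)$, estimating $\sum_{\mbf n}\prod_{i=0}^{k}\bigl((n_{i+1}-n_i)\vee 1\bigr)^{-1/2}$ crudely by the number $\binom{n-m+k}{k}$ of compositions of $n-m$ into $k+1$ nonnegative parts. Collecting these factors and simplifying with Stirling's formula produces \eqref{IgammaL2bd}.

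For (b) and (c): the right side of \eqref{IgammaL2bd} is summable in $k$ against $\gamma^{2k}$ for every $\gamma>0$, because $(2(n-m)+k)^{-k}\Gamma((k+1)/2)^{-1}$ decays super-exponentially; together with $\Ee[I_kI_{k'}]=0$ this makes $S_N$ Cauchy in $L^2(\Pp)$, with an $L^2$ limit $\wt Y$ satisfying $\Ee[\wt Y^2]=\sum_k\gamma^{2k}\Ee[I_k^2]$. It remains to show $\wt Y=Y_\gamma$. I would first record the routine a priori estimate $\Ee[Y_\gamma(j,w\viiva m,x)^2]\le C\,q^2(j-m,w-x)$, valid for $m\le j\le n$ and $x\le w\le y$ with $C=C(\gamma,n-m,y-x)$ — a direct Gaussian computation from \eqref{OCYpart}, in the spirit of Theorem~\ref{thm:Zinvar}\ref{itm:SHEmoments} — and then apply the It\^o-isometry estimate of (a) to $R_{N+1}$. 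Replacing its innermost factor $Y_\gamma$ by this bound turns $\Ee[R_{N+1}^2]$ into precisely a constant multiple of the $k=N+1$ instance of the quantity estimated in \eqref{IgammaL2bd}, so $\Ee[(\gamma^{N+1}R_{N+1})^2]\to 0$. Hence $Y_\gamma=\wt Y$ in $L^2(\Pp)$, which is \eqref{OCYchaos}, and then \eqref{Ygamma2} follows from the already-established orthogonality of the $I_k$. The main obstacle is the combinatorial/Stirling bookkeeping in step (a) — matching the crude count against the precise exponents $(n-m)^{k/2}$, $(2(n-m)+k)^{-k}$, $\Gamma((k+1)/2)^{-1}$, which requires handling $n-m$ and $k$ simultaneously; as an alternative to (c), one can identify $\wt Y$ with $Y_\gamma$ by verifying that $\wt Y$ solves the same It\^o equation and invoking a Gronwall uniqueness argument, which uses the same a priori second-moment bound.
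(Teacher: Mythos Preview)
Your overall architecture --- Picard iteration of Lemma~\ref{lem:Yrep}, It\^o isometry plus orthogonality of different chaos orders, then summability --- matches the paper exactly. The genuine gap is in step~(a): bounding
\[
\sum_{\mbf n\in\delta_k(n\viiva m)}\prod_{i=0}^{k}\bigl((n_{i+1}-n_i)\vee 1\bigr)^{-1/2}
\]
by the number of compositions $\binom{n-m+k}{k}$ throws away all the $1/\sqrt{\,\cdot\,}$ factors and is too crude to recover the $\Gamma((k+1)/2)^{-1}$ in \eqref{IgammaL2bd}. Concretely, take $n-m=1$ and let $k\to\infty$: after you plug in the crude count and simplify with Stirling, your bound divided by the target in \eqref{IgammaL2bd} behaves like $(ke/2)^{k/2}$ up to polynomial factors, which is not dominated by $C^k$ for any fixed $C$. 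So the crude count does not ``produce \eqref{IgammaL2bd}'' as claimed.

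What the paper does instead is bound the discrete sum by the continuous simplex integral
\[
\int_{\Delta_k(n-m\viiva 0)}\prod_{i=0}^{k}\frac{ds_i}{\sqrt{s_{i+1}-s_i}},
\]
which is a Dirichlet integral equal (after the change $t_i=(s_i-s_{i-1})/(n-m)$) to $\pi^{(k+1)/2}(n-m)^{(k-1)/2}/\Gamma((k+1)/2)$. It is precisely this Dirichlet evaluation that supplies the $\Gamma((k+1)/2)^{-1}$; the remaining factor $(n-m)^{(k-1)/2}$ combines with the Stirling asymptotics of $2^{2(n-m)}[(n-m)!]^2/[2(n-m)+k]!$ to give the $(n-m)^{k/2}/(2(n-m)+k)^{k}$ in \eqref{IgammaL2bd}. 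You even flag the bookkeeping here as ``the main obstacle,'' which is right --- but it cannot be resolved by the counting bound you propose; you need the sum-to-integral comparison. Your handling of (b) and (c) is fine (and in fact more careful than the paper, which simply asserts that Picard iteration yields the series once $L^2$ convergence is in hand).
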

\begin{proof}
Picard iteration of \eqref{Yrep} in Lemma \ref{lem:Yrep} in the case $\ve = 0$ gives the expansion \eqref{OCYchaos}, assuming that the series is convergent.  By independence of the $B_k$, the fact that It\^o integrals have mean $0$, and the It\^o  isometry, we have that 
\[
\Ee[I_k(n,y \viiva m,x) I_j(n,y \viiva m,x)] = \delta_{j = k} \sum_{\delta_k(n \viiva m)} \int \displaylimits_{\Delta_k(y \viiva x)} \prod_{i = 0}^{k} q^2(n_{i+1} - n_{i},y_{i+1} - y_{i}) \prod_{i = 1}^k dy_i,
\]
 Hence, as long as the sum on the right-hand side of \eqref{Ygamma2} is convergent, the expansion \eqref{OCYchaos} is $L^2(\Pp)$ convergent, and \eqref{Ygamma2} holds. For this, it suffices to show \eqref{IgammaL2bd}, and it further suffices to show the $m = x = 0$ case by translation invariance. 
For shorthand notation, set $Y_\gamma(n,y) = Y_\gamma(n,y \viiva 0,0)$. Then, by Lemma \ref{lem:q2prod} and Stirling's approximation, there exists a constant $C > 0$ (possibly changing from line to line) so that
\begin{equation} \label{eq:Ikbd1}
\begin{aligned}
    \f{\Ee[(I_{k}(n,y))^2]}{q^2(n,y)} &\le \f{1}{q^2(n,y)} \sum_{\delta_k(n)} \int \displaylimits_{\Delta_k(y)} \prod_{i = 0}^{k } q^2(n_{i+1} - n_{i},y_{i+1} - y_{i}) \prod_{i = 1}^k dy_i \\
    &\le C^k  \f{2^{2n} [n!]^2 y^k }{[2n + k]!} \sum_{\delta_k(n)} \prod_{i = 0}^k\f{1}{\sqrt{(n_{i+1} - n_i) \vee 1}} \\
    &\le C^k  y^k \f{n^{2n+1} 2^{2n}}{(2n + k)^{2n + k + 1/2}}\sum_{\delta_k(n)} \prod_{i = 0}^k\f{1}{\sqrt{(n_{i+1} - n_i) \vee 1}}\\
    &\le \f{C^k y^k n^{1/2}}{(2n+k)^{k}} \sum_{\delta_k(n)} \prod_{i = 0}^k\f{1}{\sqrt{(n_{i+1} - n_i) \vee 1}},
\end{aligned}
\end{equation}
where in the last line, we made the observation that $\f{n^{2n + 1/2}2^{2n}}{(2n + k)^{2n + 1/2}}\le 1$. 
Now, observe that
\begin{equation} \label{eq:Dirnm}
\begin{aligned}
&\quad \,\sum_{\delta_k(n)} \prod_{i = 0}^k\f{1}{\sqrt{(n_{i+1} - n_i) \vee 1}} = \sum_{m_i \ge 0, \sum_{i = 0}^{k} m_i = n} \prod_{i = 0}^k\f{1}{\sqrt{m_i\vee 1}} \\
&= \int_{\R^k} \prod_{i = 1}^k\Biggl(\f{\ind(t_i > 0) }{\sqrt{\lfloor t_i \rfloor \vee 1}}\Biggr) \f{\ind(\sum_{i = 1}^k \lfloor t_i \rfloor \le n)}{\sqrt{(n - \sum_{i = 1}^k \lfloor t_i \rfloor)\vee 1}}  \prod_{i = 1}^k dt_i.
\end{aligned}
\end{equation}
Make the change of variables $t_i = s_i(n+k)$ to obtain
\begin{equation} \label{eq:sqint}
\f{(n+k)^k}{n^{(k+1)/2}} \int_{\R^k} \prod_{i = 1}^k \Biggl(  \sqrt{\f{n \ind(s_i > 0)}{\lfloor s_i(n+k) \rfloor \vee 1}}\Biggr) \sqrt{ \f{n \ind( \sum_{i = 1}^k \lfloor s_i(n+k) \rfloor \le n)}{\bigl(n - \lfloor s_i(n+k) \rfloor\bigr) \vee 1}}  \prod_{i = 1}^k \, ds_i.
\end{equation}
We now bound the integrand above. Since $s_i > 0$, we have 
\begin{equation} \label{eq:sibd}
s_i(n + k) \vee 1 \ge (s_i n - 1) \vee 1 \ge \f{s_i n}{2},
\end{equation}
where in the last inequality, we have used $(x-\ell) \vee 1 \ge \f{x}{\ell + 1}$. 

Next, observe that 
\[
\sum_{i = 1}^k \lfloor s_i(n+k) \rfloor  \ge -k + \sum_{i = 1}^k s_i(n + k),
\]
so 
\begin{equation} \label{sin2}
\sum_{i = 1}^k  \lfloor s_i(n+k) \rfloor \le n \Longrightarrow \sum_{i = 1}^k s_i \le 1.
\end{equation}
When this occurs, we have 
\[
\sum_{i = 1}^k \lfloor s_i(n+k) \rfloor \le  \sum_{i = 1}^k s_i n + k \sum_{i = 1}^ks_i \le k + \sum_{i = 1}^k s_i n
\]
so
\begin{equation} \label{sin3}
(n - \lfloor s_i(n+k) \rfloor ) \vee 1 \ge (n -k -  \sum_{i = 1}^k s_i n) \vee 1 \ge \f{n - \sum_{i = 1}^k s_i n}{k+1},
\end{equation}
where we have again used the bound $(x - \ell) \vee 1 \ge \f{x}{\ell + 1}$ for $\ell = k$. Combining \eqref{eq:Dir_int},\eqref{eq:sqint},\eqref{sin2}, and \eqref{sin3}, we obtain 
\begin{align*}
&\quad \,\sum_{\delta_k(n)} \prod_{i = 0}^k\f{1}{\sqrt{(n_{i+1} - n_i) \vee 1}}  \\
&\le \f{2^{k/2} \sqrt{k+1} (n+k)^k}{n^{(k+1)/2}} \int_{\R^k} \prod_{i = 1}^k \Bigl(\f{\ind(s_i > 0)}{\sqrt {s_i}}\Bigr) \f{\ind(\sum_{i = 1}^k s_i < 1)}{\sqrt{1 - \sum_{i = 1}^k s_i}} \prod_{i = 1}^k ds_i  \le  \f{C^k (n+k)^k}{n^{(k+1)/2} \Gamma((k+1)/2)}.    
\end{align*}
Substituting this bound back into \eqref{eq:Ikbd1}, we obtain
\begin{align*}
\f{\Ee[(I_{k}(n,y))^2]}{q^2(n,y)} \le  C^k  y^k \f{(n+k)^k }{(2n + k)^{k}n^{k/2} \Gamma((k+1)/2) } \le\f{ C^k  y^k }{n^{k/2} \Gamma((k+1)/2) } ,
\end{align*}
as desired. 
\end{proof}

\begin{lemma} \label{lem:Ywhitenoise}
Given a space-time white noise $W$, one can couple the field of i.i.d. Brownian motions $\{B_r\}_{r \in \Z}$ with $W$ so that
\be \label{Ygamma series integral}\begin{aligned} 
    Y_\gamma(n,y \viiva m,x)  &= \sum_{k = 0}^\infty \gamma^k \int_{\R^k} \int_{\R^k}
    \prod_{i = 0}^k q(\lfloor t_{i + 1} \rfloor - \lfloor t_i \rfloor,y_{i +1} - y_i) \prod_{i = 1}^k W(dt_i,dy_i),
    \end{aligned}\ee
    where we define  
    $t_0 = m, t_{k + 1} = n, y_0 = x$, and $y_{k+1} = y$.
\end{lemma}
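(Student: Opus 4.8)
The plan is to build the i.i.d.\ Brownian motions $\{B_r\}_{r\in\Z}$ out of $W$ by restricting the white noise to the unit space‑time strips, and then to match the chaos expansion \eqref{OCYchaos} with \eqref{Ygamma series integral} term by term. Concretely, given a space‑time white noise $W$ on $\R^2$, set
\[
B_r(y)=W\bigl(\ind_{[r,r+1)\times(0,y]}\bigr)\ \text{ for } y\ge 0,\qquad B_r(y)=-W\bigl(\ind_{[r,r+1)\times(y,0]}\bigr)\ \text{ for }y<0.
\]
The $L^2$ isometry of $W$ gives $\Ee[B_r(y)B_r(y')]=y\wedge y'$ for $y,y'\ge0$ and independence of the increments of $B_r$ over disjoint spatial intervals, so each $B_r$ is a two‑sided standard Brownian motion; since the strips $[r,r+1)\times\R$ are pairwise disjoint, $\{B_r\}_{r\in\Z}$ is an independent family. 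With this coupling it suffices to show that the right‑hand side of \eqref{OCYchaos} equals the right‑hand side of \eqref{Ygamma series integral}.

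\textbf{Kernel identification and strip decomposition.} Both expansions write $Y_\gamma(n,y\viiva m,x)$ as an $L^2$‑convergent sum of multiple Wiener–It\^o integrals of explicit \emph{deterministic} kernels — with respect to the Gaussian family $\{B_r\}$ in \eqref{OCYchaos}, and with respect to $W$ in \eqref{Ygamma series integral} — so no adaptedness issues arise and only the algebra of multiple Wiener integrals is needed. The elementary identity $\int_\R h(y)\,dB_r(y)=\int_{[r,r+1)\times\R}h(y)\,W(dt\,dy)$, valid for simple $h$ by definition of the two integrals and extended to $h\in L^2(\R)$ by density and the isometries, is the one‑dimensional version of this identification. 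For the $k$‑th summand of \eqref{Ygamma series integral}, decompose the $\R^k$ integration in the time variables according to which strip each $t_i$ occupies, $t_i\in[n_i,n_i+1)$. The kernel $\prod_{i=0}^k q(\lfloor t_{i+1}\rfloor-\lfloor t_i\rfloor,\,y_{i+1}-y_i)$ depends on the $t_i$ only through $\lfloor t_i\rfloor=n_i$, hence is constant on each such cell, and integrating $W$ over $t_i\in[n_i,n_i+1)$ replaces $W(dt_i\,dy_i)$ by $dB_{n_i}(y_i)$. The support of the Poisson kernel $q$ forces $m=n_0\le n_1\le\cdots\le n_k\le n_{k+1}=n$ and $x=y_0\le y_1\le\cdots\le y_k\le y_{k+1}=y$, so the strip decomposition produces exactly the sum over $\delta_k(n\viiva m)$ and the integral over $\Delta_k(y\viiva x)$ appearing in the definition of $I_k(n,y\viiva m,x)$. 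Thus the $k$‑th summand of \eqref{Ygamma series integral} equals $\gamma^k I_k(n,y\viiva m,x)$; summing over $k$, the series converges in $L^2(\Pp)$ by \eqref{Ygamma2}–\eqref{IgammaL2bd}, which gives \eqref{Ygamma series integral}. (The structural role of $q(\lfloor t_{i+1}\rfloor-\lfloor t_i\rfloor,\,\aabullet)$ here is the semi‑discrete analogue of the heat kernel $\rho(t_{i+1}-t_i,\aabullet)$ in the SHE chaos expansion \eqref{SHEchaos}.)

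\textbf{Main obstacle.} The one delicate point is the bookkeeping in the case of coincident levels $n_i=n_{i+1}$, i.e.\ two consecutive time variables landing in the same strip. There one must check that passing from the $W$‑integral over that strip to the iterated It\^o integral $dB_{n_i}(y_i)\,dB_{n_i}(y_{i+1})$ against a single Brownian motion introduces no spurious diagonal (It\^o–correction) terms from the time variables, and that the symmetrization implicit in the multiple Wiener integral collapses onto the single ordered region of \eqref{OCYchaos}. This is resolved by the two observations already used above: the kernel does not depend on the time coordinates within a strip, and the factor $q(\lfloor t_{i+1}\rfloor-\lfloor t_i\rfloor,\,y_{i+1}-y_i)=q(0,y_{i+1}-y_i)$ vanishes unless $y_{i+1}\ge y_i$, so on the support the spatial coordinates are a.e.\ strictly ordered. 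Hence the corresponding pair of white‑noise integrals acts on disjoint space‑time slabs $[n_i,n_i+1)\times\{y_i\}$ and $[n_i,n_i+1)\times\{y_{i+1}\}$ and reproduces precisely the iterated spatial It\^o integral, the only It\^o structure present being the spatial one already encoded in \eqref{OCYchaos}.
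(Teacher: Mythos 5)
Your proof is correct and takes essentially the same approach as the paper's: define each $B_r$ by integrating $W$ over the unit time strip $[r,r+1)\times\R$, then identify the $k$-th term of the white-noise chaos expansion with $\gamma^k I_k(n,y\viiva m,x)$ from \eqref{OCYchaos} using the fact that $q(\lfloor t_{i+1}\rfloor-\lfloor t_i\rfloor,\cdot)$ depends on each $t_i$ only through its integer part. The paper verifies this by writing out $k=1$ explicitly and appealing to induction; your strip-decomposition argument for general $k$ and the remark about coincident strips (which, as you correctly note, cause no diagonal corrections because the spatial coordinates are a.e.\ strictly ordered on the support of $q$) make the same point slightly more carefully.
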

\begin{proof}
Given a space-time white noise $W$, we can define a field of i.i.d.\ two-sided Brownian motions $\{B_r\}_{r \in \Z}$ by
    \[ 
    B_r(y) = \begin{cases}
    W(\ind([r,r + 1] \times [0,y]))  & y \ge 0 \\
    -W(\ind([r,r + 1] \times [y,0])) &y < 0 
    \end{cases}
    \]
    Alternatively, we can use a single definition using the formal equality
    \be \label{Brdef}
    B_r(y) = \int_{0}^y \,dx \int_r^{r + 1}dt\, W(t,x). 
    \ee
      From the definition of $B_r$, we have the formal equality
    \be \label{Brderiv}
    d B_r(x) = \,dx \int_r^{r+1} dt \, W(t,x).
    \ee
    
    Now, with the $B_r$ defined in terms of $W$, we show that we can write $Y_\gamma$  as \eqref{Ygamma series integral}. Using \eqref{Brderiv}, the $k = 1$ term in \eqref{OCYchaos} can be written as 
    \begin{align*}
        &\quad \; \gamma \int_x^y \sum_{r = m}^n q(n - r,y - z)q(r - m,z - x) \,dB_r(z) \\
        &= \gamma \int_x^y \sum_{r = m}^n q(n - r,y - z)q(r - m,z - x) \,dz\int_r^{r + 1} dt \,W(t,z) \\
        &= \gamma \int_x^y \,dz \sum_{r = m}^n \int_r^{r + 1} dt\, q(n - \lfloor t \rfloor,y - z)q(\lfloor t \rfloor - m,z - x) W(t,z) \\
        &= \gamma \int_x^y\,dz \int_m^{n + 1}dt \,q(n - \lfloor t \rfloor,y - z)q(\lfloor t \rfloor - m,z - x)W(t,z) \\
        &= \gamma \int_{\R} \,dz \int_\R \,dt \,q(n - \lfloor t \rfloor,y - z)q(\lfloor t \rfloor - m,z - x)W(t,z),
    \end{align*}
    and this matches the $k = 1$ term of \eqref{Ygamma series integral}. The last line follows because the integrand is $0$ outside the bounds of integration on the previous line. The general case follows using the same reasoning and induction. 
\end{proof}

We prove an intermediate lemma for a scaled transition function. 
With $q$ as in \eqref{poi}, set 
\be \label{pNdef}
p_N(t,y \viiva s,x) = \sqrt N q\bigl(\fl{tN} - \fl{sN},(t-s)N + \sqrt N (y-x)\bigr)\qquad\text{and}\qquad p_N(t,y) = p_N(t,y \viiva 0,0).
\ee

\begin{lemma} \label{lem:moment_bd}
The following hold.
\begin{enumerate} [label=\rm(\roman{*}), ref=\rm(\roman{*})]  \itemsep=3pt
\item \label{itm:pNptwise} As $N \to \infty$, $p_N(t,y \viiva s,x) \to \rho(t -s,y-x )$, pointwise, for $x,y \in \R$ and $t > s$.
\item \label{itm:intconv} For each $t > 0$, $y \in \R$, $\alpha > 0$, and integer $M \ge 1$,
\be \label{eqn:intc}
\lim_{N \to \infty} \int_{-\infty}^{t\sqrt N + y} e^{\alpha |x|}p_N^M(t,y \viiva 0, x)\,dx = \int_\R e^{\alpha |x|}\rho^M(t,y - x)\,dx < \infty.
\ee
\end{enumerate}
\end{lemma}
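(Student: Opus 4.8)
\emph{Proof idea.}
The plan is to prove \ref{itm:pNptwise} as the local central limit theorem for the Poisson law via Stirling's formula, and then to derive \ref{itm:intconv} from \ref{itm:pNptwise} together with a Gaussian upper bound on $p_N$ that is uniform in $N$, splitting the integral into a bulk region handled by dominated convergence and a far-left region controlled by a crude large-deviation estimate.

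For \ref{itm:pNptwise} I would set $n=\lfloor tN\rfloor-\lfloor sN\rfloor$ and $\mu=(t-s)N+\sqrt N(y-x)$, so that $p_N(t,y\viiva s,x)=\sqrt N\,e^{-\mu}\mu^n/n!$ once $N$ is large (using $t>s$, so that $\mu>0$ and $n\ge1$). Stirling's formula then gives
\[
p_N(t,y\viiva s,x)=\sqrt{\tfrac{N}{2\pi n}}\;e^{-n\psi(\mu/n)}\,(1+o(1)),\qquad \psi(u):=u-1-\log u.
\]
Since $n/N\to t-s$ one has $\sqrt{N/(2\pi n)}\to(2\pi(t-s))^{-1/2}$; since $\lfloor tN\rfloor-\lfloor sN\rfloor=(t-s)N+O(1)$ one has $\mu-n=\sqrt N(y-x)+O(1)$ and $\mu/n\to1$; and with $\psi(u)=\tfrac12(u-1)^2+O((u-1)^3)$ near $u=1$,
\[
n\,\psi(\mu/n)=\frac{(\mu-n)^2}{2n}+O(N^{-1/2})\;\longrightarrow\;\frac{(y-x)^2}{2(t-s)},
\]
so that $p_N(t,y\viiva s,x)\to\rho(t-s,y-x)$.

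For \ref{itm:intconv} (where $s=0$), the first step is a uniform bound. Stirling's lower bound $n!\ge\sqrt{2\pi n}\,(n/e)^n$ together with $\lfloor tN\rfloor\ge tN/2$ for $N$ large gives $p_N(t,y\viiva0,x)\le(\pi t)^{-1/2}e^{-n\psi(\mu/n)}$, and the convexity bounds $\psi(u)\ge\tfrac12(u-1)^2$ for $0<u\le1$ and $\psi(u)\ge\tfrac{(u-1)^2}{2u}$ for $u\ge1$ give $n\psi(\mu/n)\ge\frac{(\mu-n)^2}{2(\mu\vee n)}$. Writing $w=y-x$ and $\mu=tN+\sqrt N w\ge0$ (the support condition), elementary estimates on $\mu-n=\sqrt N w+O(1)$ and on $\mu\vee n$ then yield, for a constant $C_1=C_1(t)$ and all $N\ge N_0$,
\[
p_N(t,y\viiva0,x)\le C_1\,e^{-(y-x)^2/(32t)}\ \text{ if }|y-x|\le t\sqrt N,\qquad p_N(t,y\viiva0,x)\le(\pi t)^{-1/2}e^{-\sqrt N(y-x)/4}\ \text{ if }y-x>t\sqrt N.
\]
Then I would split $\int_{-\infty}^{t\sqrt N+y}=\int_{y-t\sqrt N}^{y+t\sqrt N}+\int_{-\infty}^{y-t\sqrt N}$. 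On the bulk piece, $\ind_{[y-t\sqrt N,\,y+t\sqrt N]}(x)\,e^{\alpha|x|}p_N^M(t,y\viiva0,x)$ converges pointwise to $e^{\alpha|x|}\rho^M(t,y-x)$ by \ref{itm:pNptwise} and is dominated by $C_1^M e^{\alpha|x|}e^{-M(y-x)^2/(32t)}\in L^1(\R)$, so dominated convergence gives convergence to $\int_\R e^{\alpha|x|}\rho^M(t,y-x)\,dx$, which is finite since $\rho^M$ decays Gaussianly. On the far-left piece $y-t\sqrt N<0$ for $N$ large, so after substituting $v=y-x$,
\[
\int_{-\infty}^{y-t\sqrt N}e^{\alpha|x|}p_N^M(t,y\viiva0,x)\,dx\le e^{-\alpha y}(\pi t)^{-M/2}\int_{t\sqrt N}^{\infty}e^{-(M\sqrt N/4-\alpha)v}\,dv=O\bigl(N^{-1/2}e^{-MtN/8}\bigr)\to0.
\]
Adding the two pieces gives \eqref{eqn:intc}.

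I expect the uniform Gaussian bound on $p_N$ to be the main obstacle: it needs Stirling's inequality in non-asymptotic form, the right convexity estimates for $\psi$ in both regimes $\mu\le n$ and $\mu\ge n$, and a choice of bulk/tail split that keeps the tail contribution negligible even though its domain of integration recedes to $-\infty$ as $N\to\infty$.
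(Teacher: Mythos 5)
Your proof of part (i) is essentially the paper's: both compute $p_N$ via Stirling's approximation and a second-order Taylor expansion of the logarithm; writing it through the relative-entropy function $\psi(u)=u-1-\log u$ is a cosmetic reorganization of the same estimate.

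For part (ii), your route is genuinely different. The paper substitutes $w=(tN-x\sqrt N)(\alpha/\sqrt N+M)$ to rewrite each of the two pieces $\int_{-y}^{t\sqrt N}$ and $\int_{-\infty}^{-y}$ as incomplete gamma functions, then invokes Tricomi's uniform asymptotic $\gamma(a+1,a+z\sqrt{2a})/\Gamma(a+1)\to\tfrac12\erfc(-z)$ together with Stirling to identify the limit exactly as a scaled $\erfc$, and separately checks that $\int e^{\alpha|x|}\rho^M$ reduces to the same $\erfc$ expression. You instead establish a uniform-in-$N$ Gaussian upper bound $p_N(t,y\viiva 0,x)\le C_1 e^{-(y-x)^2/(32t)}$ on the bulk $\{|y-x|\le t\sqrt N\}$ by combining Stirling's non-asymptotic lower bound on $n!$ with the two convexity inequalities $\psi(u)\ge\tfrac12(u-1)^2$ (for $u\le 1$) and $\psi(u)\ge(u-1)^2/(2u)$ (for $u\ge 1$), from which $n\psi(\mu/n)\ge(\mu-n)^2/\big(2(\mu\vee n)\big)$; you then apply dominated convergence on the bulk (using part (i) for pointwise convergence) and dispose of the far tail $y-x>t\sqrt N$ with the sharper linear-in-$\sqrt N$ exponent from the same bound. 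I checked the convexity inequalities and the constants in your two cases, and they go through (your $1/32$ and $1/4$ are loose versions of the exact $1/8$ and $1/16$ one actually gets). Your approach is more elementary and self-contained — it never touches special functions — whereas the paper's delivers the limiting value of each half-line integral in closed form, which is perhaps overkill since the final answer is assembled from both halves anyway. Either argument is a legitimate proof of the lemma.
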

\begin{proof}
\smallskip \noindent \textbf{Item \ref{itm:pNptwise}:} The pointwise convergence $p_N(t,y \viiva s,x) \to \rho(t-s,y-x)$ is a simple application of Stirling's approximation. We prove the $x = s = 0$ case to avoid clutter, but the general case is entirely similar.  
\be \label{stirlingpN}
\begin{aligned}    p_N(t,y \viiva s,x) &= \sqrt N e^{-N(t-s) + \sqrt N (y-x)} \Bigl(\f{(N(t-s) + \sqrt N (y-x))^{\lfloor tN \rfloor}}{(\lfloor tN \rfloor )!}\Bigr) \\
    &\sim \f{\sqrt N}{\sqrt{2\pi \lfloor tN \rfloor}} e^{-tN + \lfloor tN \rfloor + \sqrt N y} \Bigl(\f{tN + \sqrt N y}{\lfloor tN \rfloor}\Bigr)^{\lfloor tN \rfloor} \sim p(t,y),
\end{aligned}
\ee
where the last step follows from the Taylor expansion
\begin{align*}
\lfloor tN \rfloor \log \Bigl(\f{tN + \sqrt N y}{\lfloor tN \rfloor}\Bigr) &= \lfloor tN \rfloor \log \Bigl(1 + \f{tN + \sqrt N y - \lfloor tN \rfloor}{\lfloor tN \rfloor}\Bigr) \\
&= tN - \sqrt N y - \lfloor tN \rfloor - \f{\lfloor tN \rfloor}{2} \Bigl(\f{tN + \sqrt N y - \lfloor tN \rfloor}{\lfloor tN \rfloor}\Bigr)^2 + O(N^{-1/2}) \\
&=  tN - \sqrt N y - \lfloor tN \rfloor - \f{y^2}{2t} + O(N^{-1/2})
\end{align*}

\smallskip \noindent \textbf{Item \ref{itm:intconv}:} Recall the convention $p_N(t,y) = p_N(t,y \viiva 0,0)$.  Changing variables, \eqref{eqn:intc} is equivalent to
\[
\lim_{N \to \infty} \int_{-\infty}^{t\sqrt N} e^{\alpha|x + y|}p_N^2(t,-x)\,dx = \int_\R e^{\alpha|x + y|}\rho^2(t,-x)\,dx
\]
We prove this by showing separately that
\be \label{int1c}
\int_{-y}^{t\sqrt N} e^{\alpha (x + y)} p_N^M(t,-x)\,dx \to \int_{-y}^\infty e^{\alpha (x + y)}\rho^M(t,-x)\,dx,
\ee
and
\be \label{int2c}
\int_{-\infty}^{-y} e^{-\alpha (x + y)} p_N^M(t,-x)\,dx \to \int_{-\infty}^{-y} e^{-\alpha (x + y)} \rho^M(t,-x)\,dx.
\ee

First, by completing the square and changing variables, we obtain 
\begin{align*}
\int_{-y}^\infty e^{\alpha x}\rho^M(t,-x)\,dx &= \f{\sqrt{2t} e^{\alpha^2t/(2M)} }{\sqrt M (2\pi t)^{M/2}}  \int_{\sqrt{\f{M}{2t}}(-y - \f{\alpha t}{M})}^\infty e^{-u^2}\,du \\
&= \f{e^{\alpha^2t/(2M)} }{2\sqrt M (2\pi t)^{(M-1)/2}}\erfc\Bigl(\sqrt{\f{M}{2t}}(-y - \alpha t/M)\,\Bigr).
\end{align*}
On the other hand,
\begin{align*}
    &\quad \int_{-y}^{t\sqrt N } e^{\alpha x} p_N^M(t,-x)\,dx 
    = N^{M/2}\int_{-y}^{t\sqrt N } e^{\alpha x} e^{-M(tN -x\sqrt N\,)}\f{(tN - x\sqrt N)^{M\fl{tN}}}{(\fl{tN}!)^M}\,dx,
\end{align*}
which, upon the transformation $w = (tN - x\sqrt N)(\alpha/\sqrt N + M)$, we obtain
\begin{align} \label{int3c}
&\quad \; \f{N^{(M-1)/2}e^{\alpha t \sqrt N}}{(\lfloor tN \rfloor !)^M (\alpha/\sqrt N + M)^{M\lfloor tN \rfloor + 1}} \int_0^{(tN + y\sqrt N)(\alpha/\sqrt N + M)} e^{-w}w^{M \lfloor tN \rfloor}\,dw \\
&= \f{N^{(M-1)/2}e^{\alpha t \sqrt N}(M\lfloor tN\rfloor)!}{(\lfloor tN \rfloor !)^M (\alpha/\sqrt N + M)^{M\lfloor tN \rfloor + 1}} \f{\gamma(M \lfloor tN \rfloor + 1,(tN + y\sqrt N)(\alpha/\sqrt N + M))}{(M\lfloor tN\rfloor)!}
\end{align}
where $\gamma(s,x) = \int_0^x e^{-u} u^{s - 1}\,du$ is the lower incomplete gamma function. Tricomi \cite{Tricomi-1950} showed that as $a \to \infty$, the function $\gamma$ has the following asymptotic expansion that holds uniformly on compact subsets of $z$ (see also \cite{Temme-1975}):
\be \label{gammaexp}
\f{\gamma(a + 1,a + z(2a)^{1/2})}{\Gamma(a + 1)} \sim \f{1}{2}\erfc(-z) + o(1).
\ee
Inserting this asymptotic into \eqref{int3c} and using Stirling's approximation, we obtain
\begin{align*}
&\int_0^{t\sqrt N} e^{\alpha x}p_N^M(t,-x)\,dx \sim \f{N^{(M-1)/2}e^{\alpha t \sqrt N}(M\lfloor tN\rfloor)!}{2(\lfloor tN \rfloor !)^M (\alpha/\sqrt N + M)^{M\lfloor tN \rfloor + 1}}\erfc\Bigl(\sqrt{\f{M}{2t}}(-y - \alpha t/M)\,\Bigr) \\
&\sim \f{N^{(M-1)/2}e^{\alpha t\sqrt N}\sqrt{2\pi M \lfloor tN \rfloor}(M \lfloor tN \rfloor/e)^{M \lfloor tN \rfloor} }{2 (2\pi \lfloor tN \rfloor)^{M/2}} \\
&\qquad\qquad\qquad \times (\lfloor tN\rfloor/e)^{M \lfloor tN \rfloor} M (\alpha/\sqrt N + M)^{M \lfloor tN \rfloor} \erfc\Bigl(\sqrt{\f{M}{2t}}(-y - \alpha t/M)\,\Bigr) \\
&\sim \f{e^{\alpha t \sqrt N}}{2\sqrt M(2\pi t)^{(M-1)/2}}\Bigl(1 + \f{\alpha}{M \sqrt N}\Bigr)^{M \lfloor tN \rfloor} \sim \f{e^{\alpha^2t/(2M)} }{2\sqrt M (2\pi t)^{(M-1)/2}}\erfc\Bigl(\sqrt{\f{M}{2t}}(-y - \alpha t/M)\,\Bigr).
\end{align*}
The last step comes from 
\[
t\alpha \sqrt N - M\lfloor tN\rfloor  \log\Bigl(1 + \f{\alpha}{M\sqrt N}\Bigr) = t\alpha \sqrt N - M\lfloor tN\rfloor \Bigl(\f{\alpha}{M\sqrt N} - \f{\alpha^2}{2M^2 N} + o(N^{-1})\Bigr) = \f{\alpha^2 t}{2M} + o(1).
\]
This proves \eqref{int1c}. The proof of \eqref{int2c} is similar: the left-hand side is transformed into an incomplete gamma function via the transformation $w = (M-\alpha/\sqrt N)(Nt - x\sqrt N)$. In this case, we are left with a gamma function minus an incomplete gamma function, and the asymptotic expansion \eqref{gammaexp} gives us the needed asymptotics.
\end{proof}

We introduce the scaled   O'Connell-Yor polymer partition function, whose convergence to the fundamental solution of SHE is proved next. 
For $\beta > 0$ and a sequence $\beta_N$ such that $N^{1/4} \beta_N \to \beta$,  define a scaling factor
    \begin{align} 
    &\psi_N(s,t,x,y;\beta_N) =\sqrt N \exp\Bigl(-N\bigl(1+ \f{\beta_N^2}{2}\bigr)(t - s) - \sqrt N \bigl(1 + \f{\beta_N^2}{2}\bigr)(y - x)\Bigr) \label{psi}   
    \end{align}
     and the scaled partition function
  \be \label{ZNshort}
     \begin{aligned} 
Z_N(t,y\viiva s,x) &=  \psi_N(s,t,x,y;\beta_N) \OCY_{\beta_N}\bigl(\fl{tN},tN + y\sqrt N \,\bviiva \fl{sN},sN 
+ x\sqrt N \,\bigr) \tspa\ind\{x \le (t-s)\sqrt N + y\}\\
&= \sqrt N\,   Y_{\beta_N}\bigl(\lfloor tN\rfloor ,tN + y\sqrt N \bviiva \lfloor sN\rfloor, sN + y\sqrt N \tspb \bigr). \end{aligned}\ee
 We use representation \eqref{Ygamma series integral} in terms of white noise for $Z_N(t,y\viiva s,x)$ and then scale the white noise suitably to 
relate $Z_N(t,y\viiva s,x)$ to $Z_\beta(t,y\viiva s,x)$.  This produces for each $N$ a  coupling of   $Z_N(t,y\viiva s,x)$ and $Z_\beta(t,y\viiva s,x)$ on the probability space of the white noise. We show that in this coupling, their $L^2$ distance converges to zero.

For the next proofs, recall  the standard fact from analysis known as the generalized dominated convergence theorem: if $f_n \to f$ a.e., $|f_n| \le g_n\to g$ a.e., 
and $\int g_n \to \int g < \infty$, then $\int f_n \to \int f$.

\begin{theorem}\label{thm:OCYtoSHE}
 Fix $\beta > 0$ and a sequence $\beta_N$ such that $N^{1/4} \beta_N \to \beta$.  For each $N$ we have  a coupling of $Z_N$ and $Z_\beta$ on the probability space of the white noise so that this limit holds:  \[ \lim_{N\to\infty} \Ee\bigl[\,\bigl\lvert Z_N(t,y\viiva s,x) \,-\,  Z_\beta(t,y\viiva s,x) \bigr\rvert^2\,\bigr]=0\qquad\text{for each $s < t$ and $x,y \in \R$}.\]
In particular, the weak convergence $Z_N(t,y\viiva s,x) \Longrightarrow Z_\beta(t,y\viiva s,x)$ holds for each $s < t$ and $x,y \in \R$.
\end{theorem}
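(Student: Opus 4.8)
The plan is to express both $Z_N(t,y\viiva s,x)$ and $Z_\beta(t,y\viiva s,x)$ as chaos expansions over one common space-time white noise, match them chaos-component by chaos-component after a parabolic rescaling of the noise, and then control the chaos sum uniformly in the order so that a dominated-convergence argument over the sum closes the estimate. Fix $s<t$ and $x,y\in\R$ throughout.

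\emph{Construction of the coupling.} Using Lemma \ref{lem:Ywhitenoise} I realize $Z_N=\sqrt N\,Y_{\beta_N}(\lfloor tN\rfloor,tN+y\sqrt N\viiva\lfloor sN\rfloor,sN+x\sqrt N)$ (as in \eqref{ZNshort}) through the $L^2$-convergent series \eqref{Ygamma series integral} driven by $W$, whose $k$-th term is $\beta_N^k$ times a $k$-fold iterated It\^o integral with kernel $\prod_{i=0}^k q(\lfloor t_{i+1}\rfloor-\lfloor t_i\rfloor,\,y_{i+1}-y_i)$. In this integral I substitute $t_i=Nu_i$ and $y_i=Nu_i+\sqrt N\,w_i$. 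This is a linear change with Jacobian $N^{3/2}$, so $W$ pulls back to $N^{3/4}W^{(N)}$ for a white noise $W^{(N)}$ on the $(u,w)$-plane; by \eqref{pNdef} the kernel becomes $N^{-(k+1)/2}\prod_{i=0}^k p_N(u_{i+1},w_{i+1}\viiva u_i,w_i)$; and after collecting the powers of $N$ the $\sqrt N$ prefactor cancels and the overall constant reduces to $(\beta_N N^{1/4})^k$, which converges to $\beta^k$. Thus the $k$-th chaos component of $Z_N$ equals the iterated integral of $(\beta_N N^{1/4})^k\prod_{i=0}^k p_N$ against $W^{(N)}$, with internal endpoints $(u_0^N,w_0^N)\to(s,x)$ and $(u_{k+1}^N,w_{k+1}^N)\to(t,y)$ that absorb the floor corrections. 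I then realize $Z_\beta(t,y\viiva s,x)$ via the chaos expansion \eqref{SHEchaos} driven by the \emph{same} $W^{(N)}$; since $W^{(N)}$ is a white noise this has the correct law and defines for each $N$ the coupling asserted in the theorem, and now the $k$-th chaos components of $Z_N$ and $Z_\beta$ lie in the same homogeneous Wiener chaos of $W^{(N)}$.

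\emph{Termwise convergence.} By orthogonality of distinct chaoses, $\Ee[|Z_N-Z_\beta|^2]=\sum_{k\ge0}\Ee[|\Delta_N^{(k)}|^2]$, where $\Delta_N^{(k)}$ is the difference of the two $k$-th chaos components and $\Ee[|\Delta_N^{(k)}|^2]$ is the squared $L^2$-norm over the time-ordered region of $(\beta_N N^{1/4})^k\prod_{i=0}^k p_N(u_{i+1},w_{i+1}\viiva u_i,w_i)-\beta^k\prod_{i=0}^k\rho(u_{i+1}-u_i,w_{i+1}-w_i)$. For each fixed $k$ this integrand tends to $0$ pointwise, by $\beta_N N^{1/4}\to\beta$, the pointwise convergence $p_N\to\rho$ of Lemma \ref{lem:moment_bd}\ref{itm:pNptwise}, and the convergence of the internal endpoints. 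To promote this to $L^1$-convergence of the square I invoke the generalized dominated convergence theorem: Lemma \ref{lem:q2prod} bounds $\prod_{i=0}^k q^2\le C^k g(\mathbf n,\mathbf y)$ pointwise, which after the same change of variables bounds $\prod_{i=0}^k p_N^2$ by an explicit sequence $G_N$; by \eqref{eq:Dir_int} together with the Stirling asymptotics already used to prove \eqref{IgammaL2bd}, $G_N$ converges pointwise to a multiple of $\prod_{i=0}^k\rho^2$ and $\int G_N$ converges to the corresponding finite integral, while $\beta^k\prod_{i=0}^k\rho$ supplies a fixed integrable dominator for the other term. Hence $\Ee[|\Delta_N^{(k)}|^2]\to0$ for every $k$.

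\emph{Uniform tail and conclusion.} From $Z_N=\sqrt N\,Y_{\beta_N}$, \eqref{Ygamma2} and the bound \eqref{IgammaL2bd} give $\Ee[|(\text{$k$-th chaos of }Z_N)|^2]=N\beta_N^{2k}\,\Ee[I_k^2]\le C^k\,\beta_N^{2k}\,(Nq^2(\nu_N,\ell_N))\,\ell_N^{\,k}\,\nu_N^{\,k/2}\,((2\nu_N+k)^k\Gamma((k+1)/2))^{-1}$, where $\nu_N=\lfloor tN\rfloor-\lfloor sN\rfloor$ and $\ell_N=(t-s)N+\sqrt N(y-x)\ge0$. Since $Nq^2(\nu_N,\ell_N)=p_N(t,y\viiva s,x)^2\to\rho(t-s,y-x)^2$ and $\beta_N^{2k}\ell_N^{\,k}\nu_N^{\,k/2}(2\nu_N+k)^{-k}\le(C'\beta^2\sqrt t)^k$ for all large $N$, uniformly in $k$, this is at most $c_k:=C''(C'C\beta^2\sqrt t)^k/\Gamma((k+1)/2)$ for all large $N$, and it is summable in $k$ for each of the finitely many remaining $N$ as well; the same estimate with $\beta_N N^{1/4}$ replaced by $\beta$ controls the $k$-th chaos component of $Z_\beta$. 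Therefore $\Ee[|\Delta_N^{(k)}|^2]\le c_k'$ with $\sum_k c_k'<\infty$, and dominated convergence for the series yields $\Ee[|Z_N-Z_\beta|^2]=\sum_k\Ee[|\Delta_N^{(k)}|^2]\to0$. Weak convergence follows at once, since $L^2$-convergence implies convergence in probability and hence in distribution. The main obstacle is the discretization bookkeeping in the termwise step: carrying the floor functions and the $o(1)$ endpoint corrections cleanly through the change of variables, and verifying that the explicit Dirichlet-integral bound of Lemma \ref{lem:q2prod} has the right limit under the $p_N$-scaling — essentially re-running, at the level of the full $k$-fold kernel, the Stirling analysis behind \eqref{eq:prodqbd}--\eqref{eq:Dir_int} and \eqref{IgammaL2bd}.
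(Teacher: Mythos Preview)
Your proposal is correct and follows essentially the same route as the paper: couple $Z_N$ and $Z_\beta$ via a common white noise after the parabolic change of variables, then combine a uniform-in-$N$ summable bound on the $k$-th chaos norms (from \eqref{IgammaL2bd}) with termwise $L^2$-convergence obtained through generalized dominated convergence using Lemma~\ref{lem:q2prod}. The only cosmetic difference is that the paper packages the summation step as an $\varepsilon/3$ split (finitely many terms plus two tails) rather than your dominated-convergence-over-the-series; both rest on the same uniform tail estimate. You have also correctly located the genuine work: the convergence $\int G_N\to\int\prod\rho^2$ is exactly the content of the paper's separate Lemma~\ref{lem:int_conv}, a case-by-case analysis of the floor-function boundary effects that is not a one-line consequence of \eqref{eq:Dir_int} and Stirling.
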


\begin{remark}
    We sketch here how our result is consistent with that in \cite[Theorem 1.2]{Nica-2021}. An independent proof follows. Ours gives the result for the four-parameter field, while \cite{Nica-2021} handles the two-parameter case. A change of coordinates is required to transfer between the two results, as shown in the discussion below. 
    We note that \cite[Theorem 1.2]{Nica-2021} is a more general result about partition functions for $d$ nonintersecting paths, while we only handle the $d = 1$ case.  There, the  semi-discrete partition function is rescaled by the Lebesgue volume of the path space $\pathsp_{(0,0),(n,x)}$: 
\[
\wt Z^{\rm sd}_\beta(n,x) = \f{n!}{x^n}\OCY_\beta(n,x\viiva 0,0).
\]
Theorem 1.2 of \cite{Nica-2021} states that for any sequence $\beta_N$ with $N^{1/4} \beta_N \to \beta$ as $N \to \infty$, we have the following convergence in distribution as $N \to \infty$: 
\[
\wt Z^{\rm sd}_{\beta_N} (\lfloor tN + x \sqrt N \rfloor,tN)\exp\Bigl(-\f{\beta_N^2}{2}tN \Bigr) \Longrightarrow \f{Z_\beta(t,x\viiva 0,0)}{\rho(t,x)}.
\]
Furthermore, as a process indexed by  $(t,x)\in(0,\infty)\times \R$, the convergence holds in the sense of finite-dimensional distributions, and there exists a coupling in which the convergence is in $L^p$ for any $p \ge 1$. Using Stirling's approximation, one can directly apply this result to show that, in the $s = x = 0$ case, the following convergence holds in $L^2(\Pp)$:
\be \label{ConvtoZb}
\sqrt N \exp\Bigl(-N\bigl(1 + \f{\beta_N^2}{2}\bigr)(t-s)\Bigr)\OCY_{\beta_N}\bigl(\lfloor tN + y\sqrt N \rfloor,tN\bviiva \lfloor sN + x\sqrt N \rfloor,sN\bigr) \Longrightarrow Z_\beta(t,y\viiva s,x).
\ee
It is reasonable to think this would extend to general $s < t$ and $x,y \in \R$, but some additional justification would be needed since the shift invariance does not immediately work through the floor functions. To see how the result in Theorem \ref{thm:OCYtoSHE} appears from \eqref{ConvtoZb}, replace $t$ with $t - \f{y}{\sqrt N}$ and $s$ with $s - \f{x}{\sqrt N}$, then replace $x$ with $-x$ and $y$ with $-y$ and use the reflection invariance of $Z_\beta$ (Theorem \ref{thm:Zinvar}\ref{itm:Zrefl}). To make this argument directly rigorous, one would need to show uniform convergence on compact sets, or change the parameterization and show that the chaos series still converges.  We emphasize here that our proof below is self-contained, uses different methods, and does not rely on the result of \cite{Nica-2021}, although the white noise coupling is the same.
\end{remark}

\begin{proof}[Proof of Theorem \ref{thm:OCYtoSHE}]
    With $Z_N(t,y \viiva s,x)$ from \eqref{ZNshort} and  for  a sequence $\beta_N$ with $N^{1/4} \beta_N  \to \beta$, Lemma \ref{lem:Ywhitenoise} implies
    \begin{align*}
    Z_N(t,y \viiva s,x) &= \sqrt N\tspb  Y_{\beta_N}\bigl(\lfloor tN\rfloor ,tN + y\sqrt N \bviiva \lfloor sN\rfloor, sN + y\sqrt N \bigr) \\
    &= \sqrt N \sum_{k = 0}^\infty \beta_N^k \int_{\R^k} \int_{\R^k} \prod_{i = 0}^k q(\lfloor t_{i + 1} \rfloor - \lfloor t_i \rfloor,y_{i +1} - y_i)\, W(dt_i,dy_i),
    \end{align*}
    where $t_0 = \lfloor sN \rfloor, t_{k+1} = \lfloor tN \rfloor, y_0 =sN + x\sqrt N$, and $y_k = tN + y\sqrt N$.
    Now, consider the transformation 
    \be \label{WNtrans}
    (t_i,y_i)_{1 \le i \le k} \mapsto \Phi_k((t_i,y_i)_{1 \le i \le k}) = \Bigl(\f{t_i}{N}, \f{y_i - t_i}{\sqrt N}    \Bigr)_{1 \le i \le k}.
    \ee
    This transformation alters the white noise, but multiplying by the square-root Jacobian term $N^{3k/4}$, we have the following distributional equality on the level of processes in $(s,x,t,y) \in \Rup$ (note that the transformation does not depend on the choice of $s,x,t,y$):  
    \begin{align}
    &\quad \; Z_N(t,y \viiva s,x) 
    \deq\sum_{k = 0}^\infty (N^{1/4}\beta_N)^k J_k^N(t,y \viiva s,x) \nonumber \\ 
    &:=  \sum_{k = 0}^\infty (N^{1/4}\beta_N)^k N^{(k+1)/2} \int_{\R^k}\int_{\R^k} \prod_{i = 0}^k q\bigl(\lfloor Nt_{i + 1} \rfloor - \lfloor N t_i \rfloor, N(t_{i +1} - t_i) + \sqrt N(y_{i + 1} - y_i)\bigr) \prod_{i = 1}^k W(dt_i,dy_i) \nonumber \\
    &= \sum_{k = 0}^\infty (N^{1/4}\beta_N)^k  \int_{\R^k} 
    \int_{\R^k}  \prod_{i = 0}^k p_N(t_{i + 1},y_{i + 1} \viiva t_i,y_i) \prod_{i = 1}^k W(dt_i\,dy_i) \label{prelimit_chaos}
    \end{align}
    where $p_N$ is defined in \eqref{pNdef}, and we define $t_0 = s, t_{k+1} = t, y_0 = x$, and $y_{k+1} = y$.
    We recall that  $N^{1/4} \beta_N \to \beta$. Since $q(n,y) = 0$ for $n < 0$ or $y < 0$, the integrand of the $k$th term in \eqref{prelimit_chaos} is supported on the set
    \begin{align*} 
     A_k(N,s,t,y,x):= \Bigl\{ 
     & sN + x \sqrt N \le t_i N + y_i \sqrt N \le t_{i + 1}N + y_{i + 1}\sqrt N\tspb, \; \\
     &\qquad\quad  \text{and}\;  \f{\fl{sN}}N \le  t_i \le \f{\lfloor t_{i + 1}N \rfloor + 1}{N}, \;  1 \le i \le k\Bigr\}.
    \end{align*} 

    The chaos series \eqref{prelimit_chaos} is the version of $Z_N(t,y \viiva s,x)$ that we couple with the SHE through the common white noise.  It is compared with 
      the chaos series \eqref{SHEchaos} of $Z_\beta$:
     \be \label{SHEchaos_rep}
    Z_\beta(t,y\viiva s,x) = \sum_{k = 0}^\infty \beta^k J_k(t,y \viiva s,x) := \sum_{k = 0}^\infty \beta^k \int_{\R^k}\int_{\R^k} \prod_{i = 0}^k \rho(t_{i + 1} - t_i,y_{i +1} - y_i) \prod_{i = 1}^k W(dt_i,d y_i),
    \ee
    where the integrand of the $k$th term is supported on the set $\Delta_k(t \viiva s)$ where $s < t_i < t_{i+1}$ for $1 \le i \le k$.
     We seek to show that for fixed $(s,x,t,y) \in \Rup$,   $\lim_{N \to \infty} \|Z_N(t,y \viiva s,x) - Z_\beta(t,y \viiva s,x)\|_{L^2(\Pp)}=0$. 
     We note that for any integer $K_0 \ge 0$,
\be
\begin{aligned} \label{L2decomp}
&\Bigl\|Z_N(t,y \viiva s,x) - Z_\beta(t,y \viiva s,x) \Bigr\|_{L^2(\Pp)} \\
 &\le \sum_{k = 0}^{K_0} \Bigl\| (N^{1/4} \beta_N)^k J_k^N(t,y \viiva s,x) - \beta^k J_k(t,y \viiva s,x)\Bigr\|_{L^2(\Pp)}  \\
 &\qquad\qquad + \Bigl\|\sum_{k = K_0 + 1}^\infty (N^{1/4} \beta_N)^k J_k^N(t,y \viiva s,x)  \Bigr\|_{L^2(\Pp)} + \Bigl\|\sum_{k = K_0 + 1}^\infty \beta^k J_k(t,y \viiva s,x)   \Bigr\|_{L^2(\Pp)}.
 \end{aligned}
 \ee
     Since the series for $Z_\beta$ is almost surely convergent in $L^2(\Pp)$, for any $\ve > 0$, there exists $K_0 \ge 0$ so that 
     \be \label{Zbsumsmall}
     \Bigl\|\sum_{k = K_0 + 1}^\infty \beta^k J_k(t,y \viiva s,x) \Bigr\|_{L^2(\Pp)} < \ve.
     \ee
      Then, by \eqref{eq:Ikfbd}, reversing the transformation \eqref{WNtrans}, and dividing by the $N^{3k/2}$ Jacobian term, we get
     \begin{align}
         &\quad \; \Bigl\|J_k^N(t,y \viiva s,x)  \Bigr\|_{L^2(\Pp)}^2 \le \int_{\R^k}\int_{\R^k} \prod_{i = 0}^k [p_N(t_{i+1},y_{i+1} \viiva t_i,y_i)]^2 \prod_{i = 1}^k dt_i dy_i  \nonumber \\
         &= N^{-k/2 + 1}\sum_{\delta_k(\lfloor tN \rfloor \viiva \lfloor sN\rfloor )}\int_{\Delta_k(tN + y \sqrt N \viiva sN + x \sqrt N)} \prod_{i = 0}^k q^2( n_{i+1} - n_i, y_{i+1} - y_i) \prod_{i = 1}^k dy_i \label{pN2_integral} \\
        &\overset{\eqref{IgammaL2bd}}{\le} C^k N^{-k/2 + 1} \f{((t-s)N + (y-x)\sqrt N)^{k} q^2(\lfloor tN \rfloor - \lfloor sN \rfloor \viiva (t-s)N + (y-x)\sqrt N)}{(\lfloor tN\rfloor - \lfloor sN \rfloor)^{k/2} \Gamma((k+1)/2)} \nonumber \\
        &\le \f{C^k [p_N(t,y \viiva s,x)]^2 }{\Gamma((k+1)/2)} \le \f{C^k}{\Gamma((k+1)/2)} \nonumber 
     \end{align}
     where the constant $C$ changes from line to line and depends on the fixed parameters $x,y$ and $s < t$, but not on $N$. The last inequality follows from the pointwise convergence $p_N(t,y \viiva s,x) \to \rho(t-s,y-x)$ (Lemma \ref{lem:moment_bd}\ref{itm:pNptwise}). Then, using orthogonality of each chaos for different values of $k$, there exists $K_0$ sufficiently large so that for all $N \ge 1$,
     \be \label{N_term_tail}
     \begin{aligned}
     \Bigl\|\sum_{k = K_0 + 1}^\infty (N^{1/4} \beta_N)^k J_k^N(t,y \viiva s,x)  \Bigr\|_{L^2(\Pp)}^2 &= \sum_{k = K_0 + 1}^\infty  (N^{1/4} \beta_N)^{2k} \|J_k^N(t,y \viiva s,x) \|_{L^2(\Pp)}^2  \\
     &\le \sum_{k = K_0 + 1}^\infty \f{C^k}{\Gamma((k+1)/2)} < \ve.
     \end{aligned}
     \ee
     Then, combining \eqref{L2decomp},\eqref{Zbsumsmall}, and \eqref{N_term_tail}, and recalling that $N^{1/4}\beta_N \to \beta$, the proof is complete once we show that, for each $k \ge 0$,
    \[
    \limsup_{N \to \infty} \Bigl\|J_k^N(t,y \viiva s,x) - J_k(t,y \viiva s,x)\Bigr\|_{L^2(\Pp)} = 0.
    \]
    When $k = 0$, this is simply the convergence of the nonrandom quantity $[p_N(t,y\viiva s,x)]^2$ to $\rho^2(t-s,y-x)$, which is Lemma \ref{lem:moment_bd}\ref{itm:pNptwise}. Thus, we take $k \ge 1$ in the sequel.
    We again use \eqref{eq:Ikfbd}. That is,
    \be \label{L2diffWN}
    \begin{aligned}
        &\quad \;\Bigl\|J_k^N(t,y \viiva s,x) - J_k(t,y \viiva s,x)\Bigr\|_{L^2(\Pp)}^2  \\
        &\le \int_{\R^k} \int_{\R^k} \Biggl(\prod_{i = 0}^k p_N(t_{i+1},y_{i+1} \viiva t_i,y_i) - \prod_{i = 0}^k \rho(t_{i+1} - t_i, y_{i+1}-y_i)\Biggr)^2 \prod_{i = 1}^k dt_i dy_i.
    \end{aligned}
    \ee
    Recall that $\prod_{i = 0}^k p_N(t_{i+1},y_{i+1} \viiva t_i,y_i)$ is supported on the set $A_k(N,s,t,y,x)$, while $\prod_{i = 0}^k \rho(t_{i+1} - t_i , y_{i+1} - y_i)$ is supported on the set where $t_{i+1} > t_i$ for all $i$. By Lemma \ref{lem:moment_bd}\ref{itm:pNptwise}, the integrand in \eqref{L2diffWN} converges to $0$ Lebesgue-a.e.   Expand the square, drop the cross term, and use   \eqref{eq:prodqbd} of Lemma \ref{lem:q2prod} to conclude that   the integrand in \eqref{L2diffWN} is bounded by a $k$-dependent constant times
    \begin{align} 
    &\quad \; \f{N^{k+1}}{\pi^{(k+1)/2}} e^{-2[(t-s)N +(y-x)\sqrt N]} 2^{2(\lfloor tN \rfloor - \lfloor sN \rfloor)} \prod_{i = 0}^k \f{((t_{i+1} - t_i)N +(y_{i+1} - y_i)\sqrt N)^{2(\lfloor t_{i+1}N \rfloor - \lfloor t_{i}N \rfloor)}}{[2(\lfloor t_{i+1}N \rfloor - \lfloor t_i N \rfloor)]! \sqrt{(\lfloor t_{i+1}N \rfloor - \lfloor t_i N \rfloor) \vee 1}}  \label{856} \\
&+\prod_{i = 0}^k \rho^2(t_{i+1} - t_i, y_{i+1}-y_i) \label{857}
    \end{align}
     A Stirling's approximation computation nearly identical to that in the proof of Lemma \ref{lem:moment_bd}\ref{itm:pNptwise}  shows that the term in \eqref{856} converges pointwise to the term in \eqref{857}. By the generalized dominated convergence theorem, it then suffices to show that the integral over $A_k(N,s,t,y,x)$  of the term in \eqref{856} converges as $N \to \infty$ to 
     \be \label{rhointcomp}
     \begin{aligned}
     &\quad \; \int_{\R^k} \int_{\R^k} \prod_{i = 0}^k \rho^2(t_{i+1} - t_i, y_{i+1}-y_i) \prod_{i = 1}^k dt_i dy_i \\
     &= \f{\sqrt{t-s}}{2^{k}\pi^{k/2}} \rho^2(t-s,y-x) \int_{\R^k} \ind_{B_k} \Biggl(\prod_{i = 1}^{k}\,dt_i\f{1}{\sqrt t_i}\Biggr) \f{1}{\sqrt{t - s - \sum_{i = 1}^k t_i}} < \infty,
     \end{aligned}
     \ee
     where 
    \[
    B_k = \{t_i > 0, 1 \le i \le k, \sum_{i = 1}^{k} t_i < t-s\}.
    \]
     The equality above comes  as follows.  To compute the integral on the left in \eqref{rhointcomp}, write the integrand as 
     \[
        \f{1}{2^{k+1}\pi^{(k+1)/2}}\prod_{i = 0}^k \f{1}{\sqrt{t_{i+1} - t_i}} \prod_{i = 0}^k \f{1}{\sqrt{\pi(t_{i+1} - t_i)}}e^{-\f{(y_{i+1} - y_i)^2}{t_{i+1} - t_i}},
     \]
     and recognize the second product as a product of transition probabilities for a diffusivity $\f{1}{\sqrt 2}$ Brownian motion. Hence, 
     \[
     \int_{\R^k} \int_{\R^k} \prod_{i = 0}^k \rho^2(t_{i+1} - t_i, y_{i+1}-y_i) \prod_{i = 1}^k dt_i dy_i  = \f{e^{-(y - x)^2/(t - s)}}{2^{k+1}\pi^{(k+1)/2}\sqrt{\pi(t-s)}}  \int_{\R^k}\prod_{i = 0}^k \f{\ind(t_{i+1} > t_i)}{\sqrt{t_{i+1} - t_i}} \prod_{i = 1}^k\,dt_i,  
     \]
     and one readily verifies that this agrees with \eqref{rhointcomp}.
     Next, reversing the transformation \eqref{WNtrans} just as in \eqref{pN2_integral} (and dividing by the  $N^{3k/2}$ Jacobian term), the integral over $A_k(N,s,t,y,x)$  of the term in \eqref{856} is equal to 
     \begin{align*}
         &\f{N^{-k/2 + 1}}{\pi^{(k+1)/2}} e^{-2[(t-s)N +(y-x)\sqrt N]} 2^{2(\lfloor tN \rfloor - \lfloor sN \rfloor)} \\
         &\qquad \qquad \times \sum_{\delta_k(\lfloor tN \rfloor \viiva \lfloor sN \rfloor)} \int_{\Delta_k(tN + y \sqrt N \viiva sN + x \sqrt N)} \prod_{i = 0}^k \f{(y_{i+1} - y_i)^{2(n_i - n_{i-1})}}{[2(n_{i+1} - n_i)]!\sqrt{(n_{i+1} - n_i)\vee 1}} \prod_{i = 1}^k dt_i \\
         &\overset{\eqref{eq:Dir_int}}{=}  \f{N^{-k/2}}{\pi^{(k+1)/2}} \f{2^{2(\lfloor tN \rfloor - \lfloor sN \rfloor)}[(\lfloor tN \rfloor - \lfloor sN \rfloor)!]^2 ((t-s)N + (y-x)\sqrt N)^k [p_N(t,y \viiva s,x)]^2}{[2(\lfloor tN \rfloor - \lfloor sN \rfloor) + k]!} \\
         &\qquad\qquad\qquad \times \sum_{\delta_k(\lfloor tN \rfloor \viiva \lfloor sN \rfloor)}\prod_{i = 0}^k \f{1}{\sqrt{(n_{i+1} - n_i) \vee 1}} \\
         \intertext{(next by Stirling's approximation and $p_N \to \rho$)}
         &\sim \f{N^{-(k-1)/2} \sqrt{t-s}}{2^{k} \pi^{k/2}}\rho^2(t-s, y-x) \sum_{\delta_k(\lfloor tN \rfloor \viiva \lfloor sN \rfloor)}\prod_{i = 0}^k \f{1}{\sqrt{(n_{i+1} - n_i) \vee 1}} 
         \\
         &= \f{N^{-(k-1)/2} \sqrt{t-s}}{2^k \pi^{k/2}}\rho^2(t-s, y-x) \sum_{\substack{m_i \ge 0 \\ \sum_{i = 1}^{k+1} m_i = \lfloor tN \rfloor - \lfloor sN \rfloor}}\prod_{i = 1}^{k+1} \f{1}{\sqrt{m_i \vee 1}} \\
         &=  \f{N^{-(k-1)/2} \sqrt{t-s}}{2^k \pi^{k/2}}\rho^2(t-s, y-x) \\
         &\qquad\qquad \times \int_{\R^k} \ind(t_i > 0, 1 \le i \le k, t_{k+1} = \lfloor tN \rfloor - \lfloor sN \rfloor - \sum_{i = 1}^k \lfloor t_i \rfloor \ge 0  )\prod_{i = 1}^{k+1} \f{1}{\sqrt{\lfloor t_i \rfloor \vee 1}} \prod_{i = 1}^k dt_i \\
         &= \f{\sqrt{t-s}}{2^k \pi^{k/2}} \rho^2(t-s,y-x) 
         \int\limits_{B_k(N)}\Biggl(\prod_{i = 1}^{k} dt_i  \sqrt{\f{N}{\lfloor t_i N \rfloor \vee 1}}\Biggr) \sqrt{\f{N}{(\lfloor tN \rfloor - \lfloor sN \rfloor - \sum_{i = 1}^k \lfloor t_iN \rfloor) \vee 1  }}, 
     \end{align*}
     where the last integration is over the set 
     \[
     B_k(N) = \Bigl\{t_i > 0,\; 1 \le i \le k,\; \sum_{i = 1}^k \lfloor t_i N \rfloor  \le \lfloor tN \rfloor - \lfloor sN \rfloor \Bigr\}.
     \]
      Comparing to \eqref{rhointcomp}, the proof is complete once we show that
     \begin{align}
     &\quad \;\lim_{N \to \infty} \int_{\R^k} \ind_{B_k(N)}\Biggl(\prod_{i = 1}^{k} dt_i  \sqrt{\f{N}{\lfloor t_i N \rfloor \vee 1}}\Biggr) \sqrt{\f{N}{(\lfloor tN \rfloor - \lfloor sN \rfloor - \sum_{i = 1}^k \lfloor t_iN \rfloor) \vee 1  }} \label{prelimitint}\\
     &= \int_{\R^k} \ind_{B_k} \Biggl(\prod_{i = 1}^{k}\,dt_i\f{1}{\sqrt t_i}\Biggr) \f{1}{\sqrt{t - s - \sum_{i = 1}^k t_i}}. \label{limit_int}
    \end{align}
    The proof is technical and lengthy, and is handled in Lemma \ref{lem:int_conv} at the end of the section. 
    \end{proof}

\begin{lemma} \label{lem:L1conv_bd}
Let $f:\Omega \times \R \to \R_{>0}$ be a jointly measurable 
function, independent of $\OCY_{\beta_N}$ and $Z_\beta$ such that, for some $\alpha > 0$, 
\be \label{fgrowth}
\Ee[f(x)] \le e^{\alpha|x|} \quad\forall x \in \R.
\ee
For a sequence $\beta_N$ with $N^{1/4}\beta_N \to \beta > 0$ and $Z_N(t,y\viiva s,x)$ defined as in \eqref{ZNshort}, under the coupling of Theorem \ref{thm:OCYtoSHE}, the following convergence holds 
for each choice of $y \in \R$ and $t > 0$:
\be\label{ZZf8}  
\lim_{N\to\infty} \Ee\biggl[\;\biggl\lvert 
\,\int_{-\infty}^{t\sqrt N + y} f(x) Z_N(t,y\viiva 0,x)\,dx \;-\;  \int_\R f(x) Z_\beta(t,y\viiva 0,x)\,dx \,
\biggr\rvert\;\biggr]  =0  . 
\ee
In particular, for a finite collection $\{f_i\}_{1 \le i \le k}$ of jointly measurable functions $f_i: \Omega \times \R \to \R_{>0}$ each satisfying \eqref{fgrowth} for some $\alpha > 0$,   the following weak convergence holds for finite-dimensional distributions of these  processes indexed by $y\in\R$: 
\[
\Biggl\{\int_{-\infty}^{t\sqrt N + y} f_i(x) Z_N(t,y \viiva 0,x)\,dx : y \in \R\Biggr\}_{1 \le i \le k} 
\overset{N \to \infty}{\Longrightarrow}
\Biggl\{\int_\R f_i(x) Z_\beta(t,y \viiva 0,x)\,dx: y \in \R\Biggr\}_{1 \le i \le k}.
\]
\end{lemma}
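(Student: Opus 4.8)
The plan is to upgrade the $L^2$ convergence of the Green's functions in Theorem \ref{thm:OCYtoSHE} to the stated $L^1$ convergence, the new ingredient being a spatial tail estimate uniform in $N$, and then to read off the convergence of finite-dimensional distributions from Slutsky's theorem. Fix $t>0$ and $y\in\R$ and work in the white-noise coupling of Theorem \ref{thm:OCYtoSHE}. First a routine reduction: since $Z_N(t,y\viiva 0,x)$ carries the indicator $\ind\{x\le t\sqrt N+y\}$ from \eqref{ZNshort}, the inner integral in \eqref{ZZf8} equals $\int_\R f(x)Z_N(t,y\viiva 0,x)\,dx$; and since $f(\aabullet)$ is independent of the white noise $W$, hence of the pair $(Z_N,Z_\beta)$, Tonelli's theorem, the triangle inequality for integrals, independence, the Cauchy--Schwarz inequality in $\Pp$, and \eqref{fgrowth} give
\[
\Ee\Bigl[\Bigl\lvert\int_\R f(x)\bigl(Z_N(t,y\viiva 0,x)-Z_\beta(t,y\viiva 0,x)\bigr)\,dx\Bigr\rvert\Bigr]\le\int_\R e^{\alpha\lvert x\rvert}\,\bigl\lVert Z_N(t,y\viiva 0,x)-Z_\beta(t,y\viiva 0,x)\bigr\rVert_{L^2(\Pp)}\,dx ,
\]
so it suffices that the right side tend to $0$. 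The $Z_\beta$ contribution is controlled by Theorem \ref{thm:Zinvar}\ref{itm:SHEmoments}, which gives $\lVert Z_\beta(t,y\viiva 0,x)\rVert_{L^2(\Pp)}\le\sqrt{C_\beta}\,\rho(t,y-x)$; since $x\mapsto e^{\alpha\lvert x\rvert}\rho(t,y-x)$ is integrable, $\int_{\lvert x\rvert>M}e^{\alpha\lvert x\rvert}\lVert Z_\beta(t,y\viiva 0,x)\rVert_{L^2(\Pp)}\,dx\to 0$ as $M\to\infty$, with no dependence on $N$.

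For $Z_N$ I would split $\R$ into a fixed compact window $[-M,M]$, a ``bulk'' annulus inside $\{\lvert x-y\rvert\le K\sqrt N\}$, and a far region $\{\lvert x-y\rvert> K\sqrt N\}$. The key input, read off from the chaos-expansion estimate inside the proof of Theorem \ref{thm:OCYtoSHE} (the line bounding $\lVert J_k^N(t,y\viiva 0,x)\rVert_{L^2}^2$ by $C^k[p_N(t,y\viiva 0,x)]^2/\Gamma((k+1)/2)$), is that the constant there is comparable to $A/\sqrt{DN}$ with $A=tN+(y-x)\sqrt N$ and $D=\lfloor tN\rfloor$, hence bounded by a constant $C(t,K)$ uniformly on $\{\lvert x-y\rvert\le K\sqrt N\}$; summing the chaos series and using $N^{1/4}\beta_N\to\beta$ then produces $C^\ast=C^\ast(t,\beta,K)$ and $N_0$ with
\[
\Ee\bigl[Z_N(t,y\viiva 0,x)^2\bigr]\le C^\ast\,[p_N(t,y\viiva 0,x)]^2\qquad\text{for all }N\ge N_0\text{ and }\lvert x-y\rvert\le K\sqrt N .
\]
On any fixed $[-M,M]$ (eventually inside the window) this bound plus the boundedness of $p_N(t,y\viiva 0,\aabullet)$ there (Lemma \ref{lem:moment_bd}\ref{itm:pNptwise}) gives $\sup_{\lvert x\rvert\le M}\lVert Z_N(t,y\viiva 0,x)\rVert_{L^2(\Pp)}<\infty$ for large $N$, so the pointwise $L^2$ convergence of Theorem \ref{thm:OCYtoSHE} together with bounded convergence yields $\int_{\lvert x\rvert\le M}e^{\alpha\lvert x\rvert}\lVert(Z_N-Z_\beta)(t,y\viiva 0,x)\rVert_{L^2(\Pp)}\,dx\to 0$ for each $M$. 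On $\{\lvert x\rvert>M,\ \lvert x-y\rvert\le K\sqrt N\}$ the display gives $\lVert Z_N\rVert_{L^2}\le\sqrt{C^\ast}\,p_N(t,y\viiva 0,x)$, and Lemma \ref{lem:moment_bd}\ref{itm:intconv} with $M=1$ forces $\int_{\lvert x\rvert>M}e^{\alpha\lvert x\rvert}p_N(t,y\viiva 0,x)\,dx\to\int_{\lvert x\rvert>M}e^{\alpha\lvert x\rvert}\rho(t,y-x)\,dx$, which is uniformly small once $M$ is large. On the far region $\{\lvert x-y\rvert>K\sqrt N\}$, choosing $K\ge 6t$ makes $Z_N\equiv 0$ for $x>y+K\sqrt N$ (the argument of the Poisson weight is negative), while for $x<y-K\sqrt N$ the crude bound $\Ee[\OCY_{\beta_N}^2]\le e^{2\beta_N^2 A}\,(A^{\lfloor tN\rfloor}/\lfloor tN\rfloor!)^2$ together with the scaling factor $\psi_N$ gives $\Ee[Z_N(t,y\viiva 0,x)^2]\le N e^{-c\lvert x-y\rvert\sqrt N}$ for large $N$, whose integral against $e^{\alpha\lvert x\rvert}$ over this region is $o(1)$ as $N\to\infty$. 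Combining the three regions via an $\varepsilon$--$M$ argument proves \eqref{ZZf8}.

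For the finite-dimensional statement, fix points $y_1,\dots,y_m$ and apply \eqref{ZZf8} to each pair $(f_i,y_j)$: every coordinate of the random vector with entries $\int_{-\infty}^{t\sqrt N+y_j}f_i(x)Z_N(t,y_j\viiva 0,x)\,dx-\int_\R f_i(x)Z_\beta(t,y_j\viiva 0,x)\,dx$ tends to $0$ in $L^1(\Pp)$, hence in probability, and since the index set is finite the vector converges to $0$ in probability. Slutsky's theorem then transfers the weak limit from $\bigl(\int_\R f_i(x)Z_\beta(t,y_j\viiva 0,x)\,dx\bigr)_{i,j}$ to $\bigl(\int_{-\infty}^{t\sqrt N+y_j}f_i(x)Z_N(t,y_j\viiva 0,x)\,dx\bigr)_{i,j}$, which is the claimed convergence.

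The hard part is the uniform-in-$N$ spatial tail bound in the second paragraph: the chaos-expansion constant $A/\sqrt{DN}$ degrades once $\lvert x\rvert$ leaves the natural $\sqrt N$-window, so one cannot dominate $\lVert Z_N(t,y\viiva 0,x)\rVert_{L^2}$ by a single constant multiple of $\rho$ (or of $p_N$) for all $x$ simultaneously; the resolution is to use the chaos estimate on the window and exploit the rapid (at least exponential) decay of the Poisson kernel $p_N$ in $\lvert x\rvert$ far out, which is exactly what Lemma \ref{lem:moment_bd}\ref{itm:intconv} packages. Everything else is the standard reduction of $L^1$-convergence to $L^2$-convergence plus second-moment bounds.
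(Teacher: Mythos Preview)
Your argument is correct, but it works harder than necessary and takes a different route from the paper. After the common reduction
\[
\Ee\Bigl[\Bigl|\int_\R f(x)\bigl(Z_N-Z_\beta\bigr)\,dx\Bigr|\Bigr]\le \int_\R e^{\alpha|x|}\,\Ee\bigl[|Z_N(t,y\viiva 0,x)-Z_\beta(t,y\viiva 0,x)|\bigr]\,dx,
\]
you pass to the $L^2$ norm and then must control $\|Z_N\|_{L^2}$ uniformly in $N$, which forces the three-region split and the excursion into the chaos estimates and far-field Poisson asymptotics. The paper instead bounds the $L^1$ norm directly by first moments: since $Z_N\ge 0$ and $Z_\beta\ge 0$ with $\Ee[Z_N(t,y\viiva 0,x)]=p_N(t,y\viiva 0,x)$ and $\Ee[Z_\beta(t,y\viiva 0,x)]=\rho(t,y-x)$, one has the pointwise dominating function $e^{\alpha|x|}(p_N+\rho)$, and Lemma~\ref{lem:moment_bd}\ref{itm:pNptwise}--\ref{itm:intconv} give both $p_N\to\rho$ pointwise and $\int e^{\alpha|x|}p_N\to\int e^{\alpha|x|}\rho$; the generalized dominated convergence theorem then finishes in one line. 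Your second-moment route recovers the same conclusion, and your bulk-window observation that the chaos constant behaves like $A/\sqrt{DN}$ and hence stays bounded for $|x-y|\le K\sqrt N$ is a valid reading of the proof of Theorem~\ref{thm:OCYtoSHE}, but all of this machinery is avoided by the first-moment trick.

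For the finite-dimensional statement, your Slutsky argument (coordinate-wise $L^1\Rightarrow$ joint convergence in probability of the difference vector to $0$) is equivalent to the paper's appeal to Cram\'er--Wold via linear combinations, and arguably cleaner.
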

\begin{proof}
    We  can integrate over the whole space $\R$ in the left integral in \eqref{ZZf8}  because $Z_N(t,y \viiva 0,x)$ is defined to be zero for $x > t\sqrt N + y$. Then, by independence and the growth assumption on $f$, we have
    \begin{align} \label{Zbetal1bd}
    \Ee \Bigl[\int_\R f(x)|Z_N(t,y \viiva 0,x) - Z_\beta(t,y \viiva 0,x)|\,dx\Bigr]
    \le \int_\R  e^{\alpha|x|} \Ee[|Z_N(t,y \viiva 0,x) - Z_\beta(t,y \viiva 0,x)| ]\,dx.
    \end{align}
    By Theorem \ref{thm:OCYtoSHE}, we have, for each $t > 0$ and $x,y \in \R$,
    \[
    \Ee[|Z_N(t,y \viiva 0,x) - Z_\beta(t,y \viiva 0,x)| ] \le  \Ee[|Z_N(t,y \viiva 0,x) - Z_\beta(t,y \viiva 0,x)|^2 ]^{1/2} \overset{N \to \infty}{\longrightarrow} 0.
    \]
    By \eqref{SHEchaos},  \eqref{Ygamexp},  and the choice of scaling, $\Ee[Z_N(t,y\viiva 0,x)] = p_N(t,y \viiva 0,x)$ and $\Ee[Z_\beta(t,y \viiva 0,x)] = \rho(t,y-x)$. Thus, the integrand on the right-hand side of the integrand in \eqref{Zbetal1bd} is bounded above by 
    \[
      e^{\alpha|x|}(p_N(t,y-x) + \rho(t,y-x)).
    \]
    Lemma \ref{lem:moment_bd}\ref{itm:pNptwise}--\ref{itm:intconv} implies that $p_N(t,y-x) \to \rho(t,y-x)$ pointwise and that 
    \[
    \int_\R e^{\alpha|x|}(p_N(t,y-x) + \rho(t,y-x))\,dx \overset{N \to \infty}{\longrightarrow} \int_\R 2 e^{\alpha|x|} \rho(t,y-x)\,dx < \infty,
    \]
    so the generalized dominated convergence theorem completes the proof. The   finite-dimensional weak convergence holds because finite linear combinations also satisfy the limit in \eqref{ZZf8}, 
    so the Cram\'er-Wold theorem completes the proof. 
\end{proof}

We conclude this section by completing the unfinished business of the proof of Theorem \ref{thm:OCYtoSHE}.
 \begin{lemma} \label{lem:int_conv}
    The convergence of \eqref{prelimitint} to \eqref{limit_int} holds. 
    \end{lemma}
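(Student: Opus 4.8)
The plan is to prove that the integral in \eqref{prelimitint} converges as $N\to\infty$ to the integral \eqref{limit_int} by a generalized dominated-convergence argument carried out directly on \eqref{prelimitint}, isolating the two strata on which the integrand degenerates: the coordinate hyperplanes $\{t_i=0\}$ and the far boundary $\{\sum_i t_i=t-s\}$. Write $M_N=\lfloor tN\rfloor-\lfloor sN\rfloor$, so $M_N=(t-s)N+O(1)$, and let $G_N$, $G$ denote the integrands appearing in \eqref{prelimitint} and \eqref{limit_int} respectively, with the indicators $\ind_{B_k(N)}$, $\ind_{B_k}$ built in. First I would record the pointwise fact that $G_N(\mathbf t)\to G(\mathbf t)$ for Lebesgue-a.e.\ $\mathbf t=(t_1,\ldots,t_k)$, since $\lfloor t_iN\rfloor/N\to t_i$, $(M_N-\sum_i\lfloor t_iN\rfloor)/N\to t-s-\sum_i t_i$, and $\ind_{B_k(N)}(\mathbf t)\to\ind_{B_k}(\mathbf t)$ off the null set $\{\sum_i t_i=t-s\}$. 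The one uniform bound needed is the elementary inequality $\sqrt{N/(\lfloor uN\rfloor\vee 1)}\le\sqrt{2/u}$ for all $u>0$, $N\ge 1$ (if $u\ge 2/N$ then $\lfloor uN\rfloor\ge uN-1\ge uN/2$; if $0<u<2/N$ then $\lfloor uN\rfloor\vee 1=1\ge uN/2$), whence $\prod_i\sqrt{N/(\lfloor t_iN\rfloor\vee 1)}\le 2^{k/2}\prod_i t_i^{-1/2}$.

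Next I would split $\R^k=E_\delta\cup E_\delta^c$ for small $\delta>0$, with $E_\delta=\{t_i>0,\ \sum_i t_i<t-s-\delta\}$. On the bulk $E_\delta$, for $N$ large one has $E_\delta\subseteq B_k(N)$ and $M_N-\sum_i\lfloor t_iN\rfloor\ge N\delta/2$, so $G_N\ind_{E_\delta}\le 2^{(k+1)/2}\delta^{-1/2}\prod_i t_i^{-1/2}\ind_{E_\delta}$, which lies in $L^1(\R^k)$ and is independent of $N$; dominated convergence then gives $\int G_N\ind_{E_\delta}\to\int G\ind_{E_\delta}$. As $\delta\downarrow 0$, $\int G\ind_{E_\delta}\uparrow\int G$ by monotone convergence, and $\int G=\int_{B_k}\prod_i t_i^{-1/2}\,(t-s-\sum_i t_i)^{-1/2}\,d\mathbf t$ is the finite Dirichlet integral $\tfrac{\pi^{(k+1)/2}}{\Gamma((k+1)/2)}(t-s)^{(k-1)/2}$, which is exactly the integral \eqref{limit_int}. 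Hence it remains only to show that $\int G_N\ind_{E_\delta^c}$ is small, uniformly in large $N$, as $\delta\downarrow 0$.

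For this boundary-layer estimate I would pass to a lattice sum. Since $G_N$ depends on $\mathbf t$ only through $\mathbf n=(\lfloor t_1N\rfloor,\ldots,\lfloor t_kN\rfloor)$ and is constant on each box $\prod_i[n_i/N,(n_i+1)/N)$ of volume $N^{-k}$, one gets $\int_{\R^k}G_N\,d\mathbf t=N^{-(k-1)/2}S_{k+1}(M_N)$, where $S_j(m):=\sum_{\mathbf n\in\Z_{\ge 0}^{\,j},\ \sum_i n_i=m}\prod_i(n_i\vee 1)^{-1/2}$ and $n_{k+1}:=M_N-\sum_{i=1}^k n_i$; moreover a box meets $E_\delta^c$ only when $\sum_i n_i\ge N(t-s-\delta)-k$, i.e.\ $n_{k+1}\le N\delta+k+1$, so that
\[
\int_{\R^k}G_N\ind_{E_\delta^c}\,d\mathbf t\ \le\ N^{-(k-1)/2}\sum_{n_{k+1}=0}^{\lfloor N\delta\rfloor+k+1}(n_{k+1}\vee 1)^{-1/2}\,S_k(M_N-n_{k+1}).
\]
Then, by induction on $j$ via the convolution identity $S_{j+1}(m)=\sum_{n=0}^m(n\vee 1)^{-1/2}S_j(m-n)$ — splitting the sum at $n=m/2$ and using $\sum_{n\le m/2}(n\vee 1)^{-1/2}=O(\sqrt m)$ and $\sum_{l\le m/2}(l\vee 1)^{(j-2)/2}=O(m^{j/2})$ — I would establish the polynomial bound $S_j(m)\le C_j(m\vee 1)^{(j-2)/2}$ for all $m\ge 0$, $j\ge 1$. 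Since $M_N-n_{k+1}\ge\tfrac12(t-s)N$ over the relevant range of $n_{k+1}$ (for $\delta$ small and $N$ large), this yields $N^{-(k-1)/2}S_k(M_N-n_{k+1})\le C(t-s)^{(k-2)/2}N^{-1/2}$, while $\sum_{n=0}^{\lfloor N\delta\rfloor+k+1}(n\vee 1)^{-1/2}\le C\sqrt{N\delta}$, so $\int G_N\ind_{E_\delta^c}\le C(t-s)^{(k-2)/2}\sqrt\delta$ uniformly for $N$ large. Combining with the bulk estimate and letting first $N\to\infty$, then $\delta\downarrow 0$, in $\int G_N=\int G_N\ind_{E_\delta}+\int G_N\ind_{E_\delta^c}$ gives $\int G_N\to\int G$, which is the assertion.

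The main obstacle is precisely this last step: near the far boundary $\{\sum_i t_i=t-s\}$ the last square-root factor of $G_N$ is of order $\sqrt N$, so $G_N$ is not dominated by any fixed integrable function and global dominated convergence fails. The remedy is the lattice-sum reduction together with the inductive polynomial bound on $S_j$, showing that the $O(1/N)$-thin slab near the boundary, even weighted by $\sqrt N$, contributes only $O(\sqrt\delta)$. Verifying $S_j(m)\le C_j(m\vee 1)^{(j-2)/2}$ is routine but is the fussy technical engine driving the estimate.
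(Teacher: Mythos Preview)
Your argument is correct, and it takes a genuinely different and somewhat cleaner route than the paper's. Both proofs split the domain into a bulk region, on which dominated convergence applies directly, and a thin boundary layer that must be estimated separately. The paper carves out the bulk $B_k^1(N)=\{t_i>(k+1)/N,\ t-s-\sum_i t_i>2/N\}$ and then breaks the complement $B_k^2(N)$ into $2^{k+1}-2$ cases according to which coordinates $t_i$ (and the residual $t-s-\sum_i t_i$) are within $O(1/N)$ of zero; each case is dispatched by an ad hoc change of variables and a Beta-probability estimate. Your approach is more economical in two respects. First, the uniform inequality $\sqrt{N/(\lfloor uN\rfloor\vee 1)}\le\sqrt{2/u}$ absorbs all the near-boundary behavior at $t_i=0$ into an integrable majorant $\prod_i t_i^{-1/2}$, so the only genuinely dangerous boundary is the far face $\sum_i t_i=t-s$; this lets you take a fixed-$\delta$ bulk $E_\delta$ rather than an $N$-dependent one. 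Second, instead of a case analysis near the far face you pass to the exact lattice sum and invoke a single inductive bound $S_j(m)\le C_j(m\vee1)^{(j-2)/2}$ on the discrete Dirichlet convolution, which uniformly controls the $O(N\delta)$-thick boundary strip by $O(\sqrt\delta)$. What the paper's approach buys is that no auxiliary $\delta$ is introduced and each case gives an explicit $O(N^{-1/2})$ decay; what yours buys is a shorter, more structural argument that avoids the combinatorial bookkeeping of multiple cases and changes of variables.
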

    \begin{proof}
    For this, we break the set $B_k(N)$ into two disjoint pieces, 
    \begin{align*}
    B_k^1(N) = \Bigl\{t_i > \f{k+1}{N}\;\; \forall\, 1 \le i \le k \;\text{and}\; t-s - \sum_{i = 1}^k t_i > \f{2}{N}  \Bigr\} \quad \text{and} \quad
    B_k^2(N) =  B_k(N) \setminus B_k^1(N).
    \end{align*}
    We use the dominated convergence theorem to show that the integral over $B_k^1(N)$ converges to the desired limit, and we argue separately that the integral over $B_k^2(N)$ goes to $0$. First,  observe that, Lebesgue a.e., 
    \be \label{892}
    \ind_{B_k^1(N)} \Biggl(\prod_{i = 1}^{k}   \sqrt{\f{N}{\lfloor t_i N \rfloor \vee 1}}\Biggr) \sqrt{\f{N}{(\lfloor tN \rfloor - \lfloor sN \rfloor - \sum_{i = 1}^k \lfloor t_i N \rfloor) \vee 1  }} \rightarrow \ind_{B_k} \Biggl(\prod_{i = 1}^{k}\f{1}{\sqrt t_i}\Biggr) \f{1}{\sqrt{t - s - \sum_{i = 1}^k t_i}}.
    \ee
    Observe that, since $x - 1 \le \lfloor x \rfloor \le x$,
    \[
    \f{\lfloor t_i N \rfloor \vee 1}{N} \ge t_i - \f{1}{N} \quad \text{and}\quad \f{\lfloor tN \rfloor - \lfloor sN \rfloor - \sum_{i = 1}^k \lfloor t_i N \rfloor}{N} \ge t - s - \sum_{i = 1}^k t_i - \f{1}{N},
    \]
    and thus, for $N$ large, 
    \be \label{tibd}
    t_i > \f{k+1}{N} \Longrightarrow t_i > \f{2}{N} \Longrightarrow  t_i - \f{1}{N}  > \f{t_i}{2} \Longrightarrow \sqrt{\f{\lfloor t_i N \rfloor \vee 1}{N}} \le \sqrt{\f{2}{t_i}}
    \ee
    and 
    \be \label{cbd}
    t - s - \sum_{i = 1}^k t_i > \f{2}{N} \Longrightarrow \sqrt{\f{N}{(\lfloor tN \rfloor - \lfloor sN \rfloor - \sum_{i = 1}^k \lfloor t_i N\rfloor) \vee 1  }} \le \sqrt{\f{2}{t - s - \sum_{i = 1}^k t_i}}.
    \ee
    Therefore,
    \begin{align*}
    \ind_{B_k^1(N)} \Biggl(\prod_{i = 1}^{k}   \sqrt{\f{N}{\lfloor t_i N \rfloor \vee 1}} \Biggl) \sqrt{\f{N}{(\lfloor tN \rfloor - \lfloor sN \rfloor - \sum_{i = 1}^k \lfloor t_i N\rfloor) \vee 1  }} \le \ind_{B_k} \Biggl(\prod_{i = 1}^k \sqrt{\f{2}{t_i}} \Biggr)\sqrt{\f{2}{t - s - \sum_{i = 1}^k t_i}} ,
    \end{align*}
    and the right-hand side is integrable over $\R^k$ (it is a constant multiple of the Dirichlet density). The dominated convergence theorem now implies the convergence of integrals of the functions in \eqref{892}. 
    
    We turn to showing the integral over $B_k^2(N)$ converges to $0$. Observe first that on the set $B_k(N)$, 
    \be \label{k+1bd}
        t - s - \sum_{i = 1}^{k} t_i \ge \f{\lfloor tN \rfloor - \lfloor sN \rfloor - \sum_{i = 1}^k \lfloor t_i N \rfloor}{N} - \f{k+1}{N} \ge -\f{k+1}{N}.
    \ee
    From the first inequality of \eqref{k+1bd}, we observe that, for all $N$, sufficiently large (depending on $t,s$),
    \be \label{impst}
    t_i \le \f{k+1}{N}\;\; \forall\;\; 1 \le i \le k \Longrightarrow t - s - \sum_{i = 1}^{k} t_i \ge \f{\lfloor tN \rfloor - \lfloor sN \rfloor}{N} - \f{k(k+1)}{N} - \f{k+1}{N} > \f{2}{N}.
    \ee
    Next, we break up the set $B_k^2(N)$  into $2^{k+1} - 2$ disjoint sets determined by whether $t_i \le \f{k+1}{N}$ for $1 \le i \le k$ and by whether $t - s - \sum_{i = 1}^k t_i \le \f{2}{N}$. The minus $2$ comes because $B_k^1(N)$ is one of these possible sets, and \eqref{impst} eliminates another possibility. Enumerate these sets as $\{B_k^{2,j}\}_{1 \le j \le 2^{k+1} - 2}$. We show that the integral over each $B_{k}^{2,j}$ converges to $0$. We do this by considering four separate cases for $B_{k}^{2,j}$. To avoid messy calculations, we use the shorthand notation
    \[
    I_k^{j}(N) := \int_{\R^k} \ind_{B_k^{2,j}(N)}\Biggl(\prod_{i = 1}^{k} dt_i  \sqrt{\f{N}{\lfloor t_i N \rfloor \vee 1}}\Biggr) \sqrt{\f{N}{(\lfloor tN \rfloor - \lfloor sN \rfloor - \sum_{i = 1}^k \lfloor t_i \rfloor) \vee 1  }}. 
    \]

    \smallskip \noindent \textbf{Case 1:  $2$ or more of the $t_i$ for $1 \le i \le k$ satisfy $t_i \le \f{k+1}{N}$:} Without loss of generality, we say that, for some $\ell \ge 2$, $t_i \le \f{k+1}{N}$  for $1 \le i \le \ell $, and $t_i > \f{k+1}{N}$  for $\ell + 1 \le i \le k$. For $\ell + 1 \le i \le k$, we use the bound in \eqref{tibd}. We also make use of the following bounds which hold in general:
    \be \label{trivest}
    \sqrt{\f{N}{\lfloor t_i N \rfloor \vee 1}} \le \sqrt N\quad \text{and}\quad \sqrt{\f{N}{(\lfloor tN \rfloor - \lfloor sN \rfloor - \sum_{i = 1}^k \lfloor t_i \rfloor) \vee 1 }} \le \sqrt N.
    \ee
    Observe also that on $B_k(N)$, for $1 \le i \le k$, and all $N$ sufficiently large,
    \[
    0 < t_i \le \f{\lfloor tN \rfloor - \lfloor sN \rfloor +1}{N} \le t-s + 1.
    \]
    Then, 
    \begin{align*}
        I_k^{j}(N) \le  \sqrt N \Biggl(\int_{0}^{(k+1)/N} \sqrt N \,dt\Biggr)^\ell \Biggl(\int_{0}^{t-s+1} \f{2}{\sqrt u}\,du \Biggr)^{k-\ell}  \le C(k,\ell) N^{-(\ell-1)/2} \to 0.
    \end{align*}

    \smallskip \noindent \textbf{Case 2: Exactly one of the $t_i$ for $1 \le i \le k$ satisfies $t_i \le \f{k+1}{N}$ and $t - s - \sum_{i = 1}^k t_i > \f{2}{N}$:}
    Without loss of generality, we will say $t_1 \le \f{k+1}{N}$. We start similarly to the last case, but instead use the bound \eqref{cbd} for the last term. In the following, the constant $C > 0$ depends on $t -s$ and $k$ and may change from line to line.
    \begin{align*}
    I_k^j(N) &\le \sqrt N \int_{\R^k}dt_1 \Biggl(\prod_{i = 2}^k\,dt_i \sqrt{\f{2}{t_i}}\Biggr)\; \sqrt{\f{2}{t - s - \sum_{i = 1}^k t_i}} \ind\Bigl(0 < t_1 < \f{k+1}{N},\; t_i > 0, 2 \le i \le k,\; \sum_{i = 1}^k t_i < t-s \Bigr)  \\
    &\le C \sqrt N \int_{\R^k}dt_1\Biggl(\prod_{i = 2}^k\,dt_i \sqrt{\f{1}{t_i}}\Biggr) \; \sqrt{\f{1}{1 - \sum_{i = 1}^k t_i}}\ind\Bigl(0 < t_1 < \f{k+1}{(t-s)N},\; t_i > 0, 2 \le i \le k, \;\sum_{i = 1}^k t_i < 1 \Bigr)  \\
    &\le C\sqrt N P\Bigl(0 < X_1 < \f{k+1}{(t-s)N}\Bigr) .
    \end{align*}
    where $P$ is the distribution of a random vector $(X_1,\ldots,X_{k+1})$ that is distributed according to the Dirichlet distribution with parameter vector $(1,\f{1}{2},\ldots,\f{1}{2})$. The next step follows from $X_1$ having a Beta distribution with parameters $(1,\f{k}{2})$.
    Thus,  for constants $C_1,C_2 > 0$ changing from term to term,
    \[
    \sqrt N P\Bigl(0 < X_1 < \f{k+1}{(t-s)N}\Bigr) = C_1\sqrt N \int_0^{C_2/N} (1-t)^{k/2 - 1}\,dt = C_1\sqrt N (1 - (1-C_2/N)^{k/2}) \le C N^{-1/2}.
    \]

    \smallskip \noindent \textbf{Case 3: $t_i > \f{k + 1}{N}$ for $1 \le i \le k$ and $t-s - \sum_{i = 1}^k t_i \le \f{2}{N}$:}
    We use \eqref{tibd} and \eqref{trivest} to get the estimate
    \[
    I_k^j(N) \le C \sqrt N \int_{\R^k} \ind_{B_k^{2,j}(N)} \prod_{i = 1}^k dt_i \sqrt{\f{1}{t_i}}
    \]
    for a constant $C$ depending on $k$.
    Next, consider the following change of variable:
    \be \label{chng_var}
    \wt t_1 = \f{k+1}{N} + t -s - \sum_{i = 1}^k t_i,\quad \wt t_i = t_i, \;\;2 \le i \le k.
    \ee
    On the set $B_{k}^{2,j}$, the assumption $t-s - \sum_{i = 1}^k t_i \le \f{2}{N}$ and \eqref{k+1bd} imply that $0 \le \wt t_1 \le \f{k+3}{N}$. Furthermore, 
    \[
        t - s - \sum_{i = 1}^k \wt t_i = t_1- \f{k+1}{N} > 0.
    \]
    In summary, the transformed vector lies in the set 
    \[
    \wt B_k^{2,j}(N) := \Bigl\{ 0 < \wt t_1 \le \f{k+3}{N},\quad \wt t_i > 0, 2 \le i \le k,\quad \sum_{i = 1}^k \wt t_i < t - s\Bigr\}
    \]
    Putting this all together, 
    \begin{align*}
    I_k^j(N) &\le C \sqrt N \int_{\R^k} \ind_{\wt B_k^{2,j}} \,d\wt t_1 \Biggl(\prod_{i = 2}^{k}\,d\wt t_i \f{1}{\sqrt{\wt t_i}} \Biggr) \sqrt{\f{1}{t - s - \sum_{i = 1}^k \wt t_i + \f{k + 1}{N}}} \\
    &\le  C \sqrt N \int_{\R^k} \ind_{\wt B_k^{2,j}} \,d\wt t_1 \Biggl(\prod_{i = 2}^{k}\,d\wt t_i \f{1}{\sqrt{\wt t_i}} \Biggr) \sqrt{\f{1}{t - s - \sum_{i = 1}^k \wt t_i}}. 
    \end{align*}
    The asymptotics of the integral can now be reduced to the computation of a beta probability, just as in the previous case. 
    
    \smallskip \noindent \textbf{Case 4: Exactly one of the $t_i$ for $1 \le i \le k$ satisfies $t_i \le \f{k+1}{N}$ and $t - s - \sum_{i = 1}^k t_i \le \f{2}{N}$.}
    Without loss of generality, we will say that $t_2 \le \f{k+1}{N}$. Then, using the bounds \eqref{trivest} for $i =1,2$ and the last factor, then \eqref{tibd} for $3 \le i \le k$, 
    \[
    I_k^j(N) \le C N^{3/2} \int_{\R^k} \ind_{B_k^{j,2}(N)} dt_1 dt_2 \prod_{i = 3}^k \f{dt_i}{\sqrt t_i}.
    \]
    Making the same change of variables \eqref{chng_var} as in the previous case,
    \[
    I_k^j(N) \le C N^{3/2} \int_{0}^{(k+3)/N}\,dt_1 \int_0^{(k+1)/N}\,dt_2 \Biggl(\int_0^{t-s + 1} \f{1}{\sqrt u}\,du 
 \Biggr)^{k-2} \le CN^{-1/2} \to 0,
    \]
    where the last $k-2$ integrals may be taken from $0$ to $t - s + 1$ for sufficiently large $N$ by the same reasoning as in Case 1. This concludes all cases.
 \end{proof}

\section{Proofs of the main theorems} \label{sec:proofs}

\subsection{Characterization and regularity of the Busemann process}
 We now turn to proving Theorem \ref{thm:SHEjumps}. To do this, we prove invariance of the $\KPZH_\beta$ for the SHE. Then, we use the uniqueness result from \cite{Janj-Rass-Sepp-22} (recorded as Theorem \ref{thm:SHE_Buse_unique}) to show that $\KPZH_\beta$ describes the Busemann process. Corollary \ref{cor:SHEjumps} gives the existence of discontinuities. We first prove an intermediate invariance result for the O'Connell-Yor polymer.

 \begin{proposition} \label{prop:OCYinvar}
Let $\beta > 0$, and let $\KH_\beta = \{\KH_\beta^\lambda\}_{\lambda \in \R}$ be the $\KPZH_\beta$.  Let $B_0,B_1,\ldots$ be a sequence of i.i.d.\ Brownian motions, independent of $\KH_\beta$ and defining the partition function \eqref{OCYpart}. Let $\OCY_\beta(n,y \viiva f)$ be as defined in \eqref{Zf}. Then, for $n \ge 0$ and $0 < \lambda_1 < \cdots < \lambda_k$,
we have this distributional equality on $C(\R)^k$: 
\be \label{OCYZinvar}
\Biggl\{\f{\OCY_\beta(n,\aabullet\viiva e^{\beta \KH_\beta^{\lambda_i}})}{\OCY_\beta(n,0\viiva e^{\beta \KH_\beta^{\lambda_i}})}\Biggr\}_{1 \le i \le k} \deq \{\exp\bigl(\beta \KH_\beta^{\lambda_i}(\aabullet)\bigr)\}_{1 \le i \le k}.
\ee
\end{proposition}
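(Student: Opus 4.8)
The plan is to show that the $\KPZH_\beta$ is invariant for the Markov chain \eqref{eqn:OCYMC} on $\X_k^{\R_{>0}}$ and then iterate. First I would observe that, by Proposition \ref{prop:KPZH_cons}\ref{itm:KPZH_dist}, the vector $(\KH_\beta^{\lambda_1},\dotsc,\KH_\beta^{\lambda_k})$ has distribution $\mu_\beta^{\bar\lambda}$ on $\X_k^{\R_{>0}}$ (here the strict positivity of the $\lambda_i$ is what places the tuple in $\X_k^{\R_{>0}}$ and guarantees condition \eqref{Q400} holds, so all the queuing maps are defined a.s.). By Theorem \ref{thm:OCYinvar}, $\mu_\beta^{\bar\lambda}$ is invariant for the chain whose transition, given the current state $(\eta^1,\dotsc,\eta^k)$ and an independent standard two-sided Brownian motion $B$, outputs $(D_\beta(B,\eta^1),\dotsc,D_\beta(B,\eta^k))$. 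So one application of the transition, starting from $(\KH_\beta^{\lambda_1},\dotsc,\KH_\beta^{\lambda_k})$ and using the Brownian motion $B_0$ (say), produces another tuple with law $\mu_\beta^{\bar\lambda}$.

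The crux is then to identify the output of this transition with the ratios in \eqref{OCYZinvar} for a single layer, and then to iterate. For a single layer $n=0$: unravelling the definitions \eqref{QPDpRp} of $D_\beta$ and \eqref{Zf} of $\OCY_\beta(0,y\viiva f)$, one has
\[
\OCY_\beta(0,y\viiva e^{\beta\eta})(\mathbf B) = \int_{-\infty}^y e^{\beta\eta(x)} e^{\beta B_0(x,y)}\,dx = e^{\beta B_0(y)}\int_{-\infty}^y e^{\beta(\eta(x)-B_0(x))}\,dx,
\]
so that $\OCY_\beta(0,y\viiva e^{\beta\eta})/\OCY_\beta(0,0\viiva e^{\beta\eta}) = \exp[\beta D_\beta(B_0,\eta)(y)]$ after matching with \eqref{QPDpRp} (with the roles $B\leftrightarrow B_0$, $Y\leftrightarrow\eta$, the $Q$-term being exactly $\beta^{-1}\log\int_{-\infty}^y e^{\beta(B_0(x,y)-\eta(x,y))}\,dx$ — note $D_\beta$ in \eqref{QPDpRp} uses $Y(x,y)-B(x,y)$ in the exponent but the increment identity $B_0(x,y)-\eta(x,y)$ versus $\eta(x)-B_0(x)$ differs only by the constant $B_0(y)-\eta(y)$, which I would track carefully). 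Thus the single-layer ($n=0$) case of \eqref{OCYZinvar} is precisely one step of the Markov chain applied to $(\KH_\beta^{\lambda_1},\dotsc,\KH_\beta^{\lambda_k})$, and by Theorem \ref{thm:OCYinvar} the result again has law $\mu_\beta^{\bar\lambda}$, i.e.\ is distributed as $\{e^{\beta\KH_\beta^{\lambda_i}}\}$.

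For general $n$, I would use the Chapman--Kolmogorov relation \eqref{Zcomp} (its $m=0$, boundary-$f$ form obtained by combining \eqref{Zcomp} and \eqref{Zf}):
\[
\OCY_\beta(n,y\viiva e^{\beta f}) = \int_{-\infty}^y \OCY_\beta(n,y\viiva 1,w)\,\OCY_\beta(0,w\viiva e^{\beta f})\,dw,
\]
and more generally peel off one layer at a time, recognizing that passing from level $m-1$ to level $m$ in the $y$-variable is exactly one application of the map $\eta\mapsto D_\beta(B_m,\eta)$ to the ratio-normalized partition function at level $m-1$, driven by the independent Brownian motion $B_m$. Concretely, setting $\eta_m^i(y) = \beta^{-1}\log\bigl(\OCY_\beta(m,y\viiva e^{\beta\KH_\beta^{\lambda_i}})/\OCY_\beta(m,0\viiva e^{\beta\KH_\beta^{\lambda_i}})\bigr)$ with $\eta_{-1}^i = \KH_\beta^{\lambda_i}$, one checks $(\eta_m^1,\dotsc,\eta_m^k) = (D_\beta(B_m,\eta_{m-1}^1),\dotsc,D_\beta(B_m,\eta_{m-1}^k))$, which is one step of the chain \eqref{eqn:OCYMC}. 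An induction on $n$ using the invariance from Theorem \ref{thm:OCYinvar} then gives $(\eta_n^1,\dotsc,\eta_n^k)\deq(\KH_\beta^{\lambda_1},\dotsc,\KH_\beta^{\lambda_k})$, which is \eqref{OCYZinvar}.

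I expect the main obstacle to be the bookkeeping that matches the $\OCY$ recursion with the abstract $D_\beta$-map: one must be careful about (i) the height-shift normalizations (the $-Q_\beta(\,\cdot\,)(0)$ terms in \eqref{QPDpRp}) so that ratios, rather than the partition functions themselves, are what transform correctly; (ii) justifying that all the integrals involved are a.s.\ finite — here invoking the a.s.\ asymptotic slope $\lim_{x\to-\infty}\KH_\beta^{\lambda_i}(x)/x = \lambda_i>0$ from Proposition \ref{prop:KPZH_cons}\ref{itm:KPZHBM} together with Lemma \ref{lem:OCYlim} to propagate the slope condition through each layer so the chain stays a.s.\ well-defined in $\X_k^{\R_{>0}}$; and (iii) confirming that the "new" Brownian motion $B_m$ feeding layer $m$ of the chain is independent of the state $(\eta_{m-1}^1,\dotsc,\eta_{m-1}^k)$, which holds because $B_0,B_1,\dotsc$ are i.i.d.\ and independent of $\KH_\beta$, so $(\eta_{m-1}^i)$ is a function of $\KH_\beta$ and $B_0,\dotsc,B_{m-1}$ only. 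Once these points are nailed down, the statement follows by the iteration described above.
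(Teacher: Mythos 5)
Your proof is correct and follows essentially the same route as the paper: identify the ratio-normalized $\log$ partition function at level $m$ as $D_\beta(B_m,\,\cdot\,)$ applied to the level-$(m-1)$ quantity (using Chapman--Kolmogorov \eqref{Zcomp}, \eqref{Zf}, and the formula \eqref{Did2}), observe this is exactly one step of the Markov chain \eqref{eqn:OCYMC} with a fresh independent driving Brownian motion, and conclude by induction via the invariance result Theorem \ref{thm:OCYinvar}. The paper phrases the inductive step as two consecutive distributional equalities whereas you phrase it as a single pathwise identity $\eta_m^i = D_\beta(B_m,\eta_{m-1}^i)$ followed by one appeal to invariance, but this is the same argument.
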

\begin{proof}
We prove this by induction. For $n = 0$, $\lambda>0$, and $y \in \R$,
\[
\OCY_\beta(0,y\viiva e^{\beta \KH_\beta^\lambda}) = \int_{-\infty}^y e^{\beta \KH_\beta^\lambda(x)}Z_\beta(0,y\viiva 0,x)\,dx = \int_{-\infty}^y \exp\bigl(\beta (\KH_\beta^\lambda(x) + B_0(x,y))\bigr)\,dx,
\]
and therefore, for $0 < \lambda_1 < \cdots < \lambda_k$,
\begin{align*}
     \Biggl\{\f{\OCY_\beta(0,y\viiva e^{\beta \KH_\beta^{\lambda_i}})}{\OCY_\beta(0,0\viiva e^{\beta \KH_\beta^{\lambda_i}})} : y\in\R \Biggr\}_{1 \le i \le k} 
     &= \Biggl\{\f{e^{\beta B_0(y)} \int_{-\infty}^y \exp\bigl(\beta(\KH_\beta^{\lambda_i}(x) - B_0(x))\bigr)\,dx}{\int_{-\infty}^0 \exp\bigl(\beta(\KH_\beta^{\lambda_i}(x) - B_0(x))\bigr)\,dx}: y\in\R\Biggr\}_{1 \le i \le k} \\
     &\!\!\!\overset{\eqref{Did}}{=} \bigl\{\exp\bigl(\beta D_\beta(B_0,\KH_\beta^{\lambda_i})(y)\bigr)\bigr\}_{1 \le i \le k}. 
\end{align*}
By Theorem \ref{thm:OCYinvar}, this has the same distribution as $\{\exp(\beta \KH_\beta^{\lambda_i})\}_{1 \le i \le k}$. 

Now, assume the invariance \eqref{OCYZinvar} holds for some $n \ge 0$. 
Then, 
\begin{align*}
\OCY_\beta(n + 1,y\viiva \exp(\beta\KH_\beta^\lambda)) &= \int_{-\infty}^y e^{\beta \KH_\beta^\lambda(x)}\OCY_\beta(n + 1,y\viiva 0,x)\,dx \\
&\overset{\eqref{Zcomp}}{=} \int_{-\infty}^y  \int_{x}^y \exp\bigl(\beta(\KH_\beta^\lambda(x) + B_{n + 1}(w,y))\bigr) \OCY_\beta(n,w|0,x)\,dw\,dx \\
&= \int_{-\infty}^y  \int_{-\infty}^w \exp\bigl(\beta(\KH_\beta^\lambda(x) + B_{n + 1}(w,y))\bigr) \OCY_\beta(n,w|0,x)\,dx\,dw \\
&\overset{\eqref{Zf}}{=} \int_{-\infty}^y \exp\bigl(\beta B_{n +1}(w,y)\bigr)\OCY_\beta(n,w\viiva \exp(\beta\KH_\beta^\lambda))\,dw.
\end{align*}
Then,
\begin{align*}
& \Biggl\{\f{\OCY_\beta(n + 1,y\viiva e^{\beta \KH_\beta^{\lambda_i}})}{\OCY_\beta(n + 1,0\viiva e^{\beta \KH_\beta^{\lambda_i}})}: y\in\R\Biggr\}_{1 \le i \le k} 
= \left\{\f{e^{\beta B_{n + 1}(y)}\ddd\int_{-\infty}^y \tfrac{\OCY_\beta(n,w\viiva e^{\beta \KH_\beta^{\lambda_i}})}{\OCY_\beta(n,0\viiva e^{\beta \KH_\beta^{\lambda_i}})} e^{- \beta B_{n +1}(w)}\,dw }
{\ddd\int_{-\infty}^0 \tfrac{\OCY_\beta(n,w\viiva e^{\beta \KH_\beta^{\lambda_i}})}{\OCY_\beta(n,0\viiva e^{\beta \KH_\beta^{\lambda_i}})} e^{- \beta B_{n +1}(w)}\,dw}: y\in\R\right\}_{1 \le i \le k} \\[0.5em]
&\deq \Biggl\{\f{e^{\beta B_{n + 1}(y))}\int_{-\infty}^y e^{\beta \KH_\beta^{\lambda_i}(w)} e^{- \beta B_{n +1}(w)}\,dw }
{\int_{-\infty}^0 e^{\beta \KH_\beta^{\lambda_i}(w)} e^{- \beta B_{n +1}(w)}\,dw}: y\in\R\Biggr\}_{1 \le i \le k}
\overset{\eqref{Did2}}{=} \bigl\{e^{\beta D(B_{n +1},\KH_\beta^{\lambda_i})} \bigr\}_{1 \le i \le k} \deq \{e^{\beta \KH_\beta^{\lambda_i}}\}_{1 \le i \le k}.
\end{align*}
The first distributional equality is the induction assumption and the second one Theorem \ref{thm:OCYinvar}.
\end{proof}
 
 Let the fundamental solution $Z_\beta$ of SHE  be defined as in \eqref{SHEchaos}, and recall the definition with initial data \eqref{ZZf}.

\begin{theorem} \label{thm:SHEinvar}
Let $\beta > 0$, and let $\KH_\beta$ be the $\KPZH_\beta$, defined on the probability space $(\Omega,\F,\Pp)$ and independent of the SHE Green's function $Z_\beta$. Let $t > 0$ and real $\lambda_1 < \cdots < \lambda_k$. Then,
\be \label{eqn:SHEinvar}
\Biggl\{\f{Z_\beta(t,\aabullet\viiva 0, e^{\beta \KH_\beta^{\lambda_i}})}{Z_\beta(t,0\viiva 0, e^{\beta \KH_\beta^{\lambda_i}})}\Biggr\}_{1 \le i \le k} \deq \{\exp(\beta \KH_\beta^{\lambda_i}(\aabullet))\}_{1 \le i \le k}.
\ee
\end{theorem}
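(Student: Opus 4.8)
The plan is to obtain \eqref{eqn:SHEinvar} as the $N\to\infty$ limit of the discrete invariance in Proposition \ref{prop:OCYinvar}, using the convergence of the scaled O'Connell--Yor polymer to the SHE Green's function proved in Theorem \ref{thm:OCYtoSHE} and Lemma \ref{lem:L1conv_bd}. Concretely, fix $t>0$ and $\lambda_1<\dots<\lambda_k$. For a sequence $\beta_N$ with $N^{1/4}\beta_N\to\beta$, Proposition \ref{prop:OCYinvar} applied at temperature $\beta_N$ gives, for strictly positive drifts $0<\mu_1<\dots<\mu_k$,
\[
\Biggl\{\f{\OCY_{\beta_N}(\fl{tN},\aabullet\viiva e^{\beta_N \KH_{\beta_N}^{\mu_i}})}{\OCY_{\beta_N}(\fl{tN},0\viiva e^{\beta_N \KH_{\beta_N}^{\mu_i}})}\Biggr\}_{1\le i\le k}\deq\{\exp(\beta_N\KH_{\beta_N}^{\mu_i}(\aabullet))\}_{1\le i\le k}.
\]
The idea is to choose the drifts depending on $N$ and rescale so that both sides converge: by the scaling relation \eqref{temp1} (equivalently Theorem \ref{thm:dist_invar}\ref{itm:KHscale}) one should be able to write the left side, after the $1:2:\sqrt N$ parabolic rescaling that defines $Z_N$ in \eqref{ZNshort}, in terms of the fixed-$\beta$ process $\KH_\beta$; the natural choice is $\mu_i^N\propto N^{-1/4}\lambda_i$ (up to a positive shift to ensure positivity of the drifts, removed at the end by increment-stationarity, Theorem \ref{thm:dist_invar}\ref{itm:inc_stat}, or by the shift built into $T_{\gamma,\alpha}$). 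After this rescaling the numerator and denominator become $\int_{-\infty}^{t\sqrt N+y}f_{i,N}(x)Z_N(t,y\viiva 0,x)\,dx$ with random initial data $f_{i,N}(x)=e^{\beta \KH_\beta^{\lambda_i}(x)}$ (or a scaled version thereof), which satisfies the exponential growth bound \eqref{fgrowth} since $\KH_\beta^{\lambda_i}$ is a Brownian motion with drift.

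The key steps, in order: (1) use Lemma \ref{lem:pQscaling}/\eqref{temp1} and the definition \eqref{ZNshort} of $Z_N$ to rewrite the rescaled discrete ratio exactly as $\int_{-\infty}^{t\sqrt N+y}e^{\beta\KH_\beta^{\lambda_i}(x)}Z_N(t,y\viiva 0,x)\,dx$ divided by the same at $y=0$, with the correct choice of $\beta_N$ and drifts making the Brownian parts match; (2) verify that the random initial data is independent of the white noise driving $Z_N$ and $Z_\beta$ and satisfies the hypothesis of Lemma \ref{lem:L1conv_bd} (here one conditions on $\KH_\beta$ and uses $\Ee[e^{\beta\KH_\beta^{\lambda_i}(x)}]=e^{\beta^2|x|/2+\beta\lambda_i x}\le e^{\alpha|x|}$ for suitable $\alpha$); (3) apply Lemma \ref{lem:L1conv_bd} to get joint convergence in distribution of the vector of numerators and denominators indexed by $y$, hence by the continuous mapping theorem (the denominator is a.s.\ positive) convergence of the ratios to $\{Z_\beta(t,\aabullet\viiva e^{\beta\KH_\beta^{\lambda_i}})/Z_\beta(t,0\viiva e^{\beta\KH_\beta^{\lambda_i}})\}_i$ in the sense of finite-dimensional distributions; (4) check that the right-hand side $\{\exp(\beta_N\KH_{\beta_N}^{\mu_i^N}(\cdot))\}$, after the same rescaling, converges in distribution on $C(\R,\R^k)$ to $\{\exp(\beta\KH_\beta^{\lambda_i}(\cdot))\}$ — this follows from Lemma \ref{lem:mubetaweak} (weak continuity of $\mu_\beta^{\bar\lambda}$ in $\beta$ and $\bar\lambda$) together with the scaling \eqref{measmap}; (5) match finite-dimensional distributions on both sides and conclude \eqref{eqn:SHEinvar}; upgrade from finite-dimensional to $C(\R)^k$-valued equality using that each marginal of $Z_\beta(t,\cdot\viiva e^{\beta\KH_\beta^{\lambda_i}})/Z_\beta(t,0\viiva\cdot)$ is continuous (it is in fact a Brownian-motion-like object, but one only needs tightness of the marginals, which come from the invariance itself once the equality in law is known for finite marginals).

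I expect the main obstacle to be step (1)–(2): getting the bookkeeping of the three scaling parameters exactly right so that the discrete invariance of Proposition \ref{prop:OCYinvar}, stated with the \emph{intrinsic} temperature-$\beta_N$ horizon $\KH_{\beta_N}$ and positive drifts, lines up after parabolic rescaling with the fixed-$\beta$ horizon $\KH_\beta$ and arbitrary real drifts — in particular handling the positivity constraint $0<\mu_1<\dots<\mu_k$ (present so that \eqref{Q400} holds and the queueing maps are defined) via a vanishing or compensating shift, and confirming that the shift drops out in the limit by increment-stationarity. A secondary technical point is justifying the passage from the $L^1$ convergence of Lemma \ref{lem:L1conv_bd} (which is stated for deterministic-in-law, independent initial data) to our setting where the initial data is the random field $e^{\beta\KH_\beta^{\lambda_i}}$: one handles this by conditioning on $\KH_\beta$, applying Lemma \ref{lem:L1conv_bd} for a.e.\ realization, and using dominated convergence in the outer expectation, the domination again coming from the exponential moment bound on the Brownian-with-drift initial data. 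Once convergence of both sides is established, the identification of the limit with the Busemann process marginal (i.e.\ Theorem \ref{thm:SHEjumps}) follows by feeding \eqref{eqn:SHEinvar} into the uniqueness statement Theorem \ref{thm:SHE_Buse_unique}, after checking the slope conditions $\KH_\beta^{\lambda_i}\in\mathbb F_{\lambda_i}$ a.s., which holds because a Brownian motion with drift $\lambda_i$ satisfies \eqref{eqn:SHE_drift_cond} almost surely.
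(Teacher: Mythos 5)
Your proposal takes the same route as the paper: apply Proposition \ref{prop:OCYinvar} at temperature $\beta_N = N^{-1/4}\beta$ with $N$-dependent positive drifts, rescale via the shift and scaling invariances of Theorem \ref{thm:dist_invar}, and pass to the limit with Lemma \ref{lem:L1conv_bd}. One caveat on your step (1)--(2): the shift in the drift index is not merely a small positivity fix to be peeled off later. The paper takes $\mu_i^N = (\lambda_i + \beta/2)N^{-1/4} + \beta^{-1}N^{1/4}$, and the $\beta^{-1}N^{1/4}$ part \emph{grows} with $N$; it is dictated by the exact cancellation between the affine term $\alpha y$ in the scaling map $T_{\gamma,\alpha}$ (with $\gamma = N^{-1/4}$, $\alpha = \beta^{-1}N^{1/4}$) and the linear factor $e^{-(\sqrt N + \beta_N^2/2)y}$ coming from $\psi_N$ in \eqref{ZNshort}. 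Increment stationarity (your first proposed fix) cannot remove a growing drift-index shift, but your fallback ``the shift built into $T_{\gamma,\alpha}$'' is precisely the mechanism, so the idea is sound even if you did not pin down the coefficient. Two small simplifications relative to your outline: Lemma \ref{lem:L1conv_bd} is already stated for random initial data $f:\Omega\times\R\to\R_{>0}$ independent of the noise with $\Ee[f(x)]\le e^{\alpha|x|}$, so no conditioning on $\KH_\beta$ is needed; and with the correct shift, the rescaled right-hand side has \emph{exactly} the law $\{\exp(\beta\KH_\beta^{\lambda_i}(\aabullet))\}$ for every $N$, so your step (4) is an identity rather than a weak-continuity argument, and tightness of the left-hand side comes for free because its law is $N$-independent.
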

 
\begin{proof}
For $N \in \N$ and $1 \le i \le k$, set $\mu_i^N = (\lambda_i + \f{\beta}{2}) N^{-1/4} + \beta^{-1} N^{1/4}$ and $\beta_N = N^{-1/4}\beta$. 
Let $N$ be large enough so that $\mu_i^N > 0$ for $i \in \{1,\ldots,k\}$. By \eqref{OCYZinvar}, for every $n \ge 1$,
\[
\Biggl\{\f{\OCY_{\beta_N}(n,\aabullet\viiva \exp(\beta_N \KH_{\beta_N}^{\mu_i^N}))}{\OCY_{\beta_N}(n,0\viiva \exp(\beta_N \KH_{\beta_N}^{\mu_i^N}))}\Biggr\}_{1 \le i \le k} \deq \{\exp(\beta_N \KH_{\beta_N}^{\mu_i^N}(\aabullet))\}_{1 \le i \le k}.
\]
Then, we have that 
\be \label{872}
\begin{aligned}
    &\quad \Biggl\{ \f{\exp(-(\sqrt N + \beta^2/2)y)\int_{-\infty}^{tN + y\sqrt N}\exp(\beta_N \KH_{\beta_N}^{\mu_i^N}(x))\OCY_{\beta_N}(tN, tN +y\sqrt N\viiva 0,x)\,dx}{\int_{-\infty}^{tN} \exp(\beta_N \KH_{\beta_N}^{\mu_i^N}(x))\OCY_{\beta_N}(tN,tN\viiva 0,x)\,dx}: y \in \R \Biggr\}_{1 \le i \le k} \\[0.5em]
    &\deq \Bigl\{\exp\bigl(\beta_N \KH_{\beta_N}^{\mu_i^N}(tN,tN + y\sqrt N)- (\sqrt N + \beta^2/2)y\bigr) : y \in \R \Bigr\}_{1 \le i \le k} \\[0.5em]
    &\deq \bigl\{\exp\bigl(\beta_N \KH_{\beta_N}^{\mu_i^N}(y \sqrt N)- (\sqrt N + \beta^2/2)y\bigr):y \in \R\bigr\}_{1 \le i \le k}\\
    &\deq \bigl\{\exp\bigl(\beta \KH_\beta^{\lambda_i}(y)\bigr):y \in \R \bigr\}_{1 \le i \le k}.
    \end{aligned}
\ee
where the second equality follows from shift invariance (Theorem \ref{thm:dist_invar}\ref{itm:shinv}), and the third equality follows from the scaling relations of Theorem \ref{thm:dist_invar}\ref{itm:KHscale}. For $t > 0$ and $x,y \in \R$, set
\[
\psi_N(t,x,y) = \sqrt N \exp\Bigl(-\bigl(N +\f{\beta^2  \sqrt N}{2}\bigr)t - \bigl(\sqrt N + \f{\beta^2}{2}\bigr)(y-x)\Bigr)
\]
so that, for our choice of $\beta_N$, $\psi_N(t,x,y) = \psi_N(0,t,x,y;\beta_N)$,
where the latter was defined in \eqref{psi}. 
By a change of variable from $x$ to $\sqrt Nx$, the first line of \eqref{872} is equal to
\[
\begin{aligned}
&\quad \Biggl\{ \f{\sqrt N \exp(-(\sqrt N + \beta^2/2)y)\int_{-\infty}^{t\sqrt N + y}\exp(\beta_N \KH_{\beta_N}^{\mu_i^N}(x\sqrt N ))\OCY_{\beta_N}(tN,tN + y\sqrt N|0,x\sqrt N)\,dx}{\sqrt N \int_{-\infty}^{t\sqrt N}  \exp(\beta_N \KH_{\beta_N}^{\mu_i^N}(x\sqrt N ))\OCY_{\beta_N}(tN,tN|0,x \sqrt N )\,dx}: y \in \R \Biggr\}_{1 \le i \le k} \\[0.5em]
&= \Biggl\{ \f{\int_{-\infty}^{t\sqrt N + y}e^{\beta_N \KH_{\beta_N}^{\mu_i^N}(x\sqrt N) - 
(\sqrt N + \beta^2/2) x}\psi_N(t,x,y) \OCY_{\beta_N}(tN,tN + y\sqrt N \viiva 0,x\sqrt N )\,dx}{\int_{-\infty}^{t\sqrt N}  e^{\beta_N \KH_{\beta_N}^{\mu_i^N}(x\sqrt N )- 
(\sqrt N + \beta^2/2)x}\psi_N(t,x,0)\OCY_{\beta_N}(tN,tN\viiva 0,x\sqrt N  )\,dx}: y \in \R \Biggr\}_{1 \le i \le k}\\[0.5em]
&\deq \Biggl\{ \f{\int_{-\infty}^{t\sqrt N + y}\exp(\beta \KH_\beta^{\lambda_i}(x)) \psi_N(t,x,y) \OCY_{\beta_N}(tN,tN + y\sqrt N |0,x\sqrt N )\,dx}{\int_{-\infty}^{t\sqrt N}  \exp(\beta \KH_\beta^{\lambda_i}(x))\psi_N(t,x,0)\OCY_{\beta_N}(tN,tN|0,x\sqrt N  )\,dx}: y \in \R \Biggr\}_{1 \le i \le k}.
\end{aligned}
\]
where the distributional equality follows from the scaling of Theorem \ref{thm:dist_invar}\ref{itm:KHscale}, similarly as is done in \eqref{872}. Lemma \ref{lem:L1conv_bd} implies that the above converges to, in the sense of finite dimensional distributions on
$C(\R,\R^k)$,
\be \label{873}
\Biggl\{\f{\int_\R \exp(\beta \KH_\beta^{\lambda_i}(x)) Z_\beta(t,y\viiva 0,x)\,dx}{\int_\R \exp(\beta \KH_\beta^{\lambda_i}(x)) Z_\beta(t,0\viiva 0,x)\,dx}:y \in \R\Biggr\}_{1 \le i \le k} 
= \Biggl\{\f{Z_\beta(t,y\viiva 0, e^{\beta \KH_\beta^{\lambda_i}})}{Z_\beta(t,0\viiva 0, e^{\beta \KH_\beta^{\lambda_i}})}:y \in \R\Biggr\}_{1 \le i \le k}.
\ee
In the application of the Lemma, $f_i(x) = e^{\beta F_\beta^{\lambda_i}(x)}$ for which the condition $\Ee[f(x)] \le e^{\alpha |x|}$ follows immediately.
Tightness follows because  the distribution of the process does not depend on $N$ by \eqref{872}.
 Then, by comparing \eqref{872} and \eqref{873}, for each $t \ge 0$,
\[
\Biggl\{\f{Z_\beta(t,\aabullet\viiva 0, e^{\beta \KH_\beta^{\lambda_i}})}{Z_\beta(t,0\viiva 0, e^{\beta \KH_\beta^{\lambda_i}})}\Biggr\}_{1 \le i \le k} \deq \{\exp(\beta \KH_\beta^{\lambda_i})\}_{1 \le i \le k}. \qedhere
\]
\end{proof}

\begin{corollary} \label{cor:SHEBuse=KPZH}
Let $\beta > 0$. Then, the following distributional equality holds as processes in $D(\R,C(\R))$:
\[
\{\bb^{(\beta \lambda) +}(0,0,0,\aabullet)\}_{\lambda \in \R} \deq \{\beta \KH_\beta^\lambda\}_{\lambda \in \R}.
\]
\end{corollary}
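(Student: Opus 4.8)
The plan is to recognize $\{\beta\KH_\beta^\lambda\}_{\lambda\in\R}$ as the fixed-spacetime-base-point marginal of the Busemann process of the SHE by invoking the uniqueness statement Theorem \ref{thm:SHE_Buse_unique}, and then to promote the resulting equality of finite-dimensional laws to an equality of laws on $D(\R,C(\R))$.

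\textbf{Finite-dimensional distributions.} First I would fix a vector $\lambda_1<\cdots<\lambda_k$ in $\R$ and set $f^i:=\beta\KH_\beta^{\lambda_i}$. By Proposition \ref{prop:KPZH_cons}\ref{itm:KPZHBM} this is a two-sided Brownian motion with diffusivity $\beta$ and drift $\beta\lambda_i$, so the strong law of large numbers for Brownian motion gives $x^{-1}\log e^{f^i(x)}\to\beta\lambda_i$ as $x\to+\infty$ and $|x|^{-1}\log e^{f^i(x)}\to-\beta\lambda_i$ as $x\to-\infty$ almost surely; a check of the three cases of \eqref{eqn:SHE_drift_cond} then shows $e^{f^i}\in\mathbb{F}_{\beta\lambda_i}$ almost surely. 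With the convention $s=0$ in \eqref{ZZf}, Theorem \ref{thm:SHEinvar} states exactly that the coupling $(f^1,\dots,f^k)$ satisfies the hypothesis of Theorem \ref{thm:SHE_Buse_unique} for every $t>0$, with the role of the direction parameters played by $\beta\lambda_1<\dots<\beta\lambda_k$. Applying Theorem \ref{thm:SHE_Buse_unique} and taking logarithms then yields
\[
\{\beta\KH_\beta^{\lambda_i}\}_{1\le i\le k}\deq\{\bb^{\beta\lambda_i}(0,0,0,\aabullet)\}_{1\le i\le k}.
\]
Since each $\beta\lambda_i$ is a deterministic direction, Theorem \ref{thm:SHEbuse}\ref{itm:SHEBuseP0} gives $\bb^{\beta\lambda_i}=\bb^{(\beta\lambda_i)-}=\bb^{(\beta\lambda_i)+}$ almost surely, so the right-hand side above coincides in law with $\{\bb^{(\beta\lambda_i)+}(0,0,0,\aabullet)\}_{1\le i\le k}$.

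\textbf{Upgrade to $D(\R,C(\R))$.} Next I would verify that $\lambda\mapsto\bb^{\lambda+}(0,0,0,\aabullet)$ is a bona fide random element of $D(\R,C(\R))$: its values lie in $C(\R)$ by Theorem \ref{thm:SHEbuse}\ref{itm:SHEBusecont}, and the pointwise one-sided limits of Theorem \ref{thm:SHEbuse}\ref{itm:SHEBusemontlim} are uniform on compact space-intervals because of the increment monotonicity in Theorem \ref{thm:SHEbuse}\ref{itm:SHEBusemont}, via the same sandwich estimate used in the proof of Proposition \ref{prop:KPZH_cons}; precomposing with the homeomorphism $\lambda\mapsto\beta\lambda$ keeps the process in $D(\R,C(\R))$. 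The process $\{\beta\KH_\beta^\lambda\}_{\lambda\in\R}$ is a random element of $D(\R,C(\R))$ by Proposition \ref{prop:KPZH_cons}. The cylinder sets $(\pi^{\lambda_1,\dots,\lambda_k})^{-1}(A)$ form a $\pi$-system generating $\B_D$, and the previous step shows the laws of the two processes agree on it; hence they agree on all of $\B_D$, which is the assertion.

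\textbf{Main obstacle.} The substantive analytic inputs---Theorem \ref{thm:SHEinvar} (invariance of $\KPZH_\beta$ under the SHE, obtained via the O'Connell--Yor-to-SHE limit of Section \ref{sec:OCYSHE}) and Theorem \ref{thm:SHE_Buse_unique} (uniqueness of the jointly invariant measure, from \cite{Janj-Rass-Sepp-22})---are already established, so this corollary is largely bookkeeping. The only genuine point of care is the passage to $D(\R,C(\R))$: one must check that the drift-indexed Busemann process is right-continuous with left limits for the topology of uniform convergence on compacts on $C(\R)$, not merely pointwise, so that coincidence of all finite-dimensional marginals suffices; that is where I expect the small remaining effort to go.
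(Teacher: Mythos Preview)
The proposal is correct and follows essentially the same route as the paper: invoke Theorem \ref{thm:SHEinvar} to verify the hypothesis of Theorem \ref{thm:SHE_Buse_unique}, obtain equality of finite-dimensional distributions (with the drift bookkeeping matching the paper's remark about ``comparing drifts''), and then pass to $D(\R,C(\R))$. Your write-up is in fact more explicit than the paper's on two points---checking $e^{\beta\KH_\beta^{\lambda_i}}\in\mathbb F_{\beta\lambda_i}$ and verifying that $\lambda\mapsto\bb^{(\beta\lambda)+}(0,0,0,\aabullet)$ is genuinely c\`adl\`ag in $C(\R)$---whereas the paper dispatches the latter step by a one-line appeal to the uniqueness clause in Proposition \ref{prop:KPZH_cons}\ref{itm:KPZH_dist}.
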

\begin{proof}
    The invariance of Theorem \ref{thm:SHEinvar} and the uniqueness of Theorem \ref{thm:SHE_Buse_unique}
    establish that for $\lambda_1 < \cdots < \lambda_k$, 
    \[
    \{\bb^{(\beta \lambda_i)+}(0,0,0,\aabullet)\}_{1 \le i \le k} \deq \{\beta \KH_\beta^{\lambda_i}\}_{1 \le i \le k}.
    \]
    The choice of factors of $\beta$ comes by comparing drifts, using Proposition \ref{prop:KPZH_cons}\ref{itm:KPZHBM} and Theorem \ref{thm:SHEbuse}\ref{itm:BuseBM}.  
    The equality of processes in the path space $D(\R,C(\R))$ follows by the uniqueness of Proposition \ref{prop:KPZH_cons}\ref{itm:KPZH_dist}. 
\end{proof}

\begin{proof}[Proof of Theorem \ref{thm:Busedrift}]
The description of the measures used in the theorem comes from Lemma \ref{lem:pQalt} and the definition of the finite-dimensional marginals of the $\KPZH_\beta$ in Proposition \ref{prop:KPZH_cons}\ref{itm:KPZH_dist}.  Uniqueness follows directly from Corollary \ref{cor:SHEBuse=KPZH} and the uniqueness in Theorem \ref{thm:SHE_Buse_unique}. In handling the factor of $\beta$, we recall that we define solutions to the KPZ equation as in \eqref{eqn:KPZeqn} as
\[
\kpzb(t,y\viiva s,f) = \f{1}{\beta}\log \int_\R e^{\beta f(x)} Z_\beta(t,y|s,x)\,dx. \qedhere
\]
\end{proof}

\begin{proof}[Proof of Theorem \ref{thm:jumps_every_edge}]
By Corollary \ref{cor:SHEjumps}, the $\KPZH_\beta$ is not almost surely continuous. Corollary \ref{cor:SHEBuse=KPZH} gives the equality of the Busemann process and the $\KPZH_\beta$. By Theorem \ref{thm:SHEbuse}\ref{itm:SHEBusedichot}, the set of discontinuities $\Lambda_\beta$ is countable and dense in $\R$ with probability $1$. The presence of the discontinuities for the process $\lambda \mapsto \KH_\beta^\lambda(x,y)$ is  Theorem \ref{thm:SHEbuse}\ref{itm:SHEBusemont} (originally proved in \cite{Janj-Rass-Sepp-22}). This completes the proof. 
\end{proof}

\begin{proof}[Proof of Theorem \ref{thm:SHEjumps}]
This is a direct consequence of Corollary \ref{cor:SHEBuse=KPZH} and Theorem \ref{thm:jumps_every_edge}. 
\end{proof}

\subsection{Limits as $\beta \nearrow \infty$ and $\beta \searrow 0$}
\label{sec:pf_KPZH_SH}

\begin{proof}[Proof of Theorem \ref{thm:SHlimit}]
We first prove the limit as $\beta \nearrow \infty$: Proposition \ref{prop:KPZH_cons}\ref{itm:KPZH_dist} implies that, for $\lambda_1 < \cdots<\lambda_k$, $(\KH_\beta^{\lambda_1},\ldots,\KH_\beta^{\lambda_k}) \sim \mu_{\beta}^{\lambda_1,\ldots,\lambda_k}$. By Lemma \ref{lem:pQalt}, we can describe this distribution as 
$
(\KH_\beta^{\lambda_1},\ldots,\KH_\beta^{\lambda_k}) \deq (\eta_\beta^1,\ldots,\eta_\beta^k),
$
where for independent Brownian motions $Y^1,\ldots,Y^k$ with drifts $\lambda_1,\ldots,\lambda_k$, $\eta_\beta^1=Y^1$, and for $2\le n \le k$, 
\begin{align*}
\eta_\beta^n(y)  &= Y^1(y) + \beta^{-1} \log \!\!\!\!\!
\int\limits_{-\infty < x_{n - 1} < \cdots <x_{1} < y}  \!\!\!\!\!\!\!\!\!\!\qquad  \prod_{i = 1}^{n-1}\exp\bigl[\beta (Y^{i + 1}(x_i) - Y^i(x_i)) \bigr] d x_{i} \\
&\qquad\qquad\qquad\qquad\qquad - \beta^{-1}\log  \!\!\!\!\! \int\limits_{-\infty < x_{n - 1} < \cdots <x_{1} < 0}   \!\!\!\!\!\!\!\!\!\!\qquad  \prod_{i = 1}^{n-1}\exp\bigl[\beta (Y^{i + 1}(x_i) - Y^i(x_i)) \bigr] d x_{i}.
\end{align*}
By the convergence of the $L^\beta$ norm as $\beta \nearrow \infty$, the zero-temperature limit $\beta\to\infty$ converts the polymer free energy into last-passage percolation. Therefore, on a single event of full probability, simultaneously for each $y \in \R$ and $n \in \{2,\ldots,k\}$,
\begin{align*}
\lim_{\beta \nearrow \infty} \eta_\beta^n(y) 
&= Y^1(y) + \sup_{-\infty < x_{n - 1} \le \cdots \le x_{1} \le y} \biggl\{\;\sum_{i = 1}^{n - 1} (Y^{i + 1}(x_i) - Y^i(x_i)) \biggr\}  \\
&\qquad\qquad\qquad\qquad\qquad-   \sup_{-\infty < x_{n - 1} \le \cdots \le x_{1} \le 0} \biggl\{\;\sum_{i = 1}^{n - 1} (Y^{i + 1}(x_i) - Y^i(x_i)) \biggr\}.
\end{align*}
Lemma \ref{lem:SHqueuerep} and Definition \ref{def:SH} imply that, in the sense of finite-dimensional distributions on $C(\R^k,\R)$, 
\[
(\eta_\beta^1(2\,\aabullet),\ldots,\eta_\beta^k(2\,\aabullet)) \overset{\beta \nearrow \infty}{\Longrightarrow} (G^{\lambda_1},\ldots,G^{\lambda_k}).
\]
Tightness holds because each component in the prelimit is a Brownian motion with a fixed drift. 

For each $\beta$, the process in Item \ref{KPZscal} has the same distribution as the process in Item \ref{tempscal} by the scaling relations of Theorem \ref{thm:dist_invar}\ref{itm:KHscale}.

Now, we prove the convergence as $\beta \searrow 0$. By Theorem~\ref{thm:lambda-F}, as processes in $y > 0$,
\be \label{FYXrep_expand}
\KH_\beta^{\lambda_2}(y) - \KH_\beta^{\lambda_1}(y) \deq \beta^{-1} \log(1 + X_{\lambda,\beta} Y_{\lambda,\beta}(y)) = \log \Biggl(1 + \f{\beta^{-1} X_{\lambda,\beta}Y_{\lambda,\beta}}{\beta^{-1}}\Biggr)^{\beta^{-1}},
\ee
where $\lambda := \lambda_2 - \lambda_1$, $X_{\lambda,\beta}$ has the Gamma distribution with shape $\lambda \beta^{-1}$ and rate $\beta^{-2}$, and 
\be \label{Ylbrep}
Y_{\lambda,\beta}(y) = \int_0^y \exp(\sqrt 2 \beta B(x) + \lambda \beta x)\,dx
\ee
where $B$ is a standard Brownian motion. For fixed $\lambda,y > 0$, couple the $Y_{\lambda,\beta}(y)$ together with a single Brownian motion $B$ using \eqref{Ylbrep}. Note that for $\beta < 1$, 
\[
\int_0^y \exp(\sqrt 2 \beta B(x) + \lambda \beta x)\,dx \le \int_0^y \exp(\sqrt 2 |B(x)| + \lambda x)\,dx, 
\]
and the right-hand side is finite almost surely. By dominated convergence, $Y_{\lambda,\beta}(y)$ converges almost surely to $y$ as $\beta \searrow 0$.
Next, the random variable $X_{\lambda,\beta}/\beta$ has mean $\lambda$ and variance $\lambda \beta$, so for any $\ve > 0$, by Chebyshev's inequality, 
\be \label{XYconv}
\lim_{\beta \searrow 0} \Pp\Bigl(|\beta^{-1}X_{\lambda,\beta} - \lambda| > \ve \Bigr) = 0.
\ee
Hence, there exists a coupling of copies of $X_{\lambda,\beta}$ (which we may keep independent of $Y_{\lambda,\beta}(y)$) so that $X_{\lambda,\beta} \to \lambda$ almost surely as $\beta \searrow 0$. In the product space, using \eqref{FYXrep_expand}, $\beta^{-1} \log(1 + X_{\lambda,\beta} Y_{\lambda,\beta}(y))$ converges almost surely to $\lambda y$. Therefore, for each $\ve > 0$ and $y > 0$,
\[
\limsup_{\beta \searrow 0} \Pp(|\KH_\beta^{\lambda_2}(y) - \KH_\beta^{\lambda_1}(y) - (\lambda_2 - \lambda_1)y| > \ve) = 0.
\]
The result also holds for $y < 0$  because $\{\KH_\beta^{\lambda}(y)\}_{\lambda \in \R} = \{-\KH_\beta^\lambda(y,0)\}_{\lambda \in \R} \deq \{-\KH_\beta^\lambda(0,-y)\}_{\lambda \in \R}$ (Theorem \ref{thm:dist_invar}\ref{itm:KHscale}). Now, let $\lambda_1 < \ldots < \lambda_k$ and $\{y_{i,j}: 2 \le i \le k, 1 \le j \le J_i\}$ be a finite collection of points in $\R$.
By a simple union bound, for each $\ve > 0$, we have 
\[
\limsup_{\beta \searrow 0} \Pp\Bigl(\sup_{2 \le i \le k, 1 \le j \le J_i}|\KH_\beta^{\lambda_i}(y_{i,j}) - \KH_\beta^{\lambda_1}(y_{i,j}) - (\lambda_i - \lambda_1)y_{i,j}| > \ve \Bigr) = 0.
\]
Since the marginal distribution of $\KH_\beta^{\lambda_1}$ does not change as $\beta \searrow 0$ (a Brownian motion with drift $\lambda_1$), it follows by Slutsky's Theorem that, in the sense of finite-dimensional distributions, 
\[
(\KH_\beta^{\lambda_1}(2\aabullet),\ldots,\KH_\beta^{\lambda_k}(2\aabullet)) \overset{\beta \searrow 0}{\Longrightarrow} (B(2\aabullet) + 2\lambda_1 \aabullet, B(2\aabullet) + 2\lambda_2 \aabullet,\ldots, B(2\aabullet) + 2\lambda_k \aabullet), 
\]
where $B$ is a standard Brownian motion. Convergence on $C(\R^k)$ follows because the marginal distribution of each component on the left-hand side is a Brownian motion with drift $\lambda_i$ and therefore is tight. 
\end{proof}

\begin{proof}[Proof of Corollary \ref{cor:KPZeqtofp}]
Given Theorem \ref{thm:SHlimit}, we follow a similar procedure to the proof of \cite[Corollary 1.9]{Wu-23}. The only needed change is that the joint distribution of the initial data changes with $T$.  Let $\Hh_\beta = \log Z_\beta$. Recalling the definition \eqref{eqn:KPZeqn} of solutions to the KPZ equation, we observe that 
\begin{align*}
    &\quad \; 2^{1/3} T^{-1/3} \Biggl[ \beta \kpzb\Bigl( \f{T t}{\beta^4} ,\f{2^{1/3} T^{2/3} y}{\beta^2};\f{Ts}{\beta^4}, F_\beta^{\alpha + 2^{1/3} T^{-1/3}\lambda_i}(\aabullet) - \alpha \aabullet\Bigr) + \f{T(t - s)}{24} -\f{2}{3}\log(\sqrt 2 T)\Biggr] \\
    &= 2^{1/3}T^{-1/3} \log \int_\R \f{1}{2^{1/3} T^{2/3}} \exp\Biggl(\beta \KH_\beta^{\alpha + \beta 2^{1/3} T^{-1/3}\lambda_i}(x) - \beta \alpha x + \Hh_\beta\Bigl(\f{Tt}{\beta^4},\f{2^{1/3} T^{2/3}y}{\beta^2} \Big | \f{Ts}{\beta^4},x\Bigr) + \f{T(t - s)}{24} \Biggr)\,dx \\
    &= 2^{1/3} T^{-1/3} \log \int_\R\beta^{-2} \exp\Biggl(\beta \KH_\beta^{\alpha + \beta 2^{1/3}T^{-1/3}\lambda_i}\Bigl(\f{2^{1/3} T^{2/3}}{\beta^2} x\Bigr) - \f{2^{1/3}T^{2/3} \alpha x}{\beta}  \\
    &\qquad\qquad\qquad\qquad\qquad\qquad + \Hh_\beta\Bigl(\f{Tt}{\beta^4},\f{2^{1/3} T^{2/3}y}{\beta^2} \Big | \f{Ts}{\beta^4},\f{2^{1/3} T^{2/3}x}{\beta^2}\Bigr) + \f{T(t - s)}{24}\Biggr)\,dx \\
    &\deq 2^{1/3} T^{-1/3} \log \int_\R  \exp\Biggl(\beta \KH_\beta^{\alpha + \beta 2^{1/3}T^{-1/3}\lambda_i}\Bigl(\f{2^{1/3} T^{2/3}}{\beta^2} x\Bigr) - \f{2^{1/3}T^{2/3} \alpha x}{\beta}  \\
&\qquad\qquad\qquad\qquad\qquad\qquad + \Hh_1\Bigl(Tt,2^{1/3} T^{2/3}y \Big | Ts,2^{1/3} T^{2/3}x\Bigr) + \f{T(t - s)}{24}\Biggr)\,dx \\
&= 2^{1/3} T^{-1/3} \log \int_\R  \exp\Biggl(2^{-1/3}T^{1/3}\Biggl[\beta 2^{1/3} T^{-1/3} \KH_\beta^{\alpha + \beta 2^{1/3}T^{-1/3}\lambda_i}\Bigl(\f{2^{1/3} T^{2/3}}{\beta^2} x\Bigr) - \f{2^{2/3}T^{1/3} \alpha x}{\beta}  \\
&\qquad\qquad\qquad\qquad\qquad\qquad + 2^{1/3}T^{-1/3}\Hh_1\Bigl(Tt,2^{1/3} T^{2/3}y \Big | Ts,2^{1/3} T^{2/3}x\Bigr) + \f{2^{1/3}T^{2/3}(t - s)}{24}\Biggr]\Biggr)\,dx \\
&= 2^{1/3} T^{-1/3} \log \int_\R \exp\Bigl(2^{-1/3} T^{1/3}[ F_T^i(x) + \Hh^T(t,y \viiva s,x)]   \Bigr)\,dx,
\end{align*}
where we note that we have added the $o(1)$ term $ -\f{2^{4/3}}{3T^{1/3}}\log(\sqrt 2 T)$ in the first expression. The distributional equality above is theorem \ref{thm:Zinvar}\ref{itm:Zscale}, and we define
\begin{align*}
F_i^T(x) &= \beta 2^{1/3} T^{-1/3} \KH_\beta^{\alpha + \beta 2^{1/3}T^{-1/3}\lambda_i}\Bigl(\f{2^{1/3} T^{2/3}}{\beta^2} x\Bigr) - \f{2^{2/3}T^{1/3} \alpha x}{\beta} \\
\Hh^T(t,y|s,x) &= 2^{1/3}T^{-1/3}\Hh_1\Bigl(Tt,2^{1/3} T^{2/3}y \Big | Ts,2^{1/3} T^{2/3}x\Bigr) + \f{2^{1/3}T^{2/3}(t - s)}{24}\\
h^T_i(t,y) &= 2^{1/3} T^{-1/3} \log \int_\R \exp\Bigl(2^{-1/3} T^{1/3}[ F_T^i(x) + \Hh^T(t,y \viiva s,x)]   \Bigr)\,dx.
\end{align*}

Note that $\{\KH_i^T\}_{1 \le i \le k}$ and $\Hh^T := \{\Hh^T(t,y|s,x): t > s, x,y \in \R\}$ are independent by assumption. 
We observe that $\KH_T^i$ is a Brownian motion with diffusion $\sqrt 2$ and drift $2\lambda_i$; hence its law does not depend on $T$. By \cite[Theorem 1.6]{Wu-23}, $\Hh^T$ converges to $\Ll$ in $C(\Rup,\R)$. By \cite{KPZ_equation_convergence,heat_and_landscape}, for each $i$, $h_i^T := \{h_i^T(t,x;F_i^T):t > s, x \in \R\}$ converges  in distribution on $C(\R_{>s},\R)$ to the KPZ fixed point $h_{\Ll}(t,y;s,G^{\lambda_i}) := \sup_{x \in \R}\{G^{\lambda_i}(x) + \Ll(x,s;y,t)\}$. Hence, this sequence is tight in $C(\R_{>s} \times \R, \R^k)$. All together, the sequence
\be \label{eqn:KPZeqtight}
\bigl(\{\KH_T^i\}_{1 \le i \le k},\Hh^T, \{h^T_i\}_{1 \le i \le k} \bigr)
\ee
is tight on $C(\R,\R^k) \times C(\Rup,\R) \times C(\R_{>s} \times \R,\R)$. Let 
\be \label{eqn:KPZeqlim}
(\{G^{\lambda_i}\}_{1 \le i \le k}, \Ll, \{g_i\}_{1 \le i \le k})
\ee
be a subsequential limit. We may write the first component $\{G^{\lambda_i}\}_{1 \le i \le k}$  and the second as $\Ll$ because we know the SH and DL are, respectively, the law of the limits of the first and second component. by the weak convergence of $h_i^T$, we also know that, marginally, for each $i$, $g_i \deq h_{\Ll}(\aabullet,\aabullet; s, G^{\lambda_i})$.  By Skorokhod representation (\cite[Thm.~11.7.2]{dudl}, \cite[Thm.~3.1.8]{EKbook}), there exists a coupling of \eqref{eqn:KPZeqtight} and \eqref{eqn:KPZeqlim} where, as $T \to \infty$, convergence holds in the sense of uniform convergence on compact sets. 
Now, we follow the procedure of \cite{Wu-23}. We observe that for fixed $t > s$ and $y \in \R$, with probability one,
\begin{align*}
h_{\Ll}(t,y;G^{\lambda_i}) &= \sup_{x \in \R}\{G^{\lambda_i}(x) + \Ll(x,0;y,t)\} \\
&=\lim_{M \to \infty} \sup_{|x| \le M} \{G^{\lambda_i}(x) + \Ll(x,0;y,t)\}  \\
&= \lim_{M \to \infty} \lim_{T \to \infty}2^{1/3} T^{-1/3} \log \int_{-M}^M \exp\Bigl(2^{-1/3} T^{1/3}[ F_T^i(x) + \Hh^T(t,y \viiva s,x)]   \Bigr)\,dx \\
&\le \lim_{M \to \infty} \lim_{T \to \infty} 2^{1/3} T^{-1/3} \log \int_{\R}\exp\Bigl(2^{-1/3} T^{1/3}[ F_T^i(x) + \Hh^T(t,y \viiva s,x)]   \Bigr)\,dx = g_i(t,y). 
\end{align*}
Hence, since we already established $h_{\Ll}(t,y;G^{\lambda_i}) \deq g_i(t,y)$, there exists an event of probability one on which, for $1 \le i \le k$, all $(t,y) \in \Q_{>s} \times \Q$, $h_{\Ll}(t,y;G^{\lambda_i}) \deq g_i(t,y)$. Equality on $\R_{>s} \times \R$ follows on this full probability event by continuity.

We next turn to the convergence of the Busemann process. It suffices to show that, for each $r \in \R$, the following distributional convergence holds, in the sense of uniform convergence on compact sets.
\begin{align*}
    &\Biggl\{2^{1/3}T^{-1/3}\Bigl[b_\beta^{\beta 2^{1/3} T^{-1/3} \lambda_i}\Bigl(\f{Ts}{\beta^4},\f{2^{1/3} T^{2/3} x}{\beta^2}; \f{Tt}{\beta^4}, \f{2^{1/3} T^{2/3} y}{\beta^2}   \Bigr) + \f{T(t - s)}{24}\Bigr]: (x,s;y,t) \in \R_r^4
   \Biggr\}  \\
   &\qquad\qquad\qquad\qquad \overset{T \to \infty}{\Longrightarrow} \{\mathcal B^{\lambda_i}(y,-t;x,-s):(x,s;y,t) \in \R_r^4  \}_{1 \le i \le k},
    \end{align*} 
    where we use the shorthand notation $\R_r^4 = \R \times \R_{> r} \times \R \times \R_{> r}$.
By the dynamic programming principle and additivity of the Busemann process (Theorem \ref{thm:SHEbuse}\ref{itm:SHEbuseadd},\ref{itm:SHEBuseevolve}) as well as the relation between Busemann process and the KPZH (Corollary \ref{cor:SHEBuse=KPZH}), for $s,t > r$,
\be \label{KPZstep}
\begin{aligned}
\bb^{(\beta \lambda)+}(s,x,t,y) &= \bb^{(\beta \lambda) +}(r ,0,t,y) - \bb^{(\beta \lambda)+}(r ,0,s,x) \\
&= \log \int_\R e^{\bb^{(\beta \lambda)+}(r ,0,r ,z)}Z_\beta(t,y\viiva r ,z)\,dz - \log \int_\R e^{\bb^{(\beta \lambda)+}(r ,0,r ,z)}Z_\beta(s,x\viiva r ,z)\,dz \\
&\deq \beta \kpzb(t,y \viiva r , F_{\beta}^{\lambda}) - \beta \kpzb(s,x;\viiva r , F_{\beta}^{\lambda}),
\end{aligned}
\ee
where the distributional equality holds as processes in $\lambda \times (x,s;y,t) \in \R \times \R_{r}^4$. 

Similarly, by the additivity and evolution of the Busemann process from Theorem \ref{thm:DL_Buse_summ}, along with the distributional equalities  $\Ll(x,s;y,t) \deq \Ll(y,-t;x,-s)$ (Lemma \ref{lem:DLsymm}) and the distributional equality between Busemann functions and the SH (Theorem \ref{thm:DL_Buse_summ}\ref{itm:SH_Buse_process}), for $s,t > r$,
\be \label{DLstep}
\begin{aligned}
\mathcal B^{\lambda +}(y,-t;x,-s) &= \mathcal B^{\lambda +}(y,-t;0,-r) - \mathcal B^{\lambda +}(x,-s;0,-r) \\
&=\sup_{z \in \R}\{\Ll(y,-t;z,-r) + \mathcal B^{\lambda +}(z,-r ;0,-r)\} \\
&\qquad\qquad -  \sup_{z \in \R}\{\Ll(x,-s;z,-r) + \mathcal B^{\lambda +}(z,-r;0,-r )\} \\
&\deq \sup_{z \in \R}\{G^{\lambda}(z) + \Ll(z,r;y,t)\} - \sup_{z \in \R}\{G^{\lambda}(z) + \Ll(z,r;x,s)\} \\
&= h_{\Ll}(t,y \viiva r,G^{\lambda}) - h_{\Ll}(s,x \viiva r ,G^{\lambda}),
\end{aligned}
\ee
where, again, the distributional equality holds as processes in $\lambda \times (x,s;y,t) \in \R \times \R_{r}^4$. Here, we have also used the independence of the Busemann process at time $-r$ and the DL for times less than $-r$ (Theorem \ref{thm:DL_Buse_summ}\ref{itm:indep_of_landscape}).
Comparing \eqref{KPZstep} to \eqref{DLstep} and using the first part of the theorem in the $\alpha = 0$ case, we get
\begin{align*}
&\quad \; \Biggl\{2^{1/3}T^{-1/3}\Bigl[b_\beta^{\beta 2^{1/3} T^{-1/3} \lambda_i}\Bigl(\f{Ts}{\beta^4},\f{2^{1/3} T^{2/3} x}{\beta^2}, \f{Tt}{\beta^4}, \f{2^{1/3} T^{2/3} y}{\beta^2}   \Bigr) + \f{T(t - s)}{24}\Bigr]: (x,s;y,t) \in \R_r^4 
   \Biggr\}_{1 \le i \le k} \\
   &\deq  \Biggl\{2^{1/3}T^{-1/3}\Bigl[h_{Z_{\beta}}\Bigl(\f{Tt}{\beta^4}, \f{2^{1/3} T^{2/3 x}}{\beta^2}  \viiva \f{T r}{\beta^4},\beta F_{\beta}^{2^{1/3} T^{-1/3} \lambda_i} \Bigr) + \f{T(t - r)}{24} - \f{2}{3} \log(\sqrt 2 T)  \\
   &\qquad -h_{Z_{\beta}}\Bigl(\f{Ts}{\beta^4}, \f{2^{1/3} T^{2/3 y}}{\beta^2}  \viiva \f{T r}{\beta^4},\beta F_{\beta}^{2^{1/3} T^{-1/3} \lambda_i} \Bigr) - \f{T(s - r)}{24} +  \f{2}{3} \log(\sqrt 2 T)     \Bigr]:  (x,s;y,t) \in \R_r^4 \Biggr\}_{1 \le i \le k} \\
   &\overset{T \to \infty}{\Longrightarrow} \Bigl\{h_{\Ll}(t,y \viiva r, G^{\lambda_i}) - h_{\Ll}(s,x \viiva r,G^{\lambda_i}): (x,s;y,t) \in \R_r^4 
   \Biggr\}_{1 \le i \le k} \\
   &\deq \{\mathcal B^{\lambda_i}(y,-t;x,-s):(x,s;y,t) \in \R_r^4  \}_{1 \le i \le k}. \qedhere 
\end{align*}
\end{proof}

\appendix
\section{Queues and the O'Connell-Yor polymer} \label{appx:queue}
Recall the transformations $\Qp_\beta,\Dp_\beta,\Rp_\beta$ defined in \eqref{QPDpRp}.
We state one of the main theorems from \cite{brownian_queues}. Their  theorem is stated for $\beta= 1$. The statement for general $\beta > 0$ follows from Lemma \ref{lem:pQscaling}  since 
\[
D_\beta(B,Y) = T_{\beta} D_1(T^2_{\beta^{-1}}(B,Y)),\qquad\text{and}\qquad R_\beta(B,Y) = T_\beta R_1(T^2_{\beta^{-1}}(B,Y)). 
\]
\begin{theorem} \cite[Theorem 5]{brownian_queues}, \cite[Theorem 2.1]{Matsumoto-Yor-2001b} \label{thm:ocy}
Let $B$ and $Y$ be independent two-sided Brownian motions with drift so that the drift of $Y$ 
is strictly larger than the drift of $B$. Let $\beta > 0$, and  let $Q_\beta,D_\beta,R_\beta$ be defined as in \eqref{QPDpRp}. Then, $(R_\beta(B,Y),D_\beta(B,Y)) \deq (B,Y)$, and for each $y \in \R$, $\{D_\beta(Y,B)(x),R_\beta(Y,B)(x): -\infty < x \le y \}$ is independent of $\{Q_\beta(x):x \ge y\}$.
\end{theorem}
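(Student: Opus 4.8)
The plan is to recognize Theorem~\ref{thm:ocy} as the Brownian (Burke‑type) output theorem and to prove it via the reversibility of the queue‑length diffusion. First I would use the scaling identities of Lemma~\ref{lem:pQscaling} to reduce to $\beta=1$, writing $Q=Q_1(B,Y)$, $D=D_1(B,Y)$, $R=R_1(B,Y)$, and then change variables from $(B,Y)$ to the independent pair $G=B-Y$ and $H=B+Y$, where $G$ is a two‑sided Brownian motion with variance parameter $2$ and drift $-\mu$, $\mu>0$ being the gap between the drifts of $Y$ and $B$. Since, directly from \eqref{QPDpRp}, $R+D=H$ and $R-D=G-2\big(Q(\aabullet)-Q(0)\big)$ with $Q$ a functional of $G$ alone, the identity $(R,D)\deq(B,Y)$ reduces to the single claim $R-D\deq G$; and the independence assertion (which by translation invariance I may state at $y=0$) reduces, via the product structure $\sigma\{(D,R)(x):x\le0\}=\sigma\{(D+R)(x):x\le0\}\vee\sigma\{(D-R)(x):x\le0\}$ with the first factor $\sigma(H)$‑measurable and the second $\sigma(G)$‑measurable, to showing $\sigma\{(D-R)(x):x\le0\}\perp\sigma\{Q(x):x\ge0\}$.

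The heart of the argument is that $Q$ is a reversible diffusion. Writing $e^{Q(y)}=e^{G(y)}\int_{-\infty}^y e^{-G(x)}\,dx$ and differentiating gives the SDE $dQ(y)=dG(y)+e^{-Q(y)}\,dy$, so $Q$ is the stationary ergodic solution of a one‑dimensional diffusion with generator $\mathcal Lf=f''+(e^{-q}-\mu)f'$; its speed measure $\pi(dq)\propto e^{-\mu q-e^{-q}}\,dq$ is a genuine probability measure precisely because $\mu>0$ (equivalently $e^{-Q(0)}\sim\mathrm{Gamma}(\mu,1)$), and $\mathcal L$ is self‑adjoint in $L^2(\pi)$. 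I would use this to conclude that $Q$, being the $\pi$‑stationary Markov diffusion, is time‑reversible: $\widehat Q(\aabullet):=Q(-\aabullet)$ is again a stationary diffusion with generator $\mathcal L$, and hence its innovation process $\widehat G(y):=\big(\widehat Q(y)-\widehat Q(0)\big)-\int_0^y e^{-\widehat Q(s)}\,ds$ is a two‑sided Brownian motion with variance parameter $2$ and drift $-\mu$.

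With this in hand, integrating $dG=dQ-e^{-Q}\,dy$ yields $G(y)=\big(Q(y)-Q(0)\big)-\int_0^y e^{-Q(s)}\,ds$ (using $G(0)=0$), so that $R(y)-D(y)=-\big(Q(y)-Q(0)\big)-\int_0^y e^{-Q(s)}\,ds$, which a substitution $s\mapsto -s$ identifies with $-\widehat G(-y)$. Reflecting a two‑sided Brownian motion in space, and reflecting it in time, each reverse the sign of the drift, so their composition preserves the law; this gives $R-D=-\widehat G(-\aabullet)\deq\widehat G\deq G$, which is the first assertion. For the second, the same computation gives $(D-R)(x)=\widehat G(-x)$, so $\sigma\{(D-R)(x):x\le0\}=\sigma\{\widehat G(z):z\ge0\}$, while the stationary‑solution formula $e^{\widehat Q(x)}=e^{\widehat G(x)}\int_{-\infty}^x e^{-\widehat G(u)}\,du$ shows $\sigma\{Q(x):x\ge0\}=\sigma\{\widehat Q(x):x\le0\}\subseteq\sigma\{\widehat G(u):u\le0\}$; since the increments of the two‑sided Brownian motion $\widehat G$ over $[0,\infty)$ and $(-\infty,0]$ are independent, these two $\sigma$‑algebras are independent.

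The main obstacle will be the time‑reversal input itself: making rigorous, on the whole line, that $\widehat Q=Q(-\aabullet)$ is a stationary diffusion with the \emph{same} generator and that its innovation is an honest two‑sided Brownian motion with the stated parameters. On a bounded window this is the classical time‑reversal theorem for diffusions; globally it requires identifying the process as the $\pi$‑stationary Markov diffusion together with a localization/ergodicity argument, for which the hypothesis $\mu>0$ (ensuring $\pi$ is a probability measure and good behaviour of $Q$ at $\pm\infty$, cf.\ \eqref{Q400}) is essential. A self‑contained alternative, which is essentially the route of \cite{brownian_queues}, is to approximate $(B,Y)$ by random‑walk queues for which both the Burke‑type output identity and quasi‑reversibility are elementary, and then pass to the diffusive limit.
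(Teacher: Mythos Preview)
The paper does not prove Theorem~\ref{thm:ocy} at all: it simply cites \cite[Theorem~5]{brownian_queues} for $\beta=1$ and invokes the scaling identity of Lemma~\ref{lem:pQscaling} (via $D_\beta=T_\beta D_1 T_{\beta^{-1}}^2$ and $R_\beta=T_\beta R_1 T_{\beta^{-1}}^2$) to transfer the statement to general $\beta>0$. Your proposal therefore goes strictly beyond what the paper does; you are sketching the proof of the cited result itself.

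Your sketch is correct. The reduction to $\beta=1$ matches the paper's reduction. The decomposition into $H=B+Y$ and $G=B-Y$, the identities $R+D=H$ and $R-D=G-2(Q-Q(0))$, the SDE $dQ=dG+e^{-Q}\,dy$, the Gamma($\mu,1$) invariant law of $e^{-Q(0)}$, and the identification $R-D=-\widehat G(-\,\aabullet)$ are all right; independence of $R+D$ and $R-D$ (needed to reconstruct the joint law of $(R,D)$ from the marginals of the sum and difference) follows because $R+D=H$ while $R-D$ is $\sigma(G)$-measurable. The independence assertion is likewise handled correctly by splitting into the $H$- and $G$-parts. The one genuine gap you flag---that the time-reversed process $\widehat Q=Q(-\,\aabullet)$ is again the $\pi$-stationary diffusion with generator $\mathcal L$, so that its innovation $\widehat G$ is a bona fide two-sided Brownian motion of variance~$2$ and drift~$-\mu$, \emph{and} that $\widehat Q$ on $(-\infty,0]$ is measurable with respect to $\widehat G$ on $(-\infty,0]$---is exactly the substantive step. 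For a self-contained treatment one can either invoke the time-reversal theorem for one-dimensional ergodic diffusions (self-adjointness of $\mathcal L$ in $L^2(\pi)$ plus uniqueness of the stationary solution, which gives the adapted representation $e^{\widehat Q(x)}=e^{\widehat G(x)}\int_{-\infty}^x e^{-\widehat G(u)}\,du$), or take the discrete-queue limit as in \cite{brownian_queues}; both routes are standard and your proposal identifies them accurately.
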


\begin{lemma} \label{lem:int_Gamma}
Let $B$ be a standard two-sided Brownian motion, and let $\beta,\lambda > 0$. Then,
 \[
 \Bigl(\int_{-\infty}^0 e^{\sqrt 2 \beta B(x) + \lambda \beta x}\Bigr)^{-1} 
 \;\sim\; \text{\rm Gamma}(\lambda \beta^{-1},\beta^{-2}). \qedhere
 \]
\end{lemma}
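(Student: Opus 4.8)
The plan is to reduce the integral to a canonical exponential functional of Brownian motion and then invoke Dufresne's identity from \cite{Dufresne-1990}.

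First I would substitute $x=-t$ and use that $\{B(-t):t\ge 0\}$ is a standard one-sided Brownian motion, so that
\[
\int_{-\infty}^0 e^{\sqrt 2\,\beta B(x) + \lambda\beta x}\,dx \;\deq\; \int_0^\infty e^{\sqrt 2\,\beta \widetilde B(t) - \lambda\beta t}\,dt
\]
for a standard Brownian motion $\widetilde B$. Next I would apply Brownian scaling to normalize the diffusion coefficient: the process $t\mapsto \sqrt 2\,\beta^{-1} W(\beta^2 t/2)$ is a standard Brownian motion for $W$ standard, so $\sqrt 2\,\beta\,\widetilde B(t) \deq 2\,W(\beta^2 t/2)$ as processes. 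Changing variables $s=\beta^2 t/2$ then turns the integral into
\[
\frac{2}{\beta^2}\int_0^\infty \exp\!\Bigl(2\bigl(W(s) - \tfrac{\lambda}{\beta}\,s\bigr)\Bigr)\,ds.
\]

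Then I would invoke Dufresne's identity: for $\mu>0$, $\int_0^\infty e^{2(W(s)-\mu s)}\,ds \deq (2\gamma_\mu)^{-1}$, where $\gamma_\mu$ has the Gamma distribution with shape $\mu$ and rate $1$. Taking $\mu=\lambda/\beta$, this shows the original integral equals $(\beta^2\gamma_{\lambda/\beta})^{-1}$ in law, so its reciprocal equals $\beta^2\gamma_{\lambda/\beta}$ in law. Finally, since multiplying a $\mathrm{Gamma}(\alpha,1)$ variable by $c>0$ gives a $\mathrm{Gamma}(\alpha,1/c)$ variable, we conclude $\beta^2\gamma_{\lambda/\beta}\sim\mathrm{Gamma}(\lambda\beta^{-1},\beta^{-2})$, which is the claim.

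The only delicate points are bookkeeping: matching the shape-versus-rate parametrization of the Gamma distribution used in the paper's notation section, and quoting Dufresne's identity in the precise normalization needed (it appears in the literature in several equivalent but differently scaled forms, so one should either pin down the exact statement or verify the normalization via a low moment such as $\mathbb E[\int_0^\infty e^{2(W_s-\mu s)}\,ds]=1/(2(\mu-1))$ for $\mu>1$). Everything else is a routine change of variables together with Brownian scaling.
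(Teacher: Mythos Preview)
Your proposal is correct and follows essentially the same route as the paper's proof: substitute $x\mapsto -x$ to convert to an integral over $[0,\infty)$ of an exponential of drifted Brownian motion, then invoke Dufresne's identity. The only cosmetic difference is that the paper quotes Dufresne's result in its general-parameter form and plugs in $\sigma=\sqrt2\,\beta$, $\gamma=\lambda\beta$ directly, whereas you first Brownian-scale to the specific normalization $e^{2(W_s-\mu s)}$ before applying it.
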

\begin{proof}
Theorem 4.4 in \cite{Dufresne-1990} (also Equation 1.8.4(1) on page 612 4 of \cite{BM_handbook}) states that for $\gamma,\sigma > 0$, 
\[
\Bigl(\int_0^\infty e^{-\sigma B(x) - \gamma x }\,dx\bigr)^{-1} \sim \text{ \rm Gamma}(2\gamma \sigma^{-2},2 \sigma^{-2})
\]
(as a caution, we note that \cite{Dufresne-1990} parameterizes the Gamma distribution by shape and scale, so the result is stated there with scale $\sigma^2/2$). Observe that 
\[
\int_{-\infty}^0 e^{\sqrt 2 \beta B(x) + \lambda \beta x} = \int_0^\infty e^{\sqrt 2 \beta B(-x) - \lambda \beta x} \deq \int_0^\infty e^{-\sqrt 2 \beta B(x) - \lambda \beta x},
\]
and so the result follows upon substituting $\sigma = \sqrt 2 \beta$ and $\gamma = \lambda \beta$. 
\end{proof}

The remainder of this appendix contains the omitted proofs from Section \ref{sec:consistent_invar}, along with some additional lemmas. These follow similarly as for zero temperature in \cite{CGM_Joint_Buse} and \cite{Seppalainen-Sorensen-21b}, modulo the necessary inputs from Section \ref{sec:queue}. We first prove an intermediate lemma.

\begin{lemma} \label{lem:extint}
Let $n \ge 2$, and let $(B^1,Y^1,\ldots,Y^n)$ be such that the following operations are well-defined. Let $\beta > 0$. For $2 \le j \le n$, define $B^j = R_\beta(B^{j - 1},Y^{j - 1})$. Then, for $1 \le k \le n - 1$, 
\be
D_\beta^{(n +1)}(B^1,Y^1,\ldots,Y^n) = D_\beta^{(k + 1)}\bigl(D_\beta(B^1,Y^1),\ldots,D_\beta(B^k,Y^k), D_\beta^{(n - k + 1)}(B^{k+1},Y^{k+1},\ldots,Y^n)\bigr).
\ee
\end{lemma}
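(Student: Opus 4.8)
\textbf{Proof proposal for Lemma \ref{lem:extint}.}

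The plan is to induct on $k$, using Lemma \ref{lem:intertwine} as the single-step building block. For the base case $k=1$, the claimed identity reads
\[
D_\beta^{(n+1)}(B^1,Y^1,\ldots,Y^n) = D_\beta^{(2)}\bigl(D_\beta(B^1,Y^1),\, D_\beta^{(n)}(B^2,Y^2,\ldots,Y^n)\bigr),
\]
where $B^2=R_\beta(B^1,Y^1)$. Writing out the right-hand side via the definition $D_\beta^{(2)}(f,g)=D_\beta(f,g)$ and then $D_\beta^{(n)}(B^2,Y^2,\ldots,Y^n)=D_\beta(B^2,D_\beta^{(n-1)}(Y^2,\ldots,Y^n))$, one recognizes that this is precisely the $n$-variable analogue of Lemma \ref{lem:intertwine}, which treats the case $n=2$ (i.e.\ the identity $D_\beta(D_\beta(B^1,Y^1),D_\beta(B^2,Y^2))=D_\beta^{(3)}(B^1,Y^1,Y^2)$ with $B^2=R_\beta(B^1,Y^1)$). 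So I first need to establish the general-$n$ version of Lemma \ref{lem:intertwine}:
\[
D_\beta\bigl(D_\beta(B^1,Y^1),\, D_\beta^{(n)}(R_\beta(B^1,Y^1),Y^2,\ldots,Y^n)\bigr) = D_\beta^{(n+1)}(B^1,Y^1,Y^2,\ldots,Y^n).
\]
This I would prove by a secondary induction on $n$: the case $n=2$ is Lemma \ref{lem:intertwine}; for the inductive step, apply the definition of $D_\beta^{(n)}$ to peel off $Y^n$, use the induction hypothesis of this secondary induction on the $(n-1)$-variable expression in $Y^2,\ldots,Y^{n-1}$ applied with boundary $R_\beta(B^1,Y^1)$, and reassemble. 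The bookkeeping is the routine part; the conceptual content is entirely contained in Lemma \ref{lem:intertwine}.

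For the inductive step in $k$, suppose the identity holds for $k$ (and all admissible $n$). I would apply the induction hypothesis to rewrite the inner block $D_\beta^{(n-k+1)}(B^{k+1},Y^{k+1},\ldots,Y^n)$ one level deeper: by the base-case identity (the general-$n$ Lemma \ref{lem:intertwine} above), with boundary $B^{k+1}$ and $R_\beta(B^{k+1},Y^{k+1})=B^{k+2}$,
\[
D_\beta^{(n-k+1)}(B^{k+1},Y^{k+1},\ldots,Y^n) = D_\beta^{(2)}\bigl(D_\beta(B^{k+1},Y^{k+1}),\, D_\beta^{(n-k)}(B^{k+2},Y^{k+2},\ldots,Y^n)\bigr).
\]
Substituting this into the position-$(k+1)$ slot of $D_\beta^{(k+1)}(\,\cdot\,)$ and invoking the nesting identity $D_\beta^{(k+1)}(f_1,\ldots,f_k,D_\beta^{(2)}(g,h))=D_\beta^{(k+2)}(f_1,\ldots,f_k,g,h)$, which follows directly by unwinding the recursive definition \eqref{hat D iterated} of $D_\beta^{(n)}$, yields exactly the $(k+1)$-statement. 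The consistency of this substitution — that the $B^j$ defined by the recursion $B^j=R_\beta(B^{j-1},Y^{j-1})$ match up across the two applications — is immediate from the definition, since the same recursion is used throughout.

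The main obstacle, and really the only one, is proving the general-$n$ extension of Lemma \ref{lem:intertwine}. Lemma \ref{lem:intertwine} itself was proved in the excerpt by a somewhat involved explicit integration-by-parts computation in the $\beta=1$ case (after reducing to $\beta=1$ via the scaling relations of Lemma \ref{lem:pQscaling}), and it is worth checking whether the $n$-variable version can be bootstrapped purely formally from the $n=2$ case by induction — as sketched above — rather than redoing the integral computation. I expect the formal induction works: the key point is that $D_\beta^{(n)}(B^{k+1},Y^{k+1},\ldots,Y^n)$ has the same recursive structure regardless of the boundary function in the first slot, so Lemma \ref{lem:intertwine} applies verbatim at each stage with the appropriate $R_\beta$-image as the new boundary. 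Everything else is unwinding definitions; the drift/integrability hypotheses needed to make all the transformations well-defined are inherited from the blanket assumption in the statement that "the following operations are well-defined," together with Lemmas \ref{lem:OCYlim} and \ref{sDppres} which guarantee the drift conditions \eqref{Q400} propagate correctly through the $R_\beta$ recursion.
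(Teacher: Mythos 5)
Your overall scheme — induct on $k$, with the $k=1$ case for general $n$ as the base — is a valid reorganization of the argument, and it genuinely differs from the paper, which inducts on $n$: the paper peels the leading argument of the outer $\Dp_\beta^{(k+1)}$ (giving $\Dp_\beta\bigl(\Dp_\beta(B^1,Y^1),\,\Dp_\beta^{(k)}(\ldots)\bigr)$), invokes its inductive hypothesis at $(n-1,k-1)$ on the shifted tuple $(B^2,Y^2,\ldots,Y^n)$, and thereby reduces every $k\ge 2$ case to the $k=1$ case of $n$. Your route instead builds up in $k$ by repeatedly invoking the $k=1$ case on suffixes, together with the right-nesting identity $\Dp_\beta^{(k+1)}(f_1,\ldots,f_k,\Dp_\beta^{(2)}(g,h))=\Dp_\beta^{(k+2)}(f_1,\ldots,f_k,g,h)$ — which, as you say, is a trivial unwinding of \eqref{hat D iterated} and is sound. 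Both decompositions ultimately trace back to \eqref{fullid}; the choice of which index to induct on is a matter of taste.

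The one place where your sketch wobbles is the base case $k=1$ for general $n$. You propose a secondary induction on $n$ in which you "peel off $Y^n$," but the recursion \eqref{hat D iterated} is left-nested — it peels from the first slot, not the last — so as written the inductive step doesn't parse. More to the point, no secondary induction is needed at all: the $k=1$ identity is a one-shot consequence of \eqref{fullid} with the second argument $Y^2$ replaced by a composite. Concretely, write
\[
\Dp_\beta^{(2)}\bigl(\Dp_\beta(B^1,Y^1),\,\Dp_\beta^{(n)}(B^2,Y^2,\ldots,Y^n)\bigr)
= \Dp_\beta\bigl(\Dp_\beta(B^1,Y^1),\,\Dp_\beta(B^2,\Dp_\beta^{(n-1)}(Y^2,\ldots,Y^n))\bigr),
\]
then apply \eqref{fullid} with the role of $Y^2$ there played by $\Dp_\beta^{(n-1)}(Y^2,\ldots,Y^n)$ (valid since Lemma \ref{lem:intertwine} holds for arbitrary admissible functions, and $B^2=\Rp_\beta(B^1,Y^1)$), which gives $\Dp_\beta\bigl(B^1,\Dp_\beta(Y^1,\Dp_\beta^{(n-1)}(Y^2,\ldots,Y^n))\bigr)=\Dp_\beta(B^1,\Dp_\beta^{(n)}(Y^1,\ldots,Y^n))=\Dp_\beta^{(n+1)}(B^1,Y^1,\ldots,Y^n)$. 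This is in fact exactly how the paper handles $k=1$ inside its $n$-induction (notably without ever invoking that induction's hypothesis), so once you replace your secondary-induction sketch with this direct computation, your $k$-induction is clean and complete.
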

We note that the case $k= n - 1$ of Lemma \ref{lem:extint} gives us
\begin{equation} \label{intertwining}
    \Dp_\beta^{(n + 1)}(B^1,Y^1,\ldots,Y^n) = \Dp_\beta^{(n)}(\Dp_\beta(B^1,Y^1),\ldots,\Dp_\beta(B^n,Y^n)).
\end{equation}
\begin{proof}[Proof of Lemma \ref{lem:extint}]
Equation \eqref{fullid} gives us the statement for $n = 2$. Assume, by induction, that the statement is true for some $n - 1 \ge 2$. We will show the statement is also true for $n$. We first prove the case $k = 1$. Using \eqref{fullid} in the second equality below,
\begin{align*}
    &\;\;\;\Dp_\beta^{(2)}(\Dp_\beta(B^1,Y^1),\Dp_\beta^{(n)}(B^2,Y^2,\ldots,Y^n)) \\
    &= \Dp_\beta(\Dp_\beta(B^1,Y^1),\Dp_\beta(B^2,\Dp_\beta^{(n -1)}(Y^2,\ldots,Y^n))) \\
    &= \Dp_\beta(B^1,\Dp_\beta(Y^1,\Dp_\beta^{(n - 1)}(Y^2,\ldots,Y^n))) \\
    &= \Dp_\beta(B^1,\Dp_\beta^{(n)}(Y^1,\ldots,Y^n)) \\
    &= \Dp_\beta^{(n + 1)}(B^1,Y^1,\ldots,Y^n).
\end{align*}
Now, let $2 \le k \le n - 1$. Then, by definition of $D^{(k + 1)}$ and the induction assumption,
\begin{align*}
    &\Dp_\beta^{(k + 1)}(\Dp_\beta(B^1,Y^1),\ldots,\Dp_\beta(B^k,Y^k), \Dp_\beta^{(n - k + 1)}(B^{k + 1},Y^{k+1},\ldots,Y^{n})) \\
    = &\Dp_\beta(\Dp_\beta(B^1,Y^1),\Dp_\beta^{(k)}(\Dp_\beta(B^2,Y^2),\ldots,\Dp_\beta(B^k,Y^k),\Dp_\beta^{(n - k + 1)}(B^{k+1},Y^{k+1},,\ldots,Y^{n}))) \\
    = &\Dp_\beta(\Dp_\beta(B^1,Y^1),\Dp_\beta^{(n)}(B^2,Y^2,\ldots,Y^n)) = \Dp_\beta^{(2)}(\Dp_\beta(B^1,Y^1),\Dp_\beta^{(n)}(B^2,Y^2,\ldots,Y^n)).
\end{align*}
The lemma now follows from the $k = 1$ case.
\end{proof}

The \textit{multiline process} is a discrete-time Markov chain on the state space $\Y_n^{(a,\infty)}$ of \eqref{Yndef}, where $a \in \R$. The analogous process is defined in a discrete setting for particle systems in \cite{Ferrari-Martin-2007,Ferrari-Martin-2005,Ferrari-Martin-2009}, for  lattice last-passage percolation in \cite{CGM_Joint_Buse}, and in zero temperature BLPP in \cite{Seppalainen-Sorensen-21b}. 
Starting at time $m-1$ in state $\mathbf Y_{m - 1} =\mathbf Y =  (Y^1,Y^2,\ldots, Y^n) \in \Y_n^{(a,\infty)}$the time $m$ state is given as 
\begin{equation*} 
\mathbf Y_{m} = \overline{\mathbf Y} = (\overline Y^1,\overline Y^2,\ldots, \overline Y^n) \in \Y_n
\end{equation*}
is defined as follows. Let $B \in \CRpin$ satisfy
\[
\lim_{x \rightarrow -\infty} x^{-1}B(x) = a. 
\]
First, set $B^1 = B$,  and $\overline Y^1 = \Dp_\beta(Y^1,B^1)$. 
    Then, iteratively for $i = 2,3,\ldots,n$:
    \be \label{multiline process}
    B^i = \Rp_\beta(B^{i - 1},Y^{i - 1}), \qquad\text{and}\qquad
    \overline Y^i = \Dp_\beta(B^i,Y^i).
    \ee

\begin{lemma} \label{multiline process well-defined}
The mapping \eqref{multiline process} is well-defined on the state space $\Y_n^{(a,\infty)}$.
\end{lemma}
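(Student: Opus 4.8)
The claim is that the multiline process map \eqref{multiline process} carries $\Y_n^{(a,\infty)}$ into itself. I would argue by induction on the index $i$ of the iteration, tracking two things simultaneously: that each intermediate function $B^i$ and each output $\overline Y^i$ lies in $\CRpin$ (i.e.\ is continuous and vanishes at $0$), and that the asymptotic linear growth rates needed for the transformations $\Dp_\beta$ and $\Rp_\beta$ to be defined are in place at each step. The key technical input is Lemma \ref{lem:OCYlim}: if the two input functions of $\Rp_\beta$ or $\Dp_\beta$ have distinct left-end slopes $a<b$, then $\Rp_\beta$ inherits the smaller slope $a$ and $\Dp_\beta$ inherits the larger slope $b$. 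Note condition \eqref{Q400}, $\limsup_{x\to-\infty}(Y(x)-B(x))/x>0$, is exactly what guarantees the integrals in \eqref{QPDpRp} converge and is implied by having a strictly smaller slope for the ``$B$'' argument; this is where the hypothesis $\mbf Y\in\Y_n^{(a,\infty)}$ and $\lim_{x\to-\infty}x^{-1}B(x)=a$ enter.

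First I would record the left-end slope bookkeeping. Write $b_i=\lim_{x\to-\infty}Y^i(x)/x$; by definition of $\Y_n^{(a,\infty)}$ we have $a<b_1<b_2<\dots<b_n$. At the first step, $B^1=B$ has slope $a$, and since $a<b_1$, condition \eqref{Q400} holds for the pair $(B^1,Y^1)$ in the order needed, so $\overline Y^1=\Dp_\beta(Y^1,B^1)$ is well-defined; here one must be careful that the arguments of $\Dp_\beta$ are in the order $\Dp_\beta(B,Y)$ with $B$ the smaller-slope function — since $a<b_1$, the pair is $(B^1,Y^1)$ and Lemma \ref{lem:OCYlim} gives $\overline Y^1$ slope $b_1$. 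Inductively, suppose $B^{i-1}$ has slope $b_{i-2}$ (with the convention $b_0=a$) and $Y^{i-1}$ has slope $b_{i-1}$, with $b_{i-2}<b_{i-1}$. Then $\Rp_\beta(B^{i-1},Y^{i-1})$ is well-defined (again \eqref{Q400} holds because the first argument has the strictly smaller slope), and by Lemma \ref{lem:OCYlim}, $B^i$ has slope $b_{i-2}$; wait — more precisely one checks that $\Rp_\beta$ returns the \emph{smaller} of the two input slopes, so $B^i$ has slope $b_{i-2}$. Hmm: to make the iteration close I actually want $B^i$ to have a slope strictly below $b_i$. Since $b_{i-2}<b_{i-1}<b_i$ this is automatic, so the pair $(B^i,Y^i)$ again satisfies \eqref{Q400}, $\Dp_\beta(B^i,Y^i)$ is well-defined, and $\overline Y^i$ has slope $b_i$ by Lemma \ref{lem:OCYlim}. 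Thus all $n$ steps are legitimate and $\overline{\mbf Y}=(\overline Y^1,\dots,\overline Y^n)$ has slopes $b_1<\dots<b_n$, all lying in $(a,\infty)$.

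It remains to verify the $\CRpin$ membership, which is the easy bookkeeping: each of $\Qp_\beta,\Dp_\beta,\Rp_\beta$ in \eqref{QPDpRp} is built from continuous functions by integration and pointwise operations, hence continuous, and the definitions of $\Dp_\beta$ and $\Rp_\beta$ explicitly subtract off the value at $0$, so the outputs vanish at $0$; $B\in\CRpin$ is given. Therefore $B^i\in\CRpin$ and $\overline Y^i\in\CRpin$ for all $i$ by induction, so $\overline{\mbf Y}\in\CRpin^n$, and combined with the slope statement, $\overline{\mbf Y}\in\Y_n^{(a,\infty)}$.

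\textbf{Main obstacle.} The only genuine subtlety — and the thing I would be most careful about — is keeping straight the argument order of $\Dp_\beta(\cdot,\cdot)$ versus $\Rp_\beta(\cdot,\cdot)$ at each iteration and confirming that the strict slope inequalities $b_0=a<b_1<\dots<b_n$ propagate correctly so that condition \eqref{Q400} is never violated. Everything else is a routine application of Lemma \ref{lem:OCYlim} and the closure of $\CRpin$ under the defining operations. One should also note explicitly that because the inequalities are strict and $\Rp_\beta$ strictly decreases the relevant slope gap by at most passing it down the chain, the integrals defining $\Qp_\beta$ at every step converge, so there are no hidden integrability failures.
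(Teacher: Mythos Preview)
Your approach is the same as the paper's: use Lemma \ref{lem:OCYlim} inductively to propagate the left-end slopes through the iteration and thereby verify condition \eqref{Q400} at each step. However, there is a concrete bookkeeping error in your induction. You hypothesize that $B^{i-1}$ has slope $b_{i-2}$, and then deduce from Lemma \ref{lem:OCYlim} that $B^i=\Rp_\beta(B^{i-1},Y^{i-1})$ inherits this same slope $b_{i-2}$ (since $\Rp_\beta$ returns the slope of its first argument). But then the induction does not close: at the next step your hypothesis would demand that $B^i$ have slope $b_{i-1}$, not $b_{i-2}$. You notice this (``Hmm: to make the iteration close\dots'') but do not actually repair the hypothesis; pivoting to ``slope strictly below $b_i$'' is fine in principle, but then that weaker statement must be the inductive hypothesis from the start, and you would still need to track \emph{some} specific slope for $B^i$ to apply Lemma \ref{lem:OCYlim} at the next $\Rp_\beta$ step.

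The fix is the one the paper uses: the correct inductive statement is simply that every $B^i$ has slope exactly $a$. Indeed $B^1=B$ has slope $a$ by hypothesis; and if $B^{i-1}$ has slope $a$, then since $a<b_{i-1}$ the pair $(B^{i-1},Y^{i-1})$ lies in $\Y_2^\R$ with ordered slopes, so Lemma \ref{lem:OCYlim} gives $B^i=\Rp_\beta(B^{i-1},Y^{i-1})$ slope $a$ as well. Now $a<b_i$ for all $i$, so \eqref{Q400} holds at every step and both $\overline Y^i=\Dp_\beta(B^i,Y^i)$ and $B^{i+1}$ are well-defined; moreover $\overline Y^i$ has slope $b_i$, as you correctly argue. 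Your additional verification that the outputs lie in $\CRpin$ and that $\overline{\mathbf Y}\in\Y_n^{(a,\infty)}$ is correct and goes slightly beyond what the paper's terse proof records.
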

\begin{proof}
This follows from Lemma \ref{lem:OCYlim}: By induction, each $B^i$ satisfies
\[
\lim_{x \rightarrow -\infty}\frac{B^i(x)}{x} = a.
\]
Therefore, since $\mathbf Y \in \Y_n^{(a,\infty)}$, for $1 \le i \le n$,
\[
\limsup_{x \rightarrow \infty}\f{Y^i(x) - B^i(x)}{x} > 0. \qedhere
\]
\end{proof}

\begin{theorem} \label{multiline invariant distribution}
At each step of the evolution of the multiline process, take the driving function $B$  to be an independent standard, two-sided Brownian motion with drift $a \in \R$. For each $\bar \lambda = (\lambda_1,\ldots,\lambda_n) \in \R^n_{> 0}$ with $a < \lambda_1 < \cdots < \lambda_n$, the measure $\nu^{\bar \lambda}$ on $\Y_n^{(a,\infty)}$ is invariant for the multiline process \eqref{multiline process} 
\end{theorem}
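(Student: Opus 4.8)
The plan is to prove invariance of $\nu^{\bar\lambda}$ for the multiline process by the standard intertwining argument from \cite{Ferrari-Martin-2007,CGM_Joint_Buse,Seppalainen-Sorensen-21b}, adapted here to the positive-temperature semi-discrete setting using the queuing results recorded above. The key structural input is Lemma \ref{lem:extint}, specifically its $k=n-1$ consequence \eqref{intertwining}: if we set $B^1=B$ and define $B^i=R_\beta(B^{i-1},Y^{i-1})$ for $2\le i\le n$, then
\[
\Dp_\beta^{(n)}(\Dp_\beta(B^1,Y^1),\ldots,\Dp_\beta(B^n,Y^n)) = \Dp_\beta^{(n+1)}(B^1,Y^1,\ldots,Y^n).
\]
This is precisely the compatibility between the multiline dynamics \eqref{multiline process} (which produces $\overline Y^i=\Dp_\beta(B^i,Y^i)$) and the O'Connell--Yor Markov chain of Theorem \ref{thm:OCYinvar} (whose transition is $\eta^i\mapsto\Dp_\beta(B,\eta^i)$ applied to $\boldsymbol\eta=\sDp^{(n)}_\beta(\mathbf Y)$): applying $\sDp^{(n)}_\beta$ to the multiline-updated state $\overline{\mathbf Y}$ gives the same thing as applying the OCY transition to $\sDp^{(n)}_\beta(\mathbf Y)$.

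The steps, in order, are as follows. First I would record that $\nu^{\bar\lambda}\circ(\sDp^{(n)}_\beta)^{-1}=\mu^{\bar\lambda}_\beta$ by the definition \eqref{mubeta}, and note $\sDp^{(n)}_\beta:\Y_n^A\to\X_n^A$ is measurable with the right domain/range by Lemma \ref{sDppres}. Second, take $\mathbf Y\sim\nu^{\bar\lambda}$ on $\Y_n^{(a,\infty)}$ and $B$ an independent two-sided Brownian motion with drift $a$; run one multiline step to get $\overline{\mathbf Y}$, and separately run one OCY step from $\boldsymbol\eta:=\sDp^{(n)}_\beta(\mathbf Y)\sim\mu^{\bar\lambda}_\beta$ with the \emph{same} driving Brownian motion $B$, obtaining $\overline{\boldsymbol\eta}=(\Dp_\beta(B,\eta^1),\ldots,\Dp_\beta(B,\eta^n))$. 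Since $\eta^i=\Dp_\beta^{(i)}(Y^1,\ldots,Y^i)$, Lemma \ref{lem:extint} (applied with $B^1=B$, $B^i=R_\beta(B^{i-1},Y^{i-1})$, and with $n$ replaced by each $i\le n$) shows $\Dp_\beta(B,\eta^i)=\Dp_\beta^{(i+1)}(B,Y^1,\ldots,Y^i)=\Dp_\beta^{(i)}(\overline Y^1,\ldots,\overline Y^i)$, i.e. $\overline{\boldsymbol\eta}=\sDp^{(n)}_\beta(\overline{\mathbf Y})$ pathwise. Third, invoke Theorem \ref{thm:OCYinvar}: since $\bar\lambda$ has strictly positive (hence, with $a<\lambda_1$, admissible) entries and the Brownian-with-drift initial law of $\boldsymbol\eta$ is exactly $\mu^{\bar\lambda}_\beta$, one has $\overline{\boldsymbol\eta}\sim\mu^{\bar\lambda}_\beta$. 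Fourth, conclude: $\sDp^{(n)}_\beta(\overline{\mathbf Y})\sim\mu^{\bar\lambda}_\beta=\nu^{\bar\lambda}\circ(\sDp^{(n)}_\beta)^{-1}$.

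The last step is where I expect the only genuine subtlety: passing from ``$\sDp^{(n)}_\beta(\overline{\mathbf Y})\sim\nu^{\bar\lambda}\circ(\sDp^{(n)}_\beta)^{-1}$'' to ``$\overline{\mathbf Y}\sim\nu^{\bar\lambda}$'' requires that $\sDp^{(n)}_\beta$ be injective on a set of full $\nu^{\bar\lambda}$-measure, with a measurable left inverse, so that the distribution of $\overline{\mathbf Y}$ is determined by the distribution of its image. This is supplied by the queuing construction: the inverse map is built from $\Rp_\beta$, which reconstructs $Y^1,\ldots,Y^n$ from $\eta^1,\ldots,\eta^n$ (using $B^i=R_\beta(\eta^{i-1},\eta^i)$-type relations, exactly as in \cite[proof of Theorem 7.3]{Seppalainen-Sorensen-21b} in zero temperature, with Theorem \ref{thm:ocy} guaranteeing the correct joint laws and the independence needed for the inversion to be well-defined a.s.). Alternatively, and more cleanly, one shows directly that each $\overline Y^i$ is a Brownian motion with drift $\lambda_i$ (by Lemma \ref{lem:OCYlim} for the drift and Theorem \ref{thm:ocy} applied inductively for the law, together with the independence-across-levels statement in Theorem \ref{thm:ocy} that makes $\overline Y^1,\ldots,\overline Y^n$ mutually independent), which is precisely $\overline{\mathbf Y}\sim\nu^{\bar\lambda}$. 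The main obstacle, then, is this inversion/independence bookkeeping; everything else reduces to \eqref{intertwining} and Theorem \ref{thm:OCYinvar}. Since the argument runs parallel to \cite[Section 7]{Seppalainen-Sorensen-21b} once Lemmas \ref{lem:pQalt}--\ref{lem:intertwine} are in place, I would give the details in the appendix, emphasizing only the points where positive temperature forces a change.
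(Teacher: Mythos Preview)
Your main proposed route is circular. You want to invoke Theorem \ref{thm:OCYinvar} (invariance of $\mu_\beta^{\bar\lambda}$ for the OCY Markov chain) to deduce Theorem \ref{multiline invariant distribution}, but in the paper the logical order is the reverse: the proof of Theorem \ref{thm:OCYinvar} \emph{uses} Theorem \ref{multiline invariant distribution}. Indeed, the intertwining identity \eqref{intertwining} is exactly what transfers invariance of $\nu^{\bar\lambda}$ under the multiline dynamics to invariance of $\mu_\beta^{\bar\lambda}$ under the OCY dynamics, not the other way around. So your steps two and three assume what you are asked to prove. Even setting aside the circularity, your inversion step four is a real difficulty (you would need $\sDp_\beta^{(n)}$ to be a.s.\ injective with a measurable inverse), and it is simply not needed.

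The correct argument is the one you mention only as an ``alternative'' at the very end, and it is what the paper does. Start from $\mathbf Y\sim\nu^{\bar\lambda}$ and an independent Brownian motion $B^1=B$ with drift $a$. Apply Theorem \ref{thm:ocy} to the pair $(B^1,Y^1)$: this gives that $\overline Y^1=\Dp_\beta(B^1,Y^1)$ is Brownian with drift $\lambda_1$, that $B^2=\Rp_\beta(B^1,Y^1)$ is Brownian with drift $a$, and that $\overline Y^1$ and $B^2$ are independent. Since $(\overline Y^1,B^2)$ is a function of $(B^1,Y^1)$, the collection $\overline Y^1,B^2,Y^2,\ldots,Y^n$ is mutually independent. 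Iterate: at stage $k$, apply Theorem \ref{thm:ocy} to $(B^k,Y^k)$ to get $\overline Y^k$ Brownian with drift $\lambda_k$, independent of $B^{k+1}$ Brownian with drift $a$, and conclude that $\overline Y^1,\ldots,\overline Y^k,B^{k+1},Y^{k+1},\ldots,Y^n$ are mutually independent. After $n$ steps, $\overline{\mathbf Y}\sim\nu^{\bar\lambda}$. No intertwining, no inversion, and Theorem \ref{thm:OCYinvar} is not touched. The intertwining identity \eqref{intertwining} then plays its intended role in the \emph{subsequent} proof of Theorem \ref{thm:OCYinvar}.
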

\begin{proof}
Assuming that $\mathbf Y = (Y^1,\ldots, Y^n) \in \Y_n^{\R_{>0}}$ are i.i.d. Brownian motions with drifts $\lambda_1,\ldots,\lambda_n$, we must show that the same is true for $\overline Y^1,\ldots,\overline Y^n$.  By Theorem \ref{thm:ocy}, 
$
\overline Y^1 = \Dp_\beta(B^1,Y^1)
$
is a two-sided Brownian motion with  drift $\lambda_1$, independent of
$
B^2 = \Rp_\beta(B^1,Y^1),
$
which is a two-sided Brownian motion with drift $a$. Hence, the random paths $\overline Y^1, B^2,Y^2,\ldots, Y^n$ are mutually independent. We iterate this process as follows: Assume, for some $2 \leq k \leq n - 1$, that the random paths $\overline Y^1,\ldots,\overline Y^{k - 1},B^k,Y^k,\ldots, Y^n$ are mutually independent, where for $1 \leq i \leq k - 1$, $\overline Y^i$ is a Brownian motion with drift $\lambda_i$. Applying Theorem \ref{thm:ocy} again,
$
\overline Y^k = \Dp_\beta(B^k,Y^k)
$
is a two-sided Brownian motion with drift $\lambda_k$, independent of 
$
B^{k + 1} = \Rp_\beta(B^k,Y^k),
$
which is a two-sided Brownian motion with zero drift.
Since $(\overline Y^k,B^{k + 1})$ is a function of $(B^k,Y^k)$, it follows that $\overline Y^1,\ldots,\overline Y^k,B^{k + 1},Y^{k + 1},\ldots,Z^n$ are mutually independent, completing the proof.
\end{proof}

\begin{proof}[Proof of Lemma \ref{lem:OCY_consis} (Consistency of the measures)]    
It suffices to show that if $(\eta^1,\ldots,\eta^n)$ has distribution $\mu_\beta^{\lambda_1,\ldots, \lambda_n}$, then \[(\eta^1,\ldots,\eta^{i - 1},\eta^{i + 1},\ldots, \eta^n) \sim \mu_\beta^{\lambda_1,\ldots,\lambda_{i - 1},\lambda_{i + 1},\ldots,\lambda_n}.
\] 
Let $\mbf Y = (Y^1,\ldots,Y^n) \sim \nu^{\bar \lambda}$ and $\eta = \sDp^{(n)}_\beta(\mbf Y)$ so $\eta = (\eta^1,\ldots,\eta^n) \sim \mu_\beta^{\bar \lambda}$.

   For $i = n$, the statement is immediate from the definition of the map $\sDp_\beta^{(n)}$. Next, we show the case $i = 1$. For $2 \le j \le n$, by \eqref{intertwining}, we may write
\[
D_\beta^{(j)}(Y^1,\ldots,Y^j) = D_\beta^{(j - 1)}(D_\beta(\wt Y^1, Y^{2}),D_\beta(\wt Y^2, Y^{3}),\ldots,D_\beta(\wt Y^{j - 1},Y^j)),
\]
where $\wt Y^{1} = Y^1$, and for $i > 1$, $\wt Y^{i} = R(\wt Y^{i - 1},Y^{i})$. Then $(\eta^2,\ldots,\eta^n) = \sDp_\beta^{(n - 1)}(\hat Y^2,\ldots,\hat Y^n)$, where $\hat Y^i = D_\beta(\wt Y^{i - 1},Y^i)$ for $2 \le i \le n$. By Theorem \ref{multiline invariant distribution}, $\hat Y^2,\ldots,\hat Y^n$ are independent, completing the proof in the case $i = 1$. Using the definition of $D_\beta^{(j)}$ \eqref{hat D iterated}, for $i < j \le n$,
\begin{equation} \label{queue_iter}
D_\beta^{(j)}(Y^1,\ldots,Y^j) = D_\beta(D_\beta(Y^1,D_\beta(Y_2,\cdots D_\beta(Y^{i-1},D_\beta^{(j - i + 1)}(Y^i,\ldots,Y^{j}))\cdots).
\end{equation}
 We apply \eqref{intertwining}, just as in the $i = 1$ case, to obtain
\begin{align} \label{queue_iter_2}
&\;\;\;D_\beta^{(j - i + 1)}(Y^i,\ldots,Y^j)\nonumber \\
&= D_\beta^{(j - i)}(D_\beta(\wt Y^{i},Y^{i + 1}),\ldots,D_\beta(\wt Y^{j - 1},Y^j)) = D_\beta^{(j - i)}(\hat Y^{i + 1},\ldots,\hat Y^j),
\end{align}
where, $\wt Y^{i} = Y^i$, and for $j > i$, $\wt Y^{j} =R(\wt Y^{j- 1},Y^j)$. For $j > i$, we define $\hat Y^j = D_\beta(\wt Y^{j - 1},Y^j)$.
Then, by \eqref{queue_iter} and \eqref{queue_iter_2},  when $i < j \le n$,
\[D_\beta^{(j)}(Y^1,\ldots,Y^j) = D_\beta^{(j - 1)}(Y^{1},\ldots,Y^{i-1},\hat Y^{i+1},\ldots,\hat Y^{j} ),
\]
 and thus, 
\be \label{eqn:marg}
(\eta^1,\ldots,\eta^{i - 1},\eta^{i + 1},\ldots,\eta^n) = \sDp_\beta^{(n - 1)}(Y^1,\ldots,Y^{i - 1},\hat Y^{i + 1},\ldots,\hat Y^n).
\ee 
By Theorem \ref{multiline invariant distribution}, $\hat Y^{i + 1},\ldots,\hat Y^{n}$ are independent Brownian motions with drifts $\lambda_{i + 1},\ldots,\lambda_n$. These random paths are functions of $Y^{i},\ldots,Y^n$,so the paths functions $Y^1,\ldots,Y^{i - 1},\hat Y^{i + 1},\ldots,\hat Y^j$ are also independent. Thus, by \eqref{eqn:marg}, \[
(\eta^1,\ldots,\eta^{i - 1},\eta^{i + 1},\ldots, \eta^n) \sim \mu^{(\lambda_1,\ldots,\lambda_{i - 1},\lambda_{i + 1},\ldots,\lambda_n)}.  \qedhere
\]
\end{proof}

\begin{proof}[Proof of Theorem \ref{thm:OCYinvar}]
Let $\mathbf Y \sim \nu^{\bar \lambda}$. Let $\boldsymbol \eta = \sDp_\beta^{(n)}(\mathbf Y)$ so that $\boldsymbol\eta \sim \mu_\beta^{\bar \lambda}$. Then, for Brownian motion $B$, let $\Ss_{\beta}^B$ denote the mapping of a single evolution step of $\mathbf Y$ according to the multiline process \eqref{multiline process} and $\T_\beta^B$ denote the mapping of a single evolution step of $\boldsymbol\eta$ according to the Markov chain \eqref{eqn:OCYMC}. By definition of $\Dp_\beta^{(k)}$ and Equation \eqref{intertwining},
\begin{align*}
[\T_\beta^{B}(\boldsymbol\eta)]_k = &\Dp_\beta(\boldsymbol\eta^k,B) = \Dp_\beta(B^1,\Dp_\beta^{(k)}(Y^1,\ldots,Y^k)) = \Dp_\beta^{(k + 1)}(B^1,Y^1,\ldots,Y^k) \\
= &\Dp_\beta^{(k)}(\Dp_\beta(B^1,Y^1),\Dp_\beta(B^{2},Y^{2}),\ldots,\Dp_\beta(B^k,Y^k))  \\
= &\Dp_\beta^{(k)}([\Ss_\beta^{B}(\mathbf Y)]_1,[\Ss_{\beta}^{B}(\mathbf Y)]_{2 },\ldots,[\Ss_\beta^{B}(\mathbf Y)]_k) = [\sDp_\beta^{(n)}(\Ss_\beta^B(\mathbf Y))]_k.
\end{align*}
Therefore, $\T_\beta^B(\boldsymbol\eta) = \sDp_\beta^{(n)}(\Ss_\beta^B(\mathbf Z))$, and because $\boldsymbol\eta = \sDp^{(n)}(\mathbf Z)$, we have
\[
\T^B(\D^{(n)}(\mathbf Y)) = \D^{(n)}(\Ss^B(\mathbf Y)).
\]
Theorem \ref{multiline invariant distribution} implies that $\Ss_\beta^B(\mathbf Y) \overset{d}{=} \mathbf Y \sim \nu^{\bar \lambda}$. Therefore, $\T_\beta^B(\boldsymbol\eta) \overset{d}{=} \sDp_\beta^{(n)}(\mathbf Y) \sim \mu^{\bar \lambda}$.
\end{proof}

\section{Stationary horizon and   the directed landscape} \label{sec:SH}
The stationary horizon (SH) was first introduced by Busani in \cite{Busani-2021} and was later studied by Busani and the third and fourth authors in \cite{Seppalainen-Sorensen-21b,Busa-Sepp-Sore-22a,Busa-Sepp-Sore-22b}. We refer to those articles for a more complete description. Analogously to how the $\KPZH_\beta$ describes the jointly invariant measures for the O'Connell-Yor polymer and the KPZ equation, it was proved in \cite{Seppalainen-Sorensen-21b,Busa-Sepp-Sore-22a} that the SH describes the jointly invariant measures for Brownian last-passage percolation and the KPZ fixed point. Jointly invariant measures for the KPZ fixed point are made precise through the coupling with the directed landscape. See \cite{KPZfixed,Directed_Landscape,KPZ_equation_convergence,heat_and_landscape,Dauvergne-Virag-21,Wu-23} for more on the KPZ fixed point and directed landscape. We briefly describe the needed definition and facts about the SH, DL, and the KPZ fixed point here. 

The directed landscape (DL) is a random continuous function $\Ll:\Rup \to \R$. By convention, we switch the ordering of space-time coordinates to $\Ll(x,s;y,t)$ (in contrast to the ordering in $Z_\beta(t,y \viiva s,x)$). Given the DL, we can construct the KPZ fixed point started from time $s$ as 
\[
h_{\Ll}(t,y|s,\h) = \sup_{x \in \R}\{\h(x) + \Ll(x,s;y,t)\}, \qquad  t>s, \ y\in\R.
\]

SH is constructed with the zero-temperature counterparts of the mappings of Section \ref{sec:queue}. We denote these with the same letters but without the $\beta$ subscript. 
For functions that satisfy $Y(0) = B(0) = 0$ and $\limsup_{x \to -\infty} Y(x) - B(x) = -\infty$, define 
\be \label{Phialt}
D(B,Y)(y) = B(y) + \sup_{-\infty < x \le y }\{Y(x) - B(x)\} - \sup_{-\infty < x \le 0}\{Y(x) - B(x)\}. 
\ee
As in \eqref{hat D iterated}, iterate the mapping $D$ as follows:
\[
    D^{(1)}(Y) = Y, \quad\text{and}\quad
    D^{(n)}(Y^1,Y^{2},\ldots,Y^n) = D(Y^1,D^{(n - 1)}(Y^2,\ldots,Y^{n}))\quad \text{ for } n \geq 2. 
\]
A mapping $\sDp^{(n)}:\Y_n^\R \to \X_n^\R$ 
is defined as follows:  the image $\boldsymbol \eta = (\eta^1,\ldots, \eta^n)=\sDp^{(n)}(\mbf Z) \in \X_n$ is defined  for $\mathbf Y  = (Y^1,\ldots,Y^n)\in \Y_n^\R$  by 
\[
\eta^i = D^{(i)}(Y^1,\ldots,Y^i) \text{ for } 1 \le i \le n.
\]
For $\bar \lambda = (\lambda_1 < \cdots < \lambda_n)$, we define the measure $\mu^{\lambda_1,\ldots,\lambda_n}$ (again without the $\beta$ subscript) as 
\[
\mu^{\bar \lambda} =\nu^{\bar \lambda} \circ (\mathcal D^{(n)})^{-1}. 
\]

\begin{definition} \label{def:SH}
The stationary horizon $\{G_\mu\}_{\mu \in \R}$ is a process with paths in $D(\R,C(\R))$.
Its law is characterized as follows: For real numbers $\bar \lambda = (\lambda_1 < \cdots < \lambda_k)$, the $k$-tuple  $(G^{\lambda_1},\ldots,G^{\lambda_k})\in C(\R)^k$  has distribution  $\mu^{\bar \lambda} \circ (\wt T^k_2)^{-1}$, where $\wt T_2$ is the mapping $C(\R)^k \to C(\R)^k$ defined by
\[
\wt T_2^k(f_1,\ldots, f_k)(x) = (f_1(2x),\ldots,f_k(2x)). 
\]
\end{definition}
In this definition, we multiply by a factor of $2$ so that the marginal distributions are Brownian motions with diffusivity $\sqrt 2$. This is the correct parameterization for invariance under the KPZ fixed point.

\begin{lemma}[\cite{Busani-2021}, Theorem 1.2;  \cite{Seppalainen-Sorensen-21b}, Theorems 3.6(iii), 5.4] \label{lem:SHscal}
For $c > 0$ and $\nu \in \R$, \[
    \{cG_{c (\mu + \nu)}(c^{-2}x) - 2\nu x  : x\in \R\}_{\mu \in \R} \,\deq\, \{G_\mu(x): x \in \R\}_{\mu \in \R}.
    \]
\end{lemma}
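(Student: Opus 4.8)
The statement to prove is Lemma~\ref{lem:SHscal}: for $c>0$ and $\nu\in\R$,
\[
\{cG_{c(\mu+\nu)}(c^{-2}x) - 2\nu x : x \in \R\}_{\mu \in \R} \deq \{G_\mu(x): x\in\R\}_{\mu\in\R}.
\]

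\textbf{Overall approach.} Since the law of the SH is characterized by its finite-dimensional marginals (Definition~\ref{def:SH}), it suffices to fix $\lambda_1 < \cdots < \lambda_k$ and check that the $k$-tuple $(cG^{c(\lambda_i+\nu)}(c^{-2}\aabullet) - 2\nu\aabullet)_{1\le i\le k}$ has the distribution prescribed by Definition~\ref{def:SH} for the drift vector $(\lambda_1,\dots,\lambda_k)$, namely $\mu^{\bar\lambda}\circ(\wt T_2^k)^{-1}$. Unwinding Definition~\ref{def:SH}, $(G^{\lambda_1},\dots,G^{\lambda_k})\deq \bigl(\eta^1(2\aabullet),\dots,\eta^k(2\aabullet)\bigr)$ where $(\eta^1,\dots,\eta^k)=\sDp^{(k)}(Y^1,\dots,Y^k)$ and $Y^1,\dots,Y^k$ are independent two-sided Brownian motions with drifts $\lambda_1,\dots,\lambda_k$ and diffusivity $1$. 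So the whole statement reduces to a scaling identity for the zero-temperature mapping $\sDp^{(k)}$ together with the scaling behavior of Brownian motion.

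\textbf{Key steps.} First I would establish the zero-temperature analogue of the scaling Lemma~\ref{lem:pQscaling}: for the map $D$ of \eqref{Phialt}, if $T_{\gamma,\alpha}f(x)=\gamma^{-1}f(\gamma^2 x)+\alpha x$, then $T_{\gamma,\alpha}D^{(n)}(Y^1,\dots,Y^n) = D^{(n)}_{\text{zero}}(T^n_{\gamma,\alpha}(Y^1,\dots,Y^n))$ — but note that unlike the positive-temperature case, the zero-temperature $D$ already has no $\beta$ parameter, and the identity $T_{\gamma,\alpha}D(B,Y) = D(T_{\gamma,\alpha}B, T_{\gamma,\alpha}Y)$ follows directly from \eqref{Phialt} by the substitution $x=\gamma^2 w$ inside the supremum and the cancellation of the $\alpha$-shifts and $\gamma^{-1}$-factors in the two subtracted suprema; iterating gives the $n$-tuple version. (This is essentially the $\beta\to\infty$ limit of Lemma~\ref{lem:pQscaling}, or can be proved the same way directly.) Second, I would apply this with appropriate parameters: starting from independent BMs $Y^i$ with drift $c(\lambda_i+\nu)$ and diffusivity $1$, the map $T_{\gamma,\alpha}$ with $\gamma=\sqrt c$ (so $\gamma^2=c$) and $\alpha=-\nu$ sends $Y^i$ to $c^{-1/2}Y^i(c\,\aabullet)-\nu\aabullet$, which is again a BM with diffusivity $1$ and drift $c^{-1/2}\cdot c(\lambda_i+\nu)\cdot ? $ — I need to be careful with the arithmetic of drifts here, but the point is that after suitable choice of $\gamma,\alpha$ the resulting processes are independent BMs with drifts $\lambda_1,\dots,\lambda_k$ and the right diffusivity, and simultaneously the $c^{-2}x$ and $-2\nu x$ operations in the statement are exactly absorbed into the $\wt T_2^k$ time-change together with the $T_{\gamma,\alpha}$ conjugation. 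Third, I would assemble these: write $G_{c(\mu+\nu)}$ via Definition~\ref{def:SH} as $\eta^i(2\aabullet)$ built from BMs with drift $c(\lambda_i+\nu)$, push the $cG(c^{-2}\aabullet)-2\nu\aabullet$ operation through the $\wt T_2^k$ and then through $\sDp^{(k)}$ using the scaling intertwining, and observe that the net effect on the driving Brownian motions is a diffusivity-preserving, drift-rescaling map carrying $(c(\lambda_i+\nu))_i$ to $(\lambda_i)_i$; since independent BMs with a given drift vector have a unique joint law, the result follows and the $\wt T_2^k$ reappears on the other side to give exactly $\mu^{\bar\lambda}\circ(\wt T_2^k)^{-1}$.

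\textbf{Main obstacle.} The genuinely delicate part is bookkeeping the interplay of the three time/space scalings — the $2\aabullet$ built into the SH definition, the $c^{-2}\aabullet$ in the statement, and the $\gamma^2\aabullet$ in the conjugation map — together with the drift shift $-2\nu x$ versus the $\alpha x$ in $T_{\gamma,\alpha}$ and the factor-of-$2$ mismatch between drift $\lambda$ and drift $2\lambda$ that the $2\aabullet$ scaling introduces. One must verify that all constants line up so that the conjugated driving processes are precisely independent standard-diffusivity BMs with drifts $\lambda_1<\cdots<\lambda_k$ (in particular the ordering is preserved, which it is since $c>0$), and that no stray factor survives. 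This is pure (if fiddly) algebra once the scaling intertwining for $\sDp^{(k)}$ is in hand; there is no probabilistic difficulty beyond invoking that a vector of independent Brownian motions is determined in law by its drift and diffusivity parameters, and that $\sDp^{(k)}$ is a fixed measurable map so pushing forward commutes with the conjugation. I would also note that this lemma is quoted from \cite{Busani-2021,Seppalainen-Sorensen-21b}, so an alternative is simply to cite it; but the self-contained argument above via the $\beta\to\infty$ shadow of Lemma~\ref{lem:pQscaling} is short and worth including.
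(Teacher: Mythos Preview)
The paper does not supply a proof of this lemma at all; it is quoted from \cite{Busani-2021,Seppalainen-Sorensen-21b}, exactly as you note at the end of your proposal. Your self-contained argument via a zero-temperature version of Lemma~\ref{lem:pQscaling} is the natural way to prove it directly and is correct in strategy: the identity $T_{\gamma,\alpha}\sDp^{(k)}=\sDp^{(k)}T^k_{\gamma,\alpha}$ for the map $D$ of \eqref{Phialt} follows by the same substitution argument you describe (the $\alpha$-shifts cancel in $Y-B$, and the $\gamma$-scaling passes through the supremum), and then one transports the driving Brownian motions to the desired drift vector.

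One concrete correction to the bookkeeping you flagged: the right parameters are $\gamma=c^{-1}$ and $\alpha=-\nu$, not $\gamma=\sqrt c$. With $\tilde\eta^i=[\sDp^{(k)}(\tilde Y)]_i$ and $G^{c(\lambda_i+\nu)}(x)=\tilde\eta^i(2x)$, one has $cG^{c(\lambda_i+\nu)}(c^{-2}x)-2\nu x = c\tilde\eta^i(c^{-2}\cdot 2x)-\nu\cdot 2x = (T_{c^{-1},-\nu}\tilde\eta^i)(2x)$, so after passing through $\wt T_2^k$ the transformation on the $\eta$'s is exactly $T_{c^{-1},-\nu}$. Then $T_{c^{-1},-\nu}\tilde Y^i$ has diffusivity $1$ and drift $c^{-1}\cdot c(\lambda_i+\nu)-\nu=\lambda_i$, so all constants line up and the ordering is preserved since $c>0$.
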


\begin{lemma}[\cite{Seppalainen-Sorensen-21b}, Lemma 7.2, and see  Appendix D in \cite{Seppalainen-Sorensen-21a}] \label{lem:SHqueuerep}
For $\mbf Y = (Y^1,\ldots,Y^n)$, define
\[
A^{\mathbf Y}_{n}(x) = \sup_{-\infty < x_{n - 1} \leq \cdots \leq x_{1} \leq x} \Big\{\sum_{i = 1}^{n} Y^i(x_i) - Y^{i - 1}(x_{i})   \Big\}.
\]
Then, if $A^{\mbf Y}_n(0)$ is finite, for $n \geq 2$,
\begin{align*}
D^{(n)}(Y^1,Y^{2},\ldots, Y^n)(x) 
= &Y^1(x) + A^{\mathbf Y}_{n}(x) - A^{\mathbf Y}_{n}(0).
\end{align*}
\end{lemma}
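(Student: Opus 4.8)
The statement to be proved is Lemma \ref{lem:SHqueuerep}, which gives an explicit formula for the iterated zero-temperature mapping $D^{(n)}$ in terms of the multi-path last-passage quantity $A^{\mathbf Y}_n$.

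\textbf{Plan of proof.} I would proceed by induction on $n$. The base case is $n=2$: here $A^{\mathbf Y}_2(x) = \sup_{-\infty < x_1 \le x}\{Y^2(x_1) - Y^1(x_1)\}$, and comparing with the definition \eqref{Phialt} of $D(Y^1,Y^2)(x) = Y^1(x) + \sup_{-\infty<x_1\le x}\{Y^2(x_1)-Y^1(x_1)\} - \sup_{-\infty<x_1\le 0}\{Y^2(x_1)-Y^1(x_1)\}$ gives exactly $Y^1(x) + A^{\mathbf Y}_2(x) - A^{\mathbf Y}_2(0)$. (One should double-check the variable labelling: in $A^{\mathbf Y}_n$ the summand is $Y^i(x_i) - Y^{i-1}(x_i)$ for $i=1,\dots,n$, and the $i=1$ term $Y^1(x_1) - Y^0(x_1)$ must be interpreted as absent or with the convention that only $i=2,\dots,n$ contribute; this indexing subtlety should be stated cleanly at the outset.)

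For the inductive step, assume the formula holds for $n-1$. By the iteration rule $D^{(n)}(Y^1,\dots,Y^n) = D(Y^1, D^{(n-1)}(Y^2,\dots,Y^n))$. Write $g = D^{(n-1)}(Y^2,\dots,Y^n)$, so by the induction hypothesis $g(x) = Y^2(x) + A^{(Y^2,\dots,Y^n)}_{n-1}(x) - A^{(Y^2,\dots,Y^n)}_{n-1}(0)$, where $A^{(Y^2,\dots,Y^n)}_{n-1}(x) = \sup_{-\infty < x_{n-1}\le\cdots\le x_2\le x}\sum_{i=3}^{n}(Y^i(x_{i-1}) - Y^{i-1}(x_{i-1}))$ (again modulo reindexing). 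Then from \eqref{Phialt},
\[
D(Y^1,g)(x) = Y^1(x) + \sup_{-\infty < x_1 \le x}\{g(x_1) - Y^1(x_1)\} - \sup_{-\infty < x_1 \le 0}\{g(x_1) - Y^1(x_1)\}.
\]
Substituting the expression for $g$, the additive constant $-A^{(Y^2,\dots,Y^n)}_{n-1}(0)$ cancels between the two supremum terms, leaving
\[
D(Y^1,g)(x) = Y^1(x) + \sup_{-\infty<x_1\le x}\Big\{Y^2(x_1) - Y^1(x_1) + A^{(Y^2,\dots,Y^n)}_{n-1}(x_1)\Big\} - (\text{same at } 0).
\]
The key algebraic point is that nesting the supremum over $x_1$ with the supremum over $x_{n-1}\le\cdots\le x_2\le x_1$ inside $A^{(Y^2,\dots,Y^n)}_{n-1}(x_1)$ produces precisely the joint supremum over the full chain $x_{n-1}\le\cdots\le x_1\le x$, and the integrand $Y^2(x_1)-Y^1(x_1) + \sum_{i=3}^n(Y^i(x_{i-1})-Y^{i-1}(x_{i-1}))$ reassembles into $\sum_{i=1}^n(Y^i(x_i)-Y^{i-1}(x_i))$ (with the $i=1$ convention) $= A^{\mathbf Y}_n$-integrand. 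Hence $\sup_{x_1\le x}\{\cdots\} = A^{\mathbf Y}_n(x)$, and similarly at $0$, completing the induction.

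\textbf{Main obstacle.} The genuinely routine part is the supremum-nesting and the cancellation of constants; the one place that needs care is the bookkeeping of indices and the convention for the $i=1$ term in the definition of $A^{\mathbf Y}_n$, together with verifying that finiteness of $A^{\mathbf Y}_n(0)$ propagates down (i.e.\ that $A^{(Y^2,\dots,Y^n)}_{n-1}(0)$ is finite and that the relevant suprema in \eqref{Phialt} are attained/finite so the mappings are well-defined at each stage) — this is where the decay hypothesis $\limsup_{x\to-\infty}(Y-B)(x) = -\infty$ implicit in \eqref{Phialt} must be tracked through the recursion. Since Lemma \ref{lem:SHqueuerep} is quoted directly from \cite{Seppalainen-Sorensen-21b} and \cite{Seppalainen-Sorensen-21a}, in the paper itself I would simply cite those references rather than reproduce the induction; but the argument above is the proof one would write out.
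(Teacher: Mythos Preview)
The paper does not prove this lemma; it is quoted directly from \cite{Seppalainen-Sorensen-21b} and \cite{Seppalainen-Sorensen-21a} with no argument given. Your inductive approach is the standard one and is correct: the base case is immediate from \eqref{Phialt}, and the inductive step is exactly the nesting-of-suprema identity you describe, with the constant from the inner $A_{n-1}$ cancelling between the two outer suprema.

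You are right to flag the indexing. As written in the paper, the sum $\sum_{i=1}^{n} (Y^i(x_i) - Y^{i-1}(x_i))$ does not type-check: the supremum is over $x_1,\dots,x_{n-1}$ only, and there is no $Y^0$. The intended expression (consistent with Lemma~\ref{lem:pQalt} and with how the lemma is applied in the proof of Theorem~\ref{thm:SHlimit}) is
\[
A^{\mathbf Y}_n(x) = \sup_{-\infty < x_{n-1} \le \cdots \le x_1 \le x} \sum_{i=1}^{n-1} \bigl(Y^{i+1}(x_i) - Y^i(x_i)\bigr),
\]
i.e.\ the zero-temperature limit of the integrand in \eqref{Did}. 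With this correction your induction goes through verbatim and the reindexing headaches you anticipate disappear. The finiteness hypothesis $A^{\mathbf Y}_n(0)<\infty$ forces $A^{(Y^2,\dots,Y^n)}_{n-1}(x_1)<\infty$ for each fixed $x_1$ (since one may freeze $x_1$ in the outer supremum), so the inductive hypothesis applies.
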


The following states properties of the Busemann process for the DL from \cite{Busa-Sepp-Sore-22a}. For a single direction $\lambda$, these properties were previously established in \cite{Rahman-Virag-21}. 
\begin{theorem}\cite[Theorems 5.1--5.2]{Busa-Sepp-Sore-22a} \label{thm:DL_Buse_summ}
 On the probability space $(\Omega,\F,\Pp)$ of the directed landscape $\Ll$, there exists a process
\[
\{\mathcal B^{\lambda \sig}(p;q): \dir \in \R, \,  \sigg \in \{-,+\}, \, p,q \in \R^2\}
\]
satisfying the following properties.  All the  properties below hold on a single event of probability one, simultaneously for all directions $\dir \in \R$, signs $\sigg \in \{-,+\}$, and points $p,q \in \R^2$, unless otherwise specified.
\begin{enumerate} [label=\rm(\roman{*}), ref=\rm(\roman{*})]  \itemsep=3pt
\item{\rm(Continuity)} \label{itm:general_cts}  As an $\R^4 \to \R$ function,  $(x,s;y,t) \mapsto \mathcal B^{\lambda \sig}(x,s;y,t)$ is  continuous. 
 \item {\rm(Additivity)} \label{itm:DL_Buse_add} For all $p,q,r \in \R^2$, 
    $\mathcal B^{\lambda \sig}(p;q) + \mathcal B^{\lambda \sig}(q;r) = \mathcal B^{\lambda \sig}(p;r)$.   In particular, $\mathcal B^{\lambda \sig}(p;q) = -\mathcal B^{\lambda \sig}(q;p)$ and $\mathcal B^{\lambda \sig}(p;p) = 0$.
    \item {\rm(Backwards evolution as the KPZ fixed point)}\label{itm:Buse_KPZ_description} For 
    all $x,y \in \R$ and $s < t$,
    \be\label{W_var}
    \mathcal B^{\lambda \sig}(x,s;y,t) = \sup_{z \in \R}\{\Ll(x,s;z,t) + \mathcal B^{\lambda \sig}(z,t;y,t)\}.
    \ee
    \item {\rm(Independence)} \label{itm:indep_of_landscape} For each $T \in \R$, these processes are independent: 
    \begin{align*}
    &\{\mathcal B^{\lambda \sig}(x,s;y,t): \dir \in \R, \,\sigg \in \{-,+\}, \, x,y \in \R, \, s,t \ge T \} \\[4pt] 
    &\qquad\qquad\qquad \text{and } \ \ 
    \{\Ll(x,s;y,t): x,y \in \R,\, s < t \le T\}. 
    \end{align*}
    \item  {\rm(Distribution along a time level)}\label{itm:SH_Buse_process} For each $t \in \R$,  the following equality in distribution holds between random elements of the Skorokhod space $D(\R,C(\R))$:
\[
\{\mathcal B^{\lambda +}(\aabullet,t;0,t)\}_{\dir \in \R} \deq \bigl\{G_{\dir}(\aabullet) \bigr\}_{\dir \in \R},
\]
where $G$ is the stationary horizon.
    \end{enumerate}
\end{theorem}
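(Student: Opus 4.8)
Since the statement is quoted from \cite{Busa-Sepp-Sore-22a}, the task is to reconstruct how such a Busemann process is built directly from $\Ll$. The plan is to define, for each fixed direction $\lambda\in\R$ and each point $p=(x,s)$, a Busemann limit along a sequence $q_n$ of space-time points escaping to infinity in the direction of slope $\lambda$ (say $q_n=(2\lambda n+o(n),n)$),
\[
\mathcal B^{\lambda}(p;q) \;=\; \lim_{n\to\infty}\bigl(\Ll(x,s;q_n)-\Ll(y,t;q_n)\bigr),\qquad q=(y,t),
\]
and to show this limit exists almost surely. The existence step relies on the curvature/shape theorem for $\Ll$ (Rahman--Vir\'ag) together with monotonicity of point-to-point last-passage values in the endpoint, which makes the difference above eventually monotone and bounded. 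Because the set of admissible far-away endpoints has an ``upper'' and a ``lower'' envelope, the limit can split into two values $\mathcal B^{\lambda-}\le\mathcal B^{\lambda+}$ at a random countable set of exceptional $\lambda$; taking monotone limits in $\lambda$ over the rationals then produces left- and right-continuous versions defined simultaneously for all $\lambda$ on one full-probability event. Joint continuity \ref{itm:general_cts} in $(x,s;y,t)$ follows from the continuity of $\Ll$ and local uniformity of the defining limits, and additivity \ref{itm:DL_Buse_add} is immediate from the telescoping form of the definition.

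Next I would establish the backward evolution \ref{itm:Buse_KPZ_description}. The metric composition law $\Ll(x,s;q_n)=\sup_{z}\{\Ll(x,s;z,t)+\Ll(z,t;q_n)\}$ lets me write
\[
\Ll(x,s;q_n)-\Ll(y,t;q_n)=\sup_{z\in\R}\Bigl\{\Ll(x,s;z,t)+\bigl[\Ll(z,t;q_n)-\Ll(y,t;q_n)\bigr]\Bigr\},
\]
and the point is to pass the limit $n\to\infty$ inside the supremum. This is justified by showing the relevant $z$ ranges over an effectively compact set (geodesics toward direction $\lambda$ do not escape to $\pm\infty$ over a bounded spatial window), after which the right side converges to $\sup_z\{\Ll(x,s;z,t)+\mathcal B^{\lambda\sig}(z,t;y,t)\}$. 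The independence statement \ref{itm:indep_of_landscape} then follows from a measurability observation: the defining limit uses only $\{\Ll(\cdot,\cdot;\cdot,\cdot):\text{times}\ge T\}$ when $s,t\ge T$, so the whole process $\{\mathcal B^{\lambda\sig}(x,s;y,t):s,t\ge T\}$ is a measurable functional of $\Ll$ restricted to time levels $\ge T$, which by independence of the directed-landscape increments across disjoint time strips is independent of $\Ll$ below $T$.

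The remaining and most substantial step is the identification of the time-level marginal \ref{itm:SH_Buse_process}. By \ref{itm:Buse_KPZ_description} and \ref{itm:DL_Buse_add}, for $s<t$ the process $y\mapsto\mathcal B^{\lambda+}(y,t;0,t)$ is obtained from $y\mapsto\mathcal B^{\lambda+}(y,s;0,s)$ by evolving under the KPZ fixed point $h_\Ll$ and subtracting a height shift; combined with time-translation invariance of $\Ll$ this shows $\{\mathcal B^{\lambda+}(\aabullet,t;0,t)\}_{\lambda\in\R}$ is a jointly invariant process for the KPZ fixed point. One then verifies the two hypotheses needed to pin down its law: that each marginal $y\mapsto\mathcal B^{\lambda}(0,t;y,t)$ is a two-sided Brownian motion with diffusivity $\sqrt2$ and drift $2\lambda$ (the single-direction computation of Rahman--Vir\'ag, obtainable e.g.\ as a scaling limit of a solvable last-passage model), and that the increments are ordered in $\lambda$. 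Feeding these into the uniqueness theorem for the stationary horizon --- the SH is the unique $D(\R,C(\R))$-valued process with these Brownian marginals that is jointly invariant under the KPZ fixed point subject to the asymptotic slope conditions --- yields $\{\mathcal B^{\lambda+}(\aabullet,t;0,t)\}_{\lambda\in\R}\deq\{G_\lambda(\aabullet)\}_{\lambda\in\R}$. I expect this last step to be the main obstacle: both the marginal Brownianity (which in \cite{Busa-Sepp-Sore-22a} ultimately leans on exactly solvable inputs or on the queueing construction of Section~\ref{sec:queue}) and the verification of the precise hypotheses of the SH uniqueness theorem require care. An alternative route, which trades one difficulty for another, is to obtain the entire theorem as a scaling limit of the Busemann process of Brownian last-passage percolation of \cite{Seppalainen-Sorensen-21b}; then the obstacle becomes proving joint convergence of the prelimiting Busemann processes and showing the limit retains the evolution and independence structure above.
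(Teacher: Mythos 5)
This theorem is cited from \cite{Busa-Sepp-Sore-22a} (and \cite{Rahman-Virag-21} for the single-direction statements); the present paper supplies no proof of its own, so there is no in-paper argument to compare against. As a reconstruction of the proof in the cited reference, your sketch is a reasonable outline and correctly identifies the load-bearing steps: Busemann-type limits of $\Ll$-differences toward direction $\lambda$, $\pm$ versions obtained from one-sided monotone limits over rational directions on a single null exceptional event, additivity and the backward KPZ fixed-point evolution from the metric composition property of $\Ll$, and independence from measurability of the construction with respect to the landscape restricted to the upper time strip. One small inaccuracy: the $\pm$ split does not arise from ``upper and lower envelopes of far-away endpoints'' but from non-uniqueness of the limit across the direction parameter, which you do describe correctly in the next sentence.

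Your primary route to item \ref{itm:SH_Buse_process} --- joint invariance under the KPZ fixed point together with Brownian marginals and monotone increments, fed into a uniqueness theorem for the stationary horizon --- is indeed the structure of \cite{Busa-Sepp-Sore-22a}: the present paper's literature review records precisely this, namely that the SH is the unique multi-type stationary distribution of the KPZ fixed point and that, as a consequence, SH gives the law of the fixed-time Busemann process of the DL. Keep in mind, however, that the SH uniqueness theorem is itself a main result of \cite{Busa-Sepp-Sore-22a} rather than an off-the-shelf input, so invoking it as a black box leaves the bulk of the work unaccounted for in a self-contained reconstruction. Your alternative route through the BLPP Busemann process of \cite{Seppalainen-Sorensen-21b} and the coincidence of the BLPP and DL Busemann processes along a time level is likewise a genuine ingredient of \cite{Busa-Sepp-Sore-22a}, as the paper itself notes. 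Beyond these caveats, no substantive gap stands out.
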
  

We also make use of the following symmetry of the directed landscape.
\begin{lemma}\cite[Proposition 14.1]{Dauvergne-Virag-21} \label{lem:DLsymm}   The directed landscape satisfies the following symmetry
\[
\{\Ll(x,s;y,t): (s,x,t,y) \in \Rup \} \deq \{\Ll(y,-t;x,-s): (s,x,t,y) \in \Rup \}. 
\]
\end{lemma}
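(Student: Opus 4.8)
This is a restatement of \cite[Proposition 14.1]{Dauvergne-Virag-21}, so the plan is to recall how that symmetry is established rather than to invent a new argument. The map $(x,s;y,t)\mapsto(y,-t;x,-s)$ is the time reversal of the directed landscape: it negates both time coordinates and exchanges the starting and ending space points. Since $s<t$ forces $-t<-s$, this map carries $\Rup$ into itself, so the claim is a genuine identity in law between two $C(\Rup,\R)$-valued processes. I would obtain it by composing two more basic symmetries of $\Ll$: the spatial reflection invariance $\Ll(x,s;y,t)\deq\Ll(-x,s;-y,t)$, and the skew-symmetry $\Ll(x,s;y,t)\deq\Ll(-y,-t;-x,-s)$. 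Applying the spatial reflection to the right-hand side of the skew-symmetry (negating the two space arguments $-y$ and $-x$ while leaving the times $-t$ and $-s$ fixed) immediately turns $\Ll(-y,-t;-x,-s)$ into $\Ll(y,-t;x,-s)$, and chaining the two distributional equalities gives the assertion.

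The spatial reflection invariance is standard. The skew-symmetry carries the real content, and I would prove it through the axiomatic characterization of $\Ll$: its time-slice marginals are $1:2:3$-rescaled parabolic Airy sheets, it has independent increments over disjoint time intervals, it satisfies the metric composition law $\Ll(x,s;y,t)=\sup_z\{\Ll(x,s;z,r)+\Ll(z,r;y,t)\}$ for $s<r<t$, and it is continuous. Setting $\wt\Ll(x,s;y,t):=\Ll(-y,-t;-x,-s)$, continuity is clear; disjoint time intervals $[s_j,t_j]$ are sent to the disjoint intervals $[-t_j,-s_j]$, so independence of increments transfers verbatim; inserting the intermediate time $-r$ (with $-t<-r<-s$) into the composition law for $\Ll$ and using commutativity of the supremum produces the composition law for $\wt\Ll$; and the marginal condition reduces, via time-stationarity and the $1:2:3$ scaling, to the symmetry of the Airy sheet $\Ss(x,y)\deq\Ss(-y,-x)$, which is part of its construction (it descends from reversibility of the last-passage/RSK correspondence under reflecting the lattice through the anti-diagonal together with exchangeability of the i.i.d.\ weights). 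The uniqueness clause of the characterization then forces $\wt\Ll\deq\Ll$, which is the skew-symmetry.

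The main obstacle is keeping the bookkeeping of reflections and parabolic corrections consistent in the marginal check: the Airy-sheet marginal carries a built-in parabola that is a function of $x-y$ and hence unchanged under swapping $x$ and $y$, but one must verify that the simultaneous negation of both spatial arguments in $\Ss(-y,-x)$ is absorbed correctly and that the rescalings by $(t-s)^{1/3}$ and $(t-s)^{2/3}$ land on the right variables. An equally valid alternative is to work directly in the prelimiting last-passage percolation model from which $\Ll$ is built: there the reversal symmetry is elementary — reverse every up/right path and reflect the index set, which turns the passage problem from $(x,s)$ to $(y,t)$ into one governed by an environment equal in law to the original — after which one passes to the $1:2:3$ scaling limit using the already-established convergence to $\Ll$; in that route the effort is in the (routine but tedious) joint-convergence bookkeeping rather than in any single algebraic identity.
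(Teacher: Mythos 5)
The paper does not contain a proof of this lemma: it is stated as a direct citation of \cite[Proposition 14.1]{Dauvergne-Virag-21}, so there is no in-paper argument to compare your proposal against. What you have written is therefore a reconstruction of the argument in the cited reference rather than a comparison.

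As a reconstruction it is correct. The algebra of the composition is right: the flip $\Ll(x,s;y,t)\deq\Ll(-y,-t;-x,-s)$ followed by spatial reflection $\Ll(u,\sigma;v,\tau)\deq\Ll(-u,\sigma;-v,\tau)$ produces $\Ll(x,s;y,t)\deq\Ll(y,-t;x,-s)$, and the map $(x,s;y,t)\mapsto(y,-t;x,-s)$ indeed preserves $\Rup$ because $s<t$ gives $-t<-s$. Your verification that $\wt\Ll(x,s;y,t):=\Ll(-y,-t;-x,-s)$ satisfies the axiomatic characterization is also sound: the composition law transfers under the substitution $z\mapsto -z$ and the reversal of the intermediate time to $-r\in(-t,-s)$, disjointness of time windows is preserved, and the marginal check indeed reduces to the Airy-sheet symmetry $\Ss(x,y)\deq\Ss(-y,-x)$. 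This is essentially the route taken in \cite{Dauvergne-Virag-21} (their Proposition 14.1 lists the DL symmetries together, derived from the Airy-sheet symmetries and the landscape characterization), so your approach is consistent with the source. One small caveat: in some presentations the flip symmetry is stated directly in the form $\Ll(x,s;y,t)\deq\Ll(y,-t;x,-s)$ rather than as the negated version you use, in which case the reflection step is unnecessary; either decomposition is fine, but you should double-check which form \cite{Dauvergne-Virag-21} actually records before presenting it as a two-step composition of \emph{their} stated symmetries.
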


\bibliographystyle{plain}
\bibliography{references_file}

\end{document}